\DeclareMathAlphabet{\mathpzc}{OT1}{pzc}{m}{it}
\numberwithin{equation}{section}
\theoremstyle{plain}
\newtheorem*{maintheorem*}{Main Theorem}
\newtheorem*{thm*}{Theorem}
\newtheorem*{thma*}{Theorem A}
\newtheorem*{thmaa*}{Theorem A'}
\newtheorem*{thmb*}{Theorem B}
\newtheorem*{thmo*}{Theorem 1.1}
\newtheorem*{thmc*}{Theorem C}
\newtheorem*{thmd*}{Theorem D}
\newtheorem*{thmf*}{Theorem 4.1}
\newtheorem*{remark*}{Remark}
\newtheorem*{conjecture*}{Conjecture}
\newtheorem*{prop*}{Proposition}
\newtheorem*{lem*}{Basic Lemma}
\newtheorem{thm}{Theorem}[section]
\newtheorem{cor}[thm]{Corollary}
\newtheorem{lem}[thm]{Lemma}
\newtheorem{Lem}[thm]{Lemma}
\newtheorem{prop}[thm]{Proposition}
\newtheorem{dfn}[thm]{Definition}
\theoremstyle{definition}
\newtheorem*{proofc*}{Proof of Theorem C}
\newtheorem{definition}[thm]{Definition}
\newtheorem{remark}[thm]{Remark}
\def\bbr{\mathbb{R}}
\def\bbc{\mathbb{C}}
\def\qcal{{\mathcal Q}}
\def\Q{\qcal}\def\D{\mathcal D}
\def\Om{{\Omega}}
\def\ecal{\mathcal{E}}
\def\ccal{\mathcal{C}}
\def\kcal{\mathcal{K}}
\def\lcal{\mathcal{L}}
\def\pcal{\mathcal{P}}
\def\tbf{{\mathbf{t}}}
\def\qpz{\mathpzc{q}}
\def\PSL{{\rm PSL}}
\def\SO{{\rm SO}}
\def\supp{{\rm supp}}
\def\dist{{d}}
\newcommand{\PS}{\operatorname{PS}}
\newcommand{\BR}{\operatorname{BR}}
\newcommand{\Leb}{\operatorname{Leb}}
\newcommand{\BMS}{\operatorname{BMS}}
\newcommand{\LG}{\Lambda(\Gamma)}
\newcommand{\be}{\begin{equation}}
\newcommand{\ee}{\end{equation}}
\def\G{\Gamma}
\def\e{\epsilon}
\def\ba{\backslash}
\def\dU{{\Delta( U)}}
\def\XX{X_1\times X_2}
\def\Xc{\mathcal C_0}
\def\e{\epsilon}
\def\ep{\epsilon}
\def\mups{\mu_x^{\PS}}
\def\fibmeas{\ell}
\def\sfiber{\mu^{\pi_1}}
\def\msec{{\upsilon}}
\def\mfib{{\Upsilon}}
\def\umt{{{\check u}_{\bf r}}}
\def\bigger{{\Delta(U)}}
\def\biggere{{\Delta(u_\tbf)}}
\def\window{{I_{r,T}}}
\def\field{\mathbb{F}}
\newcommand{\T}{\operatorname{T}}
\newcommand{\bH}{\mathbb H}
\renewcommand{\c}{\mathbb{C}}
\newcommand{\br}{\mathbb{R}}\newcommand{\op}{\operatorname}
\newcommand{\pt}{{w}}
\newcommand{{\cont}}{{\check U}}
\begin{document}

\title[Joining]
{Classification of joinings for Kleinian groups}

\author{Amir Mohammadi}
\address{Department of Mathematics, The University of Texas at Austin, Austin, TX 78750}
\email{amir@math.utexas.edu}
\thanks{Mohammadi was supported in part by NSF Grants \#1200388, \#1500677 and \#1128155, and Alfred P.~Sloan Research Fellowship.}

\author{Hee Oh}
\address{Mathematics department, Yale university, New Haven, CT 06511
and Korea Institute for Advanced Study, Seoul, Korea}
\email{hee.oh@yale.edu}

\thanks{Oh was supported by in parts by NSF Grant \#1361673.}

\subjclass[2010] {Primary 11N45, 37F35, 22E40; Secondary 37A17, 20F67}

\keywords{Geometrically finite groups, Joining, Burger-Roblin measure, Bowen-Margulis-Sullivan measure}

\begin{abstract}
  We classify all {\it locally finite}  joinings of a horospherical subgroup action on $\G \ba G$ 
  when $\G$ is a Zariski dense geometrically finite
  subgroup of $G=\PSL_2(\br)$ or $\PSL_2(\c)$. This generalizes Ratner's 1983 joining theorem for the case when $\G$ is a lattice in $G$.
 
  One of the main ingredients is equidistribution of non-closed horospherical orbits with respect to the Burger-Roblin measure
  which we prove in a greater generality where $\G$ is any Zariski dense geometrically finite subgroup of $G=\SO(n,1)^\circ$, $n\ge 2$.
  \end{abstract}

\maketitle
\tableofcontents

\section{Introduction}\label{sec;intro}
M. Ratner obtained in 1983 the classification of joinings
for horocycle flows on a finite volume quotient of $\PSL_2(\br)$ \cite{Rat-Joining}; this precedes her general measure 
classification theorem for unipotent flows on any finite volume homogeneous space of a connected Lie group \cite{Rat-Ann}.
The problem of classifying all
 locally finite invariant measures for unipotent flows on an infinite volume homogeneous space $\G\ba G$ is quite mysterious and even a conjectural picture is not clear at present. However if $G$ is a simple group of rank one, there are classification results for locally finite measures on $\Gamma\ba G$ invariant under a {\it horospherical subgroup} of $G$, when $\G$ is either geometrically finite (\cite{Bu,Roblin2003,Wi}) or a special kind of geometrically infinite subgroups  (\cite{Bab,Led,LedSa,Sar,Sar-ICM}). In this article, our goal is to extend Ratner's joining theorem to an infinite volume homogeneous space $\G\ba G$ where $G=\PSL_2(\field)$ for $\field=\bbr,\bbc$ and $\G$ is a geometrically finite and Zariski dense subgroup of $G.$  This seems to be
  the first measure classification result in homogeneous spaces of infinite volume
for unipotent subgroups which are not horospherical.

Letting $n=2, 3,$ respectively, for $\field=\br,\c$,
 the group $G$ consists of all orientation preserving isometries of the real hyperbolic space $\bH^n$.
 %and a discrete subgroup $\G<G$ is geometrically finite if it admits a finite-sided fundamental domain in $\bH^n$. 
Let $U$ be a horospherical subgroup of $G$, i.e.,
for some one-parameter diagonalizable subgroup $A=\{a_s\}$ of $G$,
$$U=\{g\in G: a_{s} g a_{-s} \to e\text{ as $s\to +\infty$}\}$$
 and set $\Delta(U):=\{(u,u): u\in U\}$.
  Let $\G_1$ and $\G_2$ be discrete
subgroups of $G$, and set
 $$Z:=\G_1\ba G\times \G_2\ba G.$$

\begin{definition} Let  $\mu_i$ be a locally finite $U$-invariant Borel measure on $\G_i \ba G$ for $i=1,2$.
A locally finite  $\Delta(U)$-invariant measure $\mu$ on $Z$ is called a {\it $U$-joining}
  with respect to the pair $(\mu_1, \mu_2)$ if  
 the push-forward  $(\pi_i )_*\mu$ is proportional to $\mu_i$ for each $i=1,2$; here $\pi_i$ denotes
  the canonical projection of 
$Z$ to $\G_i\ba G$. If $\mu$ is $\Delta (U)$-ergodic, then we call $\mu$ an ergodic $U$-joining.
\end{definition}

We will classify $U$-joinings with respect to the pair of the {\it Burger-Roblin measures}.
%In our joining classification theorem, we will take $\mu_i$ to be the Burger-Roblin measure 
%$m^{\BR}_{\G_i}$ on $\G_i\ba G$.
 The reason for this is that for $\G$ geometrically finite  and Zariski dense, 
the Burger-Roblin measure $m^{\BR}_\G$  is the unique locally finite $U$-invariant ergodic measure in $\G\ba G$ 
which is not supported on a closed $U$-orbit (\cite{Bu}, \cite{Roblin2003}, \cite{Wi}).
 Therefore the Burger-Roblin measure for $\G\ba G$, which we will call the BR-measure for simplicity,
plays the role of the Haar measure in Ratner's joining theorem for $\G$ a lattice.

% Unlike the Haar measure, the BR measure depends on $\G$.
% We denote by $\delta$ the critical exponent of $\G$, i.e., the abscissa of convergence of the Poincare series
% $\mathcal P_\G(s)=\sum_{\gamma\in \G} e^{-s d(o, \gamma (o))}$ for $o\in \bH^n$, and 
% by $\Lambda(\G)$ the limit set, which is the 
%set of the accumulation points of an orbit of $\G$ in the geometric boundary  $\partial(\bH^n)$.
%We give a definition of the BR measure using the Iwasawa decomposition $G=KAU$ (see section \ref{s;def-pre} for more details).
%Let $\nu_o$ be the Patterson-Sullivan measure
%on $\partial(\bH^n)$ viewed from $o\in \bH^n$ where $K:=\op{Stab}_G(o)$, which is unique up to a scalar multiple. Via the transitive action of $K$ on $\partial (\bH^n)$, we may lift $\nu_o$ to $K$, which we denote by $\nu_o$ by abuse of notation.
%Then we first define a locally finite measure $ \tilde m^{\BR}_\G $ on $G$ as follows: for any $\psi\in C_c(G)$,
%$$\tilde m^{\BR}_\G(\psi):=\int_{KAU} \psi(ka_su) e^{-\delta s} d\nu_o(k) ds  du$$
%where $du$ and $ds$ are the Lebesgue measures on $U$ and $\br$ respectively. The measure $\tilde m^{\BR}_\G$ is clearly right $U$-invariant. It is also left $\G$-invariant, as follows from the conformal properties of $\nu_o$. 
%We denote by $m_\G^{\BR}$ the measure on $\G\ba G$ induced from $\tilde m^{\BR}_\G$. 
%We note that $m_\G^{\BR}$ is an infinite measure if $\G$ is of infinite co-volume in $G$.

In what follows, we assume that at least one of
 $\G_1$ and $\G_2$ has infinite co-volume in $G$; otherwise, the joining classification was already proved by Ratner. 
% Without loss of generality, we assume $\text{Vol}(\G_1\ba G)=\infty$.
 %We simply say a $U$-joining on $X:=\G_1\ba G\times \G_2\ba G$ for a $U$-joining with respect to the pair
%$(m_{\G_1}^{\BR}, m_{\G_2}^{\BR})$. 
Under this assumption, the product measure
$m_{\G_1}^{\BR}\times  m_{\G_2}^{\BR}$ is never a $U$-joining (with respect to the pair
$(m_{\G_1}^{\BR}, m_{\G_2}^{\BR})$), since its projection to  $\G_i\ba G$ is an {\it infinite} multiple of
 $m_{\G_i}^{\BR}$ for at least one of $i=1,2$.
On the other hand, a finite cover self-joining provides an example of $U$-joining. 
Recall that two subgroups of $G$ are said to be commensurable with each other if their intersection has finite index in each of them.

\begin{definition}[Finite cover self-joining]\label{sj} Suppose that for some $g_0\in G$,
$\G_1$ and $g_0^{-1}\G_2 g_0$ are commensurable with each other; this in particular implies that the orbit
$[(e, g_0)] \Delta(G)$ {\footnote {For $S\subset G$,  $\Delta(S)$ denotes the diagonal embedding of $S$ into $G\times G$.}}
 is closed in $Z$. Now using the isomorphism $$(\G_1\cap g_0^{-1}\G_2 g_0)\ba G \to  [(e,g_0)]\Delta(G)$$
given by the map $[g]\mapsto  [(g,g_0g)]$, the push-forward of the BR-measure
 $ m^{\BR}_{\G_1\cap g_0^{-1}\G_2 g_0 }$  to $Z$ gives a $U$-joining, which we call a {\it finite cover self-joining}.
If $\mu$ is a $U$-joining, then any translation of $\mu$ by $(e, u_0)$  is 
also a $U$-joining, whenever $u_0$ belongs to the centralizer of $U$, which is $U$ itself.
 Such a translation of a finite cover self-joining will also be called a finite cover self-joining.
\end{definition}

The following is our main result:
\begin{thm}[Joining Classification] \label{thm;joining-class} Let $\G_1$ and $\G_2$ be geometrically finite
and Zariski dense subgroups of $G$.
Suppose that either $\G_1$ or $\G_2$ is of infinite co-volume in $G$. Then
any locally finite ergodic $U$-joining
on $Z$ is a finite cover self-joining.
\end{thm}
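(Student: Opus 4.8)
The plan follows the architecture of Ratner's joining theorem, the new dynamical input being the equidistribution of non-closed horospherical orbits toward the Burger--Roblin measure. One should keep in mind that, unlike in the lattice case, the relevant measures are invariant under no subgroup properly containing $U$ (except $MU$ when $G=\PSL_2(\c)$); the goal is therefore to show that $\mu$ is \emph{supported on} --- not invariant under --- a closed $\Delta(G)$-orbit, up to a translation by an element of the centralizer of $\Delta(U)$ (which is $U\times U$). Let $A=\{a_s\}$ be the diagonalizable subgroup having $U$ as its expanding horospherical subgroup, $\check U$ the opposite horospherical subgroup, $M$ the centralizer of $A$ in a maximal compact subgroup of $G$, and $P=MAU$. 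Fix a locally finite ergodic $U$-joining $\mu$. Since $(\pi_i)_*\mu$ is proportional to $m^{\BR}_{\Gamma_i}$, which gives zero mass to the countable union of closed $U$-orbits, $\mu$-almost every $(x_1,x_2)$ has both $x_1U$ and $x_2U$ non-closed; by the equidistribution theorem these orbits equidistribute toward $m^{\BR}_{\Gamma_1}$, resp.\ $m^{\BR}_{\Gamma_2}$, so there is a robust supply of doubly generic points whose $\Delta(U)$-orbit in $Z$ equidistributes toward $\mu$. Dually, $\mu$ is not carried by a single $\Delta(U)$-orbit: a non-closed orbit supports no locally finite invariant measure, and a closed one would push a marginal onto a closed $U$-orbit.

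The core is a Ratner-type polynomial-divergence (``$\mathcal H$-property'') argument in the product $Z$. Suppose $\mu$ is not supported, up to translation by an element of $\{e\}\times U$, on a single closed $\Delta(G)$-orbit. Then, using ergodicity, Poincar\'e recurrence for $\Delta(U)$, and the equidistribution above --- which supplies the uniform genericity that in the finite-measure setting is furnished by density-point arguments --- one produces a positive-$\mu$-measure set of doubly generic points admitting arbitrarily nearby ``return'' points $(x_1,x_2)g_n$, $g_n\to e$, with $g_n$ transverse to the local leaf structure of $\supp\mu$ (equivalently, transverse to the graphs $\{(g,u_0^{-1}gu_0):g\in G\}$, $u_0\in U$, and their $\{e\}\times U$-translates). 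Flowing by $\Delta(u_t)$ and analyzing $\Delta(u_{-t})\,g_n\,\Delta(u_t)$: because $U$ is its own centralizer in $G$, the transverse component of $g_n$ is sheared at polynomial rate into a bounded, nonzero displacement along a one-parameter subgroup $V$, and comparing the two equidistributing orbit branches along the appropriate time scale forces $\mu$ to be invariant under a closed subgroup $H=\langle\Delta(U),V\rangle$ with $\Delta(U)\subsetneq H$. I expect this shearing step to be the principal obstacle: in infinite volume one must bootstrap from Poincar\'e recurrence and the \emph{infinite}-measure equidistribution theorem to a positive-measure family of genuinely transverse near-return times (the recurrent danger being that every $g_n$ already lies in such a graph, producing no new invariance), and one must control the Radon--Nikodym cocycle of $\mu$ under $\Delta(P)$ while passing to limits of sheared orbit segments --- this is precisely where the finiteness and $\{a_s\}$-mixing of the Bowen--Margulis--Sullivan measures $m^{\BMS}_{\Gamma_i}$, over which the $m^{\BR}_{\Gamma_i}$ disintegrate along $U$-orbits, and the full strength of the equidistribution theorem are needed.

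It remains to see that no such $H$ can occur. Up to a $\{e\}\times U$-translation and, when $G=\PSL_2(\c)$, an innocuous $\Delta(M)$, the possibilities are: $H$ contains $U\times\{e\}$ or $\{e\}\times U$, hence $U\times U$; $H\subseteq\Delta(P)$, hence $H\supseteq\Delta(A)$; or $H$ contains $G\times\{e\}$ or $\{e\}\times G$. In the last case a marginal is $G$-invariant, forcing the corresponding $\Gamma_i$ to be a lattice and then, by disintegration, the other marginal to be an infinite multiple of $m^{\BR}$ of the non-lattice group, against the standing hypothesis. In the middle case each marginal is $\{a_s\}$-invariant, which $m^{\BR}$ of an infinite-covolume group is not. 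In the first case $\mu$ is $(U\times U)$-invariant; but the product $m^{\BR}_{\Gamma_1}\otimes m^{\BR}_{\Gamma_2}$ is not a $U$-joining (its marginals are infinite multiples of the $m^{\BR}_{\Gamma_i}$), so a $(U\times U)$-invariant $U$-joining must be a relatively independent joining over a nontrivial common factor of the two $U$-systems, and such a measure decomposes under the diagonal $\Delta(U)$ into graph joinings (the ``anti-diagonal'' coordinate along the $U$-direction being $\Delta(U)$-invariant), hence is not $\Delta(U)$-ergodic --- contradicting the ergodicity of $\mu$. Thus the supposition fails, $\mu$-almost every sufficiently nearby pair of generic points is related by an element of a fixed graph $\{(g,u_0^{-1}gu_0):g\in G\}$ (the $u_0$ being locally, hence by ergodicity globally, constant), and a standard local-to-global argument shows that $\mu$ is, up to translation by $(e,u_0)\in\{e\}\times U$, supported on a single closed $\Delta(G)$-orbit $[(e,g_0)]\Delta(G)$.

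Finally, via the isomorphism $[(e,g_0)]\Delta(G)\cong\Lambda\ba G$ with $\Lambda=\Gamma_1\cap g_0^{-1}\Gamma_2g_0$, the translated $\mu$ becomes a locally finite, $U$-invariant, ergodic measure on $\Lambda\ba G$ not carried by a closed $U$-orbit, whose pushforward under the covering $\Lambda\ba G\to\Gamma_1\ba G$ is a finite nonzero multiple of $m^{\BR}_{\Gamma_1}$; comparing Patterson--Sullivan densities and total masses --- an infinite cover would give an infinite multiple of $m^{\BR}_{\Gamma_1}$ --- forces $[\Gamma_1:\Lambda]<\infty$, and symmetrically $[\Gamma_2:g_0\Lambda g_0^{-1}]<\infty$. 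Hence $\Gamma_1$ and $g_0^{-1}\Gamma_2g_0$ are commensurable, so $\Lambda$ is itself Zariski dense and geometrically finite, and by the uniqueness of the locally finite $U$-invariant ergodic measure on $\Lambda\ba G$ not carried by a closed $U$-orbit (\cite{Bu,Roblin2003,Wi}), the translated $\mu$ is proportional to $m^{\BR}_\Lambda$. Undoing the translation, $\mu$ is the finite cover self-joining attached to $g_0$, up to the translation by an element of $U$ permitted in Definition~\ref{sj}.
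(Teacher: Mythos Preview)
Your outline captures the Ratner-style shearing philosophy, but it has a structural gap that the paper devotes an entire section to closing. The shearing/polynomial divergence argument you describe can only produce displacements in the normalizer $N_{G\times G}(\Delta(U))=\Delta(AM)\cdot(U\times U)$; it can never reach the $\check U$-direction, which does not normalize $\Delta(U)$. Consequently your case ``$H$ contains $G\times\{e\}$ or $\{e\}\times G$'' simply does not arise from shearing, and your case analysis cannot by itself force $\mu$ to be supported on a closed $\Delta(G)$-orbit. What the shearing actually yields (and only after careful work with the window equidistribution and the pullback function $\Psi=\psi\circ\pi_1$) is \emph{quasi}-invariance --- not invariance --- under a nontrivial connected subgroup of $\Delta(AM)(\{e\}\times U)$; see Lemma~\ref{l;normalizer-inv} and Theorem~\ref{l;extra-inv}. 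Your statement that ``each marginal is $\{a_s\}$-invariant'' in the middle case therefore does not follow: $m^{\BR}$ is $A$-quasi-invariant, not $A$-invariant, and $\Delta(A)$-quasi-invariance of $\mu$ is perfectly consistent with the marginals being $m^{\BR}$.

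The paper's route from this point is substantially different from what you sketch. Quasi-invariance under $\Delta(AM)$, combined with the non-invariance under $\{e\}\times V$ for $V<U$ (Lemma~\ref{nf}, whose proof uses Ratner's classification of \emph{finite} $V$-invariant measures to show the $V$-ergodic components of $m^{\BR}_{\Gamma_2}$ are almost all infinite --- a step your ``relatively independent joining'' argument does not supply), gives that the fibers of $\pi_1$ are a.e.\ finite of constant cardinality (Theorem~\ref{c;finite-fiber}). This produces a measurable set-valued $AMU$-equivariant map $\Upsilon:X_1\to X_2$. The crucial missing ingredient is then Theorem~\ref{t;factor-rigidity-bms}: a separate polynomial-divergence argument on $X_2$, comparing $\Upsilon(x\check u_{\bf r})$ with $\Upsilon(x)\check u_{\bf r}$ along $U$-orbits and using the $(C,\alpha)$-good property of polynomials with respect to PS-measures, showing that $\Upsilon$ is also $\check U$-equivariant on a BMS-conull set. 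Only after this does one obtain the closed $\Delta(G)$-orbit structure (Proposition~\ref{p;factor-rigidity}). Your proposal treats this $\check U$-step as if it falls out of the initial shearing, which it does not.
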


If $\mu$ is a $U$-joining and $\mu=\int \mu_x$ is the $U$-ergodic decomposition, then 
it follows from the $U$-ergodicity of the $\BR$-measure that almost every $\mu_x$ is a $U$-joining.
Therefore the classification of $U$-ergodic joinings gives a complete description for all possible $U$-joinings.

%\begin{cor} Let $\G_1$ be a geometrically finite and Zariski dense subgroup of $G$ and $\G_2<G$ any discrete Zariski dense
%subgroup of $G$
%with infinite co-volume. Suppose that $X$ admits a $U$-joining. 
 %\begin{enumerate} \item If $X$ admits a $U$-joining measure, then $\G_1$ is commensurable to a subgroup of
%$\G_2$,  up to a conjugation. 
%\item If $\G_2$ is also geometrically finite, $\G_1$ and $\G_2$  are commensurable with each other.
%\end{cor}

Theorem \ref{thm;joining-class} yields the following by the definition of a finite cover self-joining:
\begin{cor} \label{exj} Let $\G_1$ and $ \G_2$ be as in Theorem \ref{thm;joining-class}.
Then $Z$ admits a $U$-joining measure if and only if $\G_1$ and $\G_2$  are commensurable with each other,
up to a conjugation. \end{cor}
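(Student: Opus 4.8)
The plan is to obtain Corollary~\ref{exj} as a formal consequence of Theorem~\ref{thm;joining-class} and Definition~\ref{sj}, proving the two implications separately. For the ``if'' direction, suppose that $\G_1$ and $g_0^{-1}\G_2 g_0$ are commensurable with each other for some $g_0\in G$. Then the construction in Definition~\ref{sj} applies: the image of $m^{\BR}_{\G_1\cap g_0^{-1}\G_2 g_0}$ under $[g]\mapsto[(g,g_0 g)]$ is a locally finite $\Delta(U)$-invariant measure on $Z$, supported on the closed orbit $[(e,g_0)]\Delta(G)$, whose projections to $\G_1\ba G$ and $\G_2\ba G$ are finite multiples of $m^{\BR}_{\G_1}$ and $m^{\BR}_{\G_2}$ respectively. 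The last assertion holds because the Patterson--Sullivan density and the critical exponent are unchanged under commensuration and under conjugation, so the BR-measure on a common finite-index overgroup pushes forward, along each finite-degree covering map, to a finite multiple of the BR-measure downstairs. Hence $Z$ carries a $U$-joining.

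For the ``only if'' direction, let $\mu$ be a locally finite $U$-joining on $Z$ and write $\mu=\int\mu_x\,d\nu(x)$ for its $\Delta(U)$-ergodic decomposition. As noted in the remark following Theorem~\ref{thm;joining-class}, the $U$-ergodicity of the Burger--Roblin measures $m^{\BR}_{\G_i}$ forces $\nu$-almost every $\mu_x$ to be again an ergodic $U$-joining. Fix one such $\mu_x$. By Theorem~\ref{thm;joining-class}, $\mu_x$ is a finite cover self-joining; by Definition~\ref{sj} this means precisely that there exists $g_0\in G$ for which $\G_1$ and $g_0^{-1}\G_2 g_0$ are commensurable, i.e.\ $\G_1$ and $\G_2$ are commensurable up to conjugation. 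This completes the equivalence.

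I expect no genuine difficulty beyond quoting Theorem~\ref{thm;joining-class}; the one point that must be handled with care is the passage to the ergodic decomposition in the ``only if'' direction, namely that $\nu$-almost every ergodic component of $\mu$ is still a \emph{locally finite} $U$-joining with projections proportional to the $\BR$-measures. This is where one uses both the $U$-ergodicity and the Burger--Roblin--Winter uniqueness of $m^{\BR}_{\G_i}$ (\cite{Bu,Roblin2003,Wi}): since a $U$-invariant set in $\G_i\ba G$ pulls back under $\pi_i$ to a $\Delta(U)$-invariant set, each projected component $(\pi_i)_*\mu_x$ is $U$-ergodic, hence either proportional to $m^{\BR}_{\G_i}$ or carried by a closed $U$-orbit; the latter is ruled out for $\nu$-a.e.\ $x$ because $m^{\BR}_{\G_i}$ assigns no mass to closed $U$-orbits. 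Once this reduction is in place, Theorem~\ref{thm;joining-class} supplies the rest.
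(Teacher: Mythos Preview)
Your argument is correct and follows exactly the route the paper intends: the paper simply asserts that the corollary follows from Theorem~\ref{thm;joining-class} ``by the definition of a finite cover self-joining,'' together with the remark (immediately after Theorem~\ref{thm;joining-class}) that ergodic components of a $U$-joining are again $U$-joinings; you have just written out those two sentences in full. One tiny slip: in the ``if'' direction you wrote ``common finite-index overgroup'' where you mean the common finite-index \emph{subgroup} $\G_1\cap g_0^{-1}\G_2 g_0$, whose quotient covers both $\G_1\ba G$ and $\G_2\ba G$.
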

See Remark \ref{exj2} for a slightly more general statement where $\Gamma_2$ is not assumed to be geometrically finite.

 In the course of our classification theorem, we 
obtain equidistribution of a non-closed $U$-orbit $xU$ in $\G\ba G$. 
When $\G$  is a lattice, it is well-known that such $xU$ is equidistributed with respect to the Haar measure
(\cite{DS}, \cite{Rat-Ann}).
For $\G$ geometrically finite, the equidistribution is described by the BR-measure, with the normalization given by the Patterson-Sullivan measure
$\mu_x^{\PS}$ on 
$xU$ (see section \ref{ss;PS-meas} for a precise definition). We call a norm $\|\cdot \|$ on $\field$ {\it algebraic} if  
the $1$-sphere $\{\tbf\in \field: \|\tbf \|=1\}$ is contained in a union of finitely many algebraic varieties.

\begin{thm}\label{it;horo-equid-BR} Let $\G$ be a geometrically finite and Zariski dense subgroup of $G$. Fix $x\in \G\ba G$ 
such that $xU$  is not closed.
Then for any continuous function $\psi$ on $\G\ba G$ with compact support, we have
\[
\lim_{T\to\infty}\frac{1}{\mu_x^{\PS}(B_U(T))}\int_{B_U(T)}\psi(xu)du= m^{\BR}_\G(\psi)
\] where $B_U(T)=\{u \in U: \|u \|<T\}$  is the norm ball in $U\simeq \field$ with respect to an algebraic norm.
\end{thm}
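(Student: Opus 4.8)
The natural strategy is to combine the mixing of the geodesic flow $\{a_s\}$ with respect to the Bowen--Margulis--Sullivan measure with a "thickening" (or transversal-unfolding) argument that converts this mixing into equidistribution of the expanding horospherical orbits $xU$. This is the analogue, in the geometrically finite setting, of the classical Margulis argument for lattices. First I would recall the product structure of the relevant measures: locally, $\Haar$, $\BMS$ and $\BR$ all decompose along the weak-stable, weak-unstable and central directions, with the $\BR$-measure built from $\mu^{\PS}$ on the expanding horosphere $U$, Lebesgue on the contracting direction, and Lebesgue on $A$; whereas $\BMS$ uses $\mu^{\PS}$ on both $U$ and its opposite. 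The normalizing factor $\mu_x^{\PS}(B_U(T))$ is exactly what makes the $\BR$-average the right limiting object, so the whole point is to control how the $\PS$-mass on $B_U(T)$ transforms under $a_{-s}$.

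The key steps, in order, are as follows. (1) Using the conformal density property of the Patterson--Sullivan measures of dimension $\delta=\delta_\Gamma$, establish the quasi-invariance/scaling relation $a_{-s}\cdot(xB_U(T))$ has $\PS$-mass comparable to $e^{\delta s}\mu_x^{\PS}(B_U(T e^{-s}))$ up to the Radon--Nikodym cocycle, and choose $s=s(T)\to\infty$ so that $T e^{-s}$ stays bounded; this reduces the problem to averaging $\psi$ over a fixed-size piece of $xU$ pushed forward by $a_{-s}$. (2) Thicken the fixed-size $U$-piece by a small box in the contracting and central directions, so that the integral against $\BMS$ of the thickened set, translated by $a_{-s}$, is governed by mixing of $a_s$ on $L^2(\Gamma\ba G,\BMS)$ — here one must use that $\BMS$ is finite (geometrically finite + Zariski dense) and that $a_s$-mixing holds (Rudolph/Flaminio--Spatzier/Babillot, or Winter in the Zariski dense case). (3) Compare the $\BMS$-integral of the thickened set with the desired $\PS\times\Leb$-integral on the original $U$-piece: the extra $\Leb$ in the contracting direction on the $\BMS$ side must be traded, via the decomposition of $\BR$, for the contracting-$\Leb$ factor already present in $m^{\BR}_\Gamma$; the central $A$-direction contributes the constant that gets absorbed into the normalization. (4) Unwind the limit $s=s(T)\to\infty$ and use a standard $\epsilon$-approximation ($\psi$ uniformly continuous, shrink the thickening box) to conclude convergence to $m^{\BR}_\Gamma(\psi)$.

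The main obstacle is the non-compactness coming from the cusps: the $\PS$-mass $\mu_x^{\PS}(B_U(T))$ does \emph{not} grow like a clean power of $T$ (it can have lower-order oscillation reflecting the cusp excursions of the horocycle), so the scaling step (1) cannot be reduced to an exact renewal computation and one instead needs uniform control, over all $x$ in a compact set and over all large $T$, of the ratio $\mu_x^{\PS}(B_U(2T))/\mu_x^{\PS}(B_U(T))$ and of the contribution of the part of $B_U(T)$ whose $a_{-s}$-image has entered deep into a cusp neighborhood. Handling this requires the shadow lemma and the structure of the $\PS$-measure near parabolic fixed points (Sullivan, Stratmann--Velani "global measure formula"), together with a careful choice of the escape-of-mass cutoff so that the cuspidal contribution to both the $\BR$-side and the $\BMS$-side is $o(1)$; the hypothesis that the norm is \emph{algebraic} enters precisely here, ensuring that the boundary spheres $\{\|\tbf\|=1\}$ are $\PS$-null and that the relevant "sphere averages" behave well, so that the box-thickening in step (2) does not lose a positive proportion of the mass. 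I expect this uniform non-escape-of-mass estimate, rather than the mixing input itself, to be the technical heart of the argument.
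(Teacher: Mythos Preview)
Your overall strategy---mixing of the $A$-flow for $m^{\BMS}$, thickening in the transversal direction, and cuspidal non-escape-of-mass---matches the paper's, and you correctly locate the technical heart in the escape estimates and the $\PS$-regularity of $\partial B_U(T)$ (where the algebraic-norm hypothesis is indeed used, via Zariski density, to show coordinate hyperplanes are $\PS$-null). But your step (1) as written has a gap. You propose choosing $s=s(T)\to\infty$ with $Te^{-s}$ bounded, reducing to a fixed-size ball around $xa_{-s(T)}$. For uniform mixing to apply you need $xa_{-s(T)}$ in a fixed compact set; since $x^-\in\Lambda_{\rm r}(\Gamma)$ only gives recurrence along a \emph{subsequence}, this works only for those $T$ with $xa_{-\log T}$ not deep in a cusp, and you do not say how to handle the remaining $T$'s.

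The paper avoids this by fixing a \emph{single} large $s_0$ with $x_0:=xa_{-s_0}$ in a compact set, and for every $T$ working with the large ball $B_U(Te^{-s_0})$ around $x_0$. It covers the part of this ball that returns to a compact set $Q$ (supplied by Schapira's non-divergence theorem) by small balls of fixed radius, with a subordinate equicontinuous partition of unity, and applies a \emph{uniform} mixing-plus-thickening estimate on each small ball. Two further inputs control the errors: a lemma bounding $\mu_{x_0}^{\PS}(B_U(T'+\rho)\setminus B_U(T'-\rho))$ by $\epsilon\,\mu_{x_0}^{\PS}(B_U(T'))$, and a ``transversal-intersection'' estimate (Claim~A in the proof) that dominates the Lebesgue integral of $\psi$ over each small ball by the $\PS$-measure of a slightly enlarged ball. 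This last point is what converts the Lebesgue average into the $\BR$-limit, and is sharper than the factor-swap you sketch in step (3): one must actually bound $\int\psi(yu_\tbf a_{s_0})\,d\tbf$ by $\mu_y^{\PS}$ uniformly over all small balls, including those centered at points whose $U$-orbit has wandered into a flare (where the $\PS$-density vanishes but the Lebesgue density does not).
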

Indeed, we prove this theorem in a greater generality where
$\Gamma$ is a geometrically finite and Zariski dense subgroup of $G=\SO(n,1)^\circ$ for any $n\ge 2$
(see Theorem \ref{t;horo-equid-BR}).

% Note that there are effective versions of this theorem in the case when
% $\G$ is a congruence subgroup of $\op{SL}_2(\mathbb Z)$ (\cite{St}, \cite{SU}).
%  When an effective mixing of frame flow is available (e.g., see \cite{MO} when $\delta>(n-1)/2$ for $n=2,3$ and
%  when $\delta>n-2$ for $n\ge 4$), we believe that 
 % Theorem \ref{it;horo-equid-BR} can also  be  effectivized; of course the effectiveness would depend on certain Diophantine type
 % condition on the backward end point $x^-$ of the geodesic determined by $x$.
  
Theorem \ref{it;horo-equid-BR}  was proved for $G=\PSL_2(\br)$ in (\cite{Sch-Eq},  \cite{SM}).
One of the difficulties in extending Theorem \ref{it;horo-equid-BR} to a higher dimensional case (even to the case
 $n=3$) is
the lack of a uniform
control of the $\PS$-measure of a small neighborhood of the boundary of $B_U(T)$.
For $n=2$, such a neighborhood has a fixed size independent of $T$, but grows 
with $T\to \infty $ for $n\ge 3$. 

%In the case when $\G$ is a lattice and the PS-measure on $xU$ 
%is simply the Lebesgue measure, the order of its size is still strictly smaller  than that  of $B_U(T)$. 
%For $\G$ geometrically finite with cusps, it is not always the case and this causes non-trivial technical difficulties.  
We mention that Theorem \ref{it;horo-equid-BR} applied to the Apollonian group
can be used to describe the distribution of the accumulation of large circles in an unbounded Apollonian circle packing, whereas the papers
\cite{KO} and \cite{OS1} considered the distribution of small circles.% we hope to address this application in a separate paper.
 We refer to Theorem \ref{eqwi} for a window version of Theorem \ref{it;horo-equid-BR} 
which is one of the main ingredients in our proof of Theorem \ref{thm;joining-class}.

\medskip

Similarly to the finite joining case, we can deduce
the classification of $U$-equivariant factor maps
from the classification of joinings (see sub-section \ref{factors}):

%(see e.g.~\cite{Furman} for a discussion).
%This was used by Ratner~\cite{Rat-Joining} and Witte~\cite{Wit}, and in a different context by Furman~\cite{Furman}, in combination with joining classification, in obtaining a classification of measurable factors.
%The same argument applies here and we obtain the following theorem
%as a consequence of Theorem~\ref{thm;joining-class}.

\begin{thm} [$U$-factor classification]\label{thm:factor} Let $\G$ be a geometrically finite and Zariski dense subgroup of $G$.
Let $(Y, \nu)$ be a measure space 
with a locally finite $U$-invariant measure $\nu$, and $p: (\G\ba G,m^{\BR}_{\G})\to (Y,\nu)$
be a measurable $U$-equivariant factor map, that is,
$p_*m^{\BR}_\G=\nu$. Then $(Y,\nu)$ is isomorphic to $(\G_0\ba G,m^{\BR}_{\G_0})$ 
for some subgroup $\G_0$ of $G$ which contains $\G$ as a subgroup of finite index.
Moreover, under this isomorphism, the map $p$
can be conjugated to the natural projection $\G\ba G\to\G_0\ba G.$
\end{thm}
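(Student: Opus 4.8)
The natural approach is to deduce the factor classification from the joining classification (Theorem~\ref{thm;joining-class}) by forming the \emph{relatively independent self-joining over the factor}. Write $X=\G\ba G$, $m=m^{\BR}_\G$, and disintegrate $m$ over $p$ as $m=\int_Y m_y\,d\nu(y)$, where each $m_y$ is a probability measure carried by $p^{-1}(y)$ (legitimate since $X$ is standard Borel and $m$ is $\sigma$-finite). Put
\[
\hat\mu:=\int_Y m_y\times m_y\,d\nu(y)
\]
on $Z=X\times X$. I would first check that $\hat\mu$ is a genuine locally finite $U$-joining with respect to $(m^{\BR}_\G,m^{\BR}_\G)$: the identities $(\pi_i)_*\hat\mu=m$ are immediate; $U$-equivariance of $p$ together with $U$-invariance of $\nu$ forces $(r_u)_*m_y=m_{yu}$ for a.e.\ $y$ by uniqueness of disintegration, so $\hat\mu$ is $\Delta(U)$-invariant; and local finiteness is cheap, since $m_y(K)\le 1$ gives, for compact $K\subseteq X$,
\[
\hat\mu(K\times K)=\int_Y m_y(K)^2\,d\nu(y)\le\int_Y m_y(K)\,d\nu(y)=m(K)<\infty .
\]
Three further features of $\hat\mu$ will be used: it is invariant under the coordinate flip $\tau(x,x')=(x',x)$; it is supported on the equivalence relation $R_p:=\{(x,x'):p(x)=p(x')\}$; and, by the defining property of the relatively independent self-joining, the common factor $\pi_1^{-1}\mathcal B_X\cap\pi_2^{-1}\mathcal B_X$ of $(Z,\hat\mu)$ (mod null sets) is precisely $(p\circ\pi_1)^{-1}\mathcal B_Y$, i.e.\ it \emph{is} $(Y,\nu)$.

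Next I would feed $\hat\mu$ into Theorem~\ref{thm;joining-class} (which applies since $\G$ has infinite covolume, so that $\hat\mu$ is a self-joining on a product with an infinite-covolume factor). Take the $\Delta(U)$-ergodic decomposition $\hat\mu=\int\hat\mu_\omega\,d\lambda(\omega)$; by the remark following Theorem~\ref{thm;joining-class} (which uses $U$-ergodicity of the $\BR$-measure to see that $\lambda$-a.e.\ $\hat\mu_\omega$ is again a $U$-joining), Theorem~\ref{thm;joining-class} gives that $\lambda$-a.e.\ $\hat\mu_\omega$ is, up to a positive scalar, a finite cover self-joining: there are $g_\omega$ with $\G$ and $g_\omega^{-1}\G g_\omega$ commensurable, and $u_\omega\in U$, such that $\hat\mu_\omega$ is supported on the closed set $[(e,g_\omega)]\Delta(G)\cdot(e,u_\omega)$ and is a multiple of the $(e,u_\omega)$-translate of the push-forward of $m^{\BR}_{\G\cap g_\omega^{-1}\G g_\omega}$ under $[g]\mapsto[(g,g_\omega g)]$.

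The heart of the argument is then to glue these ergodic pieces into the data of a single subgroup. Let $\G_0$ be the subgroup of $G$ generated by $\G$ together with the elements $g_\omega$ (and any $U$-twists, which get absorbed into $\G_0$). I would show: (i) the equivalence relation on $X$ generated by the supports of the $\hat\mu_\omega$ is exactly the orbit relation $\{(\G g,\G g'):\G_0 g=\G_0 g'\}$, so that — since the diagonal self-joining ($g_\omega=e$, $u_\omega=e$), being $U$-ergodic, occurs in the decomposition, and since the common factor of $\hat\mu$ is $(Y,\nu)$ — the factor $(Y,\nu)$ is the quotient $\G_0\ba G$; (ii) $\G_0$ is discrete with $[\G_0:\G]<\infty$, since the $\G_0$-orbits in $X$ are the $p$-fibres and an infinite-index (or non-discrete) $\G_0$ would make $\nu=p_*m^{\BR}_\G$ fail to be locally finite; (iii) genuine commensurator contributions ($g_\omega\notin N_G(\G)$) and incompatible twists are eliminated by the flip symmetry of $\hat\mu$ together with $(\pi_i)_*\hat\mu=m$, which pin down the two degrees of each finite cover self-joining. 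Granting this, $\hat\mu$ is precisely the relatively independent self-joining of $m^{\BR}_\G$ over $(\G_0\ba G,m^{\BR}_{\G_0})$ with respect to the natural projection $q:\G\ba G\to\G_0\ba G$.

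Finally, a relatively independent self-joining is determined by its common factor; since this common factor of $\hat\mu$ has now been identified both as $(Y,\nu)$ and as $(\G_0\ba G,m^{\BR}_{\G_0})$, the two agree: $(Y,\nu)$ is measurably isomorphic to $(\G_0\ba G,m^{\BR}_{\G_0})$ (up to a normalising scalar), and under this isomorphism $p$ is conjugated to $q$. The step I expect to be the real obstacle is paragraph three — upgrading ``$\lambda$-a.e.\ ergodic component is \emph{some} finite cover self-joining'' to ``all of them come from one finite-index discrete overgroup $\G_0$'' — which requires eliminating the genuine commensurator contributions and the $U$-twists and assembling the pieces coherently, using the flip symmetry, the equivalence-relation structure of $R_p$, $U$-ergodicity of $m^{\BR}_\G$, and the local finiteness of $\nu$. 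By contrast, the construction and local finiteness of $\hat\mu$ and the equivariance bookkeeping are routine.
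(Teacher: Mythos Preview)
Your overall strategy---form the relatively independent self-joining $\hat\mu$ over $p$, apply the joining classification to its $\Delta(U)$-ergodic components, and read off the factor from the resulting structure---is exactly the paper's approach, and your verification that $\hat\mu$ is a locally finite $U$-joining is correct.

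The gap is in your ``gluing'' paragraph. The paper does not attempt to generate a group $\G_0$ from the $g_\omega$'s and argue directly; instead it uses two specific ingredients you omit. First, it establishes (Lemma~\ref{lem:H-Gamma} and Corollary~\ref{cor:comm}) that for $\G$ geometrically finite of infinite covolume, the group $H_\G=\{h:h.\tilde m^{\BR}=\tilde m^{\BR}\}$ contains $\G$ with finite index and contains ${\rm Comm}_G(\G)$; this is a genuine structural input, proved via the limit set, which a priori controls \emph{all} the $g_\omega$'s at once and lets one view the parameters $(g_\omega,u_\omega)$ as lying in a fixed group $Q\subset N_G(\G)\times U$. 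Second, to pin down the measure $\sigma$ on $L=(\G\times\{e\})\backslash Q$ coming from the ergodic decomposition, the paper runs an idempotence argument (following Furman): one computes $\sigma\star\sigma=\sigma$ from the structure of the relatively independent joining, whence $\sigma$ is Haar on a compact subgroup $R$ of $L$. Since $[H_\G:\G]<\infty$ and $U$ has no nontrivial compact subgroups, $R$ is finite and sits in $(\G\backslash N_G(\G))\times\{e\}$, which simultaneously kills the $U$-twists and identifies $\G_0$.

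Your proposed substitutes---flip symmetry to ``eliminate genuine commensurator contributions,'' and local finiteness of $\nu$ to force $[\G_0:\G]<\infty$---are not doing the same work. Flip symmetry only gives that the set of $(g_\omega,u_\omega)$'s is invariant under $(g,u)\mapsto(g^{-1},u^{-1})$, which does not by itself force $u_\omega=e$ or rule out non-normalizing commensurators. And the local finiteness argument for finite index is circular until you know the fibres are orbits of a \emph{discrete} group action, which is exactly what needs proving. The idempotence-plus-$H_\G$ route is what actually closes these gaps.
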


We now discuss the proof of Theorem~\ref{thm;joining-class}. Our proof is modeled on Ratner's proof~\cite{Rat-Joining}. 
However, the fact that we are dealing with an infinite measure introduces several 
serious technical difficulties which are dealt with in this paper. Here we briefly describe some of the main steps and difficulties. 

The main ingredient in the proof is the {\it polynomial behavior} of
unipotent flows, which guarantees {\it slow} divergence of orbits of two
nearby points under unipotent subgroups.
This means  that for two nearby generic points $x$ and $y$ in $Z$,
the set of $u\in B_U(T)$  such that the divergence of the two orbits
$x(u,u)$ and $y(u,u)$ is in the intermediate range, that is, roughly of distance
 one apart, has Lebesgue measure comparable to that of  $B_{U}( T ).$
In order to utilize this property in acquiring an additional invariance of a $U$-joining in concern,
we also need to know that this set is
{\it dynamically non-trivial}, i.e., $x(u,u)$ and $y(u,u)$ for these times of $u$ 
stay in a fixed compact subset.
In the case of a finite measure, this can be ensured using the Birkhoff ergodic theorem. 

When $Z$ is of infinite volume, there is no a priori reason for this to hold, for instance,
 the set of $u\in B_U(T)$
where $x(u,u)$ returns to a compact subset can be ``concentrated" near the center, along some sequence of $T$'s.
 This is one of the main issues 
in the way of obtaining  rigidity type results for general locally finite measures (cf.~\cite{MO}).

For a $U$-joining measure $\mu$ on $Z$, we first prove a window-type equidistribution result
for the $U$-action on each space $\G_i\ba G$ with respect to the $\BR$-measure. This enables us
to establish the required non-concentration property for $\mu$-generic points,
 using the fact that $\mu$ projects to the $\BR$-measure

Based on this non-concentration property, we can use
the construction of a polynomial like map in section \ref{ss;quasi-reg} and 
the fact that an infinite joining measure cannot be invariant
under a non-trivial subgroup of $\{e\}\times U$ (Lemma \ref{nf}) to draw two important corollaries: 
\begin{enumerate}
\item  almost all fibers of each projection of $\mu$ are finite;
\item  $\mu$ is quasi-invariant under the diagonal embedding of a connected
subgroup of the normalizer $N_G(U)$. \end{enumerate}
 
%In the course of this argument one needs to control averages of the form $xu_{\alpha(\tbf)}$ where $\alpha$ is a map with controlled, but not necessarily constant, derivatives. Such consideration are also needed in the finite measure case, however,
%in the case of infinite measure the analysis is more involved. Indeed we only show 
%some ``positive proportion type" statement rather than equidistribution
%of such orbits, see Lemma~\ref{l;window2}.  

%One other technical difficulty is that unlike the probability Haar measure case which is ergodic for any non-compact subgroup of $G$ by Moore's ergodicty theorem, the $\BR$ measure is not in general ergodic for an action of a non-compact subgroup and this presents another technical problems we need to address, see \S\ref{ss;not-prod}.

In the finite measure case, it is possible to finish using an entropy argument, 
based on the invariance by the action of a diagonalizable subgroup. 
Such an argument  using entropy is not understood in the infinite measure case. We adapt an original argument of
Ratner in~\cite{Rat-Joining} (also used by Flaminio and Spatzier in \cite{FS-Factor} for convex cocompact groups), which avoids a ``direct" use of entropy.
 This involves a step 
 of showing that  a measurable set-valued $N_G(U)$-equivariant factor map $\Upsilon:\G_1\ba G\to \G_2\ba G$
is also equivariant under the action of the opposite horospherical subgroup $\check U$.
We use a close relationship between the $\BR$ and $\BMS$ measures and show 
that essentially the same proof as in~\cite{FS-Factor} works here, again, modulo the extra technical difficulties caused by the presence of cusps. Roughly speaking, one constructs two nearby points $x$ and 
$y=x\umt$ so that 
the $U$-orbits of $\Upsilon(x )\umt$ and $\Upsilon(y)$ do not diverge on average. The fact that the divergence of these two orbits is governed by a polynomial map then implies that the two orbits ``do not diverge". One then concludes that the map commutes with the action of $\check U$ and completes the proof.

%Most of our proof works for all geometrically finite
%Zariski dense subgroups of  $G=\SO(n,1)^\circ$ for any $n\ge 2$. 
%In the presence of cusps and when $n\ge 4,$ 
%however there are some difficulties 
%in avoiding the zero set of the polynomial-like map
%constructed in \S~\ref{ss;quasi-reg}. 
%These arise from the fluctuation of the $\PS$ measure in cusps, 
%see Theorem~\ref{thm;shadow-lemma} below. 
%We plan to address this problem in a subsequent paper. 
%Equidistribution for diagonal $U$ orbit??
%and Equidistribution of $U$-orbit..
%\begin{thm}
%\end{thm}
%\marginpar{would like to thank (if that's ok): Manfred, Alan and Yair}

\medskip
\noindent{\bf Notation}
In the whole paper, we use the following standard notation. We write $f(t)\sim g(t)$ as $t\to \infty$
to mean $\lim_{t\to \infty} f(t)/g(t)= 1$. We use the Landau  notations $f(t) = O(g(t))$ and $f\ll g$
 synonymously to mean that there exists an implied constant $C>1$ such that $f(t) \le C\cdot g(t)$ for all $t>1$.
 We write $f(t)\asymp g(t)$ if $f(t)=O(g(t))$ and $g(t)=O(f(t))$.
For a space $X$, $C(X)$ (resp.\ $C^\infty(X)$) denotes the set of all continuous (resp.\ smooth) functions on $X.$
 We also let $C_c(X)$ and $C_c^\infty(X)$ denote the set of functions in $C(X)$ 
and $C_c^\infty(X)$ respectively which are compactly supported. 
 For a compact subset $\Om$ of $X$,
 we denote by $C(\Om)$ and $C^\infty(\Om)$ the set of functions in $C_c(X)$ and $C_c^\infty(X)$
respectively which vanish outside of $\Om$. For a subset $B$ in $X$,
$\partial (B)$ denotes the topological boundary of $B$ with the exception that $\partial(\bH^n)$ means
the geometric boundary of the hyperbolic $n$-space $\bH^n$. 
For a function $f$ on $\G\ba G$ and $g\in G$,
 the notation $g.f$ means the function on $\G\ba G$ defined by $g.f(x)=f(xg)$.

Given a subset $S\subset G$,
we let $\Delta(S)=\{(s,s): s\in S\}$. Also given a group $H$ and a subset $S\subset H$ we set
%$\langle S\rangle$ 
%the minimal connected subgroup of $H$ containing $S$, and by
$N_H(S):=\{h\in H: hSh^{-1}=S\}$, i.e., the normalizer of $S$ in $H$

\medskip

\noindent{\bf Acknowledgment}
We would like to thank M.~Einsiedler, Y.~Minsky and A.~Reid for useful discussions.

\section{Basic properties of PS-measure}\label{s;def-pre}
For sections 2--5, let $n\ge 2$ and $G$ be the identity component of the special orthogonal group $\SO(n,1)$. Then $G$ 
 can be considered as the group of orientation preserving isometries
of the hyperbolic space $\bH^n$.
 Let
$\G< G$ be a  discrete subgroup.  Let $\Lambda(\G)\subset \partial(\bH^n)$ denote the limit set of $\G$, and $\delta$ the critical exponent of $\G$.
%which is known to be equal to the Hausdorff dimension of $\Lambda(\G)$ \cite{Sullivan1984}.
The convex core of $\G$ is the quotient by $\G$ of the smallest convex subset in $\bH^n$ which contains all geodesics
connecting points of $\LG$. The group $\G$ is called {\it geometrically finite} if a unit neighborhood of the convex core
of $\G$ has finite volume. Throughout the paper, we assume that 
$$\text{$\G$ is geometrically finite and Zariski dense.}$$

\subsection{Conformal densities}  A family of finite measures
$\{\mu_x:x\in \bH^n\}$ on $\partial(\bH^n)$ is called  a {\em $\G$-invariant conformal
density\/} of dimension $\delta_\mu > 0$  if  for any $x,y\in \bH^n$, $\xi\in \partial(\bH^n)$ and
$\gamma\in \G$,
$$\gamma_*\mu_x=\mu_{\gamma x}\quad\text{and}\quad
 \frac{d\mu_y}{d\mu_x}(\xi)=e^{-\delta_\mu \beta_{\xi} (y,x)}, $$
where $\gamma_*\mu_x(F)=\mu_x(\gamma^{-1}(F))$ for any Borel
subset $F$ of $\partial(\bH^n)$. Here
$\beta_{\xi} (y,x)$ denotes the Busemann function:
$\beta_{\xi}(y,x)=\lim_{t\to\infty} d(\xi_t, y) -d(\xi_t, x)$ where
$\xi_t$ is a geodesic ray tending to $\xi$ as $t\to \infty$.

%We assume that $\mu_x$ is atom-free.

 We denote by $\{\nu_x\}$ the
{\em Patterson-Sullivan density}, i.e.,
a  $\G$-invariant conformal
density
 of dimension $\delta$, which is
 unique up to a scalar multiple.
 
For each $x\in \bH^n$,  we set $m_x$ to be the unique probability measure on $\partial(\bH^n)$ which is invariant under
the compact subgroup $\op{Stab}_G(x)$. Then $\{m_x:x\in \bH^n\}$ forms a $G$-invariant conformal density of dimension $(n-1)$, which will be referred
to as  the {\em Lebesgue density}.

Denote by $\{\mathcal G^s:s\in \br\}$ the geodesic flow on the unit tangent bundle $\T^1(\bH^n)$ of $\bH^n$.
For $\pt\in \T^1(\bH^n)$, we denote by $\pt^{\pm}\in \partial(\bH^n)$ the forward and the backward endpoints of the geodesic determined by
$\pt$, i.e., $\pt^{\pm}=\lim_{s\to \pm \infty}\mathcal G^s(\pt)$.

Fix $o\in \bH^n$ and $w_o\in \T^\chi_o(\bH^n)$. Let $K:=\op{Stab}_G(o)$ and $M:=\op{Stab}_G(w_o)$ so that
$\bH^n$ and $\T^1(\bH^n)$ can be identified with $G/K$ and $G/M$ respectively.
Let $A=\{a_s: s\in \br\}$ be the one-parameter subgroup of diagonalizable elements in the centralizer
of $M$ in $G$ such that  $\mathcal G^s(w_o)=[M]a_s=[a_sM]$.
For $g\in G$, define $$g^{\pm}:=gw_o^{\pm} \in \partial(\bH^n).$$
The map $\op{Viz}:G\to \partial (\bH^n)$ given by $g\mapsto g^+$ (resp. $g\mapsto g^-$) will be called the visual map (resp. the backward
visual map).

\subsection{PS measure on $gU\subset G$}\label{ss;PS-meas}
Let $U$ denote the expanding horospherical subgroup, i.e.,
$$U =\{g\in G: a_s ga_{-s}\to e\text{ as $s\to + \infty$}\}.$$ 
The group $U$ is a connected abelian group, isomorphic to $\br^{n-1}$; we use the parametrization
 $\tbf\mapsto u_\tbf$  so that for any $s\in \br$,
$$a_{-s}u_\tbf a_s=u_{e^s\tbf}.$$

For any $g\in G$, the restriction $\op{Viz}|_{gU}$ is 
 a diffeomorphism between $gU$ and $\partial(\bH^n)-\{g^-\}$.
Using this diffeomorphism, we can define measures on $gU$ which are equivalent to conformal densities on the boundary:
$$d \mu_{gU}^{\Leb} (gu_\tbf)= e^{(n-1) \beta_{(gu_\tbf)^+}(o,gu_\tbf(o))} dm_o(gu_\tbf)^+;$$
$$d \mu_{gU}^{\PS} (gu_\tbf )= e^{\delta \beta_{(gu_{\tbf})^+}(o,gu_{\tbf}(o))} d{\nu_o}(gu_{\tbf})^+.$$
The conformal properties of $\{m_x\}$ and $\{\nu_x\}$ imply that these definitions are independent of the choice of $o\in \bH^n$.
The measure $d \mu_{gU}^{\Leb} (gu_\tbf)$ is independent of the orbit $gU$:
$$d \mu_{gU}^{\Leb} (gu_\tbf)= d \mu_{U}^{\Leb} (u_\tbf)=d\tbf$$
 and is simply the Lebesgue measure on $U=\br^{n-1}$, up to a scalar multiple. 
We call the  measure $d \mu_{gU}^{\PS}$ the Patterson-Sullivan measure (or simply PS-measure) on $gU$. For simplicity,
we write
$$d\mu^{\Leb}_g(\tbf)=d \mu_{gU}^{\Leb} (gu_\tbf )\quad\text{ and }\quad d\mu^{\PS}_g(\tbf)=d \mu_{gU}^{\PS} (gu_\tbf ) $$
although these measures depend on the orbits but not on the individual points.
 
These expressions are also useful as we will sometimes consider $\mu_g^{\PS}$ as  a measure on $U$ in an obvious way, for instance, in the following lemma.
Let $\mathcal M(U)$ be the set of all regular Borel measures 
on $U$ endowed with the following topology:
$\mu_n\to \mu$ if and only if $\int f d\mu_n \to \int f d\mu$ for all $f\in C_c(U)$.

The following is proved  in \cite{FS-Factor} for $\G$ convex co-compact but the proof works for $\G$ geometrically finite.
\begin{lem} \cite[Lem.~4.1, Cor.~4.2]{FS-Factor}.\label{pscon}
\label{cont}
\begin{itemize}
\item For $g\in G$, the measure $\mu_g^{\PS}$ assigns $0$ measure to any proper sub-variety of $U$.
\item The map  $g \mapsto \mu_g^{\PS}$
is a continuous map from $\{g\in G: g^+\in \Lambda(\G)\}$ to $ \mathcal M(U)$. \end{itemize}
\end{lem}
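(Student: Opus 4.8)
The two assertions are essentially about the conformal density $\{\nu_x\}$, so the plan is to unwind the definition of $\mu_g^{\PS}$ in terms of $\nu_o$ pushed forward under the visual map. For the first bullet: a proper sub-variety $V\subsetneq U\cong\br^{n-1}$ is carried by $\op{Viz}|_{gU}$ to a proper real-analytic sub-variety $W$ of $\partial(\bH^n)-\{g^-\}$, and since $d\mu_g^{\PS}$ differs from $(\op{Viz}|_{gU})^*\nu_o$ only by the everywhere-positive conformal factor $e^{\delta\beta_{(gu_\tbf)^+}(o,gu_\tbf(o))}$, it suffices to show $\nu_o(W)=0$. This is where Zariski density enters: by a standard argument (going back to the fact that $\LG$ is not contained in any proper sub-sphere, combined with the ergodicity of the $\G$-action on $\Lambda(\G)$ with respect to $\nu_o$, or directly with the Sullivan shadow lemma), the Patterson–Sullivan measure of any proper real-analytic sub-variety of $\partial(\bH^n)$ is zero when $\G$ is Zariski dense. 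First I would isolate this as the only substantive input and cite the relevant statement; the verification that the image of an algebraic subset of $U$ under the (algebraic) visual parametrization lies in a finite union of algebraic subsets of the boundary sphere is routine.

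**Continuity of $g\mapsto\mu_g^{\PS}$.** For the second bullet, fix $f\in C_c(U)$ and a sequence $g_k\to g$ with $g_k^+,g^+\in\Lambda(\G)$; I would write
\[
\int_U f(u_\tbf)\,d\mu_{g_k}^{\PS}(\tbf)=\int_{\partial(\bH^n)} f\bigl((\op{Viz}|_{g_kU})^{-1}\xi\bigr)\,e^{\delta\beta_\xi(o,g_ku_{\tbf(\xi)}(o))}\,d\nu_o(\xi),
\]
and then want to pass to the limit inside the integral. The integrand converges pointwise for $\xi\neq g^-$: the visual parametrization $\op{Viz}|_{gU}$ depends smoothly on $g$ away from the removed point, and the Busemann cocycle is continuous. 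The key point enabling dominated convergence is that the singularity at $\xi=g^-$ is harmless, because $g^-\in\Lambda(\G)$ forces (by a shadow-lemma estimate) the $\nu_o$-mass near $g^-$ to be absolutely continuous-like and the factor $e^{\delta\beta}$ times $f\circ(\op{Viz})^{-1}$, which has $\tbf$-compact support, to vanish in a fixed neighborhood of $g^-$ once $g_k$ is close to $g$; so effectively the integral is over a compact subset of $\partial(\bH^n)-\{g^-\}$ on which the integrands are uniformly bounded. I would make this precise by choosing $R>0$ with $\supp f\subset B_U(R)$, noting that $(\op{Viz}|_{g_kU})(B_U(R))$ stays inside a fixed compact subset of $\partial(\bH^n)-\{g^-\}$ for $k$ large, and then applying bounded convergence on that compact set.

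**Main obstacle.** The genuinely delicate step is the uniform control near $g^-$: one must rule out the possibility that, as $g_k\to g$, the supports $(\op{Viz}|_{g_kU})(B_U(R))$ expand to fill out more and more of the sphere and accumulate mass toward $g^-$. This is controlled by the identity $a_{-s}u_\tbf a_s=u_{e^s\tbf}$ together with the quantitative relation between $\|\tbf\|$ and the visual distance from $g^-$; since $g_k^-$ (not just $g_k^+$) also varies continuously and $g^-\notin\op{Viz}(B_U(R))$, a compactness argument gives a uniform separation. Everything else — the pointwise convergence of the conformal factor, the change of variables — is bookkeeping with the Busemann cocycle. I expect the write-up to be short since, as the paper notes, it is ``essentially the same proof as in \cite{FS-Factor}''; the only place requiring care in the geometrically finite (rather than convex cocompact) setting is that $g^-$ may be a parabolic limit point, but since we never need uniformity \emph{in} $\G\ba G$ here and only pointwise-in-$g$ continuity, the cusp causes no trouble for this particular lemma.
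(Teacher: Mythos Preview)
The paper does not give a proof of this lemma; it simply cites \cite[Lem.~4.1, Cor.~4.2]{FS-Factor} and remarks that the proof there (for convex cocompact $\G$) works verbatim in the geometrically finite setting. Your outline is in line with what that cited proof does: reduce the first bullet to $\nu_o(\text{proper subvariety})=0$ via the positive conformal weight, and for the second bullet express $\int f\,d\mu_{g_k}^{\PS}$ as an integral against $\nu_o$ with a continuously varying density and pass to the limit.

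One correction and one simplification. You write ``$g^-\in\Lambda(\G)$ forces (by a shadow-lemma estimate)\ldots''; but the hypothesis is $g^{+}\in\Lambda(\G)$, not $g^{-}$, and in any case neither the shadow lemma nor any information about $g^{-}$ is needed here. The point is purely that $f\in C_c(U)$ has support in some $B_U(R)$, so the integrand $f\bigl((\op{Viz}|_{g_kU})^{-1}\xi\bigr)$ vanishes unless $\xi\in\op{Viz}|_{g_kU}(B_U(R))$; since $(g,\tbf)\mapsto (gu_\tbf)^+$ is jointly continuous, these image sets stay in a fixed compact $K\subset\partial(\bH^n)\setminus\{g^{-}\}$ for $k$ large, on which the Busemann weight is uniformly bounded. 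Bounded convergence on $K$ then finishes the argument, with no appeal to properties of $\nu_o$ beyond finiteness. The hypothesis $g^{+}\in\Lambda(\G)$ plays no role in the continuity itself; it is there so that $\mu_g^{\PS}$ is nontrivial near the origin, which is what Corollary~\ref{fxp} uses. Your closing remark that cusps cause no extra difficulty for this particular lemma is correct.
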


The following is an immediate consequence:
\begin{cor}\label{fxp} For a compact subset $\Omega\subset G$ and any $s>0$,
\begin{equation*}
0<\inf_{g\in \Omega, g^+\in \Lambda(\G)}\mu_g^{\PS}( gB_U(s)) 
\le \sup_{g\in \Omega, g^+\in \Lambda(\G)}\mu_g^{\PS}( gB_U(s)) <\infty .\end{equation*}
\end{cor}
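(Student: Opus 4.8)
The plan is to deduce both inequalities from Lemma \ref{pscon} by a standard compactness argument, applied to the set $\Omega' := \{g \in \Omega : g^+ \in \Lambda(\G)\}$. First I would observe that $\Omega'$ is a closed subset of the compact set $\Omega$ (since $\Lambda(\G)$ is closed in $\partial(\bH^n)$ and the visual map $\op{Viz}$ is continuous), hence $\Omega'$ is itself compact. If $\Omega'$ is empty both infimum and supremum are over the empty set and the statement is vacuous (or one interprets the bounds trivially); so assume $\Omega' \neq \emptyset$.

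For the finiteness of the supremum, consider the function $g \mapsto \mu_g^{\PS}(gB_U(s))$ on $\Omega'$. Fix $f \in C_c(U)$ with $0 \le f \le 1$ and $f \equiv 1$ on the closure of $B_U(s)$; such $f$ exists since $\overline{B_U(s)}$ is compact in $U \simeq \br^{n-1}$. Then $\mu_g^{\PS}(gB_U(s)) \le \int_U f \, d\mu_g^{\PS}$, and by the second bullet of Lemma \ref{pscon} the right-hand side is a continuous function of $g$ on $\Omega'$. A continuous function on a compact set is bounded, so $\sup_{g \in \Omega'} \mu_g^{\PS}(gB_U(s)) < \infty$.

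For the positivity of the infimum, I would argue by contradiction: if the infimum were $0$, there would be a sequence $g_k \in \Omega'$ with $\mu_{g_k}^{\PS}(g_k B_U(s)) \to 0$. By compactness of $\Omega'$, pass to a subsequence with $g_k \to g \in \Omega'$. Now pick $f \in C_c(U)$ with $0 \le f \le 1$ supported inside $B_U(s)$ and $f \equiv 1$ on $B_U(s/2)$. On one hand $\int_U f\, d\mu_{g_k}^{\PS} \le \mu_{g_k}^{\PS}(g_k B_U(s)) \to 0$; on the other hand, by continuity (Lemma \ref{pscon}, second bullet) $\int_U f\, d\mu_{g_k}^{\PS} \to \int_U f\, d\mu_g^{\PS} \ge \mu_g^{\PS}(g B_U(s/2))$. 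Hence $\mu_g^{\PS}(gB_U(s/2)) = 0$. The main point is then to rule this out: since $g^+ \in \Lambda(\G)$, the PS-density $\nu_o$ has $g^+$ in its support (the support of $\nu_o$ is exactly $\Lambda(\G)$), and $gB_U(s/2)$ maps under $\op{Viz}$ to an open neighborhood of $g^+$ in $\partial(\bH^n) \setminus \{g^-\}$; because $\mu_{gU}^{\PS}$ is, up to the positive continuous conformal factor $e^{\delta \beta}$, the pushforward of $\nu_o$ restricted to that neighborhood, it assigns positive mass to any such open set. This contradicts $\mu_g^{\PS}(gB_U(s/2)) = 0$, completing the proof.

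The only genuinely non-routine step is the last one — knowing that $g^+ \in \Lambda(\G)$ forces $\mu_g^{\PS}$ to give positive mass to small $U$-balls around $g$; this is exactly the reason the hypothesis $g^+ \in \Lambda(\G)$ appears in the statement, and it rests on the identification $\supp \nu_o = \Lambda(\G)$ together with the fact that the conformal factor relating $\mu_{gU}^{\PS}$ to $\nu_o$ is bounded below on compact sets. Everything else is soft: closedness of $\Omega'$, continuity of $g \mapsto \mu_g^{\PS}$ from Lemma \ref{pscon}, and the compactness of $\overline{B_U(s)}$ in $U \simeq \br^{n-1}$.
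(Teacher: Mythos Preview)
Your proof is correct and follows essentially the same approach as the paper, which simply records the corollary as ``an immediate consequence'' of Lemma~\ref{pscon}. You have spelled out the standard compactness-and-continuity argument that the paper leaves implicit; the only additional ingredient you invoke, that $\supp(\nu_o)=\Lambda(\G)$ so that $g^+\in\Lambda(\G)$ forces $\mu_g^{\PS}(gB_U(s/2))>0$, is exactly the reason the hypothesis $g^+\in\Lambda(\G)$ is needed and is standard.
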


For simplicity, we set $|\tbf|$ to be the maximum norm of $\tbf$. For $T>0$, we set
$$B_U(T):=\{u_\tbf\in U: |\tbf|\le T\} .$$
For any $s\in \br$, we have:
 \begin{align*} \label{e;leafwise-meas}
\mu^{\PS}_g(B_U(e^s))&=e^{\delta s}\mu_{ga_{-s}}^{\PS} (B_U(1));
\\
 \mu^{\Leb}_g(B_U(e^s))&=e^{(n-1)s}\mu_{ga_{-s}}^{\Leb}(B_U(1)).
\end{align*}

\subsection{PS measure on $xU\subset \Gamma\ba G$}\label{ss;PS-meas2}
Set $$X=\G\ba G .$$For $x\in X$, we define the measure $\mu_x^{\PS}$ on the orbit $xU$ as follows.
Choose $g\in G$ such that $x=[g]$. 
If the map $u\mapsto xu$ is injective, $\mu_x^{\PS}$ will be simply 
the push-forward of the measure $\mu_g^{\PS}$ on $gU$.
In general, we first define a measure $\bar \mu_x^{\PS}$ on $(U\cap g^{-1}\G g)\ba U$ as follows:
for $f\in C_c(U)$, let $\bar f \in C_c ((U\cap g^{-1}\G g)\ba U)$ be given by
$\bar f([u])=\sum_{\gamma_0 \in U\cap g^{-1}\G g } f (\gamma_0 u)$. Then the map $f\mapsto \bar f$ is a surjective map
from $C_c(U)$ to $C_c((U\cap g^{-1}\G g)\ba U)$, and
$$\int_{[u]\in U\cap g^{-1}\G g\ba U} \bar f ([u]) \bar \mu_x^{\PS}[u] :=\int_U f(u_\tbf) d\mu_g^{\PS}(\tbf)$$
is well-defined,  by the $\G$-invariance of the PS density.
%$\gamma_{*} d\tilde{\mu}_{gU}^{\PS}(\tbf)=d\tilde{\mu}_{\gamma gU}^{\PS}(\tbf)$ for all $\gamma\in\Gamma.$ 
This defines a locally finite measure on $(U\cap g^{-1}\G g)\ba U$ by \cite[Ch.1]{Ragh}.
Noting that the map $(U\cap g^{-1}\G g)\ba U\to xU$ given by $[u]\to xu$ is injective, 
we denote by $\mu_x^{\PS}$ the push-forward of the measure $\bar\mu_x^{\PS}$ from
$(U\cap g^{-1}\G g)\ba U$ (cf. \cite{OS}).

The map $\mu_x^{\Leb}$ is defined similarly. We caution that $\mu_x^{\PS}$ is not a locally finite measure on $\G\ba G$ unless $xU$ is a closed subset
of $X$.

%For a compact subset $gB\subset gU$ which injects to $X$,
%we may consider $d\t$ and $d\mu_{x}^{\PS}(\t)$ on $xB$ by the push-forwards of the corresponding measures $d\mu_{g}^{\Leb}(\t)$ 
%and $d\mu_{g}^{\PS}(\t)$ respectively
%via the injection $gu_\t\mapsto xu_\t$. 

 A limit point $\xi\in \Lambda(\G)$ is called {\it radial} if
some (and hence every) geodesic ray toward $\xi$ has accumulation points in a compact subset of $\G\ba G$, and
{\it parabolic} if it is fixed by a parabolic element of $\G$
 (recall that an element $g\in G$ is parabolic if the set of fixed points of $g$ in
$\partial(\bH^n)$ is a singleton.) A parabolic limit point $\xi\in \Lambda(\G)$ is called {\it bounded} if
the stabilizer $\G_{\xi}$ acts co-compactly on $\Lambda(\G)-\{\xi\}$.
 We denote by $\Lambda_{\rm r}(\G)$ and $\Lambda_{\rm bp}(\G)$ 
the set of all radial limit points and the set of all bounded parabolic limit points respectively.
As $\G$ is geometrically finite, we have (see \cite{Bo})
 $$\Lambda(\G)=\Lambda_{\rm r}(\G)\cup \Lambda_{\rm bp}(\G).$$

\begin{dfn}\rm {For $x=[ g]\in X$, we write
$x\in \LG$ and $x^-\in \Lambda_{\rm r}(\G)$ if $g^-\in \LG$ and
$g^-\in \Lambda_{\rm r}(\G)$ respectively; this is well-defined independent of the choice of $g$.} \end{dfn}
If $x^-\in \Lambda_{\rm r}(\G)$, then the map $u\mapsto xu$ is injective on $U$. 

\begin{Lem}\label{rinf}
For $x\in X$,
we have $x^-\in \Lambda_{\rm r}(\G)$ if and only if $|\mu_x^{\PS}|= \infty$.
   \end{Lem}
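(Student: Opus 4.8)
The plan is to relate the total mass $|\mu_x^{\PS}|$ to the recurrence behavior of the backward geodesic ray determined by $x$, using the shadow lemma / Sullivan-type estimates together with the geometrically finite structure of $\G$. Write $x=[g]$ and recall that when $x^-\in \Lambda_{\rm r}(\G)$ the orbit map $u\mapsto xu$ is injective, so $\mu_x^{\PS}$ is the push-forward of $\mu_g^{\PS}$ on $gU$, whereas in general it is built from $\bar\mu_x^{\PS}$ on $(U\cap g^{-1}\G g)\ba U$. The key geometric dictionary is this: for $s>0$, the quantity $\mu_x^{\PS}(B_U(e^s))$ is comparable, via the scaling relation $\mu^{\PS}_g(B_U(e^s))=e^{\delta s}\mu_{ga_{-s}}^{\PS}(B_U(1))$ from subsection \ref{ss;PS-meas}, to $e^{\delta s}\mu_{xa_{-s}}^{\PS}(B_U(1))$; and by Corollary \ref{fxp}, $\mu_{xa_{-s}}^{\PS}(B_U(1))$ is bounded above and below by positive constants precisely when $xa_{-s}$ stays in a fixed compact set with $(xa_{-s})^+\in\Lambda(\G)$. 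Note $(xa_{-s})^+ = x^+$ is fixed along the flow, so the relevant condition is the recurrence of the \emph{backward} geodesic flow $xa_{-s}$, $s\to+\infty$, to a compact set — and this is exactly what radiality of $x^-$ encodes.

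First I would treat the ``if'' direction (contrapositive): suppose $x^-\notin\Lambda_{\rm r}(\G)$. Then $x^-\in\Lambda_{\rm bp}(\G)$, i.e. $x^-$ is a bounded parabolic fixed point. In this case the backward geodesic ray eventually leaves every compact set and enters a cusp neighborhood associated to the parabolic subgroup fixing $x^-$, and there it stays (modulo the cusp group). I would then invoke the standard Sullivan measure estimate in cusp neighborhoods: if $\xi=x^-$ is fixed by a parabolic subgroup of rank $k$ acting on the relevant cusp, then $\mu_{xa_{-s}}^{\PS}(B_U(1))$ decays like $e^{-(2\delta-k)s}$ (this is the content of the global measure formula for geometrically finite groups, see e.g. Stratmann–Velani / the cusp estimates used in \cite{OS},\cite{Roblin2003}). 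Since geometrical finiteness forces $2\delta > k$ for every cusp (otherwise $\delta$ would have to be too small, or more precisely this is the standard fact that $\delta>k/2$ for bounded parabolic points of rank $k$), we get $\mu_x^{\PS}(B_U(e^s))\asymp e^{\delta s}\cdot e^{-(2\delta-k)s}=e^{(k-\delta)s}$, and more to the point the \emph{increments} $\mu_x^{\PS}(B_U(e^{s+1}))-\mu_x^{\PS}(B_U(e^s))$ are summable, so $|\mu_x^{\PS}|<\infty$. Conversely, for the ``only if'' direction: if $x^-\in\Lambda_{\rm r}(\G)$, pick a sequence $s_j\to\infty$ with $xa_{-s_j}$ lying in a fixed compact set $\Omega$; since $x^+\in\Lambda(\G)$ (as $x^-$ radial implies $x\in\Lambda(\G)$ in the sense of the definition), Corollary \ref{fxp} gives $\mu_{xa_{-s_j}}^{\PS}(B_U(1))\ge c>0$ uniformly, hence $\mu_x^{\PS}(B_U(e^{s_j}))\ge c\,e^{\delta s_j}\to\infty$, so $|\mu_x^{\PS}|=\infty$.

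The main obstacle I anticipate is making the cusp estimate in the ``if'' direction fully rigorous in the non-injective setting — that is, controlling $\bar\mu_x^{\PS}$ on $(U\cap g^{-1}\G g)\ba U$ rather than $\mu_g^{\PS}$ on $gU$ when $x^-$ is parabolic and $U\cap g^{-1}\G g$ is a nontrivial lattice in a subspace of $U$. One has to unfold the PS-measure over the cusp group and see that the resulting sum over $U\cap g^{-1}\G g$ is exactly the mechanism producing the $e^{-(2\delta-k)s}$ decay, using the conformal density transformation law and the Busemann cocycle in the parabolic cusp. I would organize this by reducing to a standard reference geometry for a rank-$k$ cusp (horoball based at $x^-$) and quoting the global measure formula; the finiteness then follows from $\delta>k/2$, which is guaranteed by geometrical finiteness. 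The radial direction is comparatively soft, relying only on Corollary \ref{fxp} and the scaling identity, so no serious difficulty there.
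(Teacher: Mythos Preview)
Your argument for the radial direction is essentially the paper's, but the claim ``$x^+\in\Lambda(\G)$ (as $x^-$ radial implies $x\in\Lambda(\G)$)'' is unjustified: radiality of $x^-$ says nothing about $x^+$. The paper's fix is to observe that $|\mu_x^{\PS}|$ depends only on the $U$-orbit $xU$, and since $(gU)^+\cap\Lambda(\G)\ne\emptyset$ one may replace $g$ by some $gu$ with $(gu)^+\in\Lambda(\G)$ without changing $x^-$ or $|\mu_x^{\PS}|$. After this adjustment, Corollary~\ref{fxp} applies along the recurrent sequence and gives the lower bound $e^{\delta s_i}\cdot c$, exactly as you wrote.

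For the converse there are two genuine problems. First, you assume $x^-\notin\Lambda_{\rm r}(\G)$ forces $x^-\in\Lambda_{\rm bp}(\G)$, omitting the case $x^-\notin\Lambda(\G)$ altogether; there one simply notes $\Lambda(\G)$ is compact in $\partial(\bH^n)\setminus\{g^-\}$, so the support of $\mu_g^{\PS}$ is bounded and $|\mu_g^{\PS}|<\infty$. Second, and more seriously, your cusp-estimate approach to the bounded parabolic case does not work as written: from $2\delta>k$ you only get $\delta>k/2$, so your claimed asymptotic $\mu_x^{\PS}(B_U(e^s))\asymp e^{(k-\delta)s}$ need not be bounded (the exponent $k-\delta$ can be positive), and the ``summable increments'' conclusion fails. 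Moreover the scaling identity you invoke computes $\mu_{xa_{-s}}^{\PS}(B_U(1))$, which sees the PS density near $(xa_{-s})^+=x^+$, not near $x^-$, so the Stratmann--Velani exponent $2\delta-k$ for balls around the parabolic point $x^-$ is not what enters. The paper avoids all of this: by the very \emph{definition} of a bounded parabolic point, $\G_{x^-}$ acts cocompactly on $\Lambda(\G)\setminus\{x^-\}$, hence the support of $\mu_x^{\PS}$ in $(U\cap g^{-1}\G g)\ba U$ is compact, and a locally finite measure with compact support is finite. No shadow lemma or global measure formula is needed.
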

\begin{proof} Choose $g\in G$ so that $[g]=x$. 
  If $x^-\notin \Lambda(\G)$, then  $\Lambda(\G)$ is a compact subset of $\partial(\bH^n) \setminus \{g^-\}$, 
  and hence  $ |\mu_g^{\PS}|<\infty$.  
  If $g\in \Lambda_{\rm bp}(\Gamma)$ and  $xU$ is a closed subset of $X$ and
  $\mu_x^{\PS}$ is supported on the quotient of $\{gu\in gU: (gu)^+\in \Lambda(\G)\}$ by the stabilizer
  $\G_{x^-}$, which is compact by the definition of a bounded
  parabolic fixed point. Hence $|\mu_x^{\PS}|<\infty$. 
Suppose that $g^-\in \Lambda_{\rm r}(\G)$. Since $|\mu_{g}^{\PS}|=|\mu_x^{\PS}|$ and $(gU)^+\cap \Lambda(\G) \ne \emptyset$,
we may assume without loss of generality that $g^+\in \Lambda(\G)$.

 Let $\Omega$ be a compact subset of $G$ such that
  $\gamma_i ga_{-s_i}\in \Omega $   for sequences $s_i \to +\infty$ and $\gamma_i\in \G$.
 Then 
\begin{align*} \mu_x^{\PS}(B_U(e^{s_i} ))&= e^{s_i \delta} \mu_{x a_{- s_i}}^{\PS} (B_U(1))\\
&= e^{s_i \delta} \mu_{\gamma_i g a_{- s_i}}^{\PS} (B_U(1))
\\ & \ge e^{s_i \delta} \cdot \inf_{h\in \Omega, h^+\in \Lambda(\G)} \mu_{h}^{\PS} (B_U(1)) .
\end{align*}
Since $\inf_{h\in \Omega, h^+\in \Lambda(\G)} \mu_{h}^{\PS}(B_U(1)) >0$ by Corollary \ref{fxp},
we have $|\mu_x^{\PS}|=\infty$.
\end{proof}

\subsection{BMS and BR measures} 
Fixing $o\in \bH^n$, the map
\[
\pt\mapsto (\pt^+, \pt^-, s=\beta_{\pt^-}(o, \pi(\pt)) 
\]
is a homeomorphism between $\op{T}^1(\bH^n)$ with
 \[
 (\partial(\bH^n)\times \partial(\bH^n) - \{(\xi,\xi):\xi\in \partial(\bH^n)\})  \times \br.
 \]

Using this homeomorphism,
we define measures
$\tilde m^{\BMS}=\tilde m^{\BMS}_\G$ and $ \tilde m^{\BR} =\tilde m^{\BR}_\G$ on $\T^1(\bH^n)$ as follows:

\[
d \tilde m^{\BMS}(\pt)= e^{\delta \beta_{\pt^+}(o,\pi(\pt))}\;
e^{\delta \beta_{\pt^-}(o, \pi(\pt)) }\; d\nu_o(\pt^+) d\nu_o(\pt^-) ds; \text{ and} 
\]
\[
 d \tilde m^{\BR}(\pt)= e^{(n-1) \beta_{\pt^+}(o,\pi(\pt))}\;
 e^{\delta \beta_{\pt^-}(o, \pi(\pt)) }\; dm_o(\pt^+) d\nu_o(\pt^-) ds.
 \]

The conformal properties of $\{\nu_x\}$ and $\{m_x\}$ imply that these definitions are independent of the choice of $o\in \bH^n$.
Using the identification $\T^1(\bH^n)$ with $G/M$, we lift the above measures to $G$ so that they are all invariant under $M$ from the right.
By abuse of notation, we use the same notation $\tilde m^{\BMS}$ and $ \tilde m^{\BR}$.
 These measures are left $\G$-invariant, and hence
induce locally finite Borel measures on $X$, which are
the  Bowen-Margulis-Sullivan measure $m^{\BMS}$ 
and the Burger-Roblin measure $m^{\BR}$ respectively.

Note that 
\begin{itemize}
\item $\supp (m^{\BMS})=\{x\in X: x^{\pm}\in \LG\}$;
\item $\supp(m^{\BR})=\{x\in X: x^-\in \LG\}$.
\end{itemize}

Sullivan showed that $m^{\BMS}$ is a finite measure \cite{Sullivan1984}.
The BR-measure $m^{\BR}$ is an infinite measure unless $\G$ is a lattice \cite{OS}.

We also consider the contracting horospherical subgroup
\[
{\cont}=\{g\in G: a_{-s} g a_s \to e\text{ as $s\to \infty$}\}
\]
and the parabolic subgroup $$P=MA{\cont} .$$
We note that $P$ is precisely the stabilizer of $w_o^+$ in $G$.
Given $g_0\in G$ define the measure $\nu$ on $g_0P$
as follows 
\[
d\nu(g_0p)=e^{-\delta \beta_{(g_0p)^-}(o, g_0p(o))} d\nu_o(g_0pw_o^-) dm ds,
\] 
for $s=\beta_{(g_0p)^-}(o, g_0p(o))$, $p=ma\check u$, and $dm $ is the probability Haar measure of $M$.

This way we get a product structure of the BMS measure which is an important ingredient in our approach:
 for any $g_0\in G$,
\begin{equation} \label{bmsp} \tilde m^{\BMS}(\psi )=\int_{g_0P}\int_{g_0pU} \psi (g_0pu_\tbf) d\mu_{g_0p}^{\PS}(\tbf) d\nu(g_0p) \end{equation}

 \section{Uniformity in the equidistribution of PS-measure}\label{notsec}
We keep notations from the last section; so  $X=\G\ba G$.
 For simplicity, we will assume that $|m^{\BMS}|=1$;
this can be achieved by replacing $\{\nu_x\}$ by a suitable
scalar multiple if necessary.
% The right translation action of $A$ on $X$ is mixing with respect to $m^{\BMS}$, as obtained by Flaminio-Spatzier \cite{FS-Factor}  and Winter \cite{Wi}:
\begin{thm}[Mixing of the $A$-action]\label{fm}  \cite{Wi}
For any $\psi_1, \psi_2\in L^2(m^{\BMS})$, we have 
\[
\lim_{s\to \infty} \int_{\G\ba G} \psi_1(xa_s) \psi_2(x) dm^{\BMS}(x)=m^{\BMS}(\psi_1) m^{\BMS}(\psi_2).
\]
\end{thm}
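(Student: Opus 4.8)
Since the statement is precisely Winter's mixing theorem \cite{Wi}, my plan is to organize its proof around two inputs: Babillot's mixing theorem for the geodesic flow, and a frame-flow refinement in the nontrivial $M$-directions. First I would pass to a convenient dense class of test functions. As $m^{\BMS}$ is finite (Sullivan \cite{Sullivan1984}), the left-hand side is bounded by $\|\psi_1\|_2\|\psi_2\|_2$ and the right-hand side by $|m^{\BMS}|\,\|\psi_1\|_2\|\psi_2\|_2$ via Cauchy--Schwarz, so both sides are continuous bilinear forms on $L^2(m^{\BMS})\times L^2(m^{\BMS})$ and it suffices to check the limit on a dense subspace. Using Peter--Weyl for the right $M$-action --- which preserves $m^{\BMS}$ and commutes with $A$ --- together with density of $C_c(\G\ba G)$, I would take $\psi_1,\psi_2$ to be $M$-finite; since $a_s$ commutes with $M$, both the bilinear form and the product $m^{\BMS}(\psi_1)m^{\BMS}(\psi_2)$ vanish unless $\psi_1,\psi_2$ lie in a common $M$-isotypic subspace $H_\rho$. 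This reduces the theorem to two cases: (i) $\psi_1,\psi_2\in H_{\mathbf 1}$, the trivial isotype; and (ii) $\psi_1,\psi_2\in H_\rho$ with $\rho$ nontrivial, in which case $m^{\BMS}(\psi_i)=0$ and the goal becomes $\int_{\G\ba G}\psi_1(xa_s)\psi_2(x)\,dm^{\BMS}\to 0$ as $s\to\infty$.

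In case (i), $H_{\mathbf 1}$ is $L^2(\T^1(\G\ba\bH^n),m^{\BMS})$ and the $A$-action is the geodesic flow, so the assertion is Babillot's theorem: the geodesic flow of a geometrically finite hyperbolic manifold carrying a finite BMS measure is mixing provided its length spectrum generates a dense subgroup of $\br$. Here finiteness, ergodicity and conservativity of $m^{\BMS}$ for the geodesic flow come from Sullivan and Roblin \cite{Sullivan1984,Roblin2003}, and non-arithmeticity of the length spectrum follows from Zariski density of $\G$ in $\SO(n,1)^\circ$. For completeness I would recall that Babillot's proof uses exactly the local product structure \eqref{bmsp} of $m^{\BMS}$: a Hopf-type argument shows that any $L^2$-eigenfunction of the flow is constant along the stable and unstable horospherical foliations; the quasi-invariance of the PS-densities under these foliations then forces the eigenvalue to lie in the (arithmetic) length spectrum --- a contradiction --- so the flow is weakly mixing, and the same product structure upgrades weak mixing to mixing.

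Case (ii) arises only for $n\ge 3$ (for $n=2$, $M$ is trivial and (i) exhausts the argument). Here I would regard $\G\ba G$ as an $M$-bundle over $\T^1(\G\ba\bH^n)$, so that the $A$-action on $H_\rho$ is driven by an $M$-valued holonomy cocycle over the geodesic flow. By the standard criterion for compact-group extensions, this action is mixing iff the base flow is mixing and the cocycle does not reduce --- up to a measurable coboundary --- to a proper closed subgroup of $M$. The crucial input is that the holonomies around closed geodesics contained in $\supp(m^{\BMS})$ are not confined to any proper closed subgroup of $M$, equivalently generate a dense subgroup; this again follows from Zariski density of $\G$, since these holonomies record the elliptic parts of loxodromic elements of $\G$. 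Together with the mixing from (i) this yields mixing on each $H_\rho$, and summing over the finitely many isotypes present in $M$-finite $\psi_1,\psi_2$ finishes the proof.

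The step I expect to be the main obstacle is the density of closed-geodesic holonomies within $\supp(m^{\BMS})$ in case (ii): in the lattice case it is immediate from homogeneous dynamics, but here one must produce and control periodic $A$-orbits and their $M$-holonomies using only the BMS measure and its support, with no access to the ambient Haar measure. This is where I would anticipate spending most of the work, and it is the core of Winter's argument. (When $\delta>(n-1)/2$ one could alternatively seek a quantitative version via the spectral gap on $L^2(\G\ba G)$ and the relation between the Haar and BMS measures, but Babillot's route has the merit of being uniform in $\delta$.)
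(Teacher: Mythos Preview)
The paper does not prove this statement: Theorem~\ref{fm} is simply quoted from \cite{Wi} and used as a black box. Your sketch is a reasonable high-level outline of Winter's argument (reduction to $M$-isotypes via Peter--Weyl, Babillot's mixing for the geodesic flow on the trivial isotype, and a compact-group-extension/holonomy argument exploiting Zariski density for the nontrivial isotypes), so there is nothing to compare against in the paper itself.
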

 
 %We note that for each fixed $\psi_1 \in L^2(m^{\BMS})$, the above convergence is uniform 
 %on any compact family of functions $\psi_2$ in $L^2(m^{\BMS})$.
 %More precisely, we will use the following theorem. For $\varphi\in C^\infty(\G\ba G)$,
%we will consider the following Sobolev norm 
 %$$\mathcal S_{\infty, 1} (\varphi)=\|\varphi\|_\infty + \sum \|\mathcal Z(\varphi)\|_\infty$$
% where the sum is taken over all $\mathcal Z$ in a fixed basis of the Lie algebra of $G$.
 
The following is immediate from Theorem \ref{fm}:
\begin{thm}\label{fmu}
 Fix $\psi \in L^2(m^{\BMS})$ and a compact subset $\Omega$ of $X$.
  Let $\mathcal F\subset L^2 (m^{\BMS})$ be a a relatively compact subset.
     Then for any $\e>0$, there exists $S>1$ such that for all $s\ge S$ and
  any $\varphi\in \mathcal F$,
$$\left| \int_{X} \psi (ga_s) \varphi (g) dm^{\BMS}(g) - m^{\BMS}(\psi) m^{\BMS}(\varphi ) \right| \le \e .$$
 \end{thm}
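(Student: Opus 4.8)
The plan is to deduce the uniform statement from the plain mixing Theorem \ref{fm} by a standard compactness-plus-approximation argument, exploiting the fact that $m^{\BMS}$ is a finite measure (we have normalized $|m^{\BMS}|=1$) so that $L^2(m^{\BMS})\hookrightarrow L^1(m^{\BMS})$ continuously. First I would fix $\psi\in L^2(m^{\BMS})$ and set, for each $s>0$,
\[
T_s\varphi := \int_X \psi(ga_s)\varphi(g)\, dm^{\BMS}(g) - m^{\BMS}(\psi)\, m^{\BMS}(\varphi).
\]
Each $T_s$ is a bounded linear functional on $L^2(m^{\BMS})$, with operator norm bounded uniformly in $s$ by $2\|\psi\|_2$ (using Cauchy--Schwarz and $|m^{\BMS}|=1$). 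Theorem \ref{fm} says precisely that $T_s\varphi\to 0$ as $s\to\infty$ for every fixed $\varphi\in L^2(m^{\BMS})$; i.e., $T_s\to 0$ in the weak-$*$ topology on the dual of $L^2(m^{\BMS})$. What must be upgraded is pointwise convergence to convergence uniform over the relatively compact set $\mathcal F$.

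The key step is the following equicontinuity/uniform-boundedness observation: since the family $\{T_s\}_{s>0}$ is uniformly bounded in operator norm by $C:=2\|\psi\|_2$, for any $\varphi,\varphi'\in L^2(m^{\BMS})$ we have $|T_s\varphi - T_s\varphi'|\le C\|\varphi-\varphi'\|_2$ for all $s$. Given $\e>0$, cover the relatively compact set $\overline{\mathcal F}$ by finitely many balls $B(\varphi_1,\e/(3C)),\dots,B(\varphi_N,\e/(3C))$ in $L^2(m^{\BMS})$. By Theorem \ref{fm} applied to each of the finitely many functions $\varphi_1,\dots,\varphi_N$ (and also to a constant, absorbed into the $m^{\BMS}(\psi)m^{\BMS}(\varphi)$ term), choose $S>1$ so that $|T_s\varphi_j|\le \e/3$ for all $s\ge S$ and all $j=1,\dots,N$. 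Then for any $\varphi\in\mathcal F$, picking $j$ with $\|\varphi-\varphi_j\|_2<\e/(3C)$, the triangle inequality gives $|T_s\varphi|\le |T_s(\varphi-\varphi_j)| + |T_s\varphi_j| \le C\cdot \e/(3C) + \e/3 < \e$ for all $s\ge S$, which is the claimed bound. (The compact subset $\Omega$ of $X$ plays no role beyond indicating the intended context; the statement is purely about the relatively compact family $\mathcal F$.)

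I do not expect a serious obstacle here: the only subtlety is making sure the relevant operator norm bound is genuinely uniform in $s$, and that is immediate from Cauchy--Schwarz once $|m^{\BMS}|<\infty$, which holds by Sullivan's finiteness theorem. One should also be slightly careful about the $m^{\BMS}(\varphi)$ term in the definition of $T_s$ — but $\varphi\mapsto m^{\BMS}(\varphi)$ is again a bounded functional of norm $1$ on $L^2(m^{\BMS})$, so it fits into the same uniform-boundedness scheme, and the finitely many limits supplied by Theorem \ref{fm} already account for it. Thus the entire argument is the routine "weak-$*$ convergence of a uniformly bounded net is uniform on precompact sets" lemma, specialized to this setting.
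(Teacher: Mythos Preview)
Your argument is correct and is exactly the standard ``pointwise convergence of uniformly bounded functionals is uniform on relatively compact sets'' argument that the paper has in mind when it declares Theorem~\ref{fmu} ``immediate from Theorem~\ref{fm}''. The one point you might make explicit is that $m^{\BMS}$ is $A$-invariant, so that $\|\psi(\cdot\, a_s)\|_2=\|\psi\|_2$ and the uniform bound $\|T_s\|\le 2\|\psi\|_2$ genuinely holds for all $s$; and your observation that the compact set $\Omega$ is vestigial is correct.
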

%\begin{proof} By the Ascoli-Arzela theorem, $\mathcal F$ 
%is precompact in $C_c(\Om)$ and hence in
% $L^2(m^{\BMS})$ as well.  Therefore, if the claim does not hold, we will obtain a counter-example
 %to Theorem \ref{fm}. 
 %\end{proof}

Theorem  \ref{fm} can be used to prove the following:
for any $\psi\in C_c(X)$, $x\in X$, and  any bounded Borel subset $B\subset xU$ with $\mups(\partial(B))=0$,
\begin{equation}\label{bmsu}
 \lim_{s\to \infty} \int_{B}\psi(xu_\tbf a_s)d\mups(\tbf)  =\mups(B)m^{\BMS}(\psi); \text{ and } \end{equation}
 \begin{equation}\label{bur}   \lim_{s\to \infty} e^{(n-1-\delta)s}\int_{B}\psi(xu_\tbf a_s)d\tbf= \mups(B)m^{\BR}(\psi) \end{equation}
  (see \cite{Roblin2003} and  \cite{OS} for 
   $M$-invariant $\psi$'s and \cite{MO-Matrix} for general $\psi$'s). 

 In this paper, we will need uniform versions of these two equidistribution statements; more precisely,
 the convergence in both statements are uniform on compact subsets.
 Since the uniformity will be crucial for our
purpose, and it is not as straightforward as in the case when $\Gamma$ is a lattice, we will provide a proof. We will be using the following definitions.

Let  $d$
denote the left $G$-invariant and  bi-$K$-invariant metric on $G$ which induces  the hyperbolic metric on $G/K=\bH^n$.
For a subset $S\subset G$, $S_\e$ denotes the $\e$-neighborhood of $e$ in the metric $\dist$, that is,
\be\label{eq:S-ep}
S_\e=\{s\in S: \dist(s, e)<\e\}.
\ee

%In the sequel $|g|$ will denote the supremum norm 
%of the matrix $g-I_{n+1}.$  

%\begin{definition}  %\begin{itemize}
%\item 

%\item For each $x\in X$ and small $r>0$, we denote by  $\mathcal F_x( r)$ the collection of $f\in C(xB_U(r))$ such that $0\le f\le 1$ and $f|_{xB_U(r/4)}=1$.

%\item For $r>0$ smaller than the injectivity radius of $\Omega$,
%we set $$\mathcal F(\Omega, r):=\{f\in
%\mathcal F_x(r) : x\in \Omega, x^+\in \Lambda(\G)\}.$$
%\end{itemize}
%\end{definition}

%\begin{cor} We note that for a compact subset $\Omega\subset X$ and any small $r_0>0$,
%\begin{equation}\label{fxp} \inf_{f\in \mathcal F(\Omega^\dag, r_0)} \mu_x^{\PS}(f) \ge
%\inf_{x\in \Omega^\dag}\mu_x^{\PS}( xB_U(r_0)) >0 .\end{equation}
%\end{cor}

\begin{lem}\label{omr}
For any compact subset $\Omega\subset G$,
there exists $R>0$ such that 
$$(gB_{{\cont}}(R))^-\cap \Lambda(\G) \ne \emptyset\quad\text{ for any $g\in \Omega$}.$$
\end{lem}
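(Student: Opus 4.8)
The plan is to argue by contradiction, using the compactness of $\Omega$ together with the fact that for each fixed $g$ the backward visual map restricted to the contracting horospherical orbit $g{\cont}$ is a homeomorphism onto $\partial(\bH^n)$ minus a single point, and that this homeomorphism varies continuously with $g$.

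First I would record the geometric input. Since ${\cont}\subset P=MA{\cont}=\op{Stab}_G(w_o^+)$, every element of $g{\cont}$ has the same forward endpoint $g^+$, while
\[
\Phi_g\colon {\cont}\longrightarrow \partial(\bH^n)\setminus\{g^+\},\qquad \check u\mapsto (g\check u)^-=g\,\check u\,w_o^-,
\]
is a homeomorphism: it is the composition of the fixed homeomorphism $\phi\colon \check u\mapsto \check u\,w_o^-$ from ${\cont}$ onto $\partial(\bH^n)\setminus\{w_o^+\}$ — the backward counterpart of the diffeomorphism property of $\op{Viz}|_{gU}$ recalled in section~\ref{ss;PS-meas} — with the homeomorphism of $\partial(\bH^n)$ induced by $g$. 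In particular $\Phi_g^{-1}(\eta)=\phi^{-1}(g^{-1}\eta)$, so $(g,\eta)\mapsto\Phi_g^{-1}(\eta)$ is jointly continuous on the open set $\{(g,\eta)\in G\times\partial(\bH^n):\eta\ne g^+\}$. I will also use that $\Lambda(\G)$ is infinite, as $\G$ is Zariski dense.

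Now suppose the conclusion fails for some compact $\Omega$. Then for each $i\in\bbn$ there is $g_i\in\Omega$ with $(g_iB_{{\cont}}(i))^-\cap\Lambda(\G)=\emptyset$; after passing to a subsequence, $g_i\to g\in\Omega$. Pick $\xi\in\Lambda(\G)$ with $\xi\ne g^+$. Since $g_i^+\to g^+$ by continuity of the visual map, $\xi\ne g_i^+$ for all $i\ge N$, some $N$; hence $\check u_i:=\Phi_{g_i}^{-1}(\xi)\in{\cont}$ is defined for $i\ge N$, and $\check u_i\to\Phi_g^{-1}(\xi)\in{\cont}$ by the continuity above. In particular there is a fixed $R_1$ with $\check u_i\in B_{{\cont}}(R_1)$ for all $i\ge N$. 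Then for $i\ge\max\{N,R_1\}$ we get $\check u_i\in B_{{\cont}}(i)$ and $(g_i\check u_i)^-=\xi\in\Lambda(\G)$, contradicting the choice of $g_i$. This proves the lemma.

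The only genuinely delicate point — the source of the uniformity in $R$ — is the last step: the element $\check u_i\in{\cont}$ needed to move the common forward endpoint of $g_i{\cont}$ onto a prescribed limit point $\xi$ must not escape to infinity in ${\cont}$. This is precisely the properness (equivalently, continuity) of $g\mapsto\Phi_g^{-1}(\xi)$, which can fail only if $\xi$ is allowed to approach $g_i^+$; choosing $\xi\ne g^+$ and using $g_i^+\to g^+$ rules this out. (One could instead avoid passing to a subsequence by observing that $g\mapsto\inf\{R>0:(gB_{{\cont}}(R))^-\cap\Lambda(\G)\ne\emptyset\}$ is finite everywhere and upper semicontinuous on $G$, hence bounded on the compact set $\Omega$.)
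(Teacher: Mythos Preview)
Your proof is correct and follows essentially the same soft compactness idea as the paper. The paper's proof is a one-line sketch pointing to the continuity of $g\mapsto d_{\partial(\bH^n)}(g^-,\Lambda(\G))$ on $\Omega$; your contradiction argument via the joint continuity of $(g,\eta)\mapsto\Phi_g^{-1}(\eta)$ is a fully detailed version of the same mechanism, and your parenthetical reformulation (upper semicontinuity of $g\mapsto\inf\{R:(gB_{\cont}(R))^-\cap\Lambda(\G)\ne\emptyset\}$) is perhaps the closest match in spirit to what the paper intends.
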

\begin{proof} The claim follows since
the map $g\mapsto d_{\partial (\bH^n)}
(g^-, \Lambda(\G))$ is continuous where $d_{\partial (\bH^n)}$ is the spherical distance of $\partial(\bH^n)$.
\end{proof}

For $\Omega \subset X,$
the injectivity radius of $\Omega$ is defined to be the
supremum of $\e>0$ such that the map $g \mapsto xg$ is injective on $G_\ep$ for all $x\in \Omega$.

\begin{thm}\label{thm;exp-peice}\label{mixing}
 Fix $\psi\in C_c(X)$, a compact subset $\Omega\subset X$ and a number $r>0$ smaller than
 the injectivity radius of $\Omega$. Fix $c>0$ and
let $\mathcal F_r (c)$ be an equicontinuous family of functions $0\le f\le 1$ on $B_U(r)$ such that
$f|_{B_U(r/4)}\ge c$.
For any sufficiently small $\e>0$, there exists $S=S(\psi,\mathcal F_r,\ep,\Omega)>1$ 
 such that for $x\in  \Omega$ with $x^+\in\Lambda(\G)$,
 for any $f \in \mathcal F_r$, and for any $s>S$,
 we have \begin{enumerate}
\item $$\left|  \int_{B_U(r)}\psi(xu_\tbf a_s)f(\tbf) d\mups(\tbf)  -\mups(f)m^{\BMS}(\psi)\right| < \e \cdot \mups(f); $$
\item 
$$ \left| e^{(n-1-\delta)s}\int_{B_U(r)}\psi(xu_\tbf a_s)f(\tbf)d\tbf -\mups(f)m^{\BR}(\psi)\right| <  \e \cdot \mups(f)$$
where $\mups(f)=\int_{xB_U(r)}f(\tbf) d\mups(\tbf)$.
\end{enumerate}
\end{thm}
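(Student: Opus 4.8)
\textbf{Proof plan for Theorem \ref{thm;exp-peice}.}

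The strategy is to upgrade the pointwise equidistribution statements \eqref{bmsu} and \eqref{bur} to a statement uniform over $x\in\Omega$ and $f\in\mathcal F_r(c)$, exploiting the uniform mixing result Theorem \ref{fmu}. The mechanism is the standard ``thickening'' trick: instead of integrating $\psi$ along the single horospherical piece $xB_U(r)$, one smears it over a small box $xB_U(r)B_{\cont}(\e')MB_{A}(\e')$ using the local product structure of $G$, so that the integral becomes (a perturbation of) the inner product $\langle a_{-s}.\Psi, F\rangle_{m^{\BMS}}$ for suitable test functions $\Psi\in C_c(X)$ (a small perturbation of $\psi$) and $F$ supported near $x$, and then invokes the BMS product formula \eqref{bmsp} to read off the PS-mass of the piece. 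First I would fix notation for the thickening: choose $0<\e'\ll\e$, set the ``box'' $\ocal_{\e'}=B_{\cont}(\e')M_{\e'}A_{\e'}$, and write, for $g$ with $[g]=x$ and $x^+\in\Lambda(\G)$,
\[
\int_{\ocal_{\e'}} \Big(\int_{B_U(r)}\psi(xu_\tbf q a_s)f(\tbf)\,d\mu^{\PS}_{x q}(\tbf)\Big)\,d\nu(q),
\]
where $\nu$ is the measure on $P$ from \S\ref{ss;PS-meas} and $d\mu^{\PS}_{xq}$ is the PS-measure on the shifted horospherical leaf; the commutation relations $a_{-s}u_\tbf a_s = u_{e^s\tbf}$ and boundedness of $\ocal_{\e'}$ under conjugation by $a_s$ for $s>0$ keep everything in a fixed compact set. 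Using Lemma \ref{cont} (continuity of $g\mapsto\mu_g^{\PS}$ on $\{g^+\in\Lambda(\G)\}$) and Corollary \ref{fxp}, the inner integral over the thickened leaf differs from $\mu^{\PS}_x(f)$ times $\nu(\ocal_{\e'})$ by a multiplicative error $O(\e)$, uniformly for $x\in\Omega$ with $x^+\in\Lambda(\G)$ and $f\in\mathcal F_r(c)$ --- here the equicontinuity of $\mathcal F_r(c)$ and the lower bound $f|_{B_U(r/4)}\ge c$ are exactly what make the PS-masses $\mu^{\PS}_x(f)$ uniformly bounded below by Corollary \ref{fxp}, so that additive errors can be converted to multiplicative ones.

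Next I would recognize the thickened integral, up to the Jacobian factor $e^{(n-1-\delta)s}$ in case (2) and with no extra factor in case (1), as $\int_X \Psi_s \cdot F\, dm^{\BMS}$ where $F\in C(\Omega')$ (a slightly larger compact set) is built from the indicator of the box around $x$ weighted by the local PS$\times$Leb$\times$Haar coordinates coming from \eqref{bmsp}, and $\Psi_s$ is $a_{-s}.\psi$ smeared over $\ocal_{\e'}$; the point is that $\Psi_s = a_{-s}.\Psi$ for a \emph{fixed} $\Psi\in C_c(X)$ close to $\psi$, and as $x$ ranges over $\Omega$ the functions $F=F_x$ range over a relatively compact (indeed equicontinuous) family in $L^2(m^{\BMS})$. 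Then Theorem \ref{fmu} gives, uniformly in $x$,
\[
\int_X (a_{-s}.\Psi)\,F_x\,dm^{\BMS} \;=\; m^{\BMS}(\Psi)\,m^{\BMS}(F_x) + o(1),
\]
and $m^{\BMS}(F_x)$ is, by construction and \eqref{bmsp}, exactly $\nu(\ocal_{\e'})\,\mu_x^{\PS}(f)(1+O(\e))$ while $m^{\BMS}(\Psi)=m^{\BMS}(\psi)(1+O(\e))$ for case (1); for case (2) one replaces the ``unfolding'' by the analogous computation with the Lebesgue density in the $U$-direction and with $\Psi$ replaced by its BR-pairing, producing $m^{\BR}(\psi)$ in the main term (this is precisely the difference between \eqref{bmsu} and \eqref{bur}, now carried out uniformly). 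Dividing through by $\nu(\ocal_{\e'})$ and letting $\e'\to 0$ after $s\to\infty$, collecting the $O(\e)$ errors, yields the two claimed inequalities with a uniform $S=S(\psi,\mathcal F_r,\e,\Omega)$.

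The main obstacle is the uniformity of the error terms in $\e'$ as $s\to\infty$: one must control the discrepancy between $\psi(xu_\tbf q a_s)$ and $\psi(xu_\tbf a_s)$ for $q\in\ocal_{\e'}$ \emph{after} applying $a_s$, where the $\cont$-component of $q$ gets contracted (harmless) but the $A$- and $M$-components get moved by a bounded amount that does not shrink, so uniform continuity of $\psi$ only gives an error that is $O(\e')$ independently of $s$ --- which is fine --- but one must also ensure the \emph{lower} bounds on the PS-masses survive the thickening uniformly over $\Omega$, and that is where Lemma \ref{cont} together with Lemma \ref{omr} (guaranteeing $(gB_{\cont}(R))^-$ meets $\Lambda(\G)$, so the thickened leaves genuinely carry PS-mass) and the equicontinuity of $\mathcal F_r(c)$ must be combined carefully. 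A secondary technical point, which I expect to require some care but not new ideas, is handling points $x$ near the cusps: although $\psi$ has compact support, the piece $xu_\tbf a_s$ for $|\tbf|\le r$ and large $s$ can wander, and one needs the injectivity-radius hypothesis $r<\op{inj}(\Omega)$ together with a covering of $\Omega$ by finitely many product neighborhoods to make the local-coordinate computation legitimate and the family $\{F_x\}$ genuinely relatively compact in $L^2(m^{\BMS})$.
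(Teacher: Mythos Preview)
Your overall architecture---thicken in the transversal $P$-direction, recognize the resulting expression as a BMS inner product, and apply the uniform mixing Theorem \ref{fmu}---is exactly the right one, and is the approach the paper takes. However, there is a genuine gap at precisely the point you flag as ``the main obstacle'': you propose to thicken by a box $\ocal_{\e'}$ with $\e'\ll\e$ \emph{small}, but for a general $x\in\Omega$ (even with $x^+\in\Lambda(\G)$) the transversal measure $\nu(x P_{\e'})$ can be \emph{identically zero} for all small $\e'$. The point is that $\nu$ is carried by $\{p:(xp)^-\in\Lambda(\G)\}$, and Lemma \ref{omr} only guarantees that $(xB_{\cont}(R))^-$ meets $\Lambda(\G)$ for some \emph{fixed} $R=R(\Omega)$ which need not be small. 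So your thickening may be vacuous, and the inner product you want to form has nothing to pair against. Invoking Lemma \ref{omr} together with continuity does not fix this: the tension between ``$\e'$ small enough for the $O(\e)$ approximation'' and ``$\e'$ large enough for $\nu(xP_{\e'})>0$'' is irresolvable at the scale of $x$ itself.

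The paper's resolution is a flow-first trick that you are missing. One first passes from $x$ to $g_0:=xa_{s_0}$ with $s_0=\log(R/\eta)$ chosen so that $(g_0P_\eta)^-\supset (xB_{\cont}(R))^-$, which by Lemma \ref{omr} forces $\nu(g_0P_\eta)>0$ \emph{uniformly} for $x\in\Omega$; see \eqref{pop}. Now the thickening is legitimate at $g_0$, and one proves claim (1) first for balls of the rescaled radius $r_1=\eta/R$ (this is \eqref{c2} in the paper). Because $r_1$ may be much smaller than $r$, a further partition-of-unity step over $B_U(r)$ by translates of functions in an $\mathcal F_{r_1}$-type family is needed to recover the statement at scale $r$; the equicontinuity of $\mathcal F_r(c)$ and the lower bound $f|_{B_U(r/4)}\ge c$ are used exactly here to keep the partition pieces in a uniform family and the errors multiplicative. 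Once (1) is established with this extra layer, part (2) follows by the transversal-intersection comparison as in \cite{OS}, as you indicate.
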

\begin{proof}
Let $\tilde\Om$ be a compact subset  of $G$ which projects onto $\Om B_U(1)$.
Fix a non-negative function $\psi\in C_c(G)$, whose support injects to $\G\ba G$. 
For each  small $\eta>0$, we define functions $\psi_{\eta,\pm}$ on $G$ by
\[
\psi_{\eta,+}(h):=\sup_{w\in G_\eta} \psi (hw)\text{ and } \psi_{\eta,-}(h):=\inf_{w\in G_\eta}\psi (hw).
\]
Fix $\e>0$. By the continuity of $\psi$ and equicontinuity of $\mathcal F$, there exists $0<\eta <\e $ such that 
\begin{align*}
&\sup_{h\in G}|\psi_{\eta,+}(h)-\psi_{\eta,-}(h)| < \e/2,\text{ and }\\
& \sup_{f\in\mathcal F_r, |\tbf-\tbf'|<\eta}|f(\tbf)-f({\tbf'})|<\e/2 .%\;\text{ whenever $|\tbf-\tbf'|<\eta$}.
\end{align*}

Recall $P=MA{\cont}$ and $P_\eta$ denotes the $\eta$-neighborhood of $e$ in $P$. 
The basic idea is to thicken $xB_U(r)$ in the transversal direction of $P$, 
and then to apply the mixing Theorem~\ref{fm}.
However the transverse measure of $xP_\eta$ may be trivial for all small $\eta$
 in which case the thickening approach does not work. So we flow
$x$ until we reach $xa_{s}$ for which the transverse measureon $xa_s P_\eta$ is non-trivial uniformly over all
$x\in \Om$. This however will force us to further subdivide $B_U(r)$ into ``smaller" boxes. 
Let us now begin to explain this process carefully.

For any $p\in P_\eta$ and $\tbf\in B_U(1)$, 
we have $$p^{-1}u_{\tbf} \in  u_{\rho_p(\tbf) }P_{D}$$ where $\rho_p:B_U(1) \to B_U(1+O(\eta))$ 
is a diffeomorphism onto its image
and $D>0$ is a constant depending only on $\eta$.

%Let $s_1>0$ be such that $e^{-s_1} D_\eta =\eta$.

Let $R>1$ be as in Lemma \ref{omr} with respect to $\tilde\Omega$.
Set 
\be\label{eq:r-1}
\mbox{$r_1:=\eta/R$ and $s_0:=\log (r_1^{-1})$.}
\ee
Without loss of generality we may assume $r_1< r/10$, by taking $\eta$ smaller if necessary.

For any $g\in \tilde\Omega$ we put $$g_0:=ga_{s_0}\in \tilde\Om a_{s_0}.$$
Then for all $g\in \tilde\Omega,$ we have
\be\label{pop}
\nu_{g_0P}(g_0P_\eta)>0;
\ee
this follows from he choice of $R$ in view of the fact that 
$(gB_{{\cont}}(R))^-$ is contained in $(g_0P_\eta)^-= (gP_{\eta e^{s_0}})^-$.

%For any $r$ fix a covering $\mathcal B( r )=\{u_{\tbf}B_U(r_1): u_{\tbf}\in B_U( r )\}$ of $B_U( r )$.
%Using Besicovich covering lemma and the fact that any two norms on a finite dimensional space are comparable, we may assume this covering has multiplicity which depends only on the dimension and the norm.

For any $c_1>0,$ let $\mathcal F_{r_1}(c_1)\subset C(B_U(r_1))$ 
be an equicontinuous family of functions $f$ such that $0\le f \le 1$ and $f|_{B_U(r_1/4)}\geq c_1$. 
Fix $g\in  \tilde\Omega$ with $g^+\in \Lambda(\G)$. 
We first claim that there exists some $S>1$ (depending on $r_1,$ $c_1,$ equicontinuity of $\mathcal F_{r_1}(c_1)$
and $\tilde\Omega$)
such that for all $s\geq S,$ and all $f\in \mathcal F_{r_1}(c_1)$, 
\begin{equation}\label{c2}
\left| \sum_{\gamma \in \G} \int\psi(\gamma g u_\tbf a_s) f ({\tbf}) 
d\mu_{g}^{\PS}(\tbf) - \mu_g^{\PS}(f) \tilde m^{\BMS}(\psi) \right|<\e \cdot \mu_g^{\PS}(f) 
\end{equation}
where $\mu_g^{\PS}(f)=\int_{gB_U(r)} f(\tbf) d\mu_g^{\PS}(\tbf)$.
Associated to $f\in \mathcal F_{r_1}(c_1)$, define the function $f_0\in C_c(g_0B_U(1))$ by
\[
f_0(g_0u_\tbf):=f( {e^{-s_0}} \tbf).
\]
Let $\varphi_{0}:=\varphi_{f,g}$ be a function defined on $g_0P_\eta B_U(1) \subset G$ given by
\[ \varphi_{0}(g_0pu_{\rho_p(\tbf)})=
\frac{ f( {e^{-s_0}} \tbf) \chi_{g_0P_\eta}  (g_0 p)}{\nu(g_0P_\eta) e^{\delta \beta_{ u_\tbf^+}( u_\tbf (o),  p u_{\rho_p(\tbf)} (o))}},
\] which is well-defined by \eqref{pop}.
Then $\varphi_{0}$ is supported in the set $g_0B_{U}(1+O(\eta)) P_\eta.$

We observe
that for each $p\in P_\eta$, $(g_0u_\tbf)^+= (g_0 p u_{\rho_p(\tbf)} )^+$, the measures $d\mu_{g_0}^{\PS}(\tbf)$
and $d((\rho_p)_*\mu_{g_0p}^{\PS} )(\tbf)=d\mu_{g_0p}^{\PS} ({\rho_p(\tbf)} )$ are absolutely continuous with each other, and the Radon-Nikodym derivative at $\tbf$
is given by
 $e^{-\delta \beta_{g_0u_\tbf^+}( u_\tbf (o),  p u_{\rho_p(\tbf)} (o))}.$

If $s>2s_0+\log (D/ \eta)$, then
\begin{align*}  &e^{\delta s_0}  \int_{B_U(r_1)} \psi(\gamma g u_\tbf a_s) f(\tbf) d\mu_{g}^{\PS}(\tbf)
\\ &= \int_{B_U(1)}\psi(\gamma g_0 u_\tbf a_{s-s_0}) f_0(g_0 u_\tbf) d\mu_{g_0}^{\PS}(\tbf) \le 
\\ &  \tfrac{1}{{\nu(g_0P_\eta)}}\!\!\int_{P_\eta}\!\! \int_{B_U(1)}\!\!\!\!\psi_{\eta,+} (\gamma g_0 p u_{\rho_p(\tbf)} a_{s-s_0}\!){f_0(g_0 u_\tbf)}
 \tfrac{d\mu_{g_0}^{\PS}}{d((\rho_p)_*\mu_{g_0p}^{\PS} )} (\tbf) d\mu_{g_0p}^{\PS}(\rho_p(\tbf)\!)    d\nu(g_0p)
\\ & =\int_{G} \psi_{\eta,+}(\gamma h  a_{s-s_0}) \varphi_{0}( h) d\tilde m^{\BMS}(h)
\end{align*}
by \eqref{bmsp}.

Hence if we set $$\Psi_{\eta,+}(\G h):=\sum_{\gamma\in \G}
\psi_{\eta,+}(\gamma h)\quad\text{
and}  \quad \Phi_{f, g}(\G h):=\sum_{\gamma\in \G} e^{-\delta s_0} \varphi_{f,g}(\gamma h),$$
then 
\be\label{fff}
\sum_{\gamma \in \G} \int\psi(\gamma g u_\tbf a_s)f(\tbf) d\mu_g^{\PS}(\tbf) 
\le  \langle a_{s-s_0} \Psi_{\eta,+},  \Phi_{f,g} \rangle_{m^{\BMS}}.
\ee

%We claim that there exists $S>2s_0+\log (D_\eta/ \eta)$ (independent of $g_0$) such that for any $s>S$,
 % we have
%\begin{equation} \label{mmm} 
%\langle a_{s-s_0} \Psi_{\eta,+},  \Phi_{g_0}\rangle_{m^{\BMS}} \le 
%(1+\e) \cdot m^{\BMS}(\Psi_{\eta,+}) m^{\BMS}(\Phi_{g_0}) .
%\end{equation}

Since $\mathcal F_{r_1}(c_1)$ is an equicontinuous and uniformly bounded 
family of functions, the collection $\{\Phi_{f, g}:f\in \mathcal F_{r_1}(c_1), g\in \tilde\Om \}$ 
is a relatively compact subset of $L^2(m^{\BMS}).$

In view of the assumption that $\inf_{f\in \mathcal F_{r_1}(c_1)} f|_{B_U(r_1/4)}\geq c_1$,
 Corollary~\ref{fxp} implies that 
\[
 c_0:=\inf_{g\in \tilde\Omega, g^+\in \Lambda(\G)} \{\mu_g^{\PS}(f): f\in \mathcal F_{r_1}(c_1)\} >0 .
\]

Now by Theorem \ref{fmu}, there exists  $S>2s_0+\log (D_\eta \eta^{-1})$ 
such that for all $s>S$ and for all $g\in  \tilde \Omega$,
\[
|\langle a_{s-s_0} \Psi_{\eta,+},  \Phi_{f,g}\rangle_{m^{\BMS}} - m^{\BMS}(\Psi_{\eta,+}) m^{\BMS}(\Phi_{f,g})|\le  c_0 \e/2.
\]
Since 
$m^{\BMS}(\Phi_{f,g} )=e^{-\delta s_0} \mu_{g_0}^{\PS}(f_0) =\mu_g^{\PS}(f),$ we deduce
\begin{align*}
\langle a_{s-s_0} \Psi_{\eta,+},  \Phi_{f,g}\rangle_{m^{\BMS}} &\le
\tilde m^{\BMS}(\psi) \mu_g^{\PS}(f) +\e /2\mu_g^{\PS}(f) +c_0 \e /2 \\& \le
\tilde m^{\BMS}(\psi) \mu_g^{\PS}(f) +\e \mu_g^{\PS}(f).
\end{align*}

%may not be a , but
% we can find a collection of non-negative functions $\Phi_{g_0,+}\in C_c(X)$, $g\in \tilde\Omega $ 
%which are all supported in a fixed compact subset of $X$ satisfying
%\begin{enumerate}
%\item $\Phi_{g_0} \le \Phi_{g_0,+}$;
%\item $\|\Phi_{g_0,+}  - \Phi_{g_0} \|_{L^2( m^{\BMS})} \le \e  $;
%\item $\mathcal S_{\infty, 1}(\Phi_{g_0})$ is uniformly bounded from above.
%\end{enumerate}
%Since the collection
% $\{\Phi_{g_0,+}: g\in \tilde\Omega\}$ satisfies the conditions for $\mathcal F$ in hence
%$$\langle a_{s-s_0} \Psi_{\eta,+},  \Phi_{g_0}\rangle_{m^{\BMS}}\le
%(1+ 3\e) \tilde m^{\BMS}(\psi) m^{\BMS}(\Phi_{g_0}) + \e.$$

Combined with \eqref{fff}, for all $s\ge S$,
$$\sum_{\gamma \in \G} \int\psi(\gamma g u_\tbf a_s)f(\tbf) d\mu_g^{\PS}(\tbf) 
\le  \tilde m^{\BMS}(\psi) \mu_g^{\PS}(f) + \e  \mu_g^{\PS}(f). $$
 The lower bound 
 $$\sum_{\gamma \in \G} \int\psi(\gamma g u_\tbf a_s)f(\tbf) d\mu_g^{\PS}(\tbf) 
\ge  \tilde m^{\BMS}(\psi) \mu_g^{\PS}(f) - \e \mu_g^{\PS}(f). $$
can be obtained similarly. 
This finishes the proof of~\eqref{c2}.

We now deduce (1) using~\eqref{c2} and a partition of unity argument. 
Consider a sub-covering $\mathcal B$ of $\{u_{\tbf}B_U(r_1): u_{\tbf}\in B_U( r)\}$ for $B_U(r)$
of multiplicity $\kappa$ depending only on the dimension of $U$.

Let $\{\sigma_\tbf: \tbf\in I\}\subset \mathcal F_{r_1}(1)$ be an equicontinuous partition of unity 
subordinate to $\mathcal B$.
By the choice of $\eta$ and $r_1<\eta$, we have
\[
\sup_{f\in\mathcal F_r, |\tbf'|<r_1}|f(\tbf+\tbf')-f({\tbf})|<\ep .
\]

For any $f\in\mathcal F_{r}(c)$, define 
\[
I_1(f):=\{{\tbf}\in I:   \sup_{\tbf'\in B_U(r_1)} |f({\tbf +\tbf'})\sigma_{\tbf}({\tbf'})|<2\ep\}
\]
and let $I_{2}(f):=I-I_1(f).$

Then the family $\{ f\sigma_{\tbf}: {\tbf}  \in I_2(f), {f\in \mathcal F_r(c)}\}$ 
satisfies the conditions of the family in~\eqref{c2} with $r_1$ and $c_1=\ep.$
Therefore, there exists
some $S>1$, independent of $f\in \mathcal F_r(c)$,
 so that  for any $x\in  \Omega$ with $x^+\in\Lambda(\G)$, for all $s>S$ and all $\tbf\in I_2(f)$, we have  

\begin{equation}\label{p-u-1}
\left|  \int\psi(xu_\tbf a_s) f (\tbf+{\tbf'})\sigma_{\tbf}({\tbf'}) d\mu_{xu_{\tbf}}^{\PS}(\tbf') - 
\mu_{xu_{\tbf}}^{\PS}(f\sigma_{\tbf}) \tilde m^{\BMS}(\psi) \right|<\e \cdot \mu_{xu_{\tbf}}^{\PS}(f\sigma_{\tbf}). 
\end{equation}
Hence 
$$\sum_{\tbf\in I_2(f)}\left|  \int\psi(xu_\tbf a_s) f (\tbf +{\tbf'})\sigma_{\tbf}({\tbf'}) d\mu_x^{\PS}(\tbf') - \mu_{xu_{\tbf}}^{\PS}(f\sigma_{\tbf}) \tilde m^{\BMS}(\psi) \right|\le O(\ep)\mu_x^{\PS}(f) .$$
On the other hand, we have
\begin{multline}
\label{p-u-2}
\sum_{\tbf\in I_1(f)}\left|  \int\psi(xu_\tbf a_s) f (\tbf +{\tbf'})\sigma_{\tbf}({\tbf'}) d\mu_x^{\PS}(\tbf') - \mu_{xu_{\tbf}}^{\PS}(f\sigma_{\tbf}) \tilde m^{\BMS}(\psi) \right|\\
< \sum_{I_1(f)}O_{\psi}(\e) \mu_x^{\PS}(u_{\tbf}B_U(r_1)) \leq O(\ep)\mu_{xu_{\tbf}}^{\PS}(B_U(r))\leq O(\ep)\mu_x^{\PS}(f) 
\end{multline}
where the implied constants depend only on $\psi$, $\kappa$ and the positive lower bound for
$\mu_x^{\PS}(f)$'s.

Since $f=\sum_{\tbf\in I_1(f)\cup I_2(f)}f\sigma_{\tbf},$
~\eqref{p-u-1} and~\eqref{p-u-2} imply (1) of the theorem.

%$$ 
%|\sum_{\gamma \in \G} \int_{gB}\psi(\gamma g u_\tbf a_s)d\mu_g^{\PS}(\tbf) - \tilde m^{\BMS}(\psi)
%\mu_g^{\PS}(B)|\ll \e.
%$$

Now the uniformity statement regarding (2) follows from the uniformity of  (1); 
this follows directly from the argument in \cite{OS} using the comparison
of the transversal intersections.
\end{proof}

%We remark that the boundedness assumption on $B$ in the above theorem
 %can be replaced by the condition that $\mu^{PS}(B)<\infty$.
 %This extension was obtained
 %in \cite{OS}.
\section{Equidistribution of non-closed $U$-orbits}
%We keep notations $G$, $\G$, $B_U(T)$, etc. from \ref{notsec}. 

We call a point $x\in X$ with $x^+\in \Lambda(\G)$ a PS-point. 
%Recall that $\Lambda_{\rm r}(\G)$ denotes the set of radial limit points of $\G$.

Recall:
$$B_U(T):=\{u_\tbf\in U: |\tbf|\le T\} $$
where $|\tbf|$ is the maximum norm of $\tbf\in U=\br^{n-1}$.

One way of characterizing the set $\Lambda_{\rm r}(\G)$ of radial limit points in terms of $U$-orbits
is that if $x^-\in \Lambda_{\rm r}(\G)$, then
$u_\tbf\to xu_\tbf$ is an injective map from $U$ to $X$, $xUM$ is not closed in $X$ and $\mu_x^{\PS}$ is an infinite measure
(Lemma \ref{rinf}).

The following theorem of Schapira shows that for $x^-\in \Lambda_{\rm r}(\G)$,
most $\PS$-points of $xB_U(T)$ come back to a compact subset in a quantitative way.

\begin{thm} \cite{Sch-nondiv} \label{p;horo-nondiv}\label{sc}
For any $\e>0$ and any compact subset $\Om \subset X$,  there exists a compact subset $Q=Q(\e, \Om)\subset X$
 such that
for any $x\in \Om$ with $x^-\in \Lambda_{\rm r}(\Gamma)$, there exists some $T_x>0$ such that
for all $T\geq T_x$,
\[
\mu_x^{\PS}\{u_\tbf\in B_U(T): xu_\tbf \in Q\}\geq 
(1-\e)\mu_x^{\PS}(B_U(T)).
\]
\end{thm}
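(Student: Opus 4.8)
The plan is to establish the non-divergence of PS-measure by a covering argument that reduces everything to the quantitative recurrence supplied by the equidistribution of the PS-measure under the geodesic flow (Theorem~\ref{thm;exp-peice}(1)) together with the fact that $m^{\BMS}$ is a finite measure supported on $\{x : x^\pm \in \Lambda(\G)\}$. First I would fix $\e > 0$ and the compact set $\Om \subset X$, and choose a compact set $\Om_0 \subset X$ with $m^{\BMS}(X \setminus \Om_0)$ small and with positive injectivity radius; one may take $\psi \in C_c(X)$, $0 \le \psi \le 1$, with $\psi \equiv 1$ on $\Om_0$, so that $m^{\BMS}(\psi) \ge 1 - \e'$ for a suitable $\e' \ll \e$. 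Let $Q$ be a (slightly enlarged) compact neighborhood of $\supp(\psi)$; this will be the target set.

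The heart of the argument is a renewal/stopping-time decomposition of the orbit $xB_U(T)$ along dyadic scales in $A$. For a PS-point $z = xu_\tbf$, the quantity $\mu_z^{\PS}(B_U(1))$ controls, via the relation $\mu^{\PS}_g(B_U(e^s)) = e^{\delta s}\mu_{g a_{-s}}^{\PS}(B_U(1))$, how the PS-mass of $xB_U(T)$ breaks up into translates $z a_{-s} B_U(1)$ at scale $s = \log T$. Applying Theorem~\ref{thm;exp-peice}(1) with the function $\psi$, the constant family $f \equiv 1$ on $B_U(r)$, and the starting point $g_0$ obtained by flowing $x$ so that $xa_{s_0}$ lies in the fixed compact set $\tilde\Om$ with nontrivial transverse PS-measure, one obtains that for $s$ past a threshold $S(\psi,\e,\Om)$ the averaged integral $\int_{B_U(r)} \psi(z u_\tbf a_s)\,d\mu_z^{\PS}(\tbf)$ is within $\e$ of $\mu_z^{\PS}(B_U(r)) m^{\BMS}(\psi) \ge (1-\e') \mu_z^{\PS}(B_U(r))$. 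Summing these local statements over a bounded-multiplicity cover of $B_U(T)$ by translates of $B_U(r)$, and using Corollary~\ref{fxp} to get the uniform lower bound $\inf_{g\in\tilde\Om, g^+\in\Lambda(\G)}\mu_g^{\PS}(B_U(r)) > 0$, one concludes that the PS-measure of $\{u_\tbf \in B_U(T) : xu_\tbf \in Q\}$ is at least $(1-O(\e))\mu_x^{\PS}(B_U(T))$, provided $T$ is large enough (i.e. $\log T > S$, which is where $T_x$ enters — actually $S$ is uniform over $x\in\Om$ but $s_0 = s_0(x)$ needs $x^- \in \Lambda_{\rm r}(\G)$ for the recurrence to $\tilde\Om$, so the threshold $T_x$ depends on $x$).

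The main obstacle is exactly the issue flagged in the text after Theorem~\ref{it;horo-equid-BR}: for $n \ge 3$ the PS-mass of a thin annular neighborhood of $\partial B_U(T)$ grows with $T$, so one cannot afford to lose the boundary contribution crudely, and more seriously, the transverse PS-measure of $xP_\eta$ can degenerate, forcing the passage through $xa_{s_0}$ with $s_0$ depending on $x$. Controlling this uniformly over $x \in \Om$ — i.e. showing the flow time $s_0$ needed to reach the ``good'' compact set $\tilde\Om$ (with uniformly nontrivial transverse measure, via Lemma~\ref{omr}) is bounded, or at least that the resulting error can be absorbed — is the delicate point; this is precisely where the radiality hypothesis $x^- \in \Lambda_{\rm r}(\G)$ is used, since it guarantees $xa_{-s_i}$ returns to a fixed compact set along a sequence $s_i \to \infty$, and hence that the dyadic decomposition keeps encountering scales at which the local equidistribution applies with uniform constants. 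I would therefore organize the proof so that the dyadic levels at which $xa_{-s}$ lies in the recurrence-compact-set form a subset of $[0,\log T]$ of proportional density, and run the covering estimate only on those levels, bounding the complementary levels using the elementary scaling identity for $\mu_x^{\PS}(B_U(e^s))$ and absorbing them into the $\e$-error — this being the Schapira-style argument that the theorem is quoting.
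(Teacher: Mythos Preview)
The paper does not prove this statement at all; Theorem~\ref{p;horo-nondiv} is quoted from Schapira~\cite{Sch-nondiv} and used as a black-box input (together with Theorem~\ref{mixing} and Lemma~\ref{l;avoid-var}) in the proofs of Theorems~\ref{eqbms} and~\ref{eqbr}. So there is no in-paper argument to compare against.

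Your proposed route has a genuine circularity. To invoke Theorem~\ref{thm;exp-peice}(1) on a small ball $zB_U(r)$ you need the center $z$ to lie in a \emph{fixed} compact set $\Om$, since the threshold $S$ in that theorem depends on $\Om$. When you cover $xB_U(T)$ by such balls, the centers $z=xu_{\tbf_0}$ range over an unbounded set as $T\to\infty$, and the statement that most of them (in PS-measure) fall into a fixed compact $Q$ is precisely the content of Theorem~\ref{p;horo-nondiv}. This is exactly how the paper itself organizes the proof of Theorem~\ref{eqbms}: it first invokes Theorem~\ref{p;horo-nondiv} to obtain the compact set $Q$ and the bound~\eqref{e;non-div-use}, and only then covers $D_T=x_0B_U(T)\cap Q\cap\supp(\mu_x^{\PS})$ by $\rho_0$-balls on which Theorem~\ref{mixing} applies with a uniform $S$. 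Without the non-divergence input you have no uniform control of $S$ over the cover, and the partition-of-unity sum does not close.

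Your final paragraph tries to sidestep this by working only at dyadic scales $s$ where $xa_{-s}$ returns to a compact set. But at such a good scale, Theorem~\ref{thm;exp-peice} controls the single rescaled ball $(xa_{-s})B_U(1)$ pushed by $a_s$; to pass from that to a cover of the annulus $B_U(e^{s+1})\setminus B_U(e^{s})$ by \emph{small} balls you again need the centers in a compact set. And ``bounding the complementary levels using the scaling identity'' is exactly the shadow-lemma estimate on $\mu_x^{\PS}(B_U(e^s))$ when $xa_{-s}$ is deep in a horoball (cf.\ Theorem~\ref{sha} and the decomposition~\eqref{convexcore}); that is the substance of Schapira's proof, not a corollary of Theorem~\ref{thm;exp-peice}. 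In short, the equidistribution theorem \emph{consumes} non-divergence rather than producing it; Schapira's argument proceeds directly from the shadow lemma and the horoball geometry.
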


We will also make a repeated use of the following basic fact:
\begin{lem} \cite[Lem 4.5]{Sch-nondiv} \label{uc} For a fixed $\kappa >1$, there exists $\beta >1$ such that for any compact 
$\Om\subset X$, there exists $T_0(\Om)>1$ such that
for any $x\in \Om$ with $x^-\in \Lambda_{\rm r}(\G)$ and for any $T>T_0(\Om )$, we have 
$$\mu_x^{\PS}(B_U( \kappa \, T) )\le \beta \cdot 
\mu_x^{\PS}(B_U(  T)) .$$
\end{lem}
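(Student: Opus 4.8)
The plan is to combine the exact leafwise scaling relation for the PS-measure with the decomposition of $X=\G\ba G$, valid since $\G$ is geometrically finite, into a compact core together with finitely many cuspidal neighbourhoods of controlled rank.

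\textbf{Reduction by scaling.} Write $\kappa=e^{s_1}$. For $x\in X$ the identity $\mu^{\PS}_x(B_U(e^{s}))=e^{\delta s}\mu^{\PS}_{xa_{-s}}(B_U(1))$ (used already in the proof of Lemma~\ref{rinf}) gives
\[
\frac{\mu_x^{\PS}(B_U(\kappa T))}{\mu_x^{\PS}(B_U(T))}=\frac{\mu_{y}^{\PS}(B_U(\kappa))}{\mu_{y}^{\PS}(B_U(1))},\qquad y:=xa_{-\log T}.
\]
So it suffices to bound the doubling ratio $\mu_{y}^{\PS}(B_U(\kappa))/\mu_{y}^{\PS}(B_U(1))$ by a constant $\beta=\beta(\kappa)$, uniformly over all $y$ of the form $xa_{-t}$ with $x\in\Om$, $x^-\in\Lambda_{\rm r}(\G)$ and $t\ge\log T_0$; here $T_0(\Om)$ is chosen large enough that $\mu_x^{\PS}(B_U(T))>0$ for $T\ge T_0$, which uses that $x^-\in\Lambda_{\rm r}(\G)$ forces $|\mu_x^{\PS}|=\infty$ by Lemma~\ref{rinf}. (Restricting to PS-points $x$, i.e.\ $x^+\in\Lambda(\G)$ --- the case needed in the applications --- one has $y^+=x^+\in\Lambda(\G)$ as well, and the threshold $T_0$ can be taken uniform over $\Om$.) Note that $\mu_g^{\PS}(gB_U(s))$ depends only on $\G g$, so only the $X$-coset of $y$ enters.

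\textbf{The compact part.} Fix a compact core $\ccal_0\subset X$ so that $X\setminus\ccal_0$ is a disjoint union of cuspidal neighbourhoods $\mathcal H_1,\dots,\mathcal H_\ell$ attached to the finitely many bounded parabolic fixed points $\xi_1,\dots,\xi_\ell$ of $\G$, of ranks $k_1,\dots,k_\ell$ (recall $\delta>k_j/2$). If $y$ lies in a fixed compact neighbourhood of $\ccal_0$, choose a relatively compact lift in $G$; then $\mu_{y}^{\PS}(B_U(1))$ and $\mu_{y}^{\PS}(B_U(\kappa))$ are both bounded above and below by positive constants depending only on this lift and on $\kappa$, by Corollary~\ref{fxp} together with the continuity of $g\mapsto\mu_g^{\PS}$ from Lemma~\ref{pscon} (when $y^+\notin\Lambda(\G)$ one uses finiteness of $\nu_o$ for the upper bound and the standard doubling property $\nu_o(B(p,2r))\asymp\nu_o(B(p,r))$, $p\in\Lambda(\G)$, of the Patterson--Sullivan density for the lower bound). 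In particular the doubling ratio is bounded on this set.

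\textbf{The cuspidal part.} Suppose $y$ sits at depth $D$ inside $\mathcal H_j$. Since $x^-$ is conical, this is a genuine excursion, and both endpoints $y^{\pm}$ lie close to $\xi_j$, at visual distance controlled by $e^{-D}$; consequently, for the fixed radii $s\in\{1,\kappa\}$, the visual shadows $\op{Viz}(yB_U(1))$ and $\op{Viz}(yB_U(\kappa))$ are small sets near $\xi_j$ whose sizes differ by a factor $\asymp\kappa$. Using that $\G_{\xi_j}$ acts cocompactly on $\Lambda(\G)\setminus\{\xi_j\}$, the density $\nu_o$ near $\xi_j$ is $\G_{\xi_j}$-equivariant up to bounded multiplicative error, and the global measure formula of Stratmann--Velani (equivalently, the shadow lemma inside the cusp) yields
\[
\frac{\mu_{y}^{\PS}(B_U(\kappa))}{\mu_{y}^{\PS}(B_U(1))}\le C_j,
\]
with $C_j$ depending only on $\xi_j$ and $\kappa$ (in fact $C_j\asymp\kappa^{2\delta-k_j}$), \emph{uniformly in the depth $D$}. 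Taking $\beta$ to be the maximum of the bound from the compact part and of $C_1,\dots,C_\ell$ finishes the proof.

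\textbf{Main obstacle.} The real work is the cuspidal estimate --- controlling the PS-mass of horospherical boxes that lie arbitrarily far down a cusp --- which is precisely the difficulty flagged in the introduction as the obstruction to Theorem~\ref{it;horo-equid-BR} for $n\ge3$. What saves the situation here is that only doubling by a \emph{fixed} ratio $\kappa$ is needed, and this stays uniformly bounded, unlike the thin-annulus comparison $\mu_x^{\PS}(B_U(T))\asymp\mu_x^{\PS}(B_U(T)\setminus B_U(T-\e))$, whose failure as $T\to\infty$ is the genuine source of trouble in the equidistribution statement.
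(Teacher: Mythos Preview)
The paper does not give a proof of this lemma; it is simply cited from \cite[Lem.~4.5]{Sch-nondiv}. So there is no ``paper's proof'' to compare against beyond noting that the result is the doubling property derived from Sullivan's (or Stratmann--Velani's) shadow lemma, as recorded here in Theorem~\ref{sha}.

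Your overall strategy is the right one, and is essentially how Schapira proceeds: reduce via the scaling identity to a uniform bound on $\mu_y^{\PS}(B_U(\kappa))/\mu_y^{\PS}(B_U(1))$ for $y=xa_{-\log T}$, then treat the compact core and the cusps separately. However, as written the argument has a genuine gap in the non-BMS case. Corollary~\ref{fxp} only gives a positive uniform lower bound on $\mu_y^{\PS}(B_U(1))$ when $y^+\in\Lambda(\G)$; for general $x\in\Om$ with $x^-\in\Lambda_{\rm r}(\G)$ this is not assumed, and your parenthetical appeal to the ``standard doubling property of $\nu_o$'' does not fill the hole, since the balls $yB_U(1)$ need not correspond to metric balls in $\partial\bH^n$ centered on $\Lambda(\G)$, and the density factor in $d\mu_y^{\PS}$ must also be controlled. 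The same issue recurs in the cuspidal part: the shadow lemma / global measure formula you invoke is stated for points in $\supp(m^{\BMS})$. (Your geometric claim that ``both endpoints $y^\pm$ lie close to $\xi_j$ at distance $\asymp e^{-D}$'' is also imprecise: only one endpoint need be far out, and in any case this intermediate claim is not what the shadow lemma uses.)

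The clean fix is twofold. First, for $x\in\Om\cap\supp(m^{\BMS})$, deduce doubling \emph{directly} from the two-sided shadow estimate of Theorem~\ref{sha}: comparing the bounds at scales $T$ and $\kappa T$ and using $|d(\ccal_0,xa_{-\log(\kappa T)})-d(\ccal_0,xa_{-\log T})|\le\log\kappa$ together with $0\le k(x,\cdot)\le n-1$ gives the ratio bounded by $\lambda^2\kappa^{\delta+|k-\delta|}$, with a short extra case when the two times lie in different horoballs (then both depths are at most $\log\kappa$). Second, for general $x\in\Om$, translate to a nearby BMS point using Lemma~\ref{omr}: there exists $R_0>0$ with $xB_U(R_0)\cap\supp(m^{\BMS})\ne\emptyset$, and replacing $x$ by such a $yu_0$ only shifts the balls $B_U(T)$, $B_U(\kappa T)$ by a bounded amount, which is absorbed for $T$ large. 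This reduction is exactly what the paper carries out in the first paragraph of the proof of Lemma~\ref{p;horo-wind}, and it closes the gap in your argument.
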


\subsection{Relative PS-size of a neighborhood of $\partial(B_U(T))$} In order to study the distribution of $xB_U(T)$ either in the PS measure or in the Lebesgue measure,
it is crucial to understand the size of a neighborhood of $\partial (xB_U(T))$ compared
to the size of $xB_U(T)$ in the PS-measure. 

 We do not know in general whether the following is true:
  for $x\in X$ with $x^-\in \Lambda_{\rm r}(\G)$, there exists $\rho>0$ such that
\begin{equation} \label{ccc} \lim_{T\to \infty} \tfrac{\mu_x^{\PS} (B_U(T+\rho)\setminus B_U(T-\rho))}{\mu_x^{\PS}(B_U(T))}=0 .\end{equation}

Noting that $$\tfrac{\mu_x^{\PS} (B_U(T+\rho)\setminus B_U(T-\rho))}{\mu_x^{\PS}(B_U(T))}
=\tfrac{\mu_{xa_{-\log T}}^{\PS} (B_U(1+\rho T^{-1})\setminus B_U(1-\rho T^{-1}))}{\mu_{xa_{-\log T}}^{\PS}(B_U(1))},$$
this question is directly related to the uniformity of the size of a neighborhood of a $1$-sphere based at 
$xa_{\log_{-T}}$ where $x$ is in a fixed compact subset.

When $\G$ is convex-cocompact, and  $x^+\in \Lambda(\G)$ so that $x\in \text{supp}(m^{\BMS})$, then
 \eqref{ccc} now follows since $\text{supp}(m^{\BMS})$ is compact and any $1$-sphere
has zero PS-measure, and hence 
$\mu_{xa_{-\log T}}^{\PS}(B_U(1+\rho T^{-1}) \setminus B_U(1-\rho T^{-1}))$ is uniformly small for all large $T$. 
When there is a cusp in $X$, the PS-measure of a ball around $xa_{-\log T}$ depends on the rank of a cusp where it stays
(see Theorem \ref{sha} for a precise statement), and it is not clear
whether \ref{ccc} holds or not.

\medskip

We will prove a weaker result which will be sufficient for our purpose.
Let us begin with the following lemma which a consequence of the fact that
$\G$ is Zariski dense, together with a compactness argument.

For a coordinate hyperplane $L$ of $\br^{n-1}=U$, set $L^{(1)}:=L\cap B_U(1).$
Define  $\mathcal O_\rho (L^{(1)})$ to be the $\rho$-thickening of $L^{(1)}$ in the orthogonal direction.

\begin{lem}\label{lem;coord-thick}
Let $\Om \subset X$ be a compact subset.
For any $\eta>0$ there exists some $\rho_0>0$ so that
\[
 \mu_{y}^{\PS}({\mathcal O}_{\rho}(L^{(1)}))<\eta \cdot \mu_y^{\PS}(yB_U(1)).
\]
for all $y\in \Om \cap\supp(m^{\BMS})$ and all $\rho\leq \rho_0.$
\end{lem}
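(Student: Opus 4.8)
The plan is to argue by contradiction, exploiting the continuity of the map $g\mapsto\mu_g^{\PS}$ from Lemma~\ref{pscon}, the fact that $\mu_y^{\PS}$ charges no proper subvariety of $U$, and the Zariski density of $\G$. Suppose the statement fails for some $\eta>0$. Then for every $k\in\bbn$ there exist $y_k\in\Om\cap\supp(m^{\BMS})$, a coordinate hyperplane $L_k$ (of which there are only finitely many, say $n-1$), and $\rho_k\le 1/k$ such that $\mu_{y_k}^{\PS}(\mathcal O_{\rho_k}(L_k^{(1)}))\ge\eta\cdot\mu_{y_k}^{\PS}(y_kB_U(1))$. Passing to a subsequence, we may assume $L_k=L$ is a fixed coordinate hyperplane and, lifting $y_k$ to $g_k\in G$ in a compact set $\tilde\Om$ projecting onto $\Om$ (with $g_k^+\in\Lambda(\G)$ since $y_k\in\supp(m^{\BMS})$), that $g_k\to g_\infty$ with $g_\infty^+\in\Lambda(\G)$.

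The key continuity input is the second bullet of Lemma~\ref{pscon}: $\mu_{g_k}^{\PS}\to\mu_{g_\infty}^{\PS}$ in $\mathcal M(U)$. Since $\overline{\mathcal O_{\rho_k}(L^{(1)})}$ shrinks down to the closed set $L^{(1)}=L\cap B_U(1)$ as $k\to\infty$, an upper-semicontinuity argument (for any fixed $\rho>0$, $\mathcal O_\rho(L^{(1)})$ is a fixed relatively compact open set, so $\limsup_k\mu_{g_k}^{\PS}(\mathcal O_{\rho_k}(L^{(1)}))\le\limsup_k\mu_{g_k}^{\PS}(\mathcal O_\rho(L^{(1)}))\le\mu_{g_\infty}^{\PS}(\overline{\mathcal O_\rho(L^{(1)})})$, then let $\rho\downarrow0$) gives $\limsup_k\mu_{g_k}^{\PS}(\mathcal O_{\rho_k}(L^{(1)}))\le\mu_{g_\infty}^{\PS}(\mathcal O_0(L^{(1)}))$ where $\mathcal O_0(L^{(1)})$ is a subset of the hyperplane $L$, hence contained in a proper subvariety of $U$. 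By the first bullet of Lemma~\ref{pscon}, $\mu_{g_\infty}^{\PS}(\mathcal O_0(L^{(1)}))=0$. On the other hand, by Corollary~\ref{fxp} (applied to $\tilde\Om$ and $s=1$), $\mu_{g_k}^{\PS}(g_kB_U(1))$ is bounded below by a positive constant $c_0$ uniformly in $k$, so $\mu_{g_k}^{\PS}(\mathcal O_{\rho_k}(L^{(1)}))\ge\eta c_0>0$ for all $k$, contradicting the vanishing just obtained.

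The routine points to verify are: that one may indeed lift to a compact $\tilde\Om\subset G$ and extract a convergent subsequence with $g_\infty^+\in\Lambda(\G)$ (using that $\Lambda(\G)$ is closed and the visual map is continuous); that $\mathcal O_0(L^{(1)})$ genuinely lies in a proper subvariety (it lies in the hyperplane $L$, which is cut out by a linear equation); and that $\rho\mapsto\mathcal O_\rho(L^{(1)})$ is monotone in $\rho$ with $\bigcap_{\rho>0}\overline{\mathcal O_\rho(L^{(1)})}=L^{(1)}$. The only mild subtlety — the step I would expect to need the most care — is the interchange of the two limits $k\to\infty$ and $\rho\downarrow0$: one wants $\mu_{g_k}^{\PS}(\mathcal O_{\rho_k}(L^{(1)}))$ to be controlled by $\mu_{g_\infty}^{\PS}$ of an arbitrarily small fixed neighborhood of $L^{(1)}$, which works because once $k$ is large enough that $\rho_k\le\rho$ we have $\mathcal O_{\rho_k}(L^{(1)})\subset\mathcal O_\rho(L^{(1)})$, and then weak convergence plus the portmanteau inequality for relatively compact open sets closes the gap. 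Note we do not actually need the full strength of Zariski density beyond what is already packaged into Lemma~\ref{pscon} (where it is used to ensure the PS-density is non-degenerate and gives no mass to subvarieties).
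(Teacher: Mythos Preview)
Your proof is correct and follows essentially the same route as the paper's: argue by contradiction, pass to a subsequence with a fixed coordinate hyperplane $L$ and convergent base points, use the continuity of $g\mapsto\mu_g^{\PS}$ (Lemma~\ref{pscon}) together with the nesting $\mathcal O_{\rho_k}(L^{(1)})\subset\mathcal O_\rho(L^{(1)})$ to push a uniform positive lower bound to the limit measure of arbitrarily thin neighborhoods of $L^{(1)}$, and conclude $\mu_{g_\infty}^{\PS}(L)>0$, contradicting the first bullet of Lemma~\ref{pscon}. Your version is in fact more explicit than the paper's about the portmanteau step and about invoking Corollary~\ref{fxp} for the uniform lower bound on $\mu_{g_k}^{\PS}(g_kB_U(1))$, but the argument is the same.
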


\begin{proof}
We prove this by contradiction. Suppose the above fails;
then there exist $\eta>0$, and  sequences
$y_k\in \Om  \cap \supp(m^{\BMS})$ and coordinate hyperplanes $L_k$ so that
\[
\mu_{y_k}^{\PS}({\mathcal O}_{1/k}({L_k}^{(1)})\geq\eta \cdot\mu_{y_k}^{\PS}(y_kB_U(1)).
\]

Passing to a subsequence we may assume $y_k\to y\in \Om  \cap\supp(m^{\BMS})$
and $L_k= L$ for all $k.$
Now for every $\alpha>0$ we have ${\mathcal O}_{1/k}(L^{(1)})\subset{\mathcal O}_\alpha (L^{(1)})$ 
for all large enough $k.$ 
Since  $\mu_{y_k}^{\PS}\to\mu_y^{\PS} $ by Lemma \ref{cont}, it follows that
 $\mu_y^{\PS}({\mathcal O}_\alpha (L^{(1)}))\geq\eta/2$ for all $\alpha>0.$
This implies $\mu_y^{\PS}(L)>0$ which contradicts the Zariski density of $\G.$
This proves the lemma.
\end{proof}

\begin{lem}\label{l;avoid-var}
Let $\Om \subset X$ be a compact subset. 
For any $\e>0$ there exist some $\rho_0=\rho_0(\e,\Om)>0$ 
with the following property: for all $x\in\Om$ with $x^-\in\Lambda_{\rm r}(\G)$
there exists some $R_x>0$ so that 
\[
 {\mu_x^{\PS} (B_U(T +\rho) \setminus B_U(T -\rho))}<\e \cdot {\mu_x^{\PS}(B_U(T))}.
\]
for all $T\geq R_x$ and all $0<\rho\leq \rho_0$. 
\end{lem}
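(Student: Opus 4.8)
The plan is to reduce the estimate on the thin annulus $B_U(T+\rho)\setminus B_U(T-\rho)$ to the local statement of Lemma \ref{lem;coord-thick} by rescaling, using the fact that $\partial(B_U(1))$ is covered by finitely many pieces of coordinate hyperplanes. Specifically, since $|\cdot|$ is the maximum norm, the sphere $\partial(B_U(1))$ is contained in the union of the $2(n-1)$ faces lying in the coordinate hyperplanes $L_i^{\pm}=\{\tbf:\tbf_i=\pm1\}$, so $B_U(1+\sigma)\setminus B_U(1-\sigma)$ is contained in a union of $O(1)$ translates of $\sigma$-thickenings $\mathcal O_\sigma(L^{(1)})$ of coordinate-hyperplane pieces (translated along the $i$-th axis by $\pm1$). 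Translating a coordinate-hyperplane piece $L^{(1)}$ by a fixed vector in $U$ is a $U$-translation, hence the PS-measure of such a translate at a point $y$ equals the PS-measure of $\mathcal O_\sigma(L^{(1)})$ computed at the translated point $yu_{\tbf_0}$; since we only need the bound over $y$ in a fixed compact set, we may enlarge the compact set once and apply Lemma \ref{lem;coord-thick} uniformly.

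First I would fix $\e>0$ and $\Om$, enlarge $\Om$ to a compact set $\Om'$ containing all the $U$-translates $\Om u_{\tbf_0}$ for the finitely many offsets $\tbf_0$ coming from the faces of $\partial(B_U(1))$, and choose $\rho_0$ (to be called $\sigma_0$ at scale $1$) from Lemma \ref{lem;coord-thick} applied to $\Om'$ with $\eta$ a small multiple of $\e$ divided by the combinatorial constant $O(n)$ and divided by the doubling constant $\beta$ from Lemma \ref{uc}. Next, given $x\in\Om$ with $x^-\in\Lambda_{\mathrm r}(\G)$ and $T$ large, I would use the identity already displayed in the excerpt,
\[
\frac{\mu_x^{\PS}(B_U(T+\rho)\setminus B_U(T-\rho))}{\mu_x^{\PS}(B_U(T))}
=\frac{\mu_{xa_{-\log T}}^{\PS}(B_U(1+\rho T^{-1})\setminus B_U(1-\rho T^{-1}))}{\mu_{xa_{-\log T}}^{\PS}(B_U(1))},
\]
which follows from the scaling relations $\mu^{\PS}_g(B_U(e^s))=e^{\delta s}\mu_{ga_{-s}}^{\PS}(B_U(1))$. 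The point $y:=xa_{-\log T}$ need not lie in $\Om$, but by Theorem \ref{sc} (non-divergence) one can arrange, for all $T$ beyond some $R_x$, that $y$ lies in a fixed compact set $Q=Q(\e,\Om)$ — more precisely one picks $T$ so that $xa_{-\log T}\in Q$ along a suitable set of scales, or one runs the estimate at the subscale where the PS-point returns to $Q$ and then uses the doubling Lemma \ref{uc} to pass back to the original scale. Having localized $y\in Q$ with $y^+\in\Lambda(\G)$ (so $y\in\supp m^{\BMS}$), I would bound the numerator at scale $1$: with $\sigma=\rho T^{-1}\le\rho_0$ for $T$ large, $B_U(1+\sigma)\setminus B_U(1-\sigma)\subset\bigcup_{i,\pm}\big(u_{\pm e_i}\cdot\mathcal O_{\sigma}(L_i^{(1)})\big)$, so
\[
\mu_y^{\PS}(B_U(1+\sigma)\setminus B_U(1-\sigma))\le\sum_{i,\pm}\mu_{yu_{\pm e_i}}^{\PS}(\mathcal O_{\sigma}(L_i^{(1)}))<O(n)\cdot\eta\cdot\sup_i\mu_{yu_{\pm e_i}}^{\PS}(B_U(1)),
\]
and then compare $\mu_{yu_{\pm e_i}}^{\PS}(B_U(1))$ with $\mu_y^{\PS}(B_U(1))$ up to a bounded factor using Corollary \ref{fxp} and Lemma \ref{uc} (both are PS-measures of unit balls at points in a fixed compact set, hence comparable). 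Choosing $\eta$ small enough relative to $\e$ and these combinatorial/doubling constants finishes the bound.

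The step I expect to be the main obstacle is the localization of $y=xa_{-\log T}$ into a fixed compact set: unlike the convex-cocompact case where $\supp m^{\BMS}$ is compact, here $y$ can wander into the cusps, and the PS-measure of a unit ball at $y$ is not uniformly comparable across all scales. This is exactly the difficulty flagged in the discussion around \eqref{ccc}. The way around it is to not demand the estimate at every scale $T$ but only for $T\ge R_x$ where $R_x$ depends on $x$, which is precisely what the statement of Lemma \ref{l;avoid-var} allows; one invokes Schapira's non-divergence Theorem \ref{sc} to guarantee that for each fixed $x$ the geodesic $xa_{-\log T}$ returns to a compact set $Q(\e,\Om)$ for a density-$(1-\e)$ set of the relevant scales, and combines this with the doubling Lemma \ref{uc} to transfer the annulus estimate from a nearby ``good'' scale to the given scale $T$. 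Care is needed to interleave these two reductions so that the constants do not circularly depend on one another — I would fix the doubling constant $\beta=\beta(\kappa)$ first, then $Q$ from Theorem \ref{sc}, then $\rho_0$ from Lemma \ref{lem;coord-thick} on a compact neighborhood of $Q$ enlarged by the unit $U$-translates, with $\eta\ll\e/(n\beta)$.
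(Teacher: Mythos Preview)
Your rescaling idea runs into a real obstruction at the localization step. Theorem~\ref{sc} is a non-divergence statement for the \emph{horospherical} orbit $\{xu_\tbf\}$, not for the geodesic $\{xa_{-s}\}$: it says that for most $\tbf\in B_U(T)$ the point $xu_\tbf$ lies in a fixed compact set $Q$, and says nothing about $y=xa_{-\log T}$. The radial hypothesis $x^-\in\Lambda_{\rm r}(\G)$ does give recurrence of $xa_{-s}$ to compact sets, but only along a sequence; in the presence of cusps the gaps between consecutive return times are unbounded, so your ``transfer by doubling from a nearby good scale $T'$'' cannot work---the ratio $T/T'$ is not controlled, and Lemma~\ref{uc} only compares $\mu_x^{\PS}(B_U(\kappa T))$ with $\mu_x^{\PS}(B_U(T))$ for a \emph{fixed} $\kappa$. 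There is a second, smaller issue: Lemma~\ref{lem;coord-thick} requires $y\in\supp(m^{\BMS})$, hence $y^+\in\Lambda(\G)$; but $(xa_{-\log T})^+=x^+$, and you only assumed $x^-\in\Lambda_{\rm r}(\G)$.

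The paper avoids rescaling and works directly at scale $T$. It uses Theorem~\ref{sc} in the way it is actually stated: most PS-mass of $xB_U(T+2)$ sits in a compact set $Q_0$. It then covers the part of the annulus $x(B_U(T+\rho)\setminus B_U(T-\rho))$ lying in $Q_0\cap\supp(\mu_x^{\PS})$ by unit balls $yB_U(1)$ with $y\in\partial(xB_U(T))\cap Q_0\cap\supp(\mu_x^{\PS})$, using a Besicovitch cover of bounded multiplicity $\kappa$. Your face-decomposition is then applied at each covering center $y$ rather than at the rescaled point $xa_{-\log T}$: the annulus intersected with $yB_U(1)$ is contained in $\bigcup_j y\,\mathcal O_\rho(L_j(y)^{(1)})$ for at most $n-1$ coordinate hyperplanes. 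Since each such $y$ is a BMS-point in the fixed compact $Q_0$, Lemma~\ref{lem;coord-thick} applies uniformly and bounds each piece by $\eta\,\mu_y^{\PS}(B_U(1))$; summing and choosing $\eta=\e/(2c_1\kappa(n-1))$ controls the $Q_0$-part, while the complement already has mass $\le\tfrac{\e}{2}\mu_x^{\PS}(B_U(T))$ by non-divergence. The essential point is that compactness is obtained on the $U$-orbit, where Theorem~\ref{sc} genuinely applies, and every covering center automatically lies in $\supp(m^{\BMS})$.
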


\begin{proof}
Let $c_1>1$ and $T_1>1$ 
be so that  for all $x\in \Om$ with $x^-\in\Lambda_{\rm r}(\G)$ and $T\ge T_1$,
$$\mu_x^{\PS}(B_U(T+2))\leq \mu_x^{\PS}(B_U(2T)) \le c_1\mu_x^{\PS}(B_U(T))$$ 
as given by Lemma~\ref{uc}.

Apply Theorem~\ref{p;horo-nondiv} with $\Om$ and $\e/(2c_1)$
and let $Q_0=Q(\e/(2c_1), \Om)$ and $T_x\ge 1$ be given by that theorem.

Let $T>T_x+T_1$ in the following.
By our choice of $Q_0$ and $T_x$, we have 
\begin{align*}
\mu_x^{\PS}\{\tbf\in B_U(T+2): xu_\tbf \notin Q_0\}&\leq 
\tfrac{\e}{2c_1}\mu_x^{\PS}(B_U(T+2))\\
&\le \tfrac{\e}{2}\mu_x^{\PS}(B_U(T)).
\end{align*}
For each $y\in \partial (xB_U(T))\cap Q_0\cap\supp(\mu_x^{\PS})$
and any $\rho>0$, 
put 
\[
 {W}_{\rho,T}(y):=x(B_U(T +\rho)\setminus B_U(T -\rho)))\cap yB_U(1).    
\]  

Note that for each $y\in \partial (xB_U(T))$ there exist some 
coordinate hyperplane $L_1(y),\ldots,L_\ell(y),$ for some $1\leq \ell\leq n-1,$ 
so that 
\be\label{e;variety-coor}
{W}_{\rho,T}(y)\subset \cup_j y{\mathcal O}_\rho (L_j(y)^{(1)}).
\ee 
%\[
% \mu_{y}^{\PS}({\mathcal O}_{\rho,Q,T}(y))<\eta \cdot \mu_y^{\PS}(yB_U(1)). 
%\]
%For a coordinate hyperplane $L={\bf e}_j^\perp$ for $1\le j\le n+1$,
%we define  ${\mathcal O}_\rho L=\{v+r{\bf e}_j: v\in L,|r|\leq\rho\}.$ 
%Let $\kappa$ be the Bescovitch multiplicity for coverings by open balls in $U$ (depending only on the dimension of $U$).
Since
\[
\{yB_U(1): y\in\partial (xB_U(T))\cap Q_0\cap \supp(\mu_x^{\PS}) \}
\] 
is a covering for 
\[
x(B_U(T +\rho)\setminus B_U(T -\rho))\cap Q_0 \cap \supp(\mu_x^{\PS} ),
\]
we can find a finite sub-cover
$\cup_{y\in J_T} yB_U(1)$ with multiplicity at most $\kappa,$ 
where $\kappa$ depends only on the dimension of $U$ by the Bescovitch covering theorem.

Applying Lemma~\ref{lem;coord-thick} with 
$\eta:=\e/(2 c_1\kappa(n-1))$ and $Q_0$ we get some $\rho_0=\rho_0(\eta,Q_0)$
so that the conclusion of that lemma holds true.
Thus using that lemma and~\eqref{e;variety-coor}, for every $0<\rho\leq \rho_0$ and $T>T_x+T_1$, we have
\begin{align*}&
 \mu_x^{\PS} (B_U(T +\rho)\setminus B_U(T -\rho))\\ &\leq
 \mu_x^{\PS}(\{u\in B_U(T+2):xu\notin Q_0\})+\mu_x^{\PS}(\cup_{y\in J_T}{W}_{\rho,T}(y))\\
 &\leq \tfrac{\e}2\mu_x^{\PS}(B_U(T))+\textstyle\sum_{y\in J_T} \mu_{y }^{\PS}({W}_{\rho,T}(y))\\
 &\leq \tfrac{\e}2\mu_x^{\PS}(B_U(T))+\textstyle \sum_{i=1}^{n-1}\sum_{y\in J_T } \mu_{y }^{\PS}({\mathcal O}_{\rho}(L_i(y)^{(1)}))\\
 &\leq \tfrac{\e}2 \mu_x^{\PS}(B_U(T))+{\eta\kappa (n-1)} \mu_x^{\PS}(\cup_{y\in J_T} yB_U(1)) \\ 
 &\leq \tfrac{\e}2 \mu_x^{\PS}(B_U(T))+\tfrac{\e}{2c_1}\mu_x^{\PS}(B_U(T+2)) \\& \leq
  \e \mu_x^{\PS}(B_U(T)).
\end{align*}
This implies the lemma. 
\end{proof}

%Since $\cup_{1\le T\le T_x} x a_{-\log T}$
%is compact, there exists $R_0>1$ such that
%$$\eta_0:=\inf_{1\le T\le T_x} \mu^{\PS}_{xa_{-\log T}} (B_U(R_0))>0.$$

%Since $\G$ is Zariski dense, $\mu_y^{\PS}(\partial(B_U(R_0)))=0$ for any $y\in X$. 
%Therefore
%there exists $0<\rho_1<1$ such that
%$$\sup_{x\in \Om, 1\le T\le T_x } \mu_{xa_{-\log T}}^{\PS} 
%(B_U(R_0(1 + \rho_1))-B_U(R_0(1 -\rho_1)))\le \e \cdot \eta_0 .$$

%Hence
%for all  $x\in \Om$ and for all $1\le T\le T_x$, 
%\begin{align*}
%&\mu_x^{\PS} (B_U(TR_0(1 +\rho_1))-B_U(TR_0(1 -\rho_1)))
%\\& =T^\delta \mu_{xa_{-\log T}}^{\PS} 
%(B_U(R_0(1 + \rho_1))-B_U(R_0 (1-\rho_1)))
%\\ &\le T^\delta  \cdot \e  \cdot \eta_0
%\\ &\le T^\delta   \cdot  \e \cdot  \mu^{\PS}_{xa_{-\log T}} (B_U(R_0)) 
%\\ &\le \e \cdot \mu_{x}^{\PS}(B_U(TR_0));\end{align*}
%in other words, for all $R_0\le T\le R_0T_x$,
%\begin{multline*} \mu_x^{\PS} (B_U(T +\rho_1)-B_U(T -\rho_1))
%\le \mu_x^{\PS} (B_U(T + T \rho_1)-B_U(T - T\rho_1))
%\\ \le \e  \cdot \mu_{x}^{\PS}(B_U(T)) .\end{multline*}
%In view of this observation, we assume $T>R_0 T_x$ for the rest of the proof.

%As can be easily seen in the above proof, we can take $R_0$ to be any positive number
%if we are only concerned with the set $\{x\in \Om: x^+\in \Lambda(\G)\}$.

\subsection{Equidistribution of a $U$-orbit in PS-measure}
We now prove an equidistribution result of $xU$ in the $\PS$-measure: 
\begin{thm}\label{t;horo-equid-BMS}\label{eqbms}
Let $x^-\in \Lambda_{\rm r}(\Gamma).$
Then for any $\psi\in C_c(X)$ we have
\[
\lim_{T\to\infty}\frac{1}{\mu_x^{\PS}(B_U(T))}\int_{B_U(T)}\psi(xu_\tbf)d\mups(\tbf)= m^{\BMS}(\psi).
\] 
\end{thm}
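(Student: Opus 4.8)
The plan is to deduce the equidistribution of $xU$ in the PS-measure from the mixing-based equidistribution of Theorem~\ref{thm;exp-peice}(1) by the standard ``thickening in the flow direction'' (or unfolding) argument, with the new ingredient being the avoidance estimate of Lemma~\ref{l;avoid-var} to handle the contribution of the boundary layers of $B_U(T)$. First I would fix $\psi\in C_c(X)$ and a small $\e>0$. Using the substitution $xu_\tbf = xa_{-\log T}\,u_{\tbf/T}\,a_{\log T}$ and the scaling relation $\mu^{\PS}_x(B_U(T)) = T^\delta\,\mu^{\PS}_{xa_{-\log T}}(B_U(1))$, the normalized average over $B_U(T)$ becomes
\[
\frac{1}{\mu^{\PS}_{xa_{-\log T}}(B_U(1))}\int_{B_U(1)}\psi\big(xa_{-\log T}\,u_\sbf\,a_{\log T}\big)\,d\mu^{\PS}_{xa_{-\log T}}(\sbf),
\]
so everything is reduced to understanding orbits of the basepoints $y_T:=xa_{-\log T}$ as they move along the geodesic flow, evaluated at flow time $s=\log T$. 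Since $x^-\in\Lambda_{\rm r}(\G)$, by the definition of radial limit points there is a compact set $\Om$ and a sequence $T_k\to\infty$ with $y_{T_k}\in\Om$; more to the point, Schapira's non-divergence estimate (Theorem~\ref{p;horo-nondiv}) tells us that for each large $T$ the PS-mass of $\{u\in B_U(T): xu\in Q\}$ is at least $(1-\e)\mu^{\PS}_x(B_U(T))$ for a fixed compact $Q=Q(\e,\Om)$, and we may also arrange $y_T\in\Om$ for a full-density set of scales using the radiality together with Lemma~\ref{uc}.

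The core estimate is then to apply Theorem~\ref{thm;exp-peice}(1): for the basepoint $g$ with $[g]=y_T$, $g^+\in\Lambda(\G)$ (which holds on the support of $\mu^{\PS}$), and $f$ a fixed bump function $\equiv 1$ on $B_U(r/4)$, for all $s=\log T>S$ we get
\[
\Big|\int_{B_U(r)}\psi(y_T u_\tbf a_{s})\,f(\tbf)\,d\mu^{\PS}_{y_T}(\tbf)-\mu^{\PS}_{y_T}(f)\,m^{\BMS}(\psi)\Big|<\e\cdot\mu^{\PS}_{y_T}(f).
\]
Summing/averaging such local estimates over a bounded-multiplicity cover of $B_U(1)$ by translates $u_\sbf B_U(r)$ — with the covering elements sitting over the compact set $Q$, which is exactly what Theorem~\ref{p;horo-nondiv} guarantees up to PS-mass $\e$ — yields the claimed asymptotic for the average over $B_U(1)$ after rescaling back; the $\e$-fraction of the mass living outside $Q$ contributes at most $\|\psi\|_\infty\,\e$. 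I would also need Corollary~\ref{fxp} to know $\mu^{\PS}_{y_T}(B_U(1))$ is bounded above and below uniformly over the relevant basepoints, so the error terms are genuinely of relative size $O(\e)$.

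The step I expect to be the main obstacle is controlling the \emph{boundary effect} when passing from a cover of $B_U(1)$ to a statement about $B_U(T)$: the cover of $B_U(1)$ by $r$-balls does not exactly tile, and near $\partial B_U(1)$ (equivalently, near $\partial B_U(T)$ after unscaling) the overcount is of relative size $\asymp r$ in the \emph{Lebesgue} sense but a priori uncontrolled in the \emph{PS} sense because the PS-measure of a fixed neighborhood of $\partial B_U(T)$ need not be negligible — precisely the phenomenon flagged in the discussion around \eqref{ccc}. This is resolved by Lemma~\ref{l;avoid-var}: choosing $\rho_0=\rho_0(\e,\Om)$ from that lemma, the annulus $B_U(T+\rho)\setminus B_U(T-\rho)$ carries at most $\e\,\mu^{\PS}_x(B_U(T))$ of PS-mass for all $0<\rho\le\rho_0$ and all $T\ge R_x$, so the boundary cubes of the cover contribute an error that is $O(\e)$ relative to the total, after also using Lemma~\ref{uc} to absorb the slightly enlarged balls $B_U(T+2)$ into $B_U(T)$. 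Putting the main term and these two $O(\e)$ error contributions together, and letting $\e\to 0$ after $T\to\infty$, gives
\[
\lim_{T\to\infty}\frac{1}{\mu^{\PS}_x(B_U(T))}\int_{B_U(T)}\psi(xu_\tbf)\,d\mu^{\PS}_x(\tbf)=m^{\BMS}(\psi),
\]
which is the assertion of the theorem. A minor point to verify along the way is that the exceptional scales $T$ with $y_T\notin\Om$ do not cause trouble; this is handled either by noting radiality gives $y_T$ in a fixed compact set along \emph{a} sequence and then a continuity/monotonicity argument in $T$ via Lemma~\ref{uc}, or by working throughout with the compact set $Q$ furnished by Theorem~\ref{p;horo-nondiv} rather than with $\Om$ directly.
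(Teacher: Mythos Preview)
Your ingredients are exactly the paper's---Theorem~\ref{thm;exp-peice}(1), Theorem~\ref{p;horo-nondiv}, and Lemma~\ref{l;avoid-var}---and the overall strategy (cover by small balls centered at PS-points in the compact set $Q$, apply the uniform mixing estimate on each, absorb the off-$Q$ part by Schapira and the boundary part by Lemma~\ref{l;avoid-var}) is also the paper's. There is, however, a real mismatch in your boundary step.

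You rescale to the unit ball and cover $B_U(1)$ by translates of $B_U(r)$ with a \emph{fixed} radius $r$. The boundary cubes then sit in an annulus of width $\asymp r$ at scale $1$, which unscales to the annulus $B_U((1+r)T)\setminus B_U((1-r)T)$ of width $\asymp rT$ at the original scale. But Lemma~\ref{l;avoid-var} only controls $\mu_x^{\PS}\bigl(B_U(T+\rho)\setminus B_U(T-\rho)\bigr)$ for $\rho\le\rho_0$ \emph{fixed}, not for $\rho$ proportional to $T$; indeed the paper explicitly flags (see the discussion around~\eqref{ccc}) that even the fixed-$\rho$ statement is delicate and that the multiplicative-annulus control you would need is not available in general. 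So your invocation of Lemma~\ref{l;avoid-var} does not cover the boundary error produced by your covering.

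The paper sidesteps this by \emph{not} rescaling all the way to $B_U(1)$. It fixes a single $s_0>S$ with $x_0:=xa_{-s_0}\in\Om$ (one such $s_0$ exists by radiality), and then for every large $T$ covers the \emph{growing} ball $x_0B_U(Te^{-s_0})\cap Q$ by balls of \emph{fixed} radius $\rho_0$. Theorem~\ref{thm;exp-peice}(1) is applied on each small ball at the fixed flow time $s_0$, and the boundary layer now has fixed width $\rho_0$ around $\partial B_U(Te^{-s_0})$, so Lemma~\ref{l;avoid-var} (applied at the basepoint $x_0\in\Om$) controls it directly. This also dissolves the ``exceptional scales'' issue you raise at the end: since $s_0$ is fixed once and for all, there is no need for $y_T=xa_{-\log T}$ to stay in $\Om$ for all $T$, and no interpolation in $T$ is required.
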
 

The main ingredients of the following proof are
Theorem \ref{mixing}, Theorem \ref{sc} and Lemma \ref{l;avoid-var}. 
In view of Theorem \ref{sc},
we only need to focus on the part of  $xB_U(T)$ which comes back to a fixed compact subset $Q$, as this part occupies
most of the PS-measure. We will use a partition of unity argument for 
a cover of $xB_U(T)\cap Q$ by small balls centered at PS-points. 
Each function in the partition of unity will be controlled by Theorem \ref{mixing}, here the uniformity in loc. cit is of crucial importance.
In this process, we have an error occurring in a small neighborhood of the boundary of $xB_U(T)$ and
Lemma \ref{l;avoid-var} says this error can be controlled. 

%It suffices to show that for any $\e>0$, there exists $T_\e=T_\e(x)$ so that for all
%$T> T_\e$ we have 
%\[
%\left|\tfrac{1}{\mu_x^{\PS}(B_U(T))}\int_{B_U(T)}\psi(xu_\tbf)d\mups(\tbf)- m^{\BMS}(\psi)\right| 
%<\e \cdot (1+m^{\BMS}(\psi) +\|\psi\|_\infty).
%\]
More precisely, we proceed as follows.
\proof By the assumption that $x^-\in \Lambda_{\rm r}(\G)$,
there exists a compact subset $\Om \subset X $ and a sequence $s_i\to +\infty$
such that $xa_{-s_i} \in \Om$. 

Let $Q=Q(\e, \Om)\subset X$ be a compact subset given by Theorem \ref{sc}.
Let $\rho_0=\rho_0(\e,\Omega)>0$ be given by Lemma \ref{l;avoid-var} applied with $\Om$ and $\e>0$.
Let $S>1$ be the constant provided by Theorem \ref{mixing} applied with $\Om$, $r=\rho_0$ and $\e>0$.
Now choose $s_0>S$ so that $x_0:=xa_{-s_0}\in \Omega$; note that $x_0^-\in\Lambda_{\rm r}(\G).$
Apply Theorem~\ref{p;horo-nondiv} and Lemma~\ref{l;avoid-var} with $x_0,$
and let $T_0:=T_{x_0}+R_{x_0}.$ Then for all $T\geq T_0$
we have 
\be\label{e;non-div-use}
\mu_{x_0}^{\PS}\{u_\tbf\in B_U(T): x_0u_{\tbf}\notin Q\}\leq 
\e\cdot \mu_{x_0}^{\PS}(B_U(T)).
\ee

For each $T\ge T_0$, we will consider a covering of the set
\[
D_T:=x_0B_U(T) \cap \supp(\mu_x^{\PS})\cap Q
\] 
and an equicontinuous partition of unity $\{f_y: y\in J_T\}$ subordinate to this covering 
as follows. Let $J_T:=\{y\in D_T\}$ denote a maximal
collection of points where $\{yB_U(\rho_0/8)\}$ are disjoint. Then 
$\{yB_U(\rho_0/2):y\in J_T\}$ covers $D_T,$ moreover, the covering 
$\{yB_U(\rho_0):y\in J_T\}$
has multiplicity $\kappa$ which depends only on the dimension of $U$.

Let $f\in C^\infty (B_U(\rho_0))$ be such that
\[
\mbox{$0\leq f\leq 1,$ $f|_{B_U(\rho_0/8)}=1$ and $1/2\leq f|_{B_U(\rho_0/2)}\leq 1.$}
\]  
For every $y\in D_T$ put $f'_y(yu_{\tbf}):=f(u_{\tbf})$
and let $F_T=\sum_{y\in J_T} f'_y.$ Then 
\[
1/2\leq F_T(x_0u_\tbf)\leq \kappa,\quad\text{for $x_0u_\tbf\in D_T$}.
\]
For each $y\in J_T$, put $f_y:={f'_y}/{F_T}.$ Then  
$\{f_y: y\in J_T\}$ is an equicontinuous partition of unity
subordinate to the covering $\{yB_U(\rho_0):y\in J_T\}.$ In particular, we have
$0\le f_y\le 1$, 
$\sum_{y\in J_T}  f_y =1$
on $x_0B_U(T-\rho)$, and $\sum_{y\in J_T}  f_y =0$ outside $x_0B_U(T+\rho)$.
Moreover, we have 
\be\label{eq:part-unity}
{\mbox{$f_y|_{yB_U(\rho_0/8)}=1$ and $\tfrac{1}{2\kappa}\leq f_y|_{yB_U(\rho_0/2)}\leq 1\;$ for all $y\in J_T$}.}
\ee

Fix $\psi\in C_c(X)$. Without loss of generality, we may assume that $\psi$ is non-negative.
For $T\ge T_0$, by applying Theorem \ref{mixing}  and Lemma \ref{l;avoid-var}(1), we have
\begin{align*}&
\int_{x_0u_\tbf\in Q\cap  x_0B_U(T)} \psi(x_0u_\tbf a_{s_0}) d\mu_{x_0}^{\PS}(\tbf)  =
\\ & \sum_{y\in J_T} \int_{Q\cap x_0B_U(T)} \psi(x_0u_\tbf a_{s_0}) f_y (x_0u_\tbf)
d\mu_{x_0}^{\PS}(\tbf)  
\\ &+ O(\mu_{x_0}^{\PS}(B_U(T+\rho_0)\setminus B_U(T-\rho_0)))
\\ &=\sum_{y\in J_T}  {\mu_{x_0}^{\PS}  (f_y)}\, m^{\BMS}(\psi)(1\pm  O({\e})) + 
O(\e\cdot \mu_{x_0}^{\PS}(B_U(T)))
\\ &=
 \mu_{x_0}^{\PS}(B_U(T))\cdot m^{\BMS}(\psi)+ O(\e \cdot \mu_{x_0}^{\PS}(B_U(T)))
 \end{align*}
where the implied constants depend only on $\psi$.

Hence if $T\ge e^{s_0} T_0$, then
\begin{align*}
&\int_{B_U(T)}\psi(xu_\tbf)d\mups(\tbf)\\ 
& = e^{\delta s_0}  \int_{B_U(Te^{-s_0})} \psi(x_0 u_\tbf a_{s_0})d\mu_{x_0}^{\PS}(\tbf)\\  
&=e^{\delta s_0}  \left(\int_{x_0B_U(Te^{-s_0})\cap Q } \psi(x_0 u_\tbf a_{s_0})d\mu_{x_0}^{\PS}(\tbf)  
+ O\left(\e\cdot \mu_{x_0}^{\PS}(B_U(T))\right)\right) \\
& =e^{\delta s_0} \left( \mu_{x_0}^{\PS}\left(B_U(Te^{-s_0})\right)\cdot m^{\BMS}(\psi)+ 
O\left( \e \cdot \mu_{x_0}^{\PS}(B_U(Te^{-s_0}))\right)\right)\\ 
&= \mu_{x}^{\PS}(B_U(T))\cdot m^{\BMS}(\psi)+ O\bigl(\e \cdot \mu_{x}^{\PS}(B_U(T))\bigr)
\end{align*}
where the implied constant depends only on $\psi$.

This  finishes the proof as $\e>0$ is arbitrary.
 \qed

\subsection{Equidistribution of a $U$-orbit in Lebesgue measure}
We will use a similar idea as in the proof of Theorem~\ref{t;horo-equid-BMS} 
to show the equidistribution of $xB_U(T)$ in the Lebesgue measure. The main difference is that we now need to control the escape
of the orbit (measured in Lebesgue measure as opposed to
the PS measure as in Theorem \ref{eqbms})
to flares as well as to cusps. For this, we utilize  the idea of comparing 
the``transversal intersections'' of the PS-measure of $xU$ and the Lebesgue measure
of $xU$.

\begin{thm}\label{t;horo-equid-BR}\label{eqbr}
Let  $x^-\in \Lambda_{\rm r}(\Gamma).$
Then for any $\psi\in C_c(X)$, we have
\[
\lim_{T\to\infty}\frac{1}{\mu_x^{\PS}(B_U(T))}\int_{B_U(T)}\psi(xu_\tbf)d\tbf= m^{\BR}(\psi).
\] 
\end{thm}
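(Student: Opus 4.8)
The plan is to mimic the structure of the proof of Theorem \ref{t;horo-equid-BMS}, replacing the $\BMS$-mixing input by the Lebesgue-measure statement (2) of Theorem \ref{mixing} and the displayed limit \eqref{bur}, while controlling the escape of mass in the \emph{Lebesgue} measure. First I would use the hypothesis $x^-\in\Lambda_{\rm r}(\G)$ to pick a compact $\Om\subset X$ and $s_i\to+\infty$ with $xa_{-s_i}\in\Om$, and fix $s_0>S$ (where $S$ is from Theorem \ref{mixing} applied with $\Om$, $r=\rho_0$, $\e$) so that $x_0:=xa_{-s_0}\in\Om$. Using the scaling relations
\[
\int_{B_U(T)}\psi(xu_\tbf)\,d\tbf = e^{(n-1)s_0}\,e^{(n-1-\delta)s_0}\!\!\int_{B_U(Te^{-s_0})}\!\!\psi(x_0u_\tbf a_{s_0})\,d\tbf \cdot e^{(\delta-(n-1))s_0}
\]
— more precisely $d\tbf=\mu_U^{\Leb}$ transforms by $e^{(n-1)s_0}$ under $a_{-s_0}$, and $e^{(n-1-\delta)s_0}$ is the normalizing factor appearing in \eqref{bur} and in Theorem \ref{mixing}(2) — the problem reduces to estimating $e^{(n-1-\delta)s_0}\int_{B_U(Te^{-s_0})}\psi(x_0u_\tbf a_{s_0})\,d\tbf$ against $\mu_{x_0}^{\PS}(B_U(Te^{-s_0}))\,m^{\BR}(\psi)$, since $\mu_x^{\PS}(B_U(T))=e^{\delta s_0}\mu_{x_0}^{\PS}(B_U(Te^{-s_0}))$ by the leafwise scaling in Section \ref{ss;PS-meas}.

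Next I would run the same covering/partition-of-unity argument as in Theorem \ref{t;horo-equid-BMS}: cover $D_T:=x_0B_U(Te^{-s_0})\cap\supp(\mu_x^{\PS})\cap Q$ (with $Q=Q(\e,\Om)$ from Theorem \ref{p;horo-nondiv}) by balls $yB_U(\rho_0)$ centered at PS-points $y\in J_T$ with bounded multiplicity $\kappa$, take an equicontinuous partition of unity $\{f_y\}$, and apply Theorem \ref{mixing}(2) to each $\int\psi(x_0u_\tbf a_{s_0})f_y(\tbf)\,d\tbf$ to get $\mu_{x_0}^{\PS}(f_y)\,m^{\BR}(\psi)(1\pm O(\e))$; summing gives $\mu_{x_0}^{\PS}(B_U(Te^{-s_0}))\,m^{\BR}(\psi)(1\pm O(\e))$ for the part over $Q$. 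The error on a $\rho_0$-neighborhood of $\partial(x_0B_U(Te^{-s_0}))$ is controlled in PS-measure by Lemma \ref{l;avoid-var}, which after multiplication by $e^{(n-1-\delta)s_0}$ and comparison is $O(\e)\,\mu_{x_0}^{\PS}(B_U(Te^{-s_0}))\,m^{\BR}(\psi)$ (here one uses that $\psi\ge0$, so the Lebesgue integral over the boundary annulus is dominated by the full integral, whose main term is comparable to $\mu^{\PS}$ of the ball).

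The main obstacle — and the genuinely new point compared to Theorem \ref{t;horo-equid-BMS} — is bounding the Lebesgue mass of $x_0B_U(Te^{-s_0})$ that \emph{escapes} the compact set $Q$: unlike the PS-measure, this mass is not controlled by Schapira's non-divergence Theorem \ref{p;horo-nondiv}, since that theorem is a statement about $\mu_x^{\PS}$. To handle this I would invoke exactly the device the authors flag in the last line of the proof of Theorem \ref{thm;exp-peice}, namely the ``comparison of the transversal intersections'' of the PS-measure of $xU$ with the Lebesgue measure of $xU$ as in \cite{OS}: for each PS-point $y$ landing in a ball one compares $\mu_y^{\Leb}(yB_U(\rho_0))$ with a multiple of $\mu_y^{\PS}(yB_U(\rho_0))$ using the product structure \eqref{bmsp} of $m^{\BMS}$ and the transversal $\nu$-measure, so that the Lebesgue mass over $J_T$-balls is controlled by $e^{(n-1-\delta)s_0}$ times the PS-mass, and the escaping part is $O(\e)$ by Theorem \ref{p;horo-nondiv} after all. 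Since $\psi\ge0$ and the comparison is uniform over the relevant compact sets, assembling the pieces and letting $\e\to0$ yields the claimed limit $m^{\BR}(\psi)$.
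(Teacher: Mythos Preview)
Your overall plan matches the paper's: rescale by $a_{s_0}$, apply Theorem~\ref{mixing}(2) via a partition of unity over PS-points in the compact $Q$, and control the rest. But the treatment of the two error terms has a real gap.

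\textbf{The boundary annulus.} You say Lemma~\ref{l;avoid-var} controls the annulus in PS-measure, and then in the parenthetical you write ``the Lebesgue integral over the boundary annulus is dominated by the full integral, whose main term is comparable to $\mu^{\PS}$ of the ball.'' That is circular: it only says the boundary term is at most the whole, not that it is $O(\e)$ of the whole. What is actually needed is a \emph{pointwise} Lebesgue-to-PS comparison, which the paper isolates as ``Claim~A'': for any $y\in x_0U$ and any bump $f_y$ on $yB_U(\rho_0)$,
\[
e^{(n-1-\delta)s_0}\int \psi(yu_\tbf a_{s_0})\,f_y(yu_\tbf)\,d\tbf \ \ll_\psi\ \mu_y^{\PS}\bigl(f_{y}^{+}\bigr),
\]
with $f_y^{+}$ a slight thickening. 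The proof first reduces $\psi$ to be supported in an admissible box $zP_{\e_1}U_{\e_0}$ and then uses the transversal/product structure (lemmas from \cite{MO-Matrix}). Covering the annulus by $\rho_0$-balls, applying this inequality, and \emph{then} invoking Lemma~\ref{l;avoid-var} on the PS side gives the bound (paper's (4.8)). You invoke the transversal comparison only for the escape part; it is equally needed here.

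\textbf{The escape from $Q$.} Your phrasing ``for each PS-point $y$ landing in a ball one compares $\mu_y^{\Leb}(yB_U(\rho_0))$ with a multiple of $\mu_y^{\PS}(yB_U(\rho_0))$'' misses the point: the balls covering $x_0B_U(T')\setminus Q_0(T')$ are centered at arbitrary points, typically \emph{not} PS-points, and their own PS-mass may vanish. What Claim~A gives is a bound by the PS-mass of a \emph{slightly enlarged} ball; the key geometric observation is that the PS-support inside those enlarged balls must lie in $Q^c$ or in the boundary annulus (since $Q_0(T')$ already absorbs all PS-points of $Q$ inside $x_0B_U(T')$). Hence the sum of the Claim~A bounds is controlled by
\[
\mu_{x_0}^{\PS}\bigl(x_0B_U(T'+\rho_0)\cap Q^c\bigr)\;+\;\mu_{x_0}^{\PS}\bigl(B_U(T'+\rho_0)\setminus B_U(T'-\rho_0)\bigr),
\]
each $\ll\e\,\mu_{x_0}^{\PS}(B_U(T'))$ by Theorem~\ref{p;horo-nondiv} and Lemma~\ref{l;avoid-var}.

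In short: the missing ingredient is to state and prove the uniform local inequality (Claim~A) and apply it to \emph{both} the boundary annulus and the escape set, rather than trying to bootstrap the boundary term from the full integral.
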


\proof
Fix $x$ and $\psi$ as above.
Without loss of generality, we may assume that $\psi$ is non-negative.
Let $P$ denote the parabolic subgroup $MA{\cont}$.
We call $W=zP_{\e_1}U_{\e_0}$ an admissible box if $W$
is the injective image of $P_{\e_1}U_{\e_0}$ in $\G\ba G$
 under the map $g\mapsto zg$ and $\mu^{\PS}_{zp}(zpU_{\e_0})\ne 0$ for all $p\in P_{\e_1}$.

Since $\psi$ is compactly supported, using a partition of unity argument, we may assume without loss
of generality that $\psi$ is supported in an admissible box $z P_{\e_1}U_{\e_0} $
for some $z\in \G\ba G$ and $0< \e_1\le \e_0$.
%{\footnote{\color{blue}despite the notation $\e_0$ may be not very small}}:

Since $x^-\in \Lambda_{\rm r}(\G)$,
there exists a compact subset $\Om\subset \G\ba G$ and a sequence $s_i\to +\infty$
such that $xa_{-s_i} \in \Om$. Now fix $\e>0$ smaller than $\e_1$ and $\e_0$. 
Let $Q=Q(\e, \Om)\subset X$ be a compact subset given by Theorem \ref{sc}.
Apply Lemma \ref{l;avoid-var} with $\Om$ and $\e>0$
and let $\rho_0>0$ be so that 
\[
\mu_x^{\PS}\left(B_U(T+4\rho_0)\setminus B_U(T-4\rho_0)\right)< \e
\] 
for all $x\in\Omega$ whenever $T>R_x$.

%Let $\alpha_0>0$ be such that for any $z \in C$ not belonging to $yB_U(\rho_0)$
%for any $y\in zU\cap \text{supp}(\mu_z^{\PS})$,  $d(z^+, \Lambda(\G)) \ge \alpha_0$.

Let $S>1$ be the constant provided by Theorem \ref{mixing}(2)
 applied with $\Om$, $\rho_0=r$, and $\e>0$.
Now choose $s_0>S$ so that $x_0:=xa_{-s_0}\in \Om;$ note that $x_0^-\in\Lambda_{\rm r}(\G).$ 
Apply Theorem~\ref{p;horo-nondiv} and Lemma~\ref{l;avoid-var} with $x_0$ and put $T_0:=T_{x_0}+R_{x_0}.$ 
By Theorem~\ref{p;horo-nondiv}, for all $T\geq T_0$, we have 
\be\label{e;non-div-use}
\mu_{x_0}^{\PS}\{u_\tbf\in B_U(T): x_0u_{\tbf}\notin Q\}\leq 
\e\cdot \mu_{x_0}^{\PS}(B_U(T)).
\ee

Given $\rho>0,$ for each $y\in x_0U$,
let $f_y\in C(yB_U(\rho))$ be a function such that $0\le f_y\le 1$ and $f_y=1$ on $yB_U(\rho/8)$.

\medskip
%\marginpar{the thickening of $f$ has changed
%to $ce^{-s_0}\e_1,$ this seems consistent with~\cite{MO-Matrix}. 
%Else we should take boxes so that$c\e_1<\rho_0$..}

{\bf Claim A}: There exists $c>1$ such that for any $y\in x_0U$,
$$e^{(n-1-\delta)s_0}\int_{x_0U}\psi(y u_{\tbf}a_{s_0}) f_y(y u_{\tbf}) d\tbf  \ll_\psi \mu_y^{\PS}(f_{y,ce^{-s_0}\e_1, +})$$
where $ f_{y,\eta, +} (yu):=\sup_{u'\in U_{\eta}} f_y(yuu')  $ for $\eta>0$.
To prove this,
define
\[
 P_z(s_0):=\{p\in P_{\e_1}: 
 y B_U(\rho) a_{s_0} \cap zpU_{\e_0} \ne \emptyset\}.
\]

%Fix $0<\e_2<1$ which is at most the injective radius of $zP_{\e_1}U_{\e_0}$. 
%For each $u_{\tbf_i}B_U(\eta) \subset B$, we set $y_{\tbf_i}=yu_{\tbf_i}$
%and
%choose $\phi\in C_c( y_{\tbf_i} B_U(2\eta))$ so that
% $\chi_{y_{\tbf_i}B_U(\eta)}\leq\phi\leq \chi_{y_{\tbf_i}B_U(2\eta)}$.
For small $\eta>0$, we define: for $w\in \text{supp}(\psi) G_{\e_0}$,
$$\psi_{\eta, +}(w):=\sup_{g\in G_{\eta}} \psi (wg), \quad \Psi_{\eta,+}(wp):=\int_{wpU} {\psi}_{\eta, +}(wpu_\tbf) 
d\tbf;$$ 
and for $zpu\in zP_{\e_1}U_{\e_0}$,
$${\tilde{\Psi}}_{\eta,+}(zpu)=\frac{1}{\mu^{\PS}_{zp}(zpU_{\e_0})} \Psi_{\eta,+}(zp).$$

 Then we have, for some fixed constant $c>0$,
 \begin{align*} &e^{(n-1-\delta)s_0}\int_{y B_U(\rho_0)}\psi(y u_{\tbf}a_{s_0}) f_y(y u_{\tbf}) d\tbf 
 \\ &\ll e^{-\delta s_0}   \sum_{p\in P_z(s_0)} {f_{y,ce^{-s_0}\e_1,+} }(zpa_{-s_0})\Psi_{c\e_1,+}(zp)
&&\text{ by  \cite[Lemma 6.2]{MO-Matrix} } 
\\
&\ll \int_{U} {\tilde{\Psi}}_{c\e_1 ,+} (y u_\tbf a_{s_0}) f_{y,ce^{-s_0} \e_1, +}
(y u_\tbf)d\mu_{y}^{\PS}(\tbf)
&&\text{ by  \cite[Lemma 6.5]{MO-Matrix} }  \\
&\ll\mu_y^{\PS}(f_{y,ce^{-s_0}\e_1,+}) \end{align*}
where the implied constant depends only on $\Psi$.
This implies the claim.

%$\sum_{y\in \mathcal I (T)}  f_y =1$ on $x_0B_U(T-\rho)$.
%and $\sum_{y\in \mathcal I (T)}  f_y =0$ on $x_0U-x_0B_U(T+\rho)$.
Without loss of generality, from now we assume that $s_0$
is big enough so that $ce^{-s_0}\e_1\leq\min\{\rho_0/10,\e\}.$

Using the claim (A) applied to a partition of unity subordinate to the covering 
by $\rho_0$-balls of the set 
\[
x_0\bigl(B_U(T'+\rho_0)\setminus B_U(T'-\rho_0)\bigr)
\]
and the fact that $ce^{-s_0}\e_1<\rho_0$, we get 
\begin{multline} 
\label{eq:avoid-var-use}e^{(n-1-\delta)s_0} \int_{B_U(T'+\rho_0)\setminus B_U(T'-\rho_0)} \psi(x_0u_\tbf a_{s_0}) d\tbf \\
\ll_{\psi} \mu_{x_0}^{\PS}(B_U(T'+2\rho_0)\setminus B_U(T'-2\rho_0)) ) \ll \e \cdot \mu_{x_0}^{\PS}(B_U(T')). 
\end{multline}
for all $T'>T_0$ where in the last in equality we used Lemma~\ref{l;avoid-var}.

As it was done in the proof of Theorem~\ref{eqbms},
for every $T'\ge T_0$, we fix a covering $\{yB_U(\rho_0)\}: y\in I_{T'}\}$ of
\[
\mbox{$ x_0B_U(T') \cap\supp ( \mu_{x_0}^{\PS})\cap Q$}
\] 
and let $\{f_y: y\in {I_{T'}}\}$ 
be an equicontinuous partition of unity subordinate to this covering as in~\eqref{eq:part-unity}. 
We recall in particular that 
\be\label{eq:rho-2-cover}
\mbox{$\{yB_U(\rho_0/2):y\in I_{T'}\}$ covers $x_0B_U(T') \cap\supp ( \mu_{x_0}^{\PS})\cap Q.$}
\ee

Set
\[
Q_0(T'):=\cup_{y\in {I_{T'}}} yB_U(\rho_0).
\]
Put $T_1:=Te^{-s_0}>T_0.$ Then using Theorem \ref{mixing}(2) and~\eqref{eq:avoid-var-use} we have
\begin{align*}&
e^{(n-1-\delta)s_0} \int_{x_0u_\tbf\in Q_0(T_1)} \psi(x_0u_\tbf a_{s_0}) d\tbf  
\\ & =e^{(n-1-\delta)s_0} \sum_{y\in I_{ T_1}} \int_{yB_U(\rho_0)} \psi(x_0u_\tbf a_{s_0}) f_y (x_0u_\tbf)
d\tbf    
\\ &+ O \left(e^{(n-1-\delta)s_0}  \int_{B_U(T_1+\rho_0)\setminus B_U(T_1-\rho_0)} \psi(x_0u_\tbf a_{s_0}) d\tbf\right)
\\ &=\sum_{y\in I_{T_1}}  {\mu_{x_0}^{\PS}  (f_y)}\, m^{\BR}(\psi)(1\pm  O({\e})) + 
O(\e\cdot \mu_{x_0}^{\PS}(B_U(T_1)))
\\ &=
 \mu_{x_0}^{\PS}(B_U(T_1))\cdot m^{\BR}(\psi)+ O(\e \cdot \mu_{x_0}^{\PS}(B_U(T_1)))
 \end{align*}
where the implied constants depend only on $\psi$.
 
Let $J_{T_1}$ be a maximal set
of points in $x_0B_U(T_1)\setminus Q_0(T_1)$ so that $\{yB_U(\rho_0/16):y\in J_{T_1}\}$ is a disjoint family.
Then $\{yB_U(\rho_0/4):y\in J_{T_1} \}$ covers $x_0B_U(T_1)\setminus Q_0(T_1);$ moreover, 
by~\eqref{eq:rho-2-cover} we have 
\begin{multline}
\label{eq:Q-0-T}\supp(\mu_{x_0}^{\PS})\cap \left(\cup_{y\in J_{T_1} }yB_U(\rho_0/3)\right)\\
\subset \left(x_0B_U(T_1+\rho_0)\cap Q^c\right)\cup\left(x_0(B_U(T_1+\rho_0)\setminus B_U(T_1-\rho_0))\right).
\end{multline}

Let $\{f_y: y\in J_{T_1}\}$ be a partition of unity subordinate to $\{yB_U(\rho_0/4):y\in J_{T_1}\}.$ 
Applying Claim $A$ to $\{f_y: y\in J_{T_1} \}$ and using ~\eqref{e;non-div-use} and~\eqref{eq:avoid-var-use}, we have
\begin{align*}&
 {e^{(n-1-\delta)s_0}}
\int_{x_0B_U(T_1)\setminus Q_0(T_1)} \psi(x_0u_\tbf a_{s_0})d\tbf \\  &\le
{e^{(n-1-\delta)s_0}}
\sum_{y\in J_{T_1}}  \int_{yB_U(\rho_0/4)} \psi(y u_\tbf a_{s_0}) f_y(yu_\tbf)  d\tbf  
\\
&\ll \sum_{y\in J_{T_1}} \mu_y^{\PS} (f_{y,\rho_0/10,+})\\ 
&\ll \mu_{x_0}^{\PS} (x_0 B_U(T_1+\rho_0) \cap Q^c) + \mu_{x_0}^{\PS} ((B_U(T_1+\rho_0)\setminus B_U(T_1-\rho_0)) &&{\text{by~\eqref{eq:Q-0-T}}} \\
&  \ll \e \cdot \mu_{x_0}^{\PS} (x_0 B_U(T_1)).
\end{align*}

Observing that
\begin{equation}\label{sumab} \frac{1}{\mu_x^{\PS}(B_U(T))}\int_{B_U(T)}\psi(xu_\tbf)d \tbf =
\frac{e^{(n-1-\delta)s_0} }{\mu_{x_0}^{\PS}(B_U(T_1))}\int_{B_U(T_1)}\psi(x_0u_\tbf a_{ s_0})d \tbf 
\end{equation}
we have shown that for all $T>T_0 e^{s_0}$,
$$\frac{1}{\mu_x^{\PS}(B_U(T))}\int_{B_U(T)}\psi(xu_\tbf)d \tbf =m^{\BR}(\psi) +O(\e)$$
which
 finishes the proof.
\qed

Both theorems \ref{eqbms} and \ref{eqbr} are proved in \cite{SM} for the case $n=2$. Theorem \ref{eqbms}
is also proved in \cite{SM} for the unit tangent bundle of a convex cocompact hyperbolic $n$-manifold. However as clear from the above proofs,
the proof in the convex cocompact case is considerably simpler since the support of $m^{\BMS}$ is compact.

 \begin{remark}
Although Theorems \ref{eqbms} and \ref{eqbr} are stated for the norm balls $B_U(T)$
with respect to the maximum norm on $U\simeq \br^{n}$, the only  property of the max norm we have used
is that $\{\tbf \in \br^n: \|\tbf\|_{\max}=1\}$ is contained in a finitely many union of algebraic sub-varieties 
in the proof of Lemma \ref{lem;coord-thick}.
In fact, our proofs work for any norm $\|\cdot \|$ on $\br^n$ as long as the $1$-sphere
$\{\tbf \in \br^n: \|\tbf\|=1\}$ is
contained in a finitely many union of algebraic sub-varieties.
\end{remark} 

\begin{remark}\label{uniremark}
Theorem \ref{t;horo-equid-BR} cannot be made uniform on compact subsets; for instance, if $x^-$ is very close to a parabolic limit point,
the convergence is expected to be slower. However, Egorov's theorem implies that for a given compact subset $Q\subset X$ and any $\e>0$,
there exists a compact subset $Q'$ with $m^{\BR}(Q-Q')\le \e$ on which the convergence in Theorem \ref{t;horo-equid-BR}
is uniform. We will use this observation later.
\end{remark}

\medskip

\section{Window lemma for horospherical averages}
%Again, we keep the notations set up in section \ref{notsec}.
In this section we first prove that for $x^-\in \Lambda_{\rm r}(\G)$,
the PS-measure of $xB_U(T)$ is 
not concentrated near the center $x$, in the sense that for any $\eta>0$, there exists $r>0$ such that
for all large $T\gg 1$,
\be\label{ncon} \mu_x^{\PS}(B_U(T)\setminus B_U(rT))\ge (1-\eta) \mu_x^{\PS}(B_U(T)).\ee
This is of course immediate in the case when $\G$ is a lattice, in which case 
$\mu_x^{\PS}(B_U(T))=\mu_x^{\Leb}(B_U(T))=c\cdot T^{n-1}$ for some fixed $c>0$.
 The inequality \eqref{ncon} also follows rather easily  when $\G$ is convex cocompact 
 since $\mu_x^{\PS} (xB_U(T)) \asymp T^\delta$ for all $x\in \text{supp}(m^{\BMS})$.
  For a general geometrically finite group, our argument is based on
   Sullivan's shadow lemma. We remark that
   \eqref{ncon} is not a straightforward consequence of Shadow lemma and finding $r$ in \eqref{ncon} is rather tricky
   as we need to consider several different possibilities for the locations of $xa_{-\log T}$ and $xa_{-\log(rT)}$
   in the convex core of $\G$ (see the proof of Lemma \ref{p;horo-wind}).
 
\subsection{Shadow lemma}\label{shadowsection}

For $\xi\in \Lambda_{\rm bp}(\G)$,
choose $g_{\xi}\in G$ such that $g_\xi^-=\xi$, and set, for each $R>0$,
$$\mathcal H(\xi, R)=\cup_{s>R}  \; g_\xi Ua_{-s} K .$$
 
The rank of the horoball $\mathcal H(\xi, R)$ of depth $R$ is defined to 
be the rank of a finitely generated abelian subgroup $\G_{\xi}:=\op{Stab}_\G(\xi)$ and is known to be strictly 
smaller than $2\delta$.

 The thick-thin decomposition of the convex core of the geometrically finite manifold $\G\ba \bH^n$ (see \cite{Bo}) implies
 that there exists $R\ge 1$ and a finite set $\{\xi_1, \cdots, \xi_{\ell}\}\subset \Lambda_{\rm bp}(\G)$ of representatives of $\G$-orbits
 in $\Lambda_{\rm bp}(\G)$
 such that  $\Gamma \mathcal H (\xi_i, R)$'s are disjoint and 
 \begin{equation}\label{convexcore}
\text{supp}(m^{\BMS}) \subset \mathcal C_0\sqcup \left(\sqcup_{i=1}^{\ell} \G\ba \G \mathcal H_i \right)\end{equation}
where $\mathcal C_0\subset X$ is compact  and
$\mathcal H_i=\mathcal H(\xi_i, R)$.

%For $\xi \in \Lambda_{\rm p}(\G)$, the horoball $\mathcal H(\xi, R)$
% based at $\xi$ and of depth $R$
%is defined to be $\{x\in \bH^n: \beta_{\xi} (o, x) > R\}$. 

%We note that if we choose $g\in G$ so that $g^-=\xi$,
%then $$ \mathcal H(\xi, R)=\cup_{s\geq R}  \; g Ua_{-s}(o) .$$

%$$\max_i \op{rank} (\mathcal H_) < 2 \delta$$

%For the manifold $\mathcal M:=\G\ba \bH^n$,
%denote by $\text{core}(\mathcal M) $ the convex core of $\mathcal M$.
%The "thick-thin" decomposition for $\text{core}(\mathcal M)$ 
%says that
 %there exists a $\Gamma$-invariant disjoint system of horoballs $\{\G \mathcal H^\dag_{i}: 1\leq i\leq \ell\}$ in $\bbh^n$ 
 %such that whenever $\gamma \mathcal{H}^\dag_i \ne \mathcal{H}^\dag_j$,
%$\dist(\gamma \mathcal H^\dag_{i}, \mathcal H^\dag_{j})\geq\eta$ for some $\eta>0$ independent of $\gamma$ and $i$, and that
%the complement of $\cup_{i=1}^{\ell} \G \ba \G \mathcal H_i^\dag$ in  the convex core $\C(\G)^\dag$ is bounded (see \cite{Bo}).
%Via the projection $\pi: G\to \bH^n$ given by $g\mapsto g(o)$, we lift this thick-thin decomposition of $\mathcal C(\G)^\dag$
%to its pre-image $\mathcal C(\G)\subset X=\G\ba G$:
%\begin{equation}\label{convexcore}
%\mathcal C(\G)=\mathcal C_0\sqcup \left(\sqcup_{i=1}^{\ell} \G\ba \G \mathcal H_i \right)\end{equation}
%where $\mathcal C_0\subset X$ is an open bounded subset and $\G \mathcal H_i =\pi^{-1}(\G \mathcal H_i^\dag)$.

The following is a variation of Sullivan's shadow lemma, obtained by Schapira-Maucourant \cite{SM}:
\begin{thm}\label{thm;shadow-lemma} \label{sha}
Let $\Om \subset X$ be a compact subset.
There exists $\lambda=\lambda(\Om)>1$ such that for all $x\in \Om\cap \op{supp}(m^{\BMS})$ with $x^{-} \in \Lambda_{\rm r}(\G)$ and $T>1$,
\[
{\lambda^{-1}}T^\delta e^{(k(x,T)-\delta)\dist(\mathcal C_0,xa_{-\log T})} \leq\mu_x^{\PS}(B_U(T))\leq 
\lambda T^\delta e^{(k(x,T)-\delta)\dist(\mathcal C_0 ,xa_{-\log T})},
\]
where $k(x,T)$ is the rank of $\mathcal H_i$ if $xa_{-\log T}\in \G \mathcal H _i$ for some $i\geq 1,$
and equals $0$ if $xa_{-\log T}\in \mathcal C_0$.
\end{thm}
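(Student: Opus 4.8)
The plan is to reduce the estimate, via the scaling relation $\mu_x^{\PS}(B_U(T))=T^{\delta}\,\mu_{xa_{-\log T}}^{\PS}(B_U(1))$, to a uniform two–sided bound for the PS–mass of a \emph{unit} $U$–ball, and then to treat the thick and thin parts of $\supp(m^{\BMS})$ separately, the thin part being (a reformulation of) Sullivan's shadow lemma. So set $y:=xa_{-\log T}$; since $y^{\pm}=x^{\pm}$ we have $y\in\supp(m^{\BMS})$ and $y^{-}\in\Lambda_{\rm r}(\G)$, whence $u\mapsto yu$ is injective and $\mu_y^{\PS}$ is the push-forward of $\mu_g^{\PS}$ from a lift $gU\subset G$. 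By the thick--thin decomposition~\eqref{convexcore} of $\supp(m^{\BMS})$, the point $y$ lies either in the compact core $\mathcal{C}_0$ or at positive depth $d=\dist(\mathcal{C}_0,y)$ in one of the finitely many standard horoballs $\mathcal{H}_i$ over bounded parabolic points $\xi_i$; writing $k(y)$ for the rank of $\mathcal{H}_i$ in the second case and $k(y)=0$ in the first, the theorem is equivalent to the uniform estimate $\mu_y^{\PS}(B_U(1))\asymp e^{(k(y)-\delta)\dist(\mathcal{C}_0,y)}$.

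In the thick case $y\in\mathcal{C}_0$, so $\dist(\mathcal{C}_0,y)=0$ and $k(y)=0$, and one only needs $\mu_y^{\PS}(B_U(1))\asymp1$. Lifting a bounded neighbourhood of $\mathcal{C}_0$ to a compact set $\Om_0\subset G$ of positive injectivity radius $\iota$, Corollary~\ref{fxp} gives $0<\inf_{g\in\Om_0,\,g^{+}\in\LG}\mu_g^{\PS}(gB_U(\iota))\le\sup_{g\in\Om_0,\,g^{+}\in\LG}\mu_g^{\PS}(gB_U(1))<\infty$; since $g^{+}=y^{+}\in\LG$ and $B_U(\iota)\subset B_U(1)$, this yields the claim with uniform constants (the same device absorbs all $y$ lying within a bounded distance of the core).

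The main work is the thin case: $y\in\G\ba\G\mathcal{H}_i$ of rank $k$, at depth $d\gg1$. Here I would argue as in Sullivan's shadow lemma: pass to $\bH^{n}$, normalise $\xi_i$ to $\infty$ in the upper half-space model, and lift $y$ to a point of Euclidean height $\asymp e^{d}$ with horizontal position bounded modulo $\G_{\xi_i}$ (here the hypothesis that $\xi_i$ is a \emph{bounded} parabolic point enters). Unwinding the definition of $\mu_y^{\PS}$ on $gU$ --- the $\op{Viz}$-parametrisation of $\partial(\bH^n)\setminus\{g^{-}\}$ together with the Busemann density $e^{\delta\beta}$ --- expresses $\mu_y^{\PS}(B_U(1))$, up to a bounded multiplicative factor, in terms of $\nu_o$-mass of a shadow controlled by the rank-$k$ group $\G_{\xi_i}$; the defining conformal property of $\{\nu_x\}$ and the continuity $g\mapsto\mu_g^{\PS}$ (Lemma~\ref{cont}), combined with compactness, show that this shadow is made up of $\asymp e^{kd}$ relevant $\G_{\xi_i}\cong\mathbb{Z}^{k}$-translates of a fixed region of comparable PS-mass, while the leafwise scaling contributes the factor $e^{-\delta d}$; multiplying the two gives $\mu_y^{\PS}(B_U(1))\asymp e^{(k-\delta)d}$, with constants depending only on the fixed core, the finite family of horoballs and their common depth $R$ (hence only on $\Om$). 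This is, up to reformulation, the global measure formula of Stratmann--Velani; compare \cite{Sullivan1984} and \cite{SM}. The delicate point --- and the step I expect to be the main obstacle --- is precisely this cusp bookkeeping: one must track the Busemann exponents while moving between the $U$-ball in $G$, the associated shadow on $\partial(\bH^n)$, and the $\G_{\xi_i}$-orbit count, and in particular isolate the intrinsic distance $\dist(\mathcal{C}_0,y)$, rather than a geodesic excursion time, as the quantity governing the estimate; this comparison, together with the constraint $k<2\delta$, is what makes the exponent come out correctly and uniformly.
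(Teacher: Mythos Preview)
The paper does not give a proof of this theorem: it is simply quoted from Maucourant--Schapira \cite{SM}, where it is established as a variant of Sullivan's shadow lemma (and, as you note, of the Stratmann--Velani global measure formula). So there is no ``paper's own proof'' to compare against beyond the citation.

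Your outline is correct in structure and matches how one would prove the result. The reduction via the scaling identity $\mu_x^{\PS}(B_U(T))=T^{\delta}\mu_{xa_{-\log T}}^{\PS}(B_U(1))$ is exactly right; the thick case follows from Corollary~\ref{fxp} as you say; and you correctly identify the thin case as the shadow-lemma computation for a rank-$k$ cusp. Where your write-up is only a sketch is precisely where the actual content lies: the cusp bookkeeping that produces the exponent $(k-\delta)\dist(\mathcal C_0,y)$ from the $\G_{\xi_i}\cong\mathbb Z^k$ orbit count and the Busemann density. You flag this yourself as the delicate step, and it is --- but that is the argument carried out in \cite{SM}, so citing it (as the paper does) is the appropriate move here rather than reproving it.
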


\subsection{Non-concentration property of PS measures}

\begin{lem}[Window lemma for PS-measure]\label{p;horo-wind} Let $\Om \subset X$ be a compact subset. For any $0<\eta<1$,
there exists $0<r=r(\eta, \Om)<1$  and $T_0>1$ such that
 for all $x\in \Om$ with $x^{-} \in \Lambda_{\rm r}(\G)$ and $T>T_0$,
we have \begin{equation} \label{nc} \mups(B_U(rT))\leq \eta \cdot \mups(B_U(T)).
\end{equation}
\end{lem}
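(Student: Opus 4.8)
The plan is to reduce the statement, via the $A$-action rescaling relation $\mu_x^{\PS}(B_U(e^s)) = e^{\delta s}\mu_{xa_{-s}}^{\PS}(B_U(1))$, to a comparison between the PS-mass of unit balls centered at $xa_{-\log T}$ and at $xa_{-\log(rT)} = (xa_{-\log T})a_{\log r}$, and then to feed both quantities into the Schapira--Maucourant shadow lemma (Theorem~\ref{sha}). Writing $x_T := xa_{-\log T}$, the inequality \eqref{nc} is equivalent to
\[
\mu_{x_T a_{\log r}}^{\PS}(B_U(1)) \le \eta\, r^{-\delta}\, \mu_{x_T}^{\PS}(B_U(1)),
\]
and by the shadow lemma (applied on a suitable compact set; note $x^- \in \Lambda_{\mathrm r}(\G)$ guarantees $x$ enters a fixed compact $\Om'$ along $a_{-s}$, though here one wants the estimate uniformly, which is exactly what Theorem~\ref{sha} provides once we locate $x_T$ relative to $\mathcal C_0$ and the horoballs $\mathcal H_i$) this becomes a statement comparing $e^{(k(x,T)-\delta)\dist(\mathcal C_0, x_T)}$ with $e^{(k(x,rT)-\delta)\dist(\mathcal C_0, x_T a_{\log r})}$.

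The core of the argument is then a case analysis according to where $x_T$ and $x_T a_{\log r}$ sit in the thick-thin decomposition \eqref{convexcore}. First, if $x_T$ lies in the compact part $\mathcal C_0$ (or within bounded distance of it), then $\dist(\mathcal C_0, x_T)$ is bounded, $k(x,T)=0$, and moving by $a_{\log r}$ for small $r$ can only push $x_T a_{\log r}$ a controlled amount; since $k(x,rT)-\delta \le \max_i k_i - \delta$ and we do not control the sign, the worst case is when $x_T a_{\log r}$ has moved into a horoball of large rank — but the distance traveled is $\le |\log r|$ up to additive constants, so $e^{(k(x,rT)-\delta)\dist(\cdots)} \ll r^{-(2\delta-\delta)} = r^{-\delta}$ roughly; choosing $r$ small forces the bound, using that the gain $r^{-\delta}$ on the right already beats this. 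The genuinely delicate case is when $x_T$ is already deep inside a horoball $\mathcal H_i$: here $\dist(\mathcal C_0, x_T)$ is large, $k(x,T)=k_i < 2\delta$, and one must understand how $\dist(\mathcal C_0, x_T a_{\log r})$ changes. Since $a_{\log r}$ with $r<1$ corresponds to flowing backward along the geodesic (away from the cusp or further into it depending on $x^-$), the distance to $\mathcal C_0$ changes by roughly $\pm|\log r|$; in the subcase where it \emph{increases} and $k_i < \delta$, the exponential factor decreases and we are fine, while if $k_i > \delta$ we must track the exponent $(k_i - \delta)$ against the factor $r^{-\delta}$ on the right-hand side and the change $|\log r|$ in distance — again small $r$ wins because $r^{-\delta} \gg r^{-(k_i-\delta)}$ precisely when... here one must be careful, and this is where the paper warns the choice of $r$ is ``tricky''.

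The main obstacle, as the authors flag, is handling the situation where $xa_{-\log T}$ and $xa_{-\log(rT)}$ lie in horoballs of \emph{different} ranks, or where one is in a horoball and the other in the thick part, so that the exponential rates $k(x,T)-\delta$ and $k(x,rT)-\delta$ differ in sign and magnitude; one cannot simply bound things termwise and must instead split into the finitely many combinatorial cases (how many distinct horoballs the geodesic segment $\{xa_{-s} : \log(rT) \le s \le \log T\}$ visits, and in what order), using that $R$ is fixed so each excursion into a horoball has a definite minimal ``cost'' in distance. The strategy is to first prove the bound when $T$ ranges over a single such excursion-type, get a uniform $r$ for each of the finitely many types, and then take $r$ to be the minimum. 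I would expect to invoke, in addition to Theorem~\ref{sha}, the disjointness of the $\G\mathcal H(\xi_i,R)$ and the fact that the geodesic flow moves points at unit speed, so that the number of horoball-transitions in a window of fixed logarithmic length $|\log r|$ is bounded in terms of $R$ and $|\log r|$ alone.
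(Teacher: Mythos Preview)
Your overall strategy---rescale by the $A$-action, apply the shadow lemma (Theorem~\ref{sha}), and do a case analysis on the thick-thin location of $xa_{-\log T}$ versus $xa_{-\log(rT)}$---is exactly the paper's. But the execution is where the content lies, and your sketch has two genuine gaps.

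First, you never identify the explicit choice of $r$, and your case-by-case heuristics are not quite right. The paper's key observation is that one can take
\[
r = (\eta\lambda^{-2})^{\,1+1/(2\delta - p_0)}, \qquad p_0 := \max_i \operatorname{rank}(\mathcal H_i),
\]
and the single inequality $2\delta > p_0$ is what makes every case close. Your line ``$e^{(k(x,rT)-\delta)\dist(\cdots)} \ll r^{-(2\delta-\delta)} = r^{-\delta}$'' is not a correct bound; the actual comparison in that case is $r^{|k(x,rT)-\delta|}$ against $\eta\lambda^{-2}r^{-\delta}$, which reduces (in the worst subcase $k>\delta$) to $r^{2\delta - k} \le \eta\lambda^{-2}$, and this is exactly where $2\delta > p_0$ enters. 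Without isolating this exponent you cannot pin down $r$ uniformly.

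Second, your plan for the hardest case---counting horoball transitions along the geodesic segment and bounding their number in terms of $R$ and $|\log r|$---is more complicated than necessary and would not directly yield a clean inequality. The paper's device is simpler: when $k(x,T) \ge 1$ and $k(x,rT) \ge 1$ with the two points in \emph{different} horoballs, there must exist an intermediate time $r \le \rho \le 1$ with $xa_{-\log(\rho T)} \in \mathcal C_0$. Then $\dist(\mathcal C_0, xa_{-\log T}) \le |\log\rho|$ and $\dist(\mathcal C_0, xa_{-\log(rT)}) \le \log(\rho/r)$, and the required inequality reduces to an elementary estimate in the single parameter $\rho$, handled by splitting at $\rho = r(\lambda^2\eta^{-1})^{1/(2\delta - k(x,rT))}$. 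No combinatorics of multiple transitions is needed.

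Finally, a minor point: you should first reduce to $x \in \mathcal C_0 \cap \operatorname{supp}(m^{\BMS})$ (so that Theorem~\ref{sha} applies directly); the paper does this via Lemma~\ref{omr} and Lemma~\ref{uc} at the start, absorbing the translation cost into constants.
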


\begin{proof} Let $\mathcal C_0$ be given as \eqref{convexcore}.
We first claim that it suffices to prove the following: 
for any $0<\eta<1$,
there exists $0<r<1$ such that  for all $y\in \mathcal C_0\cap \op{supp}(m^{\BMS})$ with $y^{-} \in \Lambda_{\rm r}(\G)$,
 and $T>1$,
we have \begin{equation} \label{nc} \mu_y^{\PS}(B_U(rT))\leq \eta \cdot \mu_y^{\PS}(B_U(T)).\end{equation}
 
 Without loss of generality, we may assume $\Om$ contains $\mathcal C_0$.
 By Lemma \ref{omr}, there exists $R_0>1$ such that
for all $x\in \Om$, $$xB_U(R_0)\cap (\mathcal C_0\cap \op{supp}(m^{\BMS}))\ne \emptyset.$$ Let $T_0>R_0 r^{-1}$, so that
we have $rT+R_0<(2r)T$ for all $T>T_0$.

If $y=xu_{\tbf}\in xB_U(R_0)\cap \mathcal C_0\cap \op{supp}(m^{\BMS})$, then $x, y\in \Om$. Hence 
by Lemma \ref{uc}  applied to $\Om$  and \eqref{nc},  there exists $c_0>1$ such that
\begin{multline*} \mu_x^{\PS} (B_U(rT))\le  \mu_y^{\PS}(B_U(rT+R_0))\le  \mu_y^{\PS} (B_U(2rT)) \\
\le c_0
 \mu_y^{\PS} (B_U(rT)) \le \eta c_0 
\mu_y^{\PS} (B_U(T)) 
\le \eta c_0 \mu_x^{\PS} (B_U(T+R_0))\\ \le \eta c_0 \mu_x^{\PS} (B_U(2T)) \le  \eta c_0^2 \mu_x^{\PS} (B_U(T)) .\end{multline*}
This proves the claim. Therefore, we need to verify \eqref{nc} only for $x\in \mathcal C_0\cap \op{supp}(m^{\BMS})$ 
with $x^{-} \in \Lambda_{\rm r}(\G)$. 
In particular, $xa_{-\log T}\in \text{supp}(m^{\BMS})$.

Let $p_0:=\max_i \op{rank} (\mathcal H_i)$
and $\lambda =\lambda (\Om)$ be as given in Theorem \ref{sha}.  As remarked before, $2\delta>p_0$.

Set
$$r(\eta):=(\eta \lambda^{-2})^{1+1/(2\delta -p_0)}.$$
 Since $\eta \lambda^{-2}<1$, we have $r(\eta)<\min \{ (\eta \lambda^{-2})^{1/(2\delta -p_0)},  \eta \lambda^{-2}\}$.
 
In view of Theorem \ref{sha}, it suffices to show that $r:=r(\eta)$ satisfies the following:
 $$ \eta \lambda^{-1} T^\delta e^{(k(x,T)-\delta)\dist(\Xc,xa_{-\log T})}
 \ge r^\delta  \lambda T^\delta e^{(k(x, rT) -\delta)\dist(\Xc,xa_{-\log rT})} . $$
or equivalently
\begin{equation}\label{time}\eta \lambda^{-2} e^{(k(x,T)-\delta)\dist(\Xc,xa_{-\log T})} e^{(-k(x, rT) +\delta)\dist(\Xc,xa_{-\log rT})}
 \ge r^\delta   . \end{equation}

From the triangle inequality, we have \begin{align}\label{tri}
\dist(\Xc,xa_{-\log T})-|\log r|&\leq \dist(\Xc,xa_{-\log rT})
\leq \dist(\Xc,xa_{-\log T})+|\log r|.
\end{align}

We prove this by considering two cases:

{{\bf Case 1}: $k(x,T)\ge k(x,rT)$.}  %Suppose there exists some $i$ such that $xa_{-\log T}\in\ccal_i$ and also $xa_{-\log rT}\in\ccal_i.$

Then
\begin{align*}& {(k(x,T)-\delta)\dist(\Xc,xa_{-\log T}) -(k(x, rT) -\delta)\dist(\Xc,xa_{-\log rT})} \\ & \ge
(k(x, rT) -\delta) (\dist(\Xc,xa_{-\log T}) -\dist(\Xc,xa_{-\log rT}) )  \\ &\ge
-|k(x, rT) -\delta| \cdot |\log r| .
\end{align*} 
Hence the lefthand side of \eqref{time} is bigger than or equal to $\eta \lambda^{-2} r^{|k(x, rT) -\delta| }$.
Considering two cases $k(x, rT) \le \delta$ and $k(x, rT) > \delta$ separately, it is easy to check that our $r=r(\eta)$
satisfies  $\eta \lambda^{-2} r^{|k(x, rT) -\delta| }\ge r^\delta$, proving \eqref{time}.

{{\bf Case 2}: $k(x,T)< k(x,rT)$.} 

We first consider the case when $k(x, T)=0$,
so that $\dist(\Xc,xa_{-\log T})=0$ and $0< \dist(\Xc,xa_{-\log rT})\le |\log r| $ by \eqref{tri}.
Then the left-hand side of \eqref{time}
becomes
$$ \eta \lambda^{-2}  e^{(-k(x, rT) +\delta) \dist(\mathcal C_0,xa_{-\log rT})}
\ge \eta \lambda^{-2} r^{|k(x, rT) -\delta|}\ge r^{\delta}
$$ as before, proving the inequality \eqref{time}.

We now assume that $k(x, T)\ge 1$. Then $k(x, rT)\ge 2$, and hence $\delta>1$.
In this case, $xa_{-\log T}$ and $xa_{-\log rT}$ are in two distinct horoballs, and hence 
there exists  $r\leq \rho\leq 1$ such that
$xa_{-\log \rho T}\in \mathcal C_0$. We take a maximum such $\rho$.  Then
$$\dist(\Xc,xa_{-\log T})= d (xa_{-\log (\rho T)} , xa_{-\log T}) \le | \log \rho|;$$
 $$\dist(\Xc,xa_{-\log rT})= d (xa_{-\log (\rho T)} , xa_{-\log rT}) \le  \log (\rho r^{-1}) .$$

It follows
\[
 e^{(k(x,T)-\delta)\dist(\Xc,xa_{-\log T})}\geq  e^{(1-\delta)\dist(\Xc,xa_{-\log T})}\ge
\rho^{\delta-1}
\]
and
$$ e^{(k(x,rT)-\delta)\dist(\Xc,xa_{-\log rT})}\le  \max\{1,(\rho/r)^{k(x,rT)-\delta}\}.$$
Therefore
\eqref{time} is reduced to the inequality \be \label{time2}
\eta \lambda^{-2} \rho^{\delta -1} \ge r^{\delta} \max\{1,(\rho/r)^{k(x,rT)-\delta}\}.\ee

If $ \max\{1,(\rho/r)^{k(x,rT)-\delta}\}=1$, since $\rho>r$,
this follows from $\eta \lambda^{-2} r^{\delta -1} \ge r^{\delta}$, which holds, by the definition of $r=r(\eta)$.

It remains to prove that when $(\rho/r)^{k(x,rT)-\delta}\ge 1$, \be\label{e;mups-doubl-case2}
\eta \lambda^{-2} \rho ^{2\delta -k (x,r T) -1} \ge  r^{2\delta-k(x,rT)}.\ee 
 
 By our definition, we have $r(\eta)\le  ({\eta \lambda^{-2}})^{1+1/(2\delta-p_0)}.$
Therefore we have 
\[
\rho^{2\delta-1-k(x,rT)}/r^{2\delta-k(x,rT)}\geq\max\{\rho^{-1}, (\rho/r)^{2\delta-k(x,rT)}\}.
\]
The conclusion now follows 
by taking two cases: $\rho\leq r ({\lambda^2}{\eta^{-1}})^{1/(2\delta-k(x,rT))}$ and alternatively
$r ({\lambda^2}{\eta^{-1}})^{1/(2\delta-k(x,rT))}\leq\rho\leq 1.$ This completes the proof.
\end{proof}

%Given $T>0$ put
%\[
% B(E,T)=\{u_\tbf\in B_U(T): xu_\tbf\in E\},\;\text{and}\;
% B'(E,T)=B_U(T)\setminus B_E(T).
%\]
%Since $\supp(\psi)\subset E$ we have
%$\int_{B_U(T)}\psi(xu_\tbf)d\tbf=\int_{B(E,T)}\psi(xu_\tbf)d\tbf.$

%It follows from the definition of an admissible box that: 
%there exists some $\eta=\eta(E)>0$ so that 
%for all $u_\tbf\in B_1(T),$ there exists some $\zbf=\zbf(\tbf)$
%with $\|\zbf\|\leq\eta$ such that $xu_{\tbf+\zbf}\in E$
%and $\tbf+\zbf\in\supp(\mups).$

%Now let 

\subsection{Equidistribution for windows}
We now draw the following corollaries of Lemma~\ref{p;horo-wind}, and Theorems \ref{eqbms} and \ref{t;horo-equid-BR}.

For $\psi\in C_c(X)$ and $T>0$,
we define the notation 
$$\mathcal P_T\psi(x)=\int_{B_U(T)}\psi(xu_\tbf)d\mu_x^{\PS}(\tbf);$$

$$\mathcal L_T\psi(x)=\int_{B_U(T)}\psi(xu_\tbf)d\tbf.$$

\begin{thm}[Window lemma for horospherical average] \label{wint} 
Fix a compact subset $\Om \subset X$.
 For any $\eta>0$,
there exists $0<r=r(\eta, \Om)<1$ such that the following holds:
\begin{enumerate}
\item  for  $x\in \Om$ with $x^{-} \in \Lambda_{\rm r} (\G)$ and for any non-negative $\psi\in C_c(X)$ with $m^{\BR}(\psi)>0$,
  there exists $T_0=T_0(x,\psi)$ such that 
\[
 \mathcal L_{rT}\psi(x)\leq \eta \cdot  \mathcal L_{T}\psi(x) \quad \text{ for all $T>T_0$}.
\]
\item
for  $x\in \Om$ with $x^{-} \in \Lambda_{\rm r}(\G)$ and for any non-negative $\psi\in C_c(X)$ with $m^{\BMS}(\psi)>0$,
  there exists $T_0=T_0(x,\psi)$ such that 
\[
 \mathcal P_{rT}\psi(x)\leq \eta \cdot  \mathcal P_{T}\psi(x) \quad \text{ for all $T>T_0$}.
\]
\end{enumerate}

\end{thm}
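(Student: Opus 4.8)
The plan is to derive both parts directly from the equidistribution of $U$-orbits in the PS- and Lebesgue measures (Theorems~\ref{eqbms} and~\ref{eqbr}) together with the window lemma for the PS-measure (Lemma~\ref{p;horo-wind}). The point is that, after dividing by $\mu_x^{\PS}(B_U(T))$, both $\mathcal P_T\psi(x)$ and $\mathcal L_T\psi(x)$ have genuine limits ($m^{\BMS}(\psi)$ and $m^{\BR}(\psi)$ respectively), so the window phenomenon for these averages is forced by the same phenomenon for the PS-mass of balls, which is exactly the content of Lemma~\ref{p;horo-wind}. The hypothesis $m^{\BR}(\psi)>0$ (resp.\ $m^{\BMS}(\psi)>0$) is precisely what makes the limiting ratio positive, and hence the two-sided comparison below meaningful.

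Concretely, given $\eta>0$ I would first apply Lemma~\ref{p;horo-wind} with $\Om$ and the smaller parameter $\eta/3$, obtaining $r=r(\eta/3,\Om)\in(0,1)$ and $T_1>1$ such that $\mu_x^{\PS}(B_U(rS))\le \tfrac{\eta}{3}\,\mu_x^{\PS}(B_U(S))$ for every $x\in\Om$ with $x^-\in\Lambda_{\rm r}(\G)$ and every $S>T_1$; this $r$, which depends only on $\eta$ and $\Om$, will serve for both parts. Now fix such an $x$ and a non-negative $\psi\in C_c(X)$ with $m^{\BR}(\psi)>0$. By Theorem~\ref{eqbr} there is $T_2=T_2(x,\psi)$ with $\tfrac{2}{3}\,m^{\BR}(\psi)\,\mu_x^{\PS}(B_U(S))\le \mathcal L_S\psi(x)\le \tfrac{4}{3}\,m^{\BR}(\psi)\,\mu_x^{\PS}(B_U(S))$ for all $S>T_2$. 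Put $T_0:=\max(T_1,T_2)/r$; then for $T>T_0$ one has both $T>\max(T_1,T_2)$ and $rT>\max(T_1,T_2)$, so chaining the upper bound at scale $rT$, Lemma~\ref{p;horo-wind} at scale $T$, and the lower bound at scale $T$ gives
\begin{align*}
\mathcal L_{rT}\psi(x)&\le \tfrac{4}{3}\,m^{\BR}(\psi)\,\mu_x^{\PS}(B_U(rT))\le \tfrac{4}{3}\cdot\tfrac{\eta}{3}\,m^{\BR}(\psi)\,\mu_x^{\PS}(B_U(T))\\
&\le \tfrac{4\eta}{9}\cdot\tfrac{3}{2}\,\mathcal L_T\psi(x)=\tfrac{2\eta}{3}\,\mathcal L_T\psi(x)\le \eta\,\mathcal L_T\psi(x),
\end{align*}
which is (1). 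For (2) I would run the identical argument with Theorem~\ref{eqbms} in place of Theorem~\ref{eqbr}, $\mathcal P$ in place of $\mathcal L$, and $m^{\BMS}(\psi)>0$ in place of $m^{\BR}(\psi)>0$.

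There is no serious obstacle here, as all the substantive work is already packaged in Lemma~\ref{p;horo-wind} and the two equidistribution theorems. The only things to watch are: (a) $r$ must be fixed \emph{before} $x$ and $\psi$ are chosen, which is legitimate because Lemma~\ref{p;horo-wind} produces an $r$ uniform over $\Om$, whereas the threshold $T_0$ is necessarily allowed to depend on $x$ and $\psi$ since Theorems~\ref{eqbms} and~\ref{eqbr} are not uniform on compact sets (cf.\ Remark~\ref{uniremark}); and (b) one should pass from $\eta$ to $\eta/3$ at the outset so that the multiplicative errors $1\pm\tfrac{1}{3}$ coming from the non-uniform convergence are absorbed. (Any fixed error tolerance $<\tfrac12$ works; $\tfrac13$ is chosen for concreteness.)
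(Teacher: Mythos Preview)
Your proof is correct and follows essentially the same approach as the paper: both derive the window property for $\mathcal L_T$ and $\mathcal P_T$ by sandwiching them between constant multiples of $\mu_x^{\PS}(B_U(T))$ via Theorems~\ref{eqbr} and~\ref{eqbms}, and then invoking Lemma~\ref{p;horo-wind}. Your version is slightly more careful in ensuring that both $T$ and $rT$ exceed the relevant thresholds, which the paper leaves implicit.
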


\begin{proof}
Let $r=r(\eta/4 , \Om)$ be as in Lemma \ref{p;horo-wind}. Let $x\in \Om$ with $x^{-} \in \Lambda_{\rm r}(\G)$.
By Theorem~\ref{t;horo-equid-BR}, there exists $T_0=T(x,\psi)$ so that for all $T>T_0$,
\begin{align*}{\mathcal L}_{rT}\psi(x)  &\le 2 m^{\BR}(\psi) \mu_x^{\PS}(B_U(rT))\\
{\mathcal L}_{T}\psi(x)  &\ge \tfrac{1}{2} m^{\BR}(\psi) \mu_x^{\PS}(B_U(T)) .\end{align*}
Hence
$$ {\mathcal L}_{rT}\psi(x)\le 4 \frac{\mu_x^{\PS}(B_U(rT))}{\mu_x^{\PS}(B_U(T))}{\mathcal L}_T\psi(x)\le
\eta \cdot {\mathcal L}_T\psi(x)$$
by the choice of $r$. This proves (1). (2) is proved similarly using Theorem \ref{eqbms} in place of \ref{t;horo-equid-BR}.
\end{proof}

\begin{thm}[Equidistribution for window averages]\label{eqwi}
For any compact subset $\Om \subset X$,
the following hold:
  for any $x\in \Om$ with $x^-\in \Lambda_{\rm r}(\G)$  and any $\varphi\in C_c(X)$,
  we have\begin{enumerate}
 \item \be\label{e;hopf-mu2}
\lim_{T\to\infty}\frac{\int_{B_U(T)-B_U(rT)} \varphi(xu_{\tbf}  )d\tbf }{\mu_x^{\PS} (B_U(T)-B_U(rT)) }=
{m^{\BR}(\varphi)};\ee
 \item \be\label{e;hopf-mu11}
\lim_{T\to\infty}\frac{\int_{B_U(T)-B_U(rT)} \varphi(xu_{\tbf}  )d\mu_x^{\PS}(\tbf) }{\mu_x^{\PS} (B_U(T)-B_U(rT)) }=
{m^{\BMS}(\varphi)} \ee
where $r=r(1/2, \Om)$ be as in Lemma \ref{p;horo-wind} for $\eta=1/2$.
\end{enumerate}

 \end{thm}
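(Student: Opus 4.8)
The plan is to derive both limits directly from the full-ball equidistribution statements, Theorems \ref{eqbms} and \ref{eqbr}, combined with the window Lemma \ref{p;horo-wind}, by writing the integral over the annulus $B_U(T)-B_U(rT)$ as the difference of the integrals over $B_U(T)$ and over $B_U(rT)$. Fix $x\in\Om$ with $x^-\in\Lambda_{\rm r}(\G)$, fix $\varphi\in C_c(X)$, and let $r=r(1/2,\Om)$ be the radius provided by Lemma \ref{p;horo-wind} for $\eta=1/2$. The first step is to record that there is $T_0>1$ with $\mu_x^{\PS}(B_U(rT))\le\tfrac12\mu_x^{\PS}(B_U(T))$ for all $T>T_0$, equivalently $\mu_x^{\PS}(B_U(T))\le 2\,\mu_x^{\PS}\bigl(B_U(T)-B_U(rT)\bigr)$; in particular the denominators in \eqref{e;hopf-mu2} and \eqref{e;hopf-mu11} are positive for large $T$ and comparable to $\mu_x^{\PS}(B_U(T))$.

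For \eqref{e;hopf-mu2}, fix $\e>0$. Since $rT\to\infty$ as $T\to\infty$, Theorem \ref{eqbr}, applied at $x$, gives $T_1>1$ such that
\[
\Bigl|\int_{B_U(S)}\varphi(xu_\tbf)\,d\tbf-\mu_x^{\PS}(B_U(S))\,m^{\BR}(\varphi)\Bigr|\le\e\,\mu_x^{\PS}(B_U(S))\qquad\text{for all }S\ge T_1.
\]
Applying this at $S=T$ and at $S=rT$ and subtracting, then using $\mu_x^{\PS}(B_U(rT))\le\mu_x^{\PS}(B_U(T))\le 2\,\mu_x^{\PS}(B_U(T)-B_U(rT))$, one finds that for $T\ge\max\{T_0,T_1/r\}$ the annulus integral differs from $\mu_x^{\PS}\bigl(B_U(T)-B_U(rT)\bigr)\,m^{\BR}(\varphi)$ by at most $4\e\,\mu_x^{\PS}\bigl(B_U(T)-B_U(rT)\bigr)$. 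Dividing and letting $\e\to0$ proves \eqref{e;hopf-mu2}. The proof of \eqref{e;hopf-mu11} is the same word for word, with $d\tbf$ replaced by $d\mu_x^{\PS}(\tbf)$, Theorem \ref{eqbr} replaced by Theorem \ref{eqbms}, and $m^{\BR}(\varphi)$ by $m^{\BMS}(\varphi)$; every estimate used is linear in $\varphi$, so no sign assumption on $\varphi$ is needed.

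There is no genuine obstacle in this argument: the entire content sits in the two equidistribution theorems and the window lemma. The one point that must be handled with care — and the sole reason Lemma \ref{p;horo-wind} enters — is that the error terms in Theorems \ref{eqbms} and \ref{eqbr} are controlled relative to $\mu_x^{\PS}$ of the \emph{full} ball $B_U(T)$, so after normalizing by $\mu_x^{\PS}(B_U(T)-B_U(rT))$ one needs these two quantities to be of the same order of magnitude, which is precisely the conclusion of the window lemma for $\eta=1/2$. (That $\mu_x^{\PS}(B_U(T))$ is finite, and positive once $T$ is large, follows from the construction of $\mu_x^{\PS}$ on the injective image $xU$ together with the fact that $\op{Viz}|_{xU}$ meets $\Lambda(\G)$, which is infinite by Zariski density.)
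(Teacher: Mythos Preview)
Your argument is correct and is essentially identical to the paper's own proof: both write the annulus integral as the difference of the two full-ball integrals, invoke Theorems \ref{eqbms} and \ref{eqbr} on each ball, and use Lemma \ref{p;horo-wind} with $\eta=1/2$ to see that the error $o(\mu_x^{\PS}(B_U(T)))+o(\mu_x^{\PS}(B_U(rT)))$ is $o(\mu_x^{\PS}(B_U(T)-B_U(rT)))$. Your presentation is just a slightly more explicit $\e$-$T_1$ version of the same computation.
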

\begin{proof} 
By Theorem \ref{t;horo-equid-BR},
we have 
$${\mathcal L}_T\varphi (x)=m^{\BR}(\varphi)\cdot  \mu_x^{\PS}(B_U(T)) +a_T\quad \text{with } a_T=o(\mu_x^{\PS}(B_U(T))) ;$$
$${\mathcal L}_{rT}\varphi (x)=m^{\BR}(\varphi)\cdot  \mu_x^{\PS}(B_U(rT)) +b_T\quad \text{with } b_T=o(\mu_x^{\PS}(B_U(rT))) .$$
Since $\mu_x^{\PS}(B_U(T) - B_U(rT)) \ge \tfrac 12 \mu_x^{\PS}(B_U(T))  $, it follows that
$|a_T| + |b_T|=o( \mu_x^{\PS}(B_U(T) - B_U(rT)))$. Hence
(1) follows. Similarly (2) can be seen using Theorem \ref{eqbms}.
\end{proof}

\begin{remark}\label{secondremark}
Note that if Theorem \ref{t;horo-equid-BR}
holds for $\psi$ uniformly for all points in a given compact subset $\Om$, then
 Theorem \ref{eqwi}(2) also holds for $\psi$ uniformly for all  $x\in \Om$. This observation will be used later.
\end{remark}

%In view of remark \eqref{uniremark},
%for any compact subset $Q$ of $X$ and $\e>0$,
%there exists a compact subset $Q'$ with $m^{\BR}(Q-Q')\le \e$ 
%on which the convergences in Theorem \ref{eqwi}
%hold uniformly. 
\subsection{Remark on measure classification}
Burger \cite{Bu} classified all locally finite $U$-invariant measures on $\G\ba \PSL_2(\br)$ when $\G$ is convex cocompact with $\delta>1/2$.
Roblin \cite{Roblin2003} extended Burger's work in much
greater generality, and classified all $UM$-invariant ergodic measures on $\G\ba G$ when
$\G$  is a geometrically finite subgroup of a simple Lie group $G$ of rank one. Extending this work, Winter \cite{Wi}
obtained a classification of all $U$-invariant ergodic measures on $\G\ba G$ when $\G$ is also
assumed to be Zariski dense. 
%Winter \cite{Wi} has shown that if $\mu$ is $U$-ergodic and not supported in a closed
% $UM$ orbit, then $\mu=m^{\BR}$ up to a proportionality. Winter's work uses Roblin's classification result for
% $UM$-invariant measures on $\G\ba G$, together with Theorem \ref{fm}. 
%His theorem works for a general rank one simple group $G$ and any 
%Zariski dense and geometrically finite group $\G$.
In the case of $G=\SO(n,1)^\circ$ and $\G$ 
geometrically finite, we can also deduce this classification result 
from Theorem \ref{mixing}, using the Hopf ratio theorem. 
%There are actually two possible approaches: 
%1. One could use Theorem~\ref{eqwi} and arguments similar
%to arguments in \S\ref{s;Joining-class} to show that such measure
%is necessarily quasi-invaraint under some diagonalizable subgroup 

%Here we will sketch the simpler argument dealing with 
%$G=\SO(n,1)^\circ$ and $\G$ geometrically finite.
First, recall the Hopf ratio theorem proved by Hochman formulated in a setting we are concerned with:
\begin{thm} \cite{Hoc} Let $H$ be a connected Lie group and $\G$ a discrete subgroup.
Let $\br^k=N\subset H$ be a connected abelian subgroup.  Let $\mu$ be a locally finite $N$-invariant 
ergodic measure on $\G\ba G$.
Let $\psi_1,\psi_2\in C_c(\G\ba H)$. Suppose $\psi_2\ge 0$. Then for $\mu$-almost all $x$ such that
$\int_{B_N(\infty)} \psi_2(x u) d\tbf =\infty$,
$$\lim_{T\to \infty} \frac{\int_{B_N(T)} \psi_1(x u) d\tbf }{\int_{B_N(T )} \psi_2(x u) d\tbf }= \frac{\mu(\psi_1)}{\mu(\psi_2)}.$$
\end{thm}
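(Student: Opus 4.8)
The statement is a multiparameter ratio ergodic theorem, and the plan is to reduce it, via the Hopf conservative--dissipative decomposition, to the case of an infinite conservative ergodic measure, and then to run the Chacon--Ornstein scheme with the geometry of the Euclidean balls $B_N(T)$ supplying the covering estimates that take the place of Hopf's one-dimensional rising-sun argument. We may assume $\mu(\psi_2)>0$, the statement being vacuous otherwise. Since $\mu$ is $N$-ergodic, its Hopf decomposition is trivial, so $\mu$ is either conservative or dissipative. If $\mu$ is finite, Wiener's pointwise ergodic theorem for the $\br^k$-action along balls gives $|B_N(T)|^{-1}\int_{B_N(T)}\psi_i(xu)\,du\to |\mu|^{-1}\mu(\psi_i)$ for $\mu$-a.e.\ $x$ and $i=1,2$, and taking the quotient proves the claim on the set where $\mu(\psi_2)>0$, which is exactly where $\int_{B_N(\infty)}\psi_2(xu)\,du=\infty$. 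If $\mu$ is infinite and dissipative, then by the Hopf decomposition and ergodicity $(\G\ba H,\mu,N)$ is measurably isomorphic to $(\br^k,\mathrm{Haar},\text{translation})$, so $\int_{B_N(\infty)}\psi_2(xu)\,du=\int_N\psi_2\,du<\infty$ and the set appearing in the statement is $\mu$-null. This leaves the case of an infinite \emph{conservative} ergodic $N$-invariant $\mu$, where conservativity together with ergodicity forces $\int_{B_N(T)}\psi_2(xu)\,du\to\infty$ for $\mu$-a.e.\ $x$, so that the hypothesis holds a.e.\ and the assertion is a genuine ratio theorem.

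For that case the proof has two steps. First, a \emph{maximal inequality}: for the fixed nonnegative $g=\psi_2$ one wants a Hopf-type bound, namely that for every $c>0$, with $E_c=\{x:\ \int_{B_N(T)}(\psi_1-cg)(xu)\,du>0\text{ for some }T>0\}$, one has $\int_{E_c}(\psi_1-cg)\,d\mu\ge -\varepsilon(c)$ with $\varepsilon(c)$ controlled, together with the analogous statement interchanging $\psi_1$ and $g$. For $k=1$ this is Hopf's maximal lemma, via the rising-sun covering of $\br$; for $N\simeq\br^k$ with $k\ge2$ one replaces intervals by the cubes $B_N(T)$ and transfers a Vitali-type covering estimate for $\br^k$ to $(\G\ba H,\mu)$ by Calder\'on's transference principle, paying a dimensional constant, and reduces the continuous parameter to a discrete one by evaluating averages on a sufficiently fine lattice in $N$ using the uniform continuity of $\psi_1,\psi_2$. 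Second, the \emph{conclusion}: applying the maximal inequality to $\psi_1-c\psi_2$ with $c$ slightly above $\mu(\psi_1)/\mu(\psi_2)$ and using $\int_{B_N(T)}\psi_2(xu)\,du\to\infty$ bounds $\limsup_T \int_{B_N(T)}\psi_1(xu)\,du/\int_{B_N(T)}\psi_2(xu)\,du\le\mu(\psi_1)/\mu(\psi_2)$ for $\mu$-a.e.\ $x$; the symmetric argument gives the matching lower bound, and ergodicity identifies the common value as $\mu(\psi_1)/\mu(\psi_2)$.

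The main obstacle is the maximal inequality when $k\ge2$: the literal higher-dimensional analogue of Hopf's maximal lemma along balls is \emph{false}, and the multiparameter ratio ergodic theorem genuinely fails for general F\o lner sequences (Brunel--Krengel type examples), so the argument must exploit the special covering geometry of Euclidean balls (equivalently of the cubes $B_N(T)$) and keep all constants independent of $T$ and of the total mass of $\mu$; producing, and then absorbing, the unavoidable dimensional loss in the maximal inequality is the technical heart of Hochman's theorem. The other ingredients---the reduction via the Hopf decomposition, the discretization from $N$ to a lattice, and Calder\'on transference from $\br^k$ to the infinite-measure system $(\G\ba H,\mu)$---are routine once the maximal inequality is in hand.
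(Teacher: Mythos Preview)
The paper does not prove this theorem; it is quoted from Hochman \cite{Hoc} and used as a black box in the proof of Theorem~\ref{thm;br}. There is therefore nothing in the paper to compare your argument against.

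That said, your outline is a fair summary of the strategy behind Hochman's result: reduce to the infinite conservative ergodic case via the Hopf decomposition, and then run a Chacon--Ornstein type argument where the crux is a maximal inequality for averages over the Euclidean balls $B_N(T)$. You correctly identify the genuine obstacle, namely that the one-dimensional Hopf maximal lemma does not extend naively to $\br^k$ for $k\ge 2$, and that the ratio ergodic theorem fails for general F{\o}lner sequences; Hochman's contribution is precisely to exploit the Besicovitch covering property of centered Euclidean balls to obtain a usable maximal inequality, and your mention of Vitali-type covering plus Calder\'on transference is in the right spirit. One minor point: in the dissipative case your claim that the system is isomorphic to $(\br^k,\mathrm{Haar})$ is the standard structure theorem for totally dissipative ergodic actions of $\br^k$, but you should note that the finiteness of $\int_N\psi_2(xu)\,du$ uses that the orbit map $u\mapsto xu$ is proper (so that the preimage of $\supp\psi_2$ is compact in $N$), which is part of that structure.
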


\begin{thm}\label{thm;br}
The only ergodic $U$-invariant measure on $X$ 
which is not supported on a closed orbit of $MU$ is the $\BR$ measure. \end{thm}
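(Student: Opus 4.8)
The plan is to combine Hochman's Hopf ratio theorem \cite{Hoc} (recalled just above) with the equidistribution result Theorem~\ref{t;horo-equid-BR}. So let $\mu$ be a locally finite ergodic $U$-invariant measure on $X$ which is not supported on a closed $MU$-orbit; the goal is to show that $\mu$ is a positive multiple of $m^{\BR}$. The proof has two steps: a soft reduction to the case where $\mu$-a.e.\ $x$ satisfies $x^-\in\Lambda_{\rm r}(\G)$, and then a Hopf-ratio computation identifying $\mu$ with $m^{\BR}$.

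\emph{Step 1: $\mu(\{x:x^-\in\Lambda_{\rm r}(\G)\})=1$.} Since $\op{Viz}|_{gU}$ is a diffeomorphism onto $\partial(\bH^n)\setminus\{g^-\}$, the map $x\mapsto x^-$ is constant along $U$-orbits, so the three sets $\{x^-\in\Lambda_{\rm r}(\G)\}$, $\{x^-\in\Lambda_{\rm bp}(\G)\}$, $\{x^-\notin\LG\}$ are $U$-invariant and Borel, and $\mu$ is concentrated on exactly one of them. In the last two cases I claim $\mu$ must be supported on a closed $MU$-orbit, contradicting the hypothesis. Indeed, because $\G$ acts properly discontinuously on $\partial(\bH^n)\setminus\LG$ (in the third case) and has only finitely many orbits on $\Lambda_{\rm bp}(\G)$ (in the second case), the $U$-invariant Borel map $x\mapsto\G x^-$ has standard Borel target, so by ergodicity $\mu$ is concentrated on a single level set $\{x:x^-\in\G\xi_0\}$, which is a homogeneous space $\Delta\ba Q$ with $Q:=\op{Stab}_G(w_o^-)=MAU$ and $\Delta\cong\op{Stab}_\G(\xi_0)$ discrete (finite when $\xi_0\notin\LG$, a cusp group when $\xi_0\in\Lambda_{\rm bp}(\G)$). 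As $U\triangleleft Q$ with $Q/U\cong M\times A$ a product of the compact group $M$ with $A\simeq\br$, applying ergodicity to $U$-invariant continuous maps into Hausdorff quotients of $Q/U$ (iterating through the compact directions if necessary) concentrates $\mu$ on a single $U$-orbit $\ocal=[h]U\simeq(U\cap h^{-1}\G h)\ba U$; the only $U$-invariant measure on $\ocal$ up to scaling is the push-forward of Haar measure along the orbit map, and by a standard argument its local finiteness forces the orbit map to be proper, so $\ocal$, and hence the $MU$-orbit $\ocal M$, is closed. This contradiction shows $\mu(\{x:x^-\in\Lambda_{\rm r}(\G)\})=1$.

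\emph{Step 2: $\mu\propto m^{\BR}$.} Fix $\psi_2\in C_c(X)$ with $\psi_2\ge0$ and $m^{\BR}(\psi_2)>0$; this is possible since $m^{\BR}$ is a nonzero locally finite measure. For every $x$ with $x^-\in\Lambda_{\rm r}(\G)$ we have $|\mu_x^{\PS}|=\infty$ by Lemma~\ref{rinf}, hence $\mu_x^{\PS}(B_U(T))\to\infty$, and Theorem~\ref{t;horo-equid-BR} gives $\int_{B_U(T)}\psi_2(xu_\tbf)\,d\tbf\sim m^{\BR}(\psi_2)\,\mu_x^{\PS}(B_U(T))\to\infty$. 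By Step~1 this holds for $\mu$-a.e.\ $x$, so Hochman's theorem applies to the pair $(\psi_1,\psi_2)$ for any $\psi_1\in C_c(X)$: for $\mu$-a.e.\ $x$,
\[
\frac{\mu(\psi_1)}{\mu(\psi_2)}=\lim_{T\to\infty}\frac{\int_{B_U(T)}\psi_1(xu_\tbf)\,d\tbf}{\int_{B_U(T)}\psi_2(xu_\tbf)\,d\tbf}=\frac{m^{\BR}(\psi_1)}{m^{\BR}(\psi_2)},
\]
where the last equality is a second application of Theorem~\ref{t;horo-equid-BR} (at a point with $x^-\in\Lambda_{\rm r}(\G)$). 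Hence $\mu(\psi_1)=\big(\mu(\psi_2)/m^{\BR}(\psi_2)\big)\,m^{\BR}(\psi_1)$ for all $\psi_1\in C_c(X)$, and since $\mu\ne0$ the constant is positive; thus $\mu$ is a positive multiple of $m^{\BR}$, as asserted.

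\emph{The main obstacle} is Step~1. The naive hope that $\{x^-\notin\Lambda_{\rm r}(\G)\}$ is simply a union of closed $MU$-orbits fails: over the domain of discontinuity $\G x^-$ accumulates on $\LG$, so the homogeneous space $\Delta\ba Q$ above is typically not closed in $X$, and at a bounded parabolic fixed point one must also reckon with possibly non-finite rotational parts of cusp groups. The fix is to use $U$-ergodicity first to collapse $\mu$ onto a single $U$-orbit, and only then to invoke the local finiteness of $\mu$ to deduce that that orbit is closed; checking that the intermediate orbit-space quotients are standard Borel and that the relevant maps are Borel is routine. By contrast, Step~2 is essentially formal once Theorem~\ref{t;horo-equid-BR} is available.
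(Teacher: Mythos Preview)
Your proof is correct. Step~2 is exactly the paper's argument: apply the Hopf ratio theorem together with Theorem~\ref{t;horo-equid-BR} at a generic point with $x^-\in\Lambda_{\rm r}(\G)$.

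For Step~1 the paper is much terser, asserting in one line that such a point exists ``since the support of $\mu$ is not contained in any closed $MU$-orbit.'' The intended background fact is Dal'bo's theorem \cite{Da} (alluded to just before Theorem~\ref{p;horo-nondiv}): for geometrically finite $\G$, the orbit $xMU$ is closed in $X$ if and only if $x^-\notin\Lambda_{\rm r}(\G)$. Even with this in hand one still needs an ergodicity reduction of the sort you carry out (first to a single $\G$-orbit of $x^-$, then to a single $U$-orbit) to conclude that $\mu$ would be supported on one closed $MU$-orbit. Your route bypasses Dal'bo entirely: after reducing to a single $U$-orbit you use local finiteness of $\mu$ to force the orbit map $(U\cap h^{-1}\G h)\ba U\to X$ to be proper, hence the $U$-orbit closed, hence (as $M$ is compact) the $MU$-orbit closed. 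This is a legitimate self-contained alternative, at the cost of some extra work.

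One correction to your commentary: the assertion in your ``main obstacle'' paragraph that $\{x:x^-\notin\Lambda_{\rm r}(\G)\}$ fails to be a union of closed $MU$-orbits is wrong---by Dal'bo's theorem it is precisely such a union. What is typically \emph{not} closed is the $MAU$-orbit $\{x:x^-\in\G\xi_0\}\cong\Delta\ba Q$, and that is the genuine issue your argument handles. This misstatement does not affect the validity of your proof.
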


\proof
Let $\mu$ be such a measure. 
 Let $\psi\in C_c(X)$ be a non-negative function so that $\mu(\psi)>0$ and $m^{\BR}(\psi)>0.$
Then since the support of $\mu$ is not contained in any closed $MU$-orbit,
there exists $x\in X$ with $x^-\in \Lambda_{\rm r}(\G)$ and
the Hopf ratio ergodic theorem holds: for all $\varphi\in C_c(X)$, we have
\be\label{e;hopf-mu}
\lim_{T\to\infty}\frac{{\mathcal L}_T\varphi(x)}{{\mathcal L}_T\psi(x)}=\frac{\mu(\varphi)}{\mu(\psi)}.
\ee 
%Since $x^-\in \Lambda_{\rm r}(\G)$, there exists $s_j \to \infty$ such that $\{x a_{-\log s_j} \}$is bounded.
%Since $$\tfrac{1}{\mu_x^{\PS}(B_U(s_i))}\int_{B_U(s_i)} \psi_ i(xu_{\tbf} ) d\tbf =
%\tfrac{1}{\mu_{xa_{-\log s_j}}^{\PS}(B_U(1))}\int_{B_U(1)} \psi_ i(xa_{-\log s_j} u_{\tbf} ) d\tbf ,$$
%by the uniformity of Theorem \ref{mixing} on compact subsets,
%we have\[
%\lim_{j\to\infty}\frac{1}{\mu_x^{\PS}(B_U(s_j))}\int_{B_U(s_j)} \psi_ i(xu_{\tbf} ) d\tbf=m^{\BR}(\psi_i) \quad
%\text{ for $i=1,2.$}\]

Therefore
$$\frac{\mu(\varphi)}{\mu(\psi)}= \frac{m^{\BR}(\varphi)}{m^{\BR}(\psi)}.$$
It follows that $\mu$ and $m^{\BR}$ are proportional to each other.
\qed

\medskip

 We mention that when $G$ is a general simple group of rank one, we expect 
an analogue of Theorem \ref{eqbr}  holds. However
in these cases,
the horospherical subgroup is not abelian any more 
and the Hopf ratio theorem is not available for a general non-abelian nilpotent group action (cf. \cite{Hochman2}).
However  a weaker type of the Hopf ratio theorem  is still available
(see \cite[Theorem 1.4]{Hochman2})
and together with this, it is plausible that 
an analogue of Theorem \ref{eqbr} would
yield an alternative proof for the above mentioned measure classification theorem.

\section{Rigidity of $AU$-equivariant maps}\label{ss;factor}
For the rest of the paper, we let  $\field=\bbr$ or $\bbc$ and
$G=\PSL_2(\field)$.
%Let $U=\{u_\tbf =\begin{pmatrix} 1& 0 \\ \tbf & 0\end{pmatrix}: \tbf \in \field\}$ be
%the strict lower triangular unipotent subgroup of $G.$ 
Let \[
U:=\left\{u_\tbf =\begin{pmatrix} 1& 0 \\ \tbf & 1\end{pmatrix}: \tbf \in \field\right\},
\; {\check U}:=\left\{\umt=\begin{pmatrix}1 & {{\bf r}}\\ 0 & 1\end{pmatrix}: {{\bf r}}\in\field\right\},
\] 
and
\[
A=\left\{ a_s=\begin{pmatrix} e^{s/2}& 0 \\ 0 & e^{-s/2} \end{pmatrix}: s\in \br \right\}.
\]

Let $\G_1$ and $ \G_2$ be geometrically finite,
and  Zariski dense subgroups of $G$ and set for each $i=1,2$
$$X_i:=\G_i\ba G .$$
We denote %by $m^{\BR}_{\G_i}$ and
 $m^{\BMS}_{\G_i}$  the BMS-measure on $X_i$
associated to $\G_i$ for each $i=1,2$.  We assume that
$|m^{\BMS}_{\G_1}|=|m^{\BMS}_{\G_2}|=1$. When there is no room for confusion, we will omit the subscript $\G_i$
from the notation of these measures.

Suppose $$\msec_1,\ldots,\msec_\ell: X_1\to X_2$$
are Borel measurable maps 
and consider a set-valued map:
\be\label{e;mfib-bms}
 \mfib(x)=\{\msec_1(x),\ldots,\msec_\ell(x)\}.
\ee
 We assume that $\mfib$ is $U$-equivariant in the sense that
there exists a $U$-invariant Borel subset $X'\subset X_1$ with
 $m^{\BMS}(X')=1$ such that for all $x\in X'$ and {\tt every} ${ u_\tbf}\in U$,
we have \be\label{Ueq} \mfib(x{u_\tbf})=\mfib(x){u_\tbf}.\ee

The main aim of this section is to prove Theorem \ref{t;factor-rigidity-bms} that if $\mfib$ is $AU$-equivariant on a $\BMS$-conull set,
it is also ${\cont}$-equivariant. 
\begin{thm}\label{t;factor-rigidity-bms}
 Suppose that for all $x\in X'$ and {\tt every} ${a u_\tbf}\in AU$,
we have $$\mfib(x{au_\tbf})=\mfib(x){au_\tbf}.$$ 
 Then there exists a $\BMS$-conull subset $X''\subset X'$ such that for all $x\in X''$ and  
 for {\tt every} ${ \umt}\in {\cont}$ with $x \umt\in X''$, we have 
\[
\mfib(x\umt)=\mfib(x)\umt. 
\]
\end{thm}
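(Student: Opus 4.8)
The plan is to deploy the standard ``polynomial divergence'' mechanism adapted to the BMS setting, following the scheme of Flaminio--Spatzier \cite{FS-Factor} but carrying along the non-divergence and window-equidistribution inputs built up in Sections 3--4 to handle cusps. First I would fix a large compact set $\Om\subset X_1$ on which $\mfib$ is ``uniformly measurable'' (by Lusin's theorem, $\mfib$ restricted to $\Om$ is close to continuous) and on which the measurable sections $\msec_j$ are all defined and take values in a compact subset of $X_2$; I also throw into $\Om$ only points with $x^-\in\Lambda_{\rm r}(\G_1)$ and $x^+\in\Lambda(\G_1)$, which is $\BMS$-conull by the support description of $m^{\BMS}$. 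I would then take a $\BMS$-generic $x\in\Om$ that is a density point of $\Om$ for the $\BMS$ measure and also satisfies the Hopf/window ergodic statements of Theorems \ref{eqbms} and \ref{eqwi}, and a small $\umt\in\check U$ with $x\umt\in\Om$; the goal is to show $\mfib(x\umt)=\mfib(x)\umt$.

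The heart of the argument is the comparison of the two orbits $t\mapsto \mfib(x)u_{\tbf}$ and $t\mapsto \mfib(x\umt)u_{\tbf}=\mfib(x\umt u_{\tbf})$ for $u_\tbf$ ranging over a large ball $B_U(T)$. Using the $AU$-equivariance and the commutation relation $\umt u_{\tbf}=u_{\tbf'}a_{s'}\check u_{\rbf'}$ (the standard $SL_2$ identity in each coordinate), one rewrites $x\umt u_{\tbf}$ as a point that is, for most $\tbf\in B_U(T)$ in the window $B_U(T)\setminus B_U(rT)$, extremely close to $xu_{\tbf''}$ for a slightly reparametrized $\tbf''$ — the discrepancy is governed by a polynomial (in fact algebraic) map in $\tbf$, so on the window it is uniformly small. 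Since $\mfib$ is essentially continuous on $\Om$ and, by Theorem \ref{eqbms} (applied on $X_1$) together with Schapira's non-divergence Theorem \ref{p;horo-nondiv}, the proportion of $\tbf\in B_U(T)\setminus B_U(rT)$ for which both $xu_{\tbf}$ and $x\umt u_{\tbf}$ lie in $\Om$ tends to a definite positive constant as $T\to\infty$, one obtains a positive-density set of times $\tbf$ at which $\mfib(xu_{\tbf})$ and $\mfib(x\umt u_{\tbf})$ are both defined and within distance $o(1)$ of each other along $xU$. Because $\mfib$ takes values in $\ell$-point sets in $X_2$, an $\ell$-valued map whose two ``branches'' over a positive proportion of a $U$-orbit are forced to be within $o(1)$ of a common point must, by an averaging/pigeonhole argument exactly as in \cite{FS-Factor}, coincide; translating back by $u_{-\tbf}$ gives $\mfib(x\umt)=\mfib(x)\umt$ for a.e.\ such pair, and a Fubini argument over $\check U$ upgrades this to a single $\BMS$-conull set $X''$ with the stated property for \emph{every} $\umt\in\check U$ with $x\umt\in X''$.

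I would organize the write-up as: (i) choice of $\Om$ and the generic set $X''$; (ii) the polynomial-divergence lemma, i.e.\ the explicit estimate that for $\tbf$ in the window, $d(x\umt u_{\tbf}, x u_{\sigma(\tbf)})$ is small, with $\sigma$ a diffeomorphism of $B_U(T)\setminus B_U(rT)$ distorting PS-mass by a bounded factor; (iii) the density statement, combining Theorem \ref{p;horo-nondiv}, Lemma \ref{l;avoid-var}, Theorem \ref{eqbms} and the window lemma Theorem \ref{eqwi}(2), to produce a positive-proportion set of good times; (iv) the matching argument for the $\ell$-valued map; (v) the Fubini upgrade. The main obstacle I expect is step (iii): unlike in the convex-cocompact case of \cite{FS-Factor} where $\supp(m^{\BMS})$ is compact and one controls everything uniformly, here the PS-mass of the window $B_U(T)\setminus B_U(rT)$ can behave erratically because $xa_{-\log T}$ may wander deep into cusps of varying rank, so one must run the comparison along the special sequence $s_i\to\infty$ with $xa_{-s_i}\in\Om$ and invoke Lemma \ref{l;avoid-var} to control the relative PS-mass of the boundary annulus — and crucially one needs the non-concentration estimate \eqref{ncon} (Lemma \ref{p;horo-wind}) to guarantee the window actually carries a definite fraction of the PS-mass. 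A secondary technical point is that the reparametrization $\sigma$ must be shown to distort both the PS-measure and the set $\{xu_\tbf\in\Om\}$ by bounded amounts, which uses the continuity of $g\mapsto\mu_g^{\PS}$ from Lemma \ref{cont} and Corollary \ref{fxp}; this is routine but must be done with care in the presence of cusps.
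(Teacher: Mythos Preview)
Your outline captures the Flaminio--Spatzier scheme in broad strokes, but it has a genuine gap at the heart of the argument and misidentifies the main technical input.

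\textbf{The displacement is not uniformly small on the window.} You assert that for $\tbf$ in $B_U(T)\setminus B_U(rT)$ the point $x\umt u_\tbf$ is ``extremely close to $xu_{\tbf''}$'' with discrepancy $o(1)$. This is false for fixed $\rbf$ and large $T$: writing $\umt u_\tbf=u_{\sigma(\tbf)}p$ with $p\in P=AM\check U$, one has $\sigma(\tbf)=\tbf/(1+\rbf\tbf)$ and $p$ has diagonal entries $1+\rbf\tbf$, which for $\tbf\sim T$ is of size $\rbf T\to\infty$. The paper resolves this by first using the $A$-equivariance to replace $(x,\rbf)$ by $(xa_s,e^{-s}\rbf)$ and then running the comparison over $B_U(e^s)$; only at this renormalized scale is the transversal displacement $g_\rbf$ genuinely of size $O(\e')$. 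Without this rescaling, your step (ii) does not produce a small discrepancy and the rest of the argument cannot start.

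\textbf{The key missing lemma.} Even after correcting the scale, knowing that $\dist(\msec_i(x\umt)\check u_{-\rbf}a_su_\tbf,\mfib(xa_s)u_\tbf)\le\e$ on a set of \emph{positive $\PS$-measure} in $B_U(e^s)$ does not by itself give the conclusion; you need to upgrade this to a bound on the \emph{supremum} over all $\tbf\in B_U(e^s)$. In the convex-cocompact case of \cite{FS-Factor} this is done via a compactness argument; here the paper proves a dedicated ``$(C,\alpha)$-good'' property for polynomials with respect to the $\PS$-measure (Lemma \ref{l;poly-ps-good}), valid for $x$ and $xa_{-s}$ in a fixed compact set, and then combines it with the Lebesgue $(C,\alpha)$-good property to isolate a single branch $k(i)$. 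This is what converts ``close on a positive-$\PS$-proportion set'' into ``$u_{-\tbf}g_{s,i}u_\tbf$ bounded for all $|\tbf|\le e^s$'', and the explicit matrix computation (Step 4 of Proposition \ref{keyP}) then forces $d(e,a_sg_{s,i}a_{-s})=O(e^{-s})$. Your ``averaging/pigeonhole'' remark and ``translating back by $u_{-\tbf}$'' do not substitute for this: translating back gives $\mfib(x\umt)\check u_{-\rbf}\approx\mfib(x)$ only \emph{after} one knows $a_sg_{s,i}a_{-s}\to e$, and that requires the sup-bound. Finally, the recurrence input used here is Rudolph's mean ergodic theorem for $m^{\BMS}$ (Theorem \ref{thm;Rudol}) together with $A$-ergodicity of $m^{\BMS}$, not the window-equidistribution Theorems \ref{eqbms}/\ref{eqwi}, which concern Lebesgue averages and play no role in this section.
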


%Assume there exists $X'\subset \supp(m^{\BMS})\subset X_1$
%with $m^{\BMS}(X')=1$ so that \begin{itemize}
%\item[(i-$\BMS$)] the set-valued map $\mfib$ is $U$-equivariant: for all $x\in X'$ and {\tt every} ${u_\tbf}\in U$
%we have $\mfib(x{u_\tbf})=\mfib(x){u_\tbf}.$
%\item[(ii-$\BMS$)] the set-valued map $\mfib$ is $AM$-equivariant:  for all $x\in X'$ and  {\tt every} $am\in AM$ we have $\mfib(xam)=\mfib(x)am.$
%\end{itemize}
%In this section we will show $\mfib$  is also ${\cont}$ equivariant with\[
 %{\cont}=\left\{\umt=\begin{pmatrix}1 & {{\bf r}}\\ 0 & 1\end{pmatrix}: {{\bf r}}\in\field\right\}.\]

This is proved in~\cite{FS-Factor} for the case
$\G$ is convex cocompact and $\ell=1$;
 the proof is based on Ratner's proof of the rigidity of $U$-factors \cite{Rat-Horocycle} in the lattice case.
%\footnote{Indeed it is shown in~\cite{FS-Factor} that (ii) above is implied by (i);
%the proof is along the same line that we show ${\cont}$ equivariance. We will not pursue this here.}. 
Here we use similar strategy and generalize this
to the case of a geometrically finite group allowing also $\ell\geq1.$ The presence of cusps
requires extra care in this extension.

Let us recall that following terminology from~\cite{KM-Recurrence}.
Let $C,\alpha>0$ and we denote by $| \cdot |$ the absolute value of
$\field$.
A function $f:\field^n\to\field$ is said to be {\it $(C,\alpha)$-good
on a ball $B$} if the following holds: for any ball $V\subset B$
and any $\e>0$ we have  
\be\label{eq;good-func}
\ell \{x\in V: |f(x) |< \e\} \leq C\left(\frac{\e}{\sup_{V} |f |}\right)^{\alpha} \ell ({V})
\ee
where $\ell $ denotes the Lebesgue measure on $\field^n$.
It follows from Lagrange's interpolation and induction
that if $f$ is a polynomial in $n$ variables and of degree bounded by $d,$
then $f$ is $(C,\alpha)$-good on $\field^n$ where $C$ and $\alpha$ depend
only on $n$ and $d.$

The $(C,\alpha)$-good property for fractal measures was studied in~\cite{KLW-Fractal}. 
We need the following lemma (a version of this is \cite[Lemma 5.1]{FS-Factor} for $\G$ convex cocompact);
our proof is soft and uses compactness arguments.
This can be thought of as a weak form of the $(C,\alpha)$-good property of polynomials. Recall a point $x\in X_1$ is
called a $\BMS$-point (resp.~$\BR$-point) if it lies in the support of $m^{\BMS}$ (resp. $m^{\BR}$).

For $d\ge 0$ and $\ell\ge 1$, let $\pcal_{d,\ell}$ be the set of functions $\Theta:U\to\br $ of the form 
\[
\Theta({\tbf}):=\min\{|\Theta_1({\tbf})|^2,\ldots, |\Theta_{\ell}({\tbf})|^2\}
\] 
where the function
$\Theta_i:U\to\mathbb F$ is a polynomial of degree at most $d$ for each $1\leq i\leq\ell$.

\begin{lem}\label{l;poly-ps-good}\label{cone}
Let $d,\ell>0$ be fixed. For any compact subset $\kcal\subset X_1$,
there exists some $C_1=C_1(\kcal, d,\ell)>0$ depending on $d,\ell$ and $\kcal$ with the following properties:
%\begin{enumerate}\item
 Let $x\in\kcal$ be a $\BMS$ point with $x^-\in \Lambda_{\rm r}(\G)$ 
and let $s\ge 0$ be so that $xa_{-s}\in\kcal.$  
Then for any $\Theta\in\pcal_{d,\ell}$, we have 
\[
\frac{1}{\mups(B_U(e^s))}\int_{B_U(e^s)} \Theta({\tbf})d\mups(\tbf)\geq
 C_1\cdot \mbox{$\sup_{\tbf\in B_U(e^s)}  \Theta (\tbf) $}.
\]
%\item
%Suppose $\kcal$ is so that $m^{\BR}(\partial\kcal)=0.$ 
%Let $x\in\kcal$ be such that $x^-\in\Lambda_{\rm r}(\G)$ and let $s\ge 0$ be so that
%$xa_{-s}\in\kcal.$ Fix $R\geq1$ so that \[
%\inf_{x\in \kcal, x^-\in \Lambda(\G)}\mu_x^{\PS}(B_U(R))>0.\] 
%Then any $\Theta\in\pcal_{d,\ell}$, we have \[
%\frac{\int_{B_U(2Re^s)}\chi_{\mathcal K}(xu_\tbf)|\Theta(\tbf)|d\tbf}{\int_{B_U(2Re^s)}\chi_{\mathcal K}(xu_\tbf)d\tbf}\geq C_1 \cdot 
%\mbox{$\sup_{\tbf\in B_U(2Re^s)}$}|\Theta (\tbf)|.\]%\end{enumerate}
\end{lem}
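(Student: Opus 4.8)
The plan is to prove this by contradiction using a compactness argument, in the same spirit as Lemma~\ref{lem;coord-thick} and Lemma~\ref{cont}. Suppose the conclusion fails. Then for some compact $\kcal$ there exist sequences $x_k\in\kcal$ of $\BMS$-points with $x_k^-\in\Lambda_{\rm r}(\G)$, exponents $s_k\ge0$ with $x_ka_{-s_k}\in\kcal$, and functions $\Theta^{(k)}\in\pcal_{d,\ell}$ such that
\[
\frac{1}{\mu_{x_k}^{\PS}(B_U(e^{s_k}))}\int_{B_U(e^{s_k})}\Theta^{(k)}(\tbf)\,d\mu_{x_k}^{\PS}(\tbf)
<\tfrac1k\cdot\sup_{\tbf\in B_U(e^{s_k})}\Theta^{(k)}(\tbf).
\]
The first step is to renormalize: using the scaling relations from Section~\ref{ss;PS-meas}, namely $\mu_g^{\PS}(B_U(e^s))=e^{\delta s}\mu_{ga_{-s}}^{\PS}(B_U(1))$ and $a_{-s}u_\tbf a_s=u_{e^s\tbf}$, I would transfer the problem down to unit scale. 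Writing $g_k$ for a lift of $x_ka_{-s_k}$ and substituting $\tbf=e^{s_k}\sbf$ with $\sbf\in B_U(1)$, the estimate becomes a statement purely about $\mu_{g_k}^{\PS}$ on $B_U(1)$ and a rescaled polynomial tuple $\tilde\Theta^{(k)}$ whose degrees are still bounded by $d$. Since the inequality is homogeneous in $\Theta$, I may normalize $\sup_{B_U(1)}\tilde\Theta^{(k)}=1$.

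The second step is compactness. The $g_k$ (or rather $x_ka_{-s_k}$) lie in $\kcal$, so after passing to a subsequence $x_ka_{-s_k}\to y$ for some $\BMS$-point $y$; by Lemma~\ref{cont}, $\mu_{g_k}^{\PS}\to\mu_{g_\infty}^{\PS}$ in $\mathcal M(U)$ (after choosing lifts converging to a lift $g_\infty$ of $y$, noting $g_k^+\in\Lambda(\G)$), and by Corollary~\ref{fxp} the masses $\mu_{g_k}^{\PS}(B_U(1))$ are bounded away from $0$ and $\infty$. Meanwhile each $\tilde\Theta^{(k)}=\min\{|\Theta^{(k)}_1|^2,\dots,|\Theta^{(k)}_\ell|^2\}$ with $\deg\Theta^{(k)}_i\le d$ and $\sup_{B_U(1)}\tilde\Theta^{(k)}=1$: the space of polynomials of degree $\le d$ with sup-norm controlled on $B_U(1)$ is finite-dimensional, so after a further subsequence $\Theta^{(k)}_i\to\Theta^{(\infty)}_i$ uniformly on a slightly larger ball, hence $\tilde\Theta^{(k)}\to\Theta^{(\infty)}:=\min\{|\Theta_1^{(\infty)}|^2,\dots,|\Theta_\ell^{(\infty)}|^2\}$ uniformly on $B_U(1)$, with $\sup_{B_U(1)}\Theta^{(\infty)}=1$. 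In particular $\Theta^{(\infty)}\in\pcal_{d,\ell}$ is not identically zero, and since $\Theta^{(\infty)}$ vanishes only on a proper sub-variety of $U$ (the union of the zero sets of the $\Theta_i^{(\infty)}$), the first bullet of Lemma~\ref{cont} gives $\mu_{g_\infty}^{\PS}(\{\Theta^{(\infty)}=0\})=0$, hence $\int_{B_U(1)}\Theta^{(\infty)}\,d\mu_{g_\infty}^{\PS}>0$. Passing to the limit in the normalized inequality (the left side converges by weak-$*$ convergence of measures against the uniformly convergent continuous integrands, together with the boundedness of masses) yields $\int_{B_U(1)}\Theta^{(\infty)}\,d\mu_{g_\infty}^{\PS}\le 0$, a contradiction. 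Then $C_1$ is obtained by this contradiction argument as the infimum over $\kcal$, $d$, $\ell$.

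\textbf{The main obstacle} I anticipate is the joint limiting of the measures and the polynomials: one must be careful that the lifts $g_k\in G$ of $x_ka_{-s_k}$ can be chosen to converge (modulo $\Gamma$ one only has subsequential convergence of $x_ka_{-s_k}$, so one picks lifts in a compact fundamental-domain piece), that $g_k^+\in\Lambda(\G)$ so that Lemma~\ref{cont} applies, and that the convergence $\tilde\Theta^{(k)}\to\Theta^{(\infty)}$ is strong enough (uniform on a neighborhood of $B_U(1)$, not just pointwise) to pair with weak-$*$ convergence of the $\mu_{g_k}^{\PS}$ — this requires the standard fact that a bounded sequence of degree-$\le d$ polynomials on a ball is precompact in $C^0$ of a slightly larger ball. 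A secondary point requiring attention is that $\min$ of squared polynomials, while not itself a polynomial, is still continuous and its zero set is contained in a finite union of proper sub-varieties (assuming not all $\Theta_i^{(\infty)}\equiv0$; the normalization $\sup=1$ rules that out), so the zero-measure statement of Lemma~\ref{cont} still applies. Once these technical points are handled the argument is routine.
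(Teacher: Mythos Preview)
Your proposal is correct and follows essentially the same strategy as the paper's proof: rescale to the unit ball via $\mu_x^{\PS}(B_U(e^s))=e^{\delta s}\mu_{xa_{-s}}^{\PS}(B_U(1))$ and $\tilde\Theta(\tbf)=\Theta(e^s\tbf)$, normalize so that $\sup_{B_U(1)}\tilde\Theta=1$, then argue by contradiction using compactness of $\kcal\cap\supp(m^{\BMS})$, continuity of $y\mapsto\mu_y^{\PS}$ (Lemma~\ref{cont}) and the bounds of Corollary~\ref{fxp}, together with the fact that a nonzero element of $\pcal_{d,\ell}$ vanishes only on a proper subvariety, which has zero $\PS$-measure. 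Your extra care about choosing convergent lifts $g_k\to g_\infty$ in $G$ is a harmless technicality that the paper handles implicitly by working directly on the quotient.
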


\begin{proof}
%We begin with the proof of (1). Let us denote
Write $\kcal^{\BMS}=\kcal\cap\supp(m^{\BMS})$. %for $\bullet=\BMS,\BR.$
Note that the above statement  is invariant
under scaling the map $\Theta.$
Further, for any $x\in \kcal^{\BMS},$ any $s\in\bbr$ and all $\Theta\in\pcal_{d,\ell}$, 
we have
\begin{align*}
 \frac{1}{\mu_{x}^{\PS}(B_U(e^{s}))}\int_{B_U(e^{s})} \Theta({\tbf}) d\mu_{x}^{\PS}(\tbf)&=
 \frac{1}{\mu_{x}^{\PS}(B_U(e^{s}))}\int_{B_U(1)} \Theta(e^{s}\tbf) d\mu_{x}^{\PS}(e^{s}\tbf)\\
&=\frac{1}{\mu_{xa_{-s}}^{\PS}(B_U(1))}\int_{B_U(1)}
 \tilde{\Theta}(\tbf) d\mu_{xa_{-s}}^{\PS}(\tbf)
\end{align*}
where  $\tilde{\Theta}(\tbf):=\Theta(e^s \tbf)$.
Suppose now the statement (1) fails. Then we have 
\begin{itemize}
\item a sequence $x_i\in\kcal^{\BMS}$, a sequence $s_i\to \infty $ such that $y_i:=x_ia_{-s_i}\in\kcal,$
\item a sequence $\tilde \Theta_i\in\pcal_{d, \ell}$ with $\sup_{B_U(1)}\tilde \Theta_i(\tbf)=1$
\end{itemize}
so that $\frac{1}{\mu_{y_i}^{\PS}(B_U(1))}\int_{B_U(1)}\tilde  \Theta_i({\tbf})d\mu_{y_i}^{\PS}(\tbf)\to0$ as $i\to \infty$.

Passing to a subsequence we may assume that
$y_i\to y\in\kcal^{\BMS}$ and $\tilde \Theta_i\to \tilde \Theta\in\pcal_{d,\ell}$ 
with $\sup_{\tbf\in B_U(1)} \tilde \Theta(\tbf) =1.$
Since the map $x\mapsto\mups$ is continuous on $\kcal^{\BMS}$
and 
\[
\mbox{$0< \inf_{x\in \kcal^{\BMS}} \mups(B_U(1)) \le \sup_{x\in \kcal^{\BMS}} \mups(B_U(1))< \infty $,}
\]
it follows that
\[
 \int_{B_U(1)}\tilde \Theta(\tbf) d\mu_y^{\PS}(\tbf)=0.
\]
This implies that $\mu_y^{\PS}(B_U(1)\cap\{\tbf: \tilde \Theta(\tbf)\neq0\})=0$
which contradicts the fact that $y\in\supp(m^{\BMS})$ in view of
Zariski density of $\G$, proving the claim.
\end{proof}
%We now turn to the proof of (2).
%Making a change of variables, we can write 
%\[
%\frac{\int_{B_U(2Re^s)}\chi_{\mathcal K}(xu_\tbf)|\Theta(\tbf)|d\tbf}{\int_{B_U(2Re^s)}\chi_{\mathcal K}(xu_\tbf)d\tbf}=\frac{\int_{B_U(2R)}\chi_{\mathcal K}(xa_{-s}u_\tbf a_s)|\tilde{\Theta}(\tbf)|d\tbf}{\int_{B_U(2R)}\chi_{\mathcal K}(xa_{-s}u_\tbf a_s)d\tbf}.
%\]
%Put $x_s=xa_{-s}$. The above using the uniformity statement proved in 
%Theorem~\ref{thm;exp-peice} and the choice of $R$ we get that
%for all large enough $s$,
%\begin{align*}&
%\frac{\int_{B_U(2Re^s)}\chi_{\mathcal K}(xu_\tbf)|\Theta(\tbf)|d\tbf}{\int_{B_U(2Re^s)}\chi_{\mathcal K}(xu_\tbf)d\tbf}\\&\geq
% C_1'\frac{1}{\mu_{x_s}^{\PS}(B_U(2R))} {\int_{B_U(2R)}|\tilde{\Theta}(\tbf)|d\mu_{x_s}^{\PS}(\tbf)}
%\\ &=C_1'\frac{1}{\mups(B_U(2Re^s))}\int_{B_U(2Re^s)}|\Theta({\tbf})|d\mups(\tbf)
%\end{align*}
%where $C_1'$ depends only on $\kcal.$ The claim in (2) now follows from (1).
%\end{proof}
%By Theorem \ref{mixing} applied to $\chi_{\mathcal K}$ and $|\Theta |$
%we get
%$$\frac{\int_{B_U(2Re^s)}\chi_{\mathcal K}(xu_\tbf)|\Theta(\tbf)|d\tbf}{\int_{B_U(2Re^s)}\chi_{\mathcal K}(xu_\tbf)d\tbf}
%\geq C_1' \int_{B_U(2R)} \tilde{\Theta} d_{xa_{-s}}^{\PS}(x)$$
%where $\tilde{\Theta}(\tbf)=\Theta(e^s \tbf)$; and hence
%the statement in (2) follows from (1)

%The proof of (2) is similar using the uniformity statement proved in 
%Theorem~\ref{thm;exp-peice}.

We also recall the following mean ergodic theorem.

\begin{thm} \cite[Thm. 17]{Rudol} \label{thm;Rudol}
For any Borel set $\mathcal K$ of $X_1$ and any $\eta>0$, the set 
$$
\left\{x\in X_1:\liminf_T \tfrac{1}{\mu_x^{\PS}(B_U(T))}\int_{B_U(T)}
\chi_{\mathcal K} (xu_\tbf)d\mu_x^{\PS}(\tbf)\geq (1-\eta)m^{\BMS}( \mathcal K )\right\}
$$
has full $\BMS$ measure.
\end{thm}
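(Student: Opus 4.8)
\medskip
\noindent\textbf{Proof proposal.} The plan is to deduce the statement from the equidistribution theorem for continuous functions, Theorem~\ref{eqbms}, by establishing a weak-type maximal inequality for the $\PS$-weighted $U$-averages and then invoking the Banach principle on almost everywhere convergence. Set, for $f\in L^1(m^{\BMS})$ and $T>0$,
\[
\mathcal A_Tf(x):=\frac{1}{\mups(B_U(T))}\int_{B_U(T)}f(xu_\tbf)\,d\mups(\tbf),
\]
which is defined whenever $x^+\in\Lambda(\G)$; recall that $m^{\BMS}$-almost every $x$ satisfies $x^{\pm}\in\Lambda(\G)$ and $x^-\in\Lambda_{\rm r}(\G)$, since $\Lambda_{\rm bp}(\G)$ is countable and the $\PS$ density is non-atomic. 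By Theorem~\ref{eqbms}, for every such $x$ and every $\psi\in C_c(X_1)$ we have $\mathcal A_T\psi(x)\to m^{\BMS}(\psi)$ as $T\to\infty$. Since $\mathcal A_T$ is an average, $|\mathcal A_T\psi-\mathcal A_T\psi'|\le\|\psi-\psi'\|_\infty$, and $C_c(X_1)$ admits a countable subset that is uniformly dense on each compact set; hence there is a single $m^{\BMS}$-conull set $X_0$ on which $\mathcal A_T\psi(x)\to m^{\BMS}(\psi)$ holds for all $\psi\in C_c(X_1)$ simultaneously.

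The key step is the maximal inequality: there is $C=C(n)$ so that for all $f\in L^1(m^{\BMS})$ and $\lambda>0$,
\[
m^{\BMS}\Bigl\{x\in X_1:\ \sup_{T>0}\mathcal A_T|f|(x)>\lambda\Bigr\}\ \le\ \frac{C}{\lambda}\,\|f\|_{L^1(m^{\BMS})}.
\]
To prove it, disintegrate $m^{\BMS}$ over the partition of $X_1$ into $U$-orbits. For $m^{\BMS}$-a.e.\ $x$ the map $u\mapsto xu$ is injective, $xU$ is parametrized by $U\simeq\br^{n-1}$, and the conditional of $m^{\BMS}$ along $xU$ is, up to a normalizing scalar, the Radon measure $\mups$ (which on a given orbit does not depend on the base point); this is exactly the content of the local product formula~\eqref{bmsp}, which one globalizes by a countable cover of $X_1$ by admissible boxes, or equivalently via the Rokhlin disintegration of the finite measure $m^{\BMS}$. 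On each orbit, $\sup_{T>0}\mathcal A_T|f|$ is the centered Hardy--Littlewood maximal function of $f$ relative to the cubes $B_U(T)$ and the measure $\mups$; since $\mups$ charges no proper subvariety (Lemma~\ref{pscon}), the cubes carry no boundary mass, and the Besicovitch covering theorem yields the orbitwise bound
\[
\mups\Bigl\{u_\tbf\in U:\ \sup_{T>0}\mathcal A_T|f|(xu_\tbf)>\lambda\Bigr\}\ \le\ \frac{C}{\lambda}\int_U|f|(xu_\tbf)\,d\mups(\tbf),
\]
with $C$ depending only on $n-1$. Integrating over the transverse parameter and using~\eqref{bmsp} once more gives the claimed inequality on $X_1$.

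Finally, the Banach principle finishes the argument. For $f\in L^1(m^{\BMS})$ pick $f_k\in C_c(X_1)$ with $f_k\to f$ in $L^1(m^{\BMS})$ (valid since $C_c(X_1)$ is dense in $L^1$ of a Radon measure). On $X_0$ the oscillation $\limsup_T\mathcal A_Tf_k-\liminf_T\mathcal A_Tf_k$ vanishes, so on $X_0$ the oscillation of $\mathcal A_Tf$ is at most $2\sup_{T>0}\mathcal A_T|f-f_k|$, and the maximal inequality forces it to be $0$ for $m^{\BMS}$-a.e.\ $x$; comparing with $m^{\BMS}(f_k)\to m^{\BMS}(f)$ identifies the $m^{\BMS}$-a.e.\ limit of $\mathcal A_Tf$ as the constant $m^{\BMS}(f)$. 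Applying this to $f=\chi_{\mathcal K}\in L^1(m^{\BMS})$ (recall $|m^{\BMS}|=1$) gives $\mathcal A_T\chi_{\mathcal K}(x)\to m^{\BMS}(\mathcal K)$ for $m^{\BMS}$-a.e.\ $x$, whence the $\liminf$ equals $m^{\BMS}(\mathcal K)\ge(1-\eta)m^{\BMS}(\mathcal K)$ on a conull set, which proves (and in fact strengthens) the statement. The one genuinely delicate point is the maximal inequality, and inside it the passage from the local disintegration~\eqref{bmsp} to a global one together with the identification of its conditionals with the $\mups$; the orbitwise Besicovitch estimate and the transference to $X_1$ are then routine.
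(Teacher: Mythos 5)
Your reduction to a weak-type $(1,1)$ maximal inequality is where the argument breaks, and the break is precisely at the step you dismiss as routine: ``integrating over the transverse parameter and using \eqref{bmsp} once more.'' There is no Rokhlin disintegration of the finite measure $m^{\BMS}$ over complete $U$-orbits with conditionals proportional to $\mu_x^{\PS}$: for $m^{\BMS}$-a.e.\ $x$ one has $x^-\in\Lambda_{\rm r}(\G_1)$, hence $|\mu_x^{\PS}|=\infty$ by Lemma \ref{rinf}, whereas Rokhlin conditionals of a probability measure along a measurable partition are probability measures; equivalently the orbit partition is not measurable, and \eqref{bmsp} is a product decomposition of the lift $\tilde m^{\BMS}$ on $G$ (or of $m^{\BMS}$ restricted to a single admissible box, over bounded plaques), not a global Fubini over whole orbits in $\G_1\ba G$. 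Nor can you repair this box by box: the orbitwise Besicovitch bound unavoidably carries $\int_{xU}|f|\,d\mu_x^{\PS}$ on its right-hand side (averages over large $B_U(T)$ see mass of $f$ arbitrarily far along the orbit; already for Lebesgue measure on $\br$ the weak-type bound needs the full integral), and this quantity is infinite for a.e.\ $x$ as soon as $m^{\BMS}(|f|)>0$ (combine Lemma \ref{rinf} with Theorem \ref{eqbms}); summing such bounds over a countable box cover gives $+\infty$, not $C\|f\|_{L^1(m^{\BMS})}/\lambda$. The classical Wiener/transference route is also unavailable because $m^{\BMS}$ is not $U$-invariant and the weights $\mu_x^{\PS}$ vary with the base point. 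A genuine proof of any usable maximal inequality here would require a covering argument on $X_1$ itself with transversally thickened boxes, uniform doubling of the $\PS$ measures (Lemma \ref{uc}, which is uniform only on compact sets and for large $T$), and control of cusp excursions (Theorem \ref{sha}, Theorem \ref{sc}) --- genuinely new work, not a transference exercise.

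For comparison, the paper does not prove this statement at all: it quotes it from Rudolph \cite{Rudol}, whose argument goes through a mean ($L^2$) ergodic theorem for $m^{\BMS}$; the $(1-\eta)$--liminf formulation is exactly what that route delivers without a pointwise maximal function (and, applied to all $\eta$ and to $\mathcal K^c$, it already yields a.e.\ convergence for each fixed Borel set, so your conclusion would not actually be stronger). Your use of Theorem \ref{eqbms} is legitimate --- it is proved in Sections 3--4 independently of Theorem \ref{thm;Rudol}, so there is no circularity --- and the Banach-principle reduction from $C_c$ to $L^1$ would be fine once a maximal inequality (even one restricted to a compact exhaustion, with constants depending on the compact piece) is actually established. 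As written, that key input is asserted but not proved, and it is the heart of the matter.
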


%Let $X'$ be as in Theorem \ref{t;factor-rigidity-bms}.
In the following, let $X'$ be the set which satisfies \eqref{Ueq} for $\mfib$.
Fix $\eta>0$. By Lusin's theorem, there exists a compact subset 
\be\label{uniformcc} \kcal_\eta \subset X'\cap \supp(m^{\BMS}) \ee  with $m^{\BMS}(\kcal_\eta )> 1-\eta$ so that
$\msec_i$ is uniformly continuous on $\kcal_\eta$ for each $1\leq i\leq\ell $.
Since $\{x\in X_1 : x^-\in \Lambda_{\rm r}(\G_1)\}$ has a full $\BMS$-measure,
it follows from  Theorem~\ref{thm;Rudol} that
there exist a compact subset 
\be \label{odef} \Om_\eta\subset  \{x\in X' \cap \supp(m^{\BMS}): x^-\in \Lambda_{\rm r}(\G_1)\}\ee with $m^{\BMS}(\Om_\eta)>1-\eta$
and $T_\eta>1$ such that
 for any $x\in \Om_\eta$ and $T\ge T_\eta$,
\begin{align}
\label{ruu}\frac{1}{\mu_x^{\PS}(B_U(T))}\int_{B_U(T)}\chi_{\kcal_\eta}(xu_\tbf)
d\mu_x^{\PS}(\tbf)&\geq (1- \eta)m^{\BMS}(\kcal_\eta)\\
\notag &\ge 1-2\eta.
\end{align}

The following is a key ingredient of the proof of Theorem \ref{t;factor-rigidity-bms}.

\begin{prop}\label{keyP} Under the hypothesis of Theorem \ref{t;factor-rigidity-bms}, there exists a compact subset
$\Om \subset\{x\in X' :x^-\in \Lambda_{\rm r}(\G)\}$ with 
 $m^{\BMS}(\Om)>0.9$,
% be a compact subset such that
%each $\msec_i$ is continuous on $\kcal$ for all $1\leq i\leq\ell $ and $\mfib$ is $MAU$-equivariant for all points on $\mathcal K$.
  $\e'>0$ and $s_0>0$ such that
for any $|\bf r|<\e'$, for any  $s>s_0$ and $x\in\Om $ such that
 $x\check u_{\bf r}, x\check u_{\bf r} a_s, xa_s \in \Om$,
\be\label{cccdmain}
\mfib(x\umt)\check u_{-{{\bf r}}}\subset \mfib(x)\cdot\{g\in G: d(e,g)\leq c\cdot e^{-s}\}.\ee
 where $c>1$ is an absolute constant.
\end{prop}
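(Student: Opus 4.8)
The plan is to carry out the argument of~\cite{FS-Factor} (itself modelled on Ratner), paying attention to the cusps and to the set-valued setting $\ell\ge1$.

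\smallskip
\noindent\textbf{Choice of compact sets.} Fix a small $\eta>0$, to be constrained against the constant $C_1$ of Lemma~\ref{cone}. Applying Lusin's theorem to $\msec_1,\dots,\msec_\ell$ gives a compact $\kcal_\eta\subset X'\cap\supp(m^{\BMS})$, which we enlarge slightly to a compact neighbourhood of itself, with $m^{\BMS}(\kcal_\eta)>1-\eta$ on which every $\msec_i$ is uniformly continuous, say with modulus $\omega$. Feeding $\kcal_\eta$ into the uniform ergodic estimate~\eqref{ruu} produces a compact $\Omega_\eta\subset\{x\in X':x^-\in\Lambda_{\rm r}(\G_1)\}\cap\supp(m^{\BMS})$ with $m^{\BMS}(\Omega_\eta)>1-\eta$ and a threshold $T_\eta$ such that for all $x\in\Omega_\eta$ and $T\ge T_\eta$ the $\mu_x^{\PS}$-proportion of $\tbf\in B_U(T)$ with $xu_\tbf\in\kcal_\eta$ is $\ge1-2\eta$. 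Running the same construction in $X_2$ and shrinking $\Omega_\eta$ so that the sets $\msec_i(\Omega_\eta)$ land in a fixed compact $\kcal'\subset X_2$ of injectivity radius $\ge\iota_0$ on which Lemma~\ref{cone} applies with $d=2$, we obtain $\Omega:=\Omega_\eta$ (further shrunk so that $\Omega\subset\kcal_\eta$ and $m^{\BMS}(\Omega)>0.9$). Replacing $X'$ by a $\BMS$-conull $AU$-invariant subset (legitimate since $m^{\BMS}$ is $A$-invariant and $X'$ is $U$-invariant), we may assume $\mfib$ is genuinely $AU$-equivariant on $X'$.

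\smallskip
\noindent\textbf{Reduction by the geodesic flow.} Let $x,\,x\umt,\,xa_s,\,x\umt a_s\in\Omega$ with $|{\bf r}|<\e'$ and $s>s_0$, and put $\epsilon:=e^{-s}$, $x_1:=xa_s$, $y_1:=x\umt a_s=x_1\check u_{\epsilon{\bf r}}$; thus $|\epsilon{\bf r}|<\e'\epsilon$ is tiny and $x_1^-,y_1^-\in\Lambda_{\rm r}(\G_1)$. Using $A$-equivariance, $\mfib(x)=\mfib(x_1)a_{-s}$, $\mfib(x\umt)=\mfib(y_1)a_{-s}$, and $\check u_{-{\bf r}}=a_s\check u_{-\epsilon{\bf r}}a_{-s}$, so for each fixed $j$, as soon as we produce $g_0\in G$ with $\msec_j(y_1)=\msec_j(x_1)\check u_{\epsilon{\bf r}}\,g_0$,
\[
\msec_j(x\umt)\check u_{-{\bf r}}=\msec_j(x)\cdot\bigl(\check u_{\bf r}\,(a_sg_0a_{-s})\,\check u_{-{\bf r}}\bigr).
\]
Since conjugation by $\check u_{\bf r}$ ($|{\bf r}|<1$) is bi-Lipschitz near $e$ with an absolute constant and $\msec_j(x)\in\mfib(x)$, it suffices to produce, for each $j$, such a $g_0$ with $d(e,a_sg_0a_{-s})\le c'e^{-s}$.

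\smallskip
\noindent\textbf{Transfer along the $U$-orbit and polynomial divergence.} Set $T_1:=e^s$ (so $T_1\ge T_\eta$ once $s_0\ge\log T_\eta$, and $|\epsilon{\bf r}|T_1<\e'$). For $\tbf\in B_U(T_1)$ use the $\SL_2$-identity $\check u_{\epsilon{\bf r}}u_\tbf=u_{\tbf'}a_\sigma\check u_{{\bf r}'}$ with $\tbf'=\tbf/(1+\epsilon{\bf r}\tbf)$, $e^{\sigma/2}=1+\epsilon{\bf r}\tbf$, ${\bf r}'=\epsilon{\bf r}/(1+\epsilon{\bf r}\tbf)$; on $B_U(T_1)$ one has $|\tbf-\tbf'|,|\sigma|=O(\e')$ and $|{\bf r}'|\le2|\epsilon{\bf r}|$. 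By the uniform return estimate (applied to the orbits $x_1u_{\tbf'}a_\sigma$ and $y_1u_\tbf=x_1u_{\tbf'}a_\sigma\check u_{{\bf r}'}$ and, pulled back through the $\msec_i$, to the corresponding orbits in $X_2$) there is a set $E\subset B_U(T_1)$ with $\mu_{x_1}^{\PS}(E)\ge(1-O(\eta))\mu_{x_1}^{\PS}(B_U(T_1))$ on which $x_1u_{\tbf'}a_\sigma$ and $y_1u_\tbf$ lie in $\kcal_\eta$, and their $\msec_i$-images lie in $\kcal'$. For $\tbf\in E$, since $y_1u_\tbf$ and $x_1u_{\tbf'}a_\sigma$ differ by the tiny element $\check u_{{\bf r}'}$, the $AU$-equivariance of $\mfib$ and the uniform continuity of $\msec_j$ yield
\[
d\bigl(\msec_j(y_1)u_\tbf,\ \msec_j(x_1)\check u_{\epsilon{\bf r}}u_\tbf\bigr)\le\omega(|{\bf r}'|)+|{\bf r}'|\le\omega(2|\epsilon{\bf r}|)+2|\epsilon{\bf r}|=:\delta_0,
\]
a quantity that is $\le1$ always and small when $\e'$ is small. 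Taking $\tbf_0\in E\cap B_U(T_\eta)$ (nonempty, since the return estimate holds already at $T=T_\eta$ and $\mu_{x_1}^{\PS}(B_U(T_\eta))>0$), the bound at $\tbf_0$ shows $\msec_j(y_1)$ and $\msec_j(x_1)\check u_{\epsilon{\bf r}}$ lie in a common $\iota_0$-chart, defining $g_0\in G$ with $|g_0-I|\lesssim\delta_0$, and for all $\tbf\in E$ one gets $|u_{-\tbf}g_0u_\tbf-I|\lesssim\delta_0$. Writing $g_0=\left(\begin{smallmatrix}\alpha&\beta\\\gamma&\delta\end{smallmatrix}\right)$, the entries of $u_{-\tbf}g_0u_\tbf-I$ are the polynomials $\alpha-1+\beta\tbf$, $\beta$, $-\beta\tbf^2+(\delta-\alpha)\tbf+\gamma$, $\delta-1-\beta\tbf$ of degree $\le2$ in $\tbf$; their squared absolute values belong to $\pcal_{2,1}$, are $\le\delta_0^2$ on $E$, and $\mu_{x_1}^{\PS}(E)$ occupies proportion $>1-C_1$ of $B_U(T_1)$ once $\eta$ is small, so Lemma~\ref{cone} forces each of them to be $\lesssim\delta_0^2$ on all of $B_U(e^s)$. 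Reading off coefficients gives $|\gamma|\lesssim\delta_0$, $|\delta-\alpha|\lesssim\delta_0e^{-s}$, $|\beta|\lesssim\delta_0e^{-2s}$, $|\alpha-1|,|\delta-1|\lesssim\delta_0$; combining $|\delta-\alpha|\lesssim\delta_0e^{-s}$ with the determinant identity $\alpha\delta=1+\beta\gamma=1+O(\delta_0^2e^{-2s})$ improves the diagonal to $|\alpha-1|,|\delta-1|\lesssim\delta_0e^{-s}$. Hence $a_sg_0a_{-s}=\left(\begin{smallmatrix}\alpha&e^s\beta\\e^{-s}\gamma&\delta\end{smallmatrix}\right)=I+O(\delta_0e^{-s})$, so $d(e,a_sg_0a_{-s})\lesssim\delta_0e^{-s}\le c'e^{-s}$ with $c'$ depending only on $\Omega$ and $\ell$, which by the reduction proves~\eqref{cccdmain}.

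\smallskip
\noindent\textbf{Expected main obstacle.} The hard part is the bookkeeping around the good parameter set $E$: one needs the uniform return estimate simultaneously for two $U$-orbits in $X_1$ and for their $\msec_i$-images in $X_2$, and for this the presence of cusps forces the use of Schapira's non-divergence Theorem~\ref{sc} and the uniform equidistribution behind~\eqref{ruu} — exactly where the geometrically finite, infinite-volume setting is harder than in~\cite{FS-Factor}. The set-valued case $\ell\ge1$ is what requires the separation of the $\ell$ image points and the min-of-polynomials shape of $\pcal_{d,\ell}$, and the branch of $\msec_j(y_1)$ has to be matched to $\msec_j(x_1)\check u_{\epsilon{\bf r}}$ consistently, which we arrange by staying inside $\kcal_\eta$. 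The remaining care is the coefficient extraction, where it is the genuinely quadratic (not merely Lipschitz) divergence of $U$-orbits together with $\det g_0=1$ that manufactures the exponent $e^{-s}$.
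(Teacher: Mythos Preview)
Your overall strategy is the same as the paper's: conjugate by $a_s$ to replace $(x,x\umt)$ by $(x_1,y_1)=(xa_s,xa_s\check u_{e^{-s}{\bf r}})$, use the $\SL_2$ factorisation $\check u_{e^{-s}{\bf r}}u_\tbf=u_{\tbf'}a_\sigma\check u_{{\bf r}'}$ to compare $U$-orbits, show the discrepancy is governed by a degree-$\le 2$ polynomial that is small on a set of large PS-measure, invoke Lemma~\ref{cone}, and finally read off the matrix entries. The coefficient extraction at the end, including the use of $\det g_0=1$ to sharpen the diagonal, is correct.

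The genuine gap is in the step where you write
\[
d\bigl(\msec_j(y_1)u_\tbf,\ \msec_j(x_1)\check u_{\epsilon{\bf r}}u_\tbf\bigr)\le\delta_0\quad\text{for }\tbf\in E
\]
with the \emph{same} index $j$ on both sides. The hypothesis is only that the \emph{set-valued} map $\mfib$ is $AU$-equivariant; the individual $\msec_j$ need not satisfy $\msec_j(xu_\tbf)=\msec_j(x)u_\tbf$ or $\msec_j(xa_s)=\msec_j(x)a_s$. Lusin continuity on $\kcal_\eta$ gives $\msec_j(y_1u_\tbf)\approx\msec_j(x_1u_{\tbf'}a_\sigma)$, but neither side can be rewritten as $\msec_j(y_1)u_\tbf$ or $\msec_j(x_1)u_{\tbf'}a_\sigma$ without an unjustified index-matching. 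Your remark that ``the branch of $\msec_j(y_1)$ has to be matched to $\msec_j(x_1)\check u_{\epsilon{\bf r}}$ consistently, which we arrange by staying inside $\kcal_\eta$'' does not suffice: the good set $E$ is not connected, so a continuity-along-paths argument is unavailable, and separation of the $\ell$ values does not by itself prevent permutations as $\tbf$ varies.

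The paper fixes this by never matching indices until the very end. In its Step~1 it only proves that the fixed point $\msec_i(x\umt)\check u_{-{\bf r}}a_su_\tbf$ is $\e$-close to the \emph{set} $\mfib(xa_s)u_\tbf$ on the good parameter set. Then in Step~3 it sets $\Theta(\tbf)=\min\bigl\{d(\msec_i(x\umt)\check u_{-{\bf r}}a_su_\tbf,\mfib(xa_s)u_\tbf)^2,\,1\bigr\}$, which is a minimum of $\ell$ polynomial-type functions truncated at $1$; the truncation makes the integral bound immediate, and Lemma~\ref{cone} then forces $\sup\Theta<1$, so the truncation was inactive and $\Theta\in\pcal_{d,\ell}$ genuinely. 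Only after this does the paper pass from the minimum to a single index: by pigeonhole one of the $\ell$ individual distances is $\le\zeta$ on a set of Lebesgue proportion $\ge 1/\ell$, and the $(C,\alpha)$-good property of polynomials with respect to Lebesgue measure (not PS-measure) upgrades this to a uniform bound on all of $B_U(e^s)$ for that single index $k(i)$. Your proof mentions $\pcal_{d,\ell}$ in the closing paragraph but never actually uses the $\ell>1$ structure; you apply Lemma~\ref{cone} only to $\pcal_{2,1}$, which presupposes the index-matching you have not established. For $\ell=1$ your argument is correct; for $\ell\ge 2$ you need to insert the paper's two-stage argument (min-of-$\ell$ plus pigeonhole plus $(C,\alpha)$-good) before the coefficient extraction.
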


\begin{proof} Fix a small $\eta>0$. Let $K_\eta$ and $\Om_\eta$ be as in \eqref{uniformcc} and \eqref{odef}.
Fix a small $\e>0$. Let $0<\e'<\e/2$ be such that
\be \label{Kc} \dist(\msec_i(x),\msec_i(x'))\leq\e \ee for all $1\leq i\leq\ell$ and all $x,x'\in\kcal_\eta$ 
with $\dist(x,x')\leq\e'.$ 
Fix  $x\in \Om_\eta$,  $|\bf r|<\e'$ and $s>0$ such that
$xa_s , x\check u_{\bf r}, x\check u_{\bf r} a_s\in \Om_\eta$.

We will first explain the idea of proofs assuming $\mfib$ is an actual map, i.e., $\ell=1$.
Writing $$\mfib(x\umt )\check u_{-\bf r} =\mfib (x) h_s\quad\text{$h_s\in G$,}$$
we would like to show that
\be \label{hs} d(e, h_s)=O(e^{-s}). \ee

Set $g_s:= a_{-s}h_s a_s$ so that
$$  \mfib(x\umt )\check u_{-\bf r} a_s = \mfib (x)a_s g_s=  \mfib (x a_s) g_s .$$
As  $\Upsilon$ is $A$-equivariant on $\Om_\eta$, we have
$\mfib(x\umt )\check u_{-\bf r} a_s=\mfib(x\umt  a_s)\check u_{-e^{-s} \bf r} $
and hence 
\be \label{a1} \mfib(x\umt  a_s)\check u_{-e^{-s} \bf r}= \mfib (xa_s) g_s .\ee

We will study the divergence of the points $\mfib(x\umt  a_s)u_{-e^{-s} \bf r}$ and $\mfib (x)a_s$
 along $u_\tbf$ flow
  and
show that for all $s$ large and for all $\tbf \in B_U(e^s)$, 
\[
\mbox{$\mfib (x\umt a_s)\check u_{-e^{-s}{{\bf r}}} u_\tbf$  and $\mfib (xa_s) u_\tbf$}
\] 
stay within bounded distance from each other, that is, $u_{-\tbf} g_s u_{\tbf}$
is uniformly bounded for all $\tbf \in B_U(e^s)$.
 This will imply that the element $g_s$ is {\it close} to the centralizer of $U$, which is $U$ itself.
 We will then be able to conclude that the element $h_s=a_{s}g_s a_{-s}$ is of size $O(e^{-s})$.

For all $|\tbf|\leq e^{s}$,  set
%$$\beta_{{\bf r}}(\tbf)==\tbf-\tfrac{e^{-s}{{\bf r}} \tbf^2}{1+e^{-s}{{\bf r}} \tbf}=\tfrac{{\bf t}}{1+e^{-s}{{\bf r}} \tbf}.$$
\[
\beta_{{\bf r}}(\tbf)=\tfrac{{\bf t}}{1+e^{-s}{{\bf r}} \tbf}.
\]
Then we have
\be\label{e;U-flow-close}
\check u_{e^{-s}{{\bf r}}}{u_\tbf}
%=  u_{\tbf}
% \begin{pmatrix}1+e^{-s}{{\bf r}} \tbf & e^{-s}{{\bf r}}\\ 
% -e^{-s}{{\bf r}} \tbf^2 & 1-e^{-s}{{\bf r}} \tbf \end{pmatrix}
=\begin{pmatrix}1+e^{-s}{{\bf r}} \tbf & e^{-s}{{\bf r}}\\ 
 \tbf & 1\end{pmatrix}=u_{\beta_{\bf r} (\tbf)}g_{{\bf r}}
\ee
where 
 $g_{\bf r} \in P=AM{\cont}$ with $d(e , g_{{\bf r}}) \leq \e'.$ It is worth mentioning 
 that the function $\beta_{{\bf r}}(\tbf)$ above is our time change map, and is responsible 
 for the two aforementioned properties of $g_{\bf r}.$

\noindent{\bf Step 1:}
We claim that for all $|\tbf|\leq e^s$ such that
$xa_su_{\tbf},x\umt a_s{u_{\beta_{-{{\bf r}}}(\tbf )}}\in\kcal_\eta$, and for any $i$,
\be\label{e;orbits-stay-close}
\dist(\msec_i(x \umt)\check u_{-\bf r} a_s u_{\bf t},\mfib(xa_s)u_{\tbf})\leq\e .
\ee

%\marginpar{A: is this right? there has to be a map 
%which does this by Haar system and i guess this should be it???} 
%{\color{blue} Note that $xa_s u_\tbf$ and $x\umt a_s u_\tbf$ 
%are in one admissible box for all $|\tbf|\leq 2e^2.$ Furthermore, 
%$\beta_{-{\bf r}}$ is the holonomy which takes $x\umt u_\tbf$ to $x u_\tbf$ 
%for all $|\tbf|\leq 2$ and maps the $\mu_{xa_s}^{\PS}$ to $\mu_{x\umt a_s}^{\PS}.$}

%Recall now that $\mfib(y)u_\bullet=\mfib(yu_\bullet)$
%for all $y\in X'$ and all $u_\bullet\in U.$
Using the $U$-equivariance, we have for any $\tbf\in\field$ and $i$,
\begin{align*}
& \msec_i(x \umt)\check u_{-\bf r} a_s u_{\bf t} \\
 &\in \mfib(x\umt a_s)\check u_{-e^{-s}{{\bf r}}}{u_\tbf}\quad \text{using $x\check u_{\bf r} a_s= x a_s\check u_{e^{-s}\bf r}$}
\\
 &=\mfib(x\umt a_s)u_{\beta_{-{{\bf r}}}(\tbf )}g_{-{{\bf r}}}
 &&\text{ by~\eqref{e;U-flow-close}}\\
  &=\mfib(x\umt a_su_{\beta_{-{{\bf r}}}(\tbf )}) g_{-{{\bf r}}}\\
 &=\mfib(xa_s u_{\tbf} g_{-{{\bf r}}}^{-1})g_{-{{\bf r}}}
 &&\text{ by~\eqref{e;U-flow-close}};
\end{align*}
where we used the identities:
$$\umt a_su_{\beta_{-{{\bf r}}}(\tbf )}=a_s \check u_{-e^{-s} {\bf r}}
  u_{\beta_{-{{\bf r}}}(\tbf )}
=a_s u_{\tbf} g_{-{{\bf r}}}^{-1}$$ by~\eqref{e;U-flow-close}.
The choice of $\e'$ made in \eqref{Kc} 
implies that for all $|\tbf|\leq e^s$ such that
$xa_su_{\tbf},xa_s u_{\tbf} g_{-{{\bf r}}}^{-1}= x\umt a_s{u_{\beta_{-{{\bf r}}}(\tbf )}}\in\kcal_\eta$, we have
\[
\dist(\msec_i(x \umt)\check u_{-\bf r} a_s u_{\bf t},\mfib(xa_s{u_\tbf}))\leq2\e .
\]
 This implies 
\eqref{e;orbits-stay-close} 
 by the $U$-equivariance of $\Upsilon$.
%Moreover, by the $U$-equivariance, we have 
%$\msec_{j(i,t)}(xa_s{u_\tbf})\in\mfib(xa_s)u_{\tbf}.$
%Therefore, for every $1\leq i\leq\ell$ and $t\in\bbr$ there exists some $1\leq k(i,t)\leq\ell$ 
%so that
%\[
%\dist(\msec_{i}(x\umt a_s)u^-_{-e^{-s}{{\bf r}}}{u_\tbf},\mfib(xa_s)u_{t})\leq\e,\;
%\text{$\forall\; |t|\leq e^s$ where $xa_su_{t},x\umt a_s{u_\tbf}\in\kcal$}
%\]
%The above with $t=0$ together with~\eqref{e;sec-far} 
%now omplies $k(i)=i.$ 
%All together we get: if $|{{\bf r}}|\leq\e'$ is so that
%$xa_s,x\umt a_s\in\kcal,$ then for every $1\leq i\leq\ell$ we have 

\noindent{\bf Step 2:} 
Letting $$\kcal_x(s,{{\bf r}}):=\{\tbf\in B_U(e^s): xa_su_{\tbf},x\umt a_su_{\beta_{-{{\bf r}}}(\tbf )}\in\kcal_\eta\}$$
and $s_0=\log T_\eta$,
we claim that  for all $s>s_0$,
\be\label{ks} \mu_{xa_s}^{\PS}
(\kcal_x(s,{{\bf r}}))\geq (1- c_0\eta)\mu_{xa_s}^{\PS}(B_U(e^s))
\ee where $c_0>0$ is an absolute constant.

Since $xa_s,x\umt a_s\in\Om_\eta$,
 we get from \eqref{ruu} that if $s>s_0$,
\be\label{c33} \mu^{\PS}_{xa_s}\{\tbf\in B_U(e^s): xa_su_{\tbf} \in\kcal_\eta\} \geq (1-2\eta)\mu_{xa_s}^{\PS}(B_U(e^s))
\ee
and 
\be\label{c3} 
{\mu^{\PS}_{x\check u_{\bf r} a_s}
\{\tbf\in B_U(e^s): x\umt a_su_{\tbf} \in\kcal\} \geq (1-2 \eta)\mu_{x\check u_{\bf r} a_s}^{\PS}(B_U(e^s)).}
\ee
Note that $\op{Jac}(\beta_{\bf r})(\tbf) =1+O(\e')$ for all $\tbf\in B_U(e^s)$, and  hence \eqref{c3}
implies 
\begin{align*} 
&\mu^{\PS}_{x\check u_{\bf r} a_s}\{\tbf\in B_U(e^s): x\umt a_su_{\beta_{-\bf r} (\tbf)} \notin\kcal\}
\\  &\leq \mu^{\PS}_{x\check u_{\bf r} a_s} \{\tbf \in B_U(e^s + O(\e ') ): x\umt a_su_\tbf  \notin\kcal\} \\ &\le
 2\eta \cdot \mu_{x\check u_{\bf r} a_s}^{\PS}(B_U(e^s +O(\e' )) \\ &\le
 (2c_1\eta)  \cdot \mu_{x\check u_{\bf r} a_s}^{\PS}(B_U(e^s))  
 \end{align*}
where $c_1$ is given by Lemma \ref{uc} for $\supp (m^{\BMS})$.
This is equivalent to saying that
\be\label{c4} 
{\mu^{\PS}_{x a_s\check u_{e^{-s}\bf r}}
\{\tbf\in B_U(e^s): x\umt a_su_{\beta_{-\bf r} (\tbf)} \notin
\kcal\} \leq  (2c_1 \eta) \cdot \mu_{xa_s\check u_{e^{-s}\bf r}}^{\PS}(B_U(e^s))} .
\ee

Note that $( x a_s \check u_{e^{-s}\bf r} u_{\beta_{-\bf r} (\tbf)})^+= (xa_s u_\tbf)^+$ 
since $g_{-\bf r}\in P$.
It follows from the definition of the $\PS$-measure
$d\mu_y^{\PS}(\tbf)=e^{\delta\beta_{(yu_\tbf) ^+}(o, yu_\tbf(o))} d\nu_o((yu_\tbf)^+)$ and 
the fact $|e^{-s}{\bf r}|=O(e^{-s} \e')$ that on $B_U(e^s)$,
\[
d\mu_{x\check u_{\bf r} a_s}^{\PS}({\beta_{-\bf r} (\tbf)})= (1+ O(e^{-s}\e')) 
d\mu_{xa_s}^{\PS}(\tbf).
\]

%{\color{blue} Similar to~\eqref{e;U-flow-close}, we have $g_{-\bf r}^{-1}\in AM{\cont}$ and, in particular, $(xa_su_\tbf)^+=(x\umt a_su_{\beta_{-\bf r}(\tbf)})^+$. 
%Now the definition of the $\PS$ measure, see \S\ref{ss;PS-meas},}
%together with $xa_s\in \kcal,$ $|e^{-s}{\bf r}|=O(e^{-s} \e')$ and 
Therefore \eqref{c4} implies that
\be \label{c5} \mu^{\PS}_{x a_s}
\{\tbf\in B_U(e^s): x\umt a_su_{\beta_{-\bf r} (\tbf)} \notin
\kcal_\eta \}  \leq (c_2 \eta )\cdot \mu_{xa_s}^{\PS}(B_U(e^s)) \ee
for some absolute constant $c_2>0$. Hence this together with \eqref{c33} implies
the claim \eqref{ks}.

\noindent{\bf Step 3:} If $\eta>0$ and $\e'>0$ are
 sufficiently small, then for each $i$, there exists $k(i)$ such that
 for all $s>s_0$,
 \be\label{e;theta-small}
\sup_{\tbf\in B_U(e^s)}\dist(\msec_i(x \umt)\check u_{-\bf r} a_s u_{\bf t} ,\msec_{k(i)} (xa_s)u_{\tbf})
\leq 1 .
\ee 

Put $$\Theta(\tbf)=
\min\{\dist(\msec_i(x \umt)\check u_{-\bf r} a_s u_{\bf t},\mfib(xa_s)u_{\tbf})^2,1\}.$$ Then
for any  $s>s_0$, \begin{align*}&
\frac{1}{\mu_{xa_s}^{\PS}(B_U(e^s))}\int_{B_U(e^s)}\Theta(\tbf)d\mu^{\PS}_{xa_s}(\tbf)\\&\leq
c_0\eta+\frac{1}{\mu_{xa_s}^{\PS}(B_U(e^s))}\int_{\kcal_x(s,{{\bf r}})}\Theta(\tbf)d\mu_{xa_s}^{\PS}(\tbf)&&
\text{ by~\eqref{ks}}\\
&\leq c_0\eta+\e &&\text{ by~\eqref{e;orbits-stay-close}}.
\end{align*}

%Put $\zeta=c_0\eta+\e.$ 
Recall now that if $y,z\in X$ are two point then
for all $\tbf\in \field$ so that $\dist(yu_\tbf,zu_\tbf)$ is sufficiently small,
 the map $\tbf\mapsto \dist(yu_\tbf,zu_\tbf)^2$ is 
governed by a polynomial of bounded degree, see~\cite{Rat-Horocycle}.
 By \eqref{e;orbits-stay-close}, for any sufficiently small $\e>0$,
  $\Theta_{x,s}\in\mathcal P_{d,\ell}$ where
$d$ depends only on the dimension. 
If we set $y:=xa_s\in \Omega_\eta$, then  $ya_{-s}=x\in \Omega_\eta$. Hence
 applying Lemma~\ref{l;poly-ps-good} for $y=xa_s$, we obtain that
\[
\sup_{|\tbf|\leq e^s} \Theta(\tbf)\leq 2 (c_0\eta+\e) /C_1:=\zeta 
\] 
where $C_1=C_1(\Omega_\eta, d,\ell)$.
 If $\eta$ and $\e$ are sufficiently small so that $2 (c_0\eta+\e) /C_1 <1$,
then  $$\dist(\msec_i(x \umt)\check u_{-\bf r} a_s u_{\bf t},\mfib(xa_s)u_{\tbf}) <\zeta^{1/2}. $$

It follows that for each $i$, there exist $1\leq k(i)\leq\ell,$ and a subset $J(s)\subset B_U(e^s)$
with $\ell (J(s)) \geq\frac{1}{\ell}  \ell (B_U(e^s)) $ so that
if we set $$\Theta_{i}(\tbf):=\dist(\msec_i(x \umt)\check u_{-\bf r} a_s u_{\bf t},,\msec_{k(i)}(xa_s)u_{\tbf})^2,$$
then
$$\sup_{\tbf\in J(s)}\Theta_{i}(\tbf)  \leq \zeta
. $$
Therefore the above and the fact that polynomials of a bounded degree are $(C,\alpha)$-good
with respect to the Haar measure, see~\eqref{eq;good-func}, imply that
\[ 
\frac{1}{\ell} \ell (B_U(e^s))  \le
\ell (J(s) )\le  C 4^{-\alpha} (\sup_{|\tbf|\leq e^s} \Theta_{i})^{-\alpha} \ell ( B_U(e^s) ). 
\]

Hence $$\sup_{|\tbf|\leq e^s} \Theta_{i}\leq \zeta \cdot C^{1/\alpha}  \ell=  2C^{1/\alpha}  \ell
 (c_0\eta+\e) /C_1  .$$
Therefore
 \eqref{e;theta-small} holds for $\eta$ and $\e$ sufficiently small.

\noindent{\bf Step 4: } 
For every $1\leq i\leq \ell$, 
define $g_{s,i}\in G$ by the following
\[
\msec_{i}(x\umt a_s)=\msec_{k(i)}(xa_s)g_{s,i} .
\] 

We claim $$d( e, a_s g_{s,i} a_{-s})=O(e^{-s}).$$

The equation ~\eqref{e;theta-small} in particular
 implies that $g_{s,i}$ is contained in an $O(1 )$ neighborhood of the identity.

We further investigate the element $g_{s,i}.$ 
Write $g_{s,i}=\begin{pmatrix} x_s & y_s \\ z_s & w_s\end{pmatrix}$, so that
\[
u_{-\tbf}g_{s,i}{u_\tbf}=
\begin{pmatrix}x_s+y_s\tbf & y_s \\ z_s+(w_s-x_s)\tbf-y_s\tbf^2 & w_s-y_s\tbf\end{pmatrix}.
\]
Therefore,~\eqref{e;theta-small} and the fact that $\det g_{s,i}=1$ imply 
\[
|z_s|= O(1),\; |1-x_s|= O(e^{-s}),\; |1-w_s| = O(e^{-s}),\; |y_s| = O(e^{-2s}).
\]
This implies
\be\label{e;gs-constrain}
d(e, a_sg_{s,i}a_{-s}) = O(e^{-s}).
\ee This proves \eqref{cccdmain} for $s_0$ (given in Step 2)
and $\Omega=\Omega_\eta$, $\e'>0$ 
 for sufficiently small $\eta, \e'>0$ (given in Step 3).
\end{proof}

%The following is the main statement in this section. 
%\begin{thm}\label{t;factor-rigidity-bms}
%Let $\mfib: X_1\to X_2$ be a set-valued map as in~\eqref{e;mfib-bms} which satisfies
%{\rm (i-$\BMS$)} and {\rm (ii-$\BMS$)} above.
%Then there exist $g_1,\ldots g_r\in G$ such that $\G_2\cap g_i^{-1} \G_1 g_i$ has finite index in $\G_2$ 
%so that if for each $1\leq i\leq r$ we put 
%$$\G_1 g_i\G_2 =\cup_{1\leq j\leq\ell_i} \Gamma_1 g_i\gamma_{ij}\text{ for } \gamma_{ij}\in \G_2$$
%then \[
%\mfib(\G_2 g)=\{\G_1 g_1\gamma_{11} g,\ldots,\G_1 g_1\gamma_{1\ell_1} g,\ldots,\G_1 g_r\gamma_{r1}g,\ldots,\G_1 g_r\gamma_{r\ell_r}g\}
%\] on a $\BMS$ conull subset of $X_1.$\end{thm}

%The proof will be based on polynomial like divergence of unipotent orbits.
%A similar construction as in \S~\ref{ss;quasi-reg} gives:
%if $y=y'g,$ then $y{u_\tbf}=y'u_{\alpha_g(t)}\varphi_g(t)$ where
%$\varphi_g(t)$ is a $U$-quasiregular map. 

%\proof

\noindent{\bf Proof of Theorem \ref{t;factor-rigidity-bms}}

Let $\Om $ and $\e'>0$ be as in Proposition \ref{keyP}.
By the ergodicity of  the $A$-flow for the BMS measure, which follows from
Theorem \ref{fm}, and the Birkhoff ergodic theorem, there exists a conull subset $X''$ of $X'$ such that
for all $x\in X'' A$, 
\be \label{birk} \lim_{S\to \infty} \frac{1}{S}\int_0^S\chi_{\Om}(xa_s)ds=m^{\BMS}(\Om) >0.9\ee

Let $x\in X''$ and $\umt\in {\cont}$ such that $x\umt\in X''$.
We will show $$\Upsilon (x \umt)=\Upsilon (x)\umt .$$

By \eqref{birk},
we can choose arbitrarily large $s_0>1$ such that $x a_{s_0}, x\umt a_{s_0} \in \Om $, and
$e^{-s_0}{\bf r}$ is of size at most $\e'$.
Setting $x_0:=xa_{s_0}$ and ${\bf r}_0:= e^{-s_0} \bf r$, it suffices to show that
$\Upsilon (x_0 \check u_{{\bf r}_0})=\Upsilon (x_0)\check u_{{\bf r}_0}$, thanks to the
$A$-equivariance. Therefore,
we may assume without loss of generality that
 $$x\in X'' A\cap \Om , \;\; x\umt\in X'' A\cap \Om \;\;\text{and} \;\; |{\bf r}|\le \e' .$$

By \eqref{birk}, we have a sequence $s_m\to + \infty$ so that 
$x\umt a_{s_m}, xa_{s_m} \in \Om$ for all $m$.
By Proposition \ref{keyP}, we have
\be\label{cccd}
\mfib(x\umt)\check u_{-{{\bf r}}}\subset \mfib(x)\cdot\{g\in G: d(e,g)\leq c e^{-s_m}\}.
\ee
Hence
$\mfib(x\umt)\check u_{-{{\bf r}}}= \mfib(x)$, proving the claim.

\section{Joining classification}\label{s;Joining-class}
We let $G=\PSL_2(\field)$ for $\field=\br, \c$.
Let $\G_1$ and $ \G_2$ be geometrically finite,
and  Zariski dense subgroups of $G$.
Set $X_i:=\G_i\ba G $ for $i=1,2$ and
$$Z:=X_1\times X_2.$$

We keep the notations $U$, $A, {\cont}$, etc. from section \ref{ss;factor}. Let 
\be 
M=\begin{cases} 
\{e\} &\text{ for $G=\PSL_2(\br)$}\\ 
\left\{\begin{pmatrix} e^{i\theta}& 0 \\ 0 & e^{-i\theta} \end{pmatrix}:\theta\in \br \right\} 
&\text{ for $G=\PSL_2(\bbc)$.}
\end{cases}
\ee
Since $M$ is considered as a subgroup of $\PSL_2(\c)$, two elements which differ by $-1$ are identified.
We set $P=MA\cont$.

%Let $U=\{u_\tbf =\begin{pmatrix} 1& 0 \\ \tbf & 0\end{pmatrix}: \tbf \in \field\}$ be
%the strict lower triangular unipotent subgroup of $G.$
We will
use the notation $\Delta$ for the diagonal embedding map of $G$ into $G\times G$; so
$\Delta(g)=(g, g)$ for $g\in G$.
For $\tbf\in \field$, $|\tbf|$ denotes the absolute value of $\tbf$ and
 for $T>0$, we set
 $$B_U(T)=\{u_{\tbf}\in U: |\tbf|\le T\}.$$
\subsection{Construction of a polynomial-like map}\label{ss;quasi-reg}
%Recall $P=MA{\cont}$; so $P$ is the subgroup consisting of all upper triangular matrices.
We fix a rational cross-section $\lcal$ for $\dU$ in $G\times G$ as follows:
\[
\lcal=(\{e\}\times G)\cdot \Delta(P)=P\times G.\] 
Then $\lcal\cap\Delta(U)=\{e\}$ and the product map from 
$\lcal\times\Delta(U)$ to $G$ defines a diffeomorphism onto 
a Zariski open dense subset of $G\times G.$
We will use $\lcal$ as the transversal direction to $\Delta(U)$
in $G\times G.$ 

We observe that $N_{G\times G} (\Delta(U))=\Delta(AM )\cdot (U\times U)$ and
\[
N_{G\times G}(\dU)\cap\lcal=\Delta( AM)\cdot (\{e\}\times U)
%\cdot\{(g,g)\in G\times G: g\in A\}.
\]

Suppose that we are given a sequence $h_k=(h_k^1, h_k^2) \in G\times G$ such that
$h_k\not\in N_{G\times G}(\bigger)$ with $h_k\to e$ as $k\to \infty$. 
Associated to $\{h_k\}$,
 we will construct a quasi-regular map 
\[
\varphi:\bigger\to\Delta( AM)\cdot (\{e\}\times U)
\]
following~\cite[Section 5]{MT}. 
Via the identification $\field =\Delta(U)$ given by $\tbf \mapsto \Delta(u_\tbf)$, we will define
the map $\varphi$ on $\field$, which will save us some notation. Accordingly,
we will write $\tbf\in B_U(1)$ to mean
that $u_\tbf\in B_U(1)$, etc.

%Suppose
%$\pi_2(h_k)=\begin{pmatrix}a_k & b_k\\ c_k & d_k\end{pmatrix};$
%for all but at most two values of $t\in\bbr$ 
%we can write
For $g= \begin{pmatrix}a & b\\ c & d\end{pmatrix}\in G$ and for $\tbf\ne -ab^{-1}$,
 define $$\alpha_g(\tbf)=\frac{c+d\tbf}{a+b\tbf}. $$ 
 We denote the pole of $\alpha_g$ by $R(g)$ and put $R(g)=\infty$
if $\alpha_g$ is defined everywhere. That is, $R(g)=-ab^{-1}$ if $b\ne 0$ and $\infty$ otherwise.

Set $$\alpha_k(\tbf):=\alpha_{h_k^1}(\tbf)\text{ and } R_k=R(h_k^1) \text{ for each $k$.}$$
Note that $R_k\to\infty$ as $k\to\infty.$
A direct computation shows that for all $\tbf\in \field -\{R_k\}$, we can write 
$h_k\Delta(u_\tbf) $ as an element of $\Delta(U)\mathcal L$ where the $\Delta(U)$ component is
given by $\Delta(u_{\alpha_k(\tbf)})$. We will denote by $\varphi_k(\tbf)$ for its transversal component so that
\[
h_k\Delta(u_\tbf) =\Delta(u_{\alpha_k(\tbf)})\varphi_{k}(\tbf) \in \Delta (U) \lcal . \]

We renormalize these maps $\varphi_k$'s using a representation 
corresponding to $\bigger.$ Recall that by a theorem of Chevalley 
there is a finite dimensional representation $(\rho,W)$ of $G\times G$,
where $G\times G$ acts from the right on $W$
and a unit vector $\qpz\in W$ so that
\[
 \bigger=\{h\in G\times G: \qpz\rho(h)=\qpz\}.
\]
Then $$N_{G\times G}(\bigger)=\{h: \qpz\rho(h)\rho(\biggere)=\qpz\rho(h)
\text{ for all $u_\tbf \in U$}\}.$$
We choose a norm $\|\cdot\|$ on $\Upsilon$ so that 
$B(\qpz,2)\cap\overline{\qpz\rho(G\times G)}\subset \qpz\rho(G\times G).$ 

Now for each $k$, define $\tilde{\phi}_k:\mathbb F \to W$ by 
$$\tilde{\phi}_k(\tbf) =\qpz\rho(h_k\biggere);$$
 $\tilde{\phi}_k$ is a polynomial map of degree bounded in terms of $\rho$ and
$\tilde{\phi}_k(0)=\qpz$.

\medskip

\noindent{\bf Explicit construction of $\tilde \phi_k$:} Consider the following representation: let $G\times G$ act on
$W=\field^2 \oplus
\field^2\oplus M_2(\field)$ by
$$(g_1,g_2). (v_1, v_2, Q) =(v_1 g_1, v_2 g_2, g_1^{-1} Q g_2).$$ 
Then the stabilizer of $\qpz:=(e_1, e_1, I_2)$ is precisely $\Delta(U)$. If we 
write $h_k^i=\begin{pmatrix} a_k^i & b_k^i \\ c_k^i& d_k^i\end{pmatrix}$,
 then, up to an additive constant vector, say, $\qpz_k\in W$
we have 
$$\tilde \phi_k (\tbf) =\qpz_k+\left( b_k^1\tbf, 0, b_k^2\tbf, 0, \begin{pmatrix} -\mathcal A_k \tbf  & 0\\
 \mathcal A_k \tbf^2+
\mathcal B_k\tbf  & \mathcal A_k \tbf  \end{pmatrix} \right)
$$ where $\mathcal A_k=b_k^1d_k^2-b_k^2d_k^1$ and $\mathcal B_k =a_k^1d_k^2+b_k^1c_k^2 -b_k^2c_k^1-a_k^2d_k^1$.
Hence  $\tilde{\phi}_k$ is a polynomial of degree at most $2$.

Let $T_k>0 $ be the infimum of $T>0$ such that $$\sup_{\tbf \in B_U(T)} \|\tilde{\phi}_k(\tbf) -\qpz \|=1.$$
Since $h_k\not\in N_{G\times G}(\dU)$, $\tilde{\phi}_k$ is a non-constant polynomial and hence
 we get $T_k\neq\infty,$
moreover, in view of our assumption $h_k\to e$ we have $T_k\to\infty$
as $k\to\infty.$ 

By normalizing $\tilde \phi_k$ by
 $$\phi_k(\tbf):=\tilde{\phi}_k(T_k \tbf),$$  
we obtain a sequence of equicontinuous polynomials $\phi_k$. Hence,
after passing to a subsequence, $\phi_k$ converges to $\phi$ where 
\begin{itemize}
 \item $\tbf \mapsto \phi (\tbf) $ is a non-constant polynomial of degree at most $2$.
 \item $\sup_{\tbf\in B_U(1)}\|\phi(\tbf ) -\qpz\|=1$ and $\phi(0)=\qpz,$ 
 \item $\{\phi(\tbf):\tbf\in B_U(1)\}\subset\qpz\rho(G\times G)$
 \item the convergence is uniform on compact subsets of $\field.$ 
\end{itemize}
Put $$\varphi=(\rho_{\lcal})^{-1}\circ\phi$$ where $\rho_\lcal$ is the restriction to $\lcal$ of the orbit map
$g\mapsto \qpz \rho(g) $.
Then $\varphi:\field \to\lcal$ is a rational map defined on a Zariski 
open dense subset $\mathcal{O}\subset\field$ containing zero
and $\varphi(0)=e.$ We have
\[
 \mbox{$\varphi(\tbf)=\lim_{k}\varphi_k({T_k\tbf})$}
\]
and the convergence is uniform on compact subsets of $\mathcal{O}.$

Note also that for any ${\tbf_0}\in \field$, we have 
\begin{align*}&
 \phi(\tbf )\rho(\Delta(u_{\tbf_0}))\\ &=\lim_k \tilde{\phi}_k({T_k\tbf})\rho(\Delta(u_{\tbf_0}))\\
 &=\lim_{k}\qpz\rho(h_k\Delta(u_{T_k\tbf}))\rho(\Delta(u_{\tbf_0}))\\ &=
 \lim_{k}\qpz\rho(h_k\Delta(u_{T_k(\tbf+\tbf_0/T_k)}))=\phi(\tbf).
\end{align*}
Therefore $\varphi(\tbf)\in N_{G\times G}(\bigger)\cap\lcal.$

\medskip

The following observation will be important in our application:
\begin{lem}\label{r;isolated-zero} There is $\eta>0$ such that $\varphi({\tbf})=e$ and $\tbf\in B_U(\eta)$ implies $\tbf=0$, that is,
$0$ is an isolated point in $\varphi^{-1}(e)$.
\end{lem}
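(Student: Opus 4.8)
The statement asserts that the limiting rational map $\varphi$ (equivalently its renormalized polynomial lift $\phi$) does not vanish identically near $0$: $0$ is an isolated zero of $\varphi^{-1}(e)$. The plan is to argue by contradiction. Suppose that $\varphi$ is identically $e$ on a neighborhood of $0$ in $\field$. Since $\varphi$ is a rational map defined on a Zariski open dense subset of $\field$ and $\field$ is a connected variety of dimension $1$, vanishing on a neighborhood of $0$ forces $\varphi(\tbf)=e$ for \emph{all} $\tbf$ in its domain $\mathcal{O}$. Equivalently $\phi(\tbf)=\qpz$ for all $\tbf\in\field$, contradicting the normalization $\sup_{\tbf\in B_U(1)}\|\phi(\tbf)-\qpz\|=1$ built into the construction. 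So the only thing to rule out is that $\varphi$ is identically trivial on a neighborhood; but that is exactly the normalization, so the content is really packaged already --- the point of the lemma is to extract the \emph{quantitative} $\eta$.

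To make that last extraction clean, I would proceed as follows. Recall $\phi$ is a non-constant polynomial of degree at most $2$ with $\phi(0)=\qpz$ and $\sup_{B_U(1)}\|\phi-\qpz\|=1$. Write $\phi(\tbf)-\qpz = \tbf\, v_1 + \tbf^2 v_2$ for fixed vectors $v_1,v_2\in W$, not both zero (from the explicit formula for $\tilde\phi_k$ one sees the linear and quadratic coefficients, namely $(b^1_k,0,b^2_k,0,*)$ and the $\mathcal{A}_k$-terms, pass to the limit after the $T_k$-rescaling). The zero set of $\phi-\qpz$ inside $\field$ is then the solution set of $\tbf(v_1+\tbf v_2)=0$: if $v_2=0$ this is $\{0\}$ (since $v_1\neq 0$); if $v_2\neq 0$ it is $\{0\}\cup\{-v_1/v_2\}$ in the worst case when $v_1$ is a scalar multiple of $v_2$, and is $\{0\}$ otherwise. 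In every case $0$ is isolated in $\{\tbf:\phi(\tbf)=\qpz\}$, with a gap bounded below by an absolute constant determined by $\|v_1\|,\|v_2\|$ and the normalization. Concretely, since $\phi-\qpz$ has sup norm $1$ on $B_U(1)$ and degree $\le 2$, by the $(C,\alpha)$-good property of polynomials (or just a direct estimate) there is an absolute $\eta>0$ with $\|\phi(\tbf)-\qpz\|>0$ for $0<|\tbf|\le\eta$; pulling back through $\rho_\lcal^{-1}$, which is a local diffeomorphism near $\qpz$ by the choice of norm ($B(\qpz,2)\cap\overline{\qpz\rho(G\times G)}\subset\qpz\rho(G\times G)$), gives the corresponding $\eta$ for $\varphi$.

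The one genuine subtlety is the transition between $\phi$ and $\varphi=\rho_\lcal^{-1}\circ\phi$: a priori $\varphi(\tbf)=e$ could happen even if $\phi(\tbf)\neq\qpz$, if two distinct $\lcal$-points had the same $\rho$-image. This is excluded because the orbit map $g\mapsto\qpz\rho(g)$ restricted to $\lcal$ near $e$ is injective --- indeed $\rho_\lcal$ is the parametrization used to \emph{define} $\varphi$, so by construction $\varphi(\tbf)=e$ iff $\phi(\tbf)=\qpz\rho(e)=\qpz$ for $\tbf$ small enough that $\phi(\tbf)$ stays in the ball $B(\qpz,2)$ where the orbit map is a diffeomorphism onto its image. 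I would state this injectivity explicitly and then the lemma follows: choose $\eta$ so that $B_U(\eta)$ maps under $\phi$ into $B(\qpz,2)$ and so that $\phi(\tbf)\neq\qpz$ for $0<|\tbf|\le\eta$, both possible by the degree-$\le 2$ and sup-norm-$1$ normalization. I expect this gluing step --- cleanly justifying that triviality of $\varphi$ near $0$ is equivalent to triviality of $\phi$ near $0$ --- to be the only place requiring care; the polynomial zero-counting itself is routine.
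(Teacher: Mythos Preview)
Your proof is correct and follows essentially the same route as the paper: reduce the question about $\varphi$ to the observation that $\phi$ is a non-constant polynomial of degree at most $2$ in one variable $\tbf\in\field$, so $\{\tbf:\phi(\tbf)=\qpz\}$ consists of at most two points and $0$ is automatically isolated. The paper's proof is exactly this one-line observation; your write-up simply makes explicit two things the paper leaves tacit --- the identification $\varphi(\tbf)=e\iff\phi(\tbf)=\qpz$ via injectivity of $\rho_{\lcal}$ (which holds because $\lcal$ is a cross-section for $\Delta(U)$), and the degree-$2$ structure $\phi(\tbf)-\qpz=\tbf v_1+\tbf^2 v_2$ coming from the explicit formula for $\tilde\phi_k$. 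Your detour through the $(C,\alpha)$-good property to produce a quantitative $\eta$ is unnecessary: once you know the zero set is finite (at most two points), any $\eta$ smaller than the distance from $0$ to the other root (if it exists) works.
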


\begin{proof}
Since $\phi$ is a non-constant
polynomial of degree at most $2$, the set $\{\tbf\in \field:\phi(\tbf)=\qpz\}$
consists of at most two points. \end{proof}

\renewcommand{\XX}{Z}

\subsection{$\Delta(U)$-recurrence for the pull back function $\Psi$}\label{s;joining}\label{sec;doubling-PS} 
As before, we normalize $|m_{\G_i}^{\BMS}|=1$ for $i=1,2$.
For the sake of simplicity, we will often omit the subscript $\G_i$ in the notation of $m^{\BMS}_{\G_i}$ and
$m^{\BR}_{\G_i}$.

For the rest of the section, let $\mu$ be an ergodic $U$-joining 
of $Z$ with respect to $m^{\BR}_{\G_1}\times m^{\BR}_{\G_2}.$ 
In particular, if $\pi_i:X_1\times X_2\to X_i$ denotes the projection
onto $i$-th coordinate,  then $$(\pi_i)_*\mu=m^{\BR}_{\G_i}.$$
We also suppose that $\mu$ is an infinite measure,
that is, at least one of $m_{\G_i}^{\BR}$ is infinite. 
Without loss of generality we will assume $m_{\G_2}^{\BR}$
is an infinite measure.

\begin{lem}\label{full}
The following set has a full $\mu$-measure in $Z$:
$$\{(x_1, x_2)\in Z: x_i^-\in \Lambda_{\rm r}(\G_i) \text{ for each $i=1,2$}\} .$$
\end{lem}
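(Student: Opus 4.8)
The plan is to show that the set
$$\mathcal{R}:=\{(x_1,x_2)\in Z: x_i^-\in\Lambda_{\rm r}(\G_i),\ i=1,2\}$$
is $\mu$-conull by analyzing each coordinate separately using the hypothesis that $(\pi_i)_*\mu=m^{\BR}_{\G_i}$. First I would recall that $\supp(m^{\BR}_{\G_i})=\{x_i\in X_i:x_i^-\in\Lambda(\G_i)\}$, and that the set of $x_i$ with $x_i^-\in\Lambda_{\rm r}(\G_i)$ is $m^{\BR}_{\G_i}$-conull: indeed $\Lambda(\G_i)=\Lambda_{\rm r}(\G_i)\sqcup\Lambda_{\rm bp}(\G_i)$ since $\G_i$ is geometrically finite, and the set of points $x_i$ with $x_i^-$ a bounded parabolic fixed point is a countable union of $U$-orbits of the form $[g_\xi]$ with $\G_{i,\xi}$ acting cocompactly on $\Lambda(\G_i)\setminus\{\xi\}$; each such orbit is a closed $UM$-orbit of finite BMS-mass but $\tilde m^{\BR}$-measure zero in the $\nu$-direction (equivalently, the $\nu_o$-measure of a single point $\xi$ is zero since $\nu_o$ is non-atomic by Zariski density, cf. Lemma~\ref{pscon}). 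So $m^{\BR}_{\G_i}\{x_i: x_i^-\notin\Lambda_{\rm r}(\G_i)\}=0$.

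Given this, let $N_i:=\{x_i\in X_i: x_i^-\notin\Lambda_{\rm r}(\G_i)\}$, so $m^{\BR}_{\G_i}(N_i)=0$. Then
$$\mu(Z\setminus\mathcal{R})\le \mu(\pi_1^{-1}(N_1))+\mu(\pi_2^{-1}(N_2))=(\pi_1)_*\mu(N_1)+(\pi_2)_*\mu(N_2)=m^{\BR}_{\G_1}(N_1)+m^{\BR}_{\G_2}(N_2)=0,$$
which is exactly the assertion. The only genuinely substantive point is the claim that $m^{\BR}_{\G_i}\{x_i:x_i^-\in\Lambda_{\rm bp}(\G_i)\}=0$; this I would justify by the explicit product formula for $\tilde m^{\BR}$ in terms of $dm_o(\pt^+)\,d\nu_o(\pt^-)\,ds$ on $\T^1(\bH^n)$, noting that $\{\pt:\pt^-=\xi\}$ has $\nu_o$-measure zero for each fixed $\xi$ (no atoms, by Zariski density and non-elementarity of $\G_i$), hence the countable set $\Lambda_{\rm bp}(\G_i)$ contributes zero, and then descending to $X_i=\G_i\ba G$.

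I do not anticipate a serious obstacle here; the lemma is essentially bookkeeping once one knows $\nu_o$ is non-atomic and that the BR-measure disintegrates with $\nu_o$ in the backward-endpoint direction. If one prefers to avoid the atomicity discussion, an alternative is to invoke the known fact (e.g.\ from \cite{OS} or \cite{Roblin2003}) that $m^{\BR}_{\G_i}$ is supported on $\{x_i^-\in\Lambda(\G_i)\}$ and gives full measure to radial points because the non-radial part is carried by the countably many closed horospherical orbits over $\Lambda_{\rm bp}(\G_i)$, each of which is $m^{\BR}$-null. Either way, the proof is the two-line union bound above applied to the pushforwards.
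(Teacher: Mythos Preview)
Your proof is correct and follows essentially the same route as the paper: show that $\{x_i:x_i^-\in\Lambda_{\rm r}(\G_i)\}$ is $m^{\BR}_{\G_i}$-conull, then use $(\pi_i)_*\mu=m^{\BR}_{\G_i}$ and a union bound on the complements. The only cosmetic difference is that the paper cites Sullivan \cite{Sullivan1979} for the fact that $\Lambda_{\rm r}(\G_i)$ has full Patterson--Sullivan measure, whereas you derive it from non-atomicity of $\nu_o$ and countability of $\Lambda_{\rm bp}(\G_i)$; both justifications are valid here.
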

\begin{proof} Since $\Lambda_{\rm r}(\G_i)$ has the full Patterson-Sullivan measure in $\Lambda(\G_i)$ by Sullivan \cite{Sullivan1979},
 $Y_i:=\{x_i\in X_i: x_i^-\in \Lambda_{\rm r}(\G_i)\}$ has a full $m^{\BR}$-measure. 
Since $(\pi_i)_*\mu=m^{\BR}$, we have $\pi_1^{-1}(Y_1)\cap \pi_2^{-1}(Y_2)$ has a full $\mu$-measure. 
\end{proof}

Recall that a Borel measure $\mu$ is $h$ quasi-invariant if  $h.\mu$ is a positive multiple of $\mu$
where $h.\mu(B ):=\mu(Bh)$ for any Borel subset $B$.

\begin{lem}\label{lem:one-factor-inv}
If $\mu$ is quasi-invariant under $(e,g)$ for some $g\in G,$ then it is invariant under $(e,g).$ 
\end{lem}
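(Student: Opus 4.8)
The plan is to exploit that $\mu$ projects to $m^{\BR}_{\G_1}$ under $\pi_1$, which is genuinely $\sigma$-finite on $X_1$, together with the fact that $\mu$ is $\Delta(U)$-invariant and $(e,g)$-quasi-invariant. Let $\lambda>0$ be the constant with $(e,g).\mu=\lambda\mu$. First I would observe that $(e,g)$ commutes with $\pi_1$ in the sense that $\pi_1\circ R_{(e,g)}=\pi_1$, where $R_{(e,g)}$ denotes right translation by $(e,g)$ on $Z$. Hence applying $(\pi_1)_*$ to both sides of $(e,g).\mu=\lambda\mu$ gives $m^{\BR}_{\G_1}=(\pi_1)_*\mu=(\pi_1)_*((e,g).\mu/\lambda)=\tfrac1\lambda(\pi_1)_*\mu=\tfrac1\lambda m^{\BR}_{\G_1}$. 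Since $m^{\BR}_{\G_1}$ is a nonzero locally finite (hence nonzero $\sigma$-finite) measure, this forces $\lambda=1$, i.e. $\mu$ is $(e,g)$-invariant.

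The one subtlety is to make sure $(\pi_1)_*((e,g).\mu)=(\pi_1)_*\mu$ rigorously: for a Borel set $B\subset X_1$, $(e,g).\mu(\pi_1^{-1}B)=\mu(\pi_1^{-1}(B)\cdot(e,g))=\mu(\pi_1^{-1}(B))$ because $\pi_1^{-1}(B)\cdot(e,g)=\pi_1^{-1}(B)$ (right multiplication by $(e,g)$ only moves the second coordinate). So $(\pi_1)_*((e,g).\mu)=(\pi_1)_*\mu=m^{\BR}_{\G_1}$ directly, and combined with $(e,g).\mu=\lambda\mu$ we get $m^{\BR}_{\G_1}=\lambda m^{\BR}_{\G_1}$, whence $\lambda=1$.

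There is essentially no obstacle here; the only thing to be careful about is that $\lambda$ is well-defined and positive, which is part of the definition of quasi-invariance, and that $m^{\BR}_{\G_1}$ is not the zero measure, which holds since $\supp(m^{\BR}_{\G_1})=\{x\in X_1:x^-\in\Lambda(\G_1)\}\ne\emptyset$ as $\G_1$ is Zariski dense (in particular nonelementary). The argument applies verbatim with $\pi_1$ replaced by $\pi_2$ if one prefers, since $(e,g)$ also fails to move along the first factor; either projection suffices because both pushforwards are nonzero $\sigma$-finite measures.
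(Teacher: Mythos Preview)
Your proof is correct and is essentially the same as the paper's: both use that $(e,g)$ preserves $\pi_1$-fibers, so testing the identity $(e,g).\mu=\lambda\mu$ against sets of the form $\pi_1^{-1}(B)$ (equivalently, pushing forward by $\pi_1$) yields $m^{\BR}_{\G_1}=\lambda\, m^{\BR}_{\G_1}$, whence $\lambda=1$ since $m^{\BR}_{\G_1}$ has sets of positive finite measure. The paper just picks one compact $\Omega\subset X_1$ with $0<m^{\BR}_{\G_1}(\Omega)<\infty$ and computes $\mu(\Omega\times X_2)$ directly, which is your argument evaluated on a single test set.

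One small correction to your closing remark: the argument does \emph{not} work with $\pi_2$, since $(e,g)$ moves the second coordinate and $\pi_2^{-1}(B)\cdot(e,g)=X_1\times Bg\neq \pi_2^{-1}(B)$ in general; only $\pi_1$ is insensitive to right translation by $(e,g)$.
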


\begin{proof}
Suppose $(e,g)\mu=c\mu.$ 
Let $\Omega\subset X_1$ be a compact subset with $m_{\G_1}^{\BR}(\Omega)>0.$ 
Then we have 
\[
c\mu(\Omega\times X_2)=(e,g)\mu(\Omega\times X_2)=\mu((\Omega\times X_2)(e, g))=\mu(\Omega\times X_2).
\] 
Since $0<\mu(\Omega\times X_2)=m_{\G_1}^{\BR}(\Omega)<\infty,$ we get $c=1.$
\end{proof}

\begin{dfn}\rm
We fix the following:
\begin{itemize}
\item a non-negative function $\psi\in C_c(X_1)$ with
$m^{\BR}(\psi)>0$, and $$\Psi:=\psi\circ \pi_1 \in C(Z);$$
\item a compact subset $\Omega_1\subset \{x\in X_1: x^-\in \Lambda_{\rm r}(\G_1)\}$ with  $m^{\BR}(\Om_1)>0$ such that
 both Theorems \ref{t;horo-equid-BR}  and  \ref{eqwi}(2) hold for $\psi$ uniformly for all $x_1\in \Om_1$
 (such $\Omega_1$ exists in view of the remarks \ref{uniremark} and \ref{secondremark});

\item a constant $$0<r:=\tfrac{1}{4}\;{ r(0.5, \Om_1)}<1$$ where  $ r(0.5, \Om_1)$ is as given in Theorem \ref{wint};
\item  a compact subset 
\[
\mathcal Q\subset\Omega_1 \times \Omega_2\subset \XX
\] 
such that
 $\mu(\mathcal Q) >0$ and for all $x\in \Q$ and for all $f\in C_c(Z)$,
\be\label{hhh} \lim_{T\to \infty}  \frac{\int_{B_U(T)}f(x\biggere)d\tbf}
{\int_{B_U(T)}\Psi(x\biggere)d\tbf}= \frac{\mu(f)}{\mu(\Psi)}.\ee

\end{itemize}
\end{dfn}

%We also fix the following:
%\begin{itemize}
%\item a compact subset $\Om\subset X_1$ such that
%for all $x_1\in \Om$,  we have $x_1^-\in \Lambda_{\rm r}(\G)$
%and Theorem \ref{t;horo-equid-BR} holds for $\psi$ uniformly for all $x_1\in \Om$
 %(such $\Omega$ exists in view of Remark \ref{uniremark});
%\item $r:=1/4 \cdot r(1/2, \Om)<1$ as in Theorem \ref{wint};
%\item $c_0:= c_0(1/10, \Om, \psi)$  and $T_0=T_0(\Om, \psi)$ as in Lemma \ref{l;window_1};
%\item  a compact subset \[
%\mathcal Q\subset\Omega_1 \times \Omega_2\subset \XX\] 
%such that $\Om_1\subset \Om$,
% $\mu(\mathcal Q) >0$ and for all $x\in \Q$ and for all $f\in C_c(X_1\times X_2)$,
%\be\label{hhh} \lim_{T\to \infty}  \frac{\int_{B_U(T)}f(x\biggere)d\tbf}
%{\int_{B_U(T)}\Psi(x\biggere)d\tbf}= \frac{\mu(f)}{\mu(\Psi)}.\ee
%\end{itemize}

Note that since $(\pi_1)_*\mu=m^{\BR}$, we have
 $$0<\mu(\Psi)<\infty; \text{ in particular, } \Psi\in L^1(\mu).$$

Since $\Psi$ is defined as the pull-back of a function on $X_1$,
we can transfer the recurrence properties of $U$-orbits  in $X_1$ to statements about $\Delta(U)$-recurrence
properties with respect to $\Psi$. We  record these properties of $\Psi$ in the following two lemmas.

\begin{lem} \label{ri} For any $x\in \XX$ with
 $\pi_1(x)^-\in \Lambda_{\rm r}(\G_1)$, we have  $$\int_{B_U(T)}\Psi(x\biggere) d\tbf\to \infty \quad \text{ as $T\to \infty$}.$$ \end{lem}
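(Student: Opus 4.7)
The plan is to reduce the assertion for $\Psi$ on $Z$ to one for $\psi$ on $X_1$ and then quote the Lebesgue equidistribution theorem established earlier. Writing $x_1:=\pi_1(x)$, the $U$-equivariance of $\pi_1$ under $\Delta(U)$ (namely $\pi_1(x\Delta(u_\tbf))=x_1 u_\tbf$) together with $\Psi=\psi\circ\pi_1$ gives
\[
\int_{B_U(T)}\Psi(x\Delta(u_\tbf))\,d\tbf=\int_{B_U(T)}\psi(x_1 u_\tbf)\,d\tbf.
\]
By hypothesis $x_1^-\in\Lambda_{\rm r}(\Gamma_1)$, so Lemma~\ref{rinf} yields $|\mu_{x_1}^{\PS}|=\infty$; equivalently, $\mu_{x_1}^{\PS}(B_U(T))\to\infty$ as $T\to\infty$.

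Next I would invoke Theorem~\ref{t;horo-equid-BR} applied to $x_1$ and the fixed test function $\psi$: since $x_1^-\in\Lambda_{\rm r}(\Gamma_1)$,
\[
\lim_{T\to\infty}\frac{1}{\mu_{x_1}^{\PS}(B_U(T))}\int_{B_U(T)}\psi(x_1 u_\tbf)\,d\tbf=m^{\BR}(\psi).
\]
Recall from the setup that $\psi$ was chosen with $m^{\BR}(\psi)>0$. Hence for all $T$ sufficiently large,
\[
\int_{B_U(T)}\psi(x_1 u_\tbf)\,d\tbf\ \geq\ \tfrac{1}{2}\,m^{\BR}(\psi)\cdot \mu_{x_1}^{\PS}(B_U(T)),
\]
and the right-hand side tends to $+\infty$. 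Combining with the identity above gives the claim.

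There is essentially no obstacle here: the lemma is a direct corollary of the two deeper inputs already established, namely the characterisation of radial limit points via infiniteness of the PS-measure (Lemma~\ref{rinf}) and the Lebesgue equidistribution theorem (Theorem~\ref{t;horo-equid-BR}). The only small point to be careful about is that $x\in Z$ is merely assumed to satisfy the radial condition on its first coordinate, so neither $\BMS$-typicality nor membership in the fixed compact set $\Omega_1$ can be used; but Theorem~\ref{t;horo-equid-BR} is stated for arbitrary points $x_1$ with $x_1^-\in\Lambda_{\rm r}(\Gamma_1)$, so this causes no difficulty.
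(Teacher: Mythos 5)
Your proof is correct and follows essentially the same route as the paper's: both reduce to the first coordinate, invoke Theorem~\ref{t;horo-equid-BR}, and conclude from $m^{\BR}(\psi)>0$. You make the needed fact $\mu_{\pi_1(x)}^{\PS}(B_U(T))\to\infty$ explicit via Lemma~\ref{rinf}, which the paper leaves implicit, but this is only a presentational difference.
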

\begin{proof} For such an $x$, we have, by Theorem \ref{eqbr},
$$\int_{B_U(T)}\Psi(x\biggere) d\tbf
=\int_{B_U(T)}\psi(\pi_1(x) u_\tbf) d\tbf
 \sim \mu_{\pi_1(x)}^{\PS} (B_U(T))\cdot  m^{\BR}(\psi) .$$
 Since $m^{\BR}(\psi)>0$, we have $\int_{B_U(T)}\Psi(x\biggere) d\tbf\to \infty$.
\end{proof}

%Moreover, $\Psi$ is uniformly continuous.
\begin{lem} \label{lwint}
 There exists $T_0=T_0(\psi, \Om)>1$ such that
for any $x\in \XX $ with $\pi_1(x)\in \Om$ and  any $T>T_0$,
$$\int_{B_U(r T)}\Psi(x\biggere)d\tbf\le \tfrac 12 \int_{B_U(T)}\Psi(x\biggere)d\tbf .$$

\end{lem}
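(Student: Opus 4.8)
The plan is to recognise this lemma as a uniform-in-$x$ version of Theorem~\ref{wint}(1), and to supply the uniformity by replacing the pointwise equidistribution theorem with the fact that on $\Om_1$ the convergence in Theorem~\ref{t;horo-equid-BR} is \emph{uniform}. First I would reduce to a statement about $X_1$ alone: since $\Psi=\psi\circ\pi_1$ we have $\Psi(x\biggere)=\psi(\pi_1(x)u_\tbf)$, hence $\int_{B_U(sT)}\Psi(x\biggere)\,d\tbf=\mathcal L_{sT}\psi(\pi_1(x))$ for every $s>0$. Writing $y:=\pi_1(x)\in\Om_1$, it therefore suffices to produce $T_0=T_0(\psi,\Om_1)$ so that $\mathcal L_{rT}\psi(y)\le\tfrac12\,\mathcal L_T\psi(y)$ for all $y\in\Om_1$ and all $T>T_0$.

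Next I would bring in two ingredients. By the choice of $\Om_1$ (which exists by Remark~\ref{uniremark}, via Egorov's theorem), Theorem~\ref{t;horo-equid-BR} holds for $\psi$ uniformly over $y\in\Om_1$; since $m^{\BR}(\psi)>0$, this yields a threshold $T_1=T_1(\psi,\Om_1)$ with $\tfrac12\,m^{\BR}(\psi)\,\mu_y^{\PS}(B_U(S))\le\mathcal L_S\psi(y)\le 2\,m^{\BR}(\psi)\,\mu_y^{\PS}(B_U(S))$ for all $y\in\Om_1$ and $S\ge T_1$. Independently, Lemma~\ref{p;horo-wind} applied with $\eta=\tfrac18$ and the compact set $\Om_1$ furnishes a radius $\rho_*>0$ and a threshold $T_2=T_2(\Om_1)$ with $\mu_y^{\PS}(B_U(\rho_*T))\le\tfrac18\,\mu_y^{\PS}(B_U(T))$ for all $y\in\Om_1$ and all $T>T_2$. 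Here I would note that $r\le\rho_*$: by the definition $r=\tfrac14\,r(0.5,\Om_1)$ and the construction in the proof of Theorem~\ref{wint}, the quantity $r(0.5,\Om_1)$ is precisely the radius produced by Lemma~\ref{p;horo-wind} with $\eta=\tfrac18$, so the extra factor $\tfrac14$ leaves room to spare and in particular $B_U(rT)\subset B_U(\rho_*T)$.

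Finally, with $T_0:=\max\{T_2,\,T_1/r\}$, for $y\in\Om_1$ and $T>T_0$ I would chain the estimates: apply the equidistribution bound at scale $rT$ (legitimate since $rT\ge T_1$), then monotonicity of $\mu_y^{\PS}$ together with $r\le\rho_*$, then the PS-window bound, then the equidistribution bound at scale $T$, obtaining $\mathcal L_{rT}\psi(y)\le 2\,m^{\BR}(\psi)\,\mu_y^{\PS}(B_U(rT))\le\tfrac14\,m^{\BR}(\psi)\,\mu_y^{\PS}(B_U(T))\le\tfrac12\,\mathcal L_T\psi(y)$, which is exactly the desired bound; note that the value $\tfrac18$ in Lemma~\ref{p;horo-wind} is dictated by the overall multiplicative error $2\cdot 2=4$ coming from the two sandwich constants, since $4\cdot\tfrac18=\tfrac12$. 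I do not anticipate a genuine obstacle: the only content of the lemma is that $T_0$ is independent of $x$, and this is precisely what uniform equidistribution on $\Om_1$ buys — with only the pointwise Theorem~\ref{wint}(1) the threshold would depend on $x$. The single point requiring care is the bookkeeping of normalisations just described, so that the equidistribution errors are absorbed by the window ratio.
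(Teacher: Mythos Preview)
Your proposal is correct and follows essentially the same approach as the paper. The paper's proof simply cites Theorem~\ref{wint} after the reduction $\Psi(x\biggere)=\psi(\pi_1(x)u_\tbf)$, leaving implicit that the uniform choice of $\Om_1$ upgrades the pointwise threshold $T_0(x,\psi)$ to a uniform one; you have unpacked exactly this, reproducing the proof of Theorem~\ref{wint}(1) with the uniform equidistribution on $\Om_1$ in place of the pointwise Theorem~\ref{t;horo-equid-BR}, and tracking the constants to justify that the fixed $r=\tfrac14\,r(0.5,\Om_1)$ suffices.
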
 \begin{proof} For $x\in \XX$ with $\pi_1(x)\in \Om$,
we have \begin{align}
\notag
&\int_{B_U(T)}\Psi(x\biggere)d\tbf-\int_{B_U(rT)}\Psi(x\biggere)d\tbf \\
\notag&= \int_{B_U(T)} \psi(\pi_1(x)u_\tbf)  d\tbf-
\int_{B_U(rT)}\psi(\pi_1(x)u_\tbf) d\tbf\\
\notag&\ge \tfrac{1}{2}\int_{B_U(T)}\psi(\pi_1(x)u_\tbf )d\tbf
\label{e;window-lbound} \\
&= \tfrac{1}{2}\int_{B_U(T)}\Psi(x\biggere)d\tbf \end{align}
where the inequality follows by Theorem~\ref{wint}.
\end{proof}

%On the other hand, the set $\{(x_1, x_2)\in \qcal: x_2^-\in \Lambda_{\rm r}(\G)\}$
%is a full measure subset of $\qcal$ by Lemma \ref{full}. Since
%for $x=(x_1, x_2)$ with $ x_2^-\in \Lambda_{\rm r}(\G)$
%$$\int_{B_W(T)}\Psi(xw_\tbf) d\tbf =\int_{B_U(T)}\psi(x_2 u_\tbf) d\tbf 
 %\sim \mu_{x_2}^{\PS} (B_U(T))  m^{\BR}(\psi) .$$ 
 %Since $m^{\BR}(\psi)>0$ as $\mu(\qcal)>0$,
 %we have $\int_{B_W(T)}\Psi(xw_\tbf) d\tbf \to \infty$ as $T\to \infty$, for $\mu$-almost all $x$.
 
%Let $\qcal_\e$ be as in~\eqref{e;unif-conv} for a fixed small $\e.$
%Let $\{g_k\}\subset G$ be a sequence with $g_k\to e$ and let $\alpha_k$ and $R_k$ be as above.
%For any $\e>0$ and any $f\in C_c(X)\cup\{\chi_{\qcal},\chi_{\qcal_+},\chi_{\qcal_{++}}\}$

\subsection{Joining measure}\label{join}

In this section we will use ergodic theorems 
and polynomial like behavior of unipotent orbits, 
the construction of a polynomial-like map in \S~\ref{ss;quasi-reg} in order to produce extra quasi-invariance for the measure.
We begin with the following.

%\subsection{} We observe $ N_{G\times G}(\bigger)=\Delta(A) \cdot(U\times U)$.
\begin{lem}\label{l;normalizer-inv}
Let
$Y\subset \XX $ be a Borel subset such that for all $y\in Y$,
\begin{enumerate}
\item 
$\lim_{T\to \infty} \int_{B_U(T)}\Psi(y\biggere)d\tbf =\infty$;
\item for all $f\in C_c(\XX)$,
$$
\lim_{T\to \infty}  \frac{\int_{B_U(T)}f(y\biggere)d\tbf}
{\int_{B_U(T)}\Psi(y\biggere)d\tbf}= \frac{\mu(f)}{\mu(\Psi)}.$$
\end{enumerate}
If $h\in N_{G\times G}(\bigger)$ satisfies
$Y \cap Y h\neq\varnothing$, then $\mu$ is $h$ quasi-invaraint.
\end{lem}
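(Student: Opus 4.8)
The plan is to use the hypothesis $Y\cap Yh\neq\varnothing$ together with the two ergodic-averaging properties assumed for points of $Y$, and deduce that pushing $\mu$ forward by $h$ changes it only by a scalar. Pick a point $y\in Y$ with $yh\in Y$, and write $h=\Delta(am)(e,u_{\sbf})$ using the description $N_{G\times G}(\Delta(U))=\Delta(AM)\cdot(U\times U)$ — more precisely, since we already know $h\in N_{G\times G}(\Delta(U))$, conjugation by $h$ normalizes $\Delta(U)$, so for each $\tbf$ there is $\tbf'=\tbf'(\tbf)$ with $h\Delta(u_\tbf)=\Delta(u_{\tbf'})h$, and $\tbf\mapsto\tbf'$ is an affine bijection of $\field$ (in fact of the form $\tbf'=e^{s}\tbf+c$ where $a=a_{-s}$). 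This affine change of variables distorts the norm balls $B_U(T)$ only boundedly: $\Delta(u_{\tbf'(B_U(T))})$ is sandwiched between $h B_U(\lambda^{-1}T-c_0)$ and $hB_U(\lambda T+c_0)$ for suitable constants, hence by the doubling property (Lemma \ref{uc}, applied through property (1) and the comparability it entails) the averages over $B_U(T)$ and over the image of $B_U(T)$ under the affine map are comparable up to a bounded multiplicative constant in the denominator.

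The key computation is then: for $f\in C_c(Z)$,
\[
\int_{B_U(T)} f(yh\Delta(u_\tbf))\,d\tbf
=\int_{B_U(T)} f(y\Delta(u_{\tbf'(\tbf)}) h)\,d\tbf
=J\int_{\tbf'(B_U(T))} (f\circ R_h)(y\Delta(u_{\tbf'}))\,d\tbf',
\]
where $R_h$ denotes right translation by $h$ and $J=|\!\det|$ of the affine map is a fixed constant. Applying property (2) to the point $yh\in Y$ on the left side and to the point $y\in Y$ on the right side (using that $\tbf'(B_U(T))$ is eventually trapped between two norm balls and that the denominators $\int_{B_U(\cdot)}\Psi(y\Delta(u_\tbf))d\tbf$ along these two scales are comparable by the doubling estimate, together with the analogous comparability for $yh$), one obtains in the limit $T\to\infty$ that
\[
\frac{\mu(f\circ R_h)}{\mu(\Psi\circ R_h)}=c_h\cdot\frac{\mu(f)}{\mu(\Psi)}
\]
for a constant $c_h>0$ independent of $f$, i.e. $(h.\mu)(f)=\text{const}\cdot\mu(f)$ for all $f\in C_c(Z)$, which is exactly $h$-quasi-invariance of $\mu$.

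The main obstacle I expect is bookkeeping the mismatch of scales and the boundary effects introduced by the affine reparametrization $\tbf\mapsto\tbf'$: the image $\tbf'(B_U(T))$ is not a norm ball centered appropriately, so one cannot directly invoke property (2), and one must instead (i) use the window/doubling statements to show the contribution of $B_U(\lambda T+c_0)\setminus B_U(\lambda^{-1}T-c_0)$ is negligible relative to $\int_{B_U(T)}\Psi(y\Delta(u_\tbf))d\tbf$, and (ii) verify that the two "normalizing" denominators — one built from $y$, one from $yh$ — grow at comparable rates so the ratio of the limits is finite and positive. Both points follow from Lemma \ref{uc} (doubling of $\mu_x^{\PS}$) combined with Theorem \ref{eqbr} (which converts the $\Psi$-averages over balls into PS-measures of balls), so the argument is routine in spirit; the care is only in making the comparisons uniform. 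Finally, applying Lemma \ref{lem:one-factor-inv} in the relevant cases upgrades quasi-invariance to invariance, though that step lies outside the statement being proved here.
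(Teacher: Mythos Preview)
Your approach is essentially the same as the paper's, but you have overcomplicated it because of a miscalculation of the conjugation map. Since $U$ is abelian, the $(U\times U)$-component of $h\in N_{G\times G}(\Delta(U))=\Delta(AM)\cdot(U\times U)$ commutes with $\Delta(U)$, and conjugation by $\Delta(am)$ acts on $\Delta(u_\tbf)$ as $\Delta(u_{\beta_h(\tbf)})$ with $\beta_h$ a \emph{linear} map (a homothety composed with a rotation of $\field$). There is no additive constant: $\beta_h(B_U(T))=B_U(e^sT)$ exactly. Consequently the change of variables sends norm balls to norm balls on the nose, the Jacobian is a constant which cancels in the ratio, and your entire ``main obstacle'' paragraph --- the sandwiching of the image between two balls, the doubling estimate, the boundary-annulus control --- is unnecessary. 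The paper simply observes
\[
\frac{\int_{B_U(T)}f(y\Delta(u_\tbf)h)\,d\tbf}{\int_{B_U(T)}\Psi(y\Delta(u_\tbf)h)\,d\tbf}
=\frac{\int_{B_U(e^sT)}f(yh\Delta(u_\tbf))\,d\tbf}{\int_{B_U(e^sT)}\Psi(yh\Delta(u_\tbf))\,d\tbf},
\]
applies property~(2) at $yh$ to the right-hand side to get $\mu(f)/\mu(\Psi)$, and combines this with property~(2) at $y$ applied to $h.f\in C_c(Z)$ to conclude $\mu(h.f)=\tfrac{\mu(h.\Psi)}{\mu(\Psi)}\mu(f)$; your constant $c_h$ would then just be $\mu(h.\Psi)/\mu(\Psi)$, not something arising from a mismatch of scales.

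Two minor corrections: first, a general $h$ in the normalizer has the form $\Delta(am)(u_{\sbf_1},u_{\sbf_2})$, not only $\Delta(am)(e,u_\sbf)$, though this is irrelevant for the conjugation computation. Second, the reference to Lemma~\ref{lem:one-factor-inv} at the end is fine but, as you note, not part of this lemma.
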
   
 
\proof  Let $h\in N_{G\times G}(\bigger)=\Delta(AM) \cdot (U\times U)$ and $y\in Y$ such that $yh\in Y$.
Since, under conjugation, $\Delta(AM)$ acts on $\bigger$ by homothethy composed with
 rotations, 
$$h^{-1}\biggere h=\Delta (u_{\beta_h(\tbf)})$$
where $\beta_h: U \to U$ is a homothety composed with a rotation.

%Let $B'_U(T)$ denote the Euclidean ball of radius $T.$
 If the $\Delta(A)$-component of
$h$ is $(a_s, a_s)$, then  $\beta_h(B_U(T)) =B_U (e^s T)$ and the Jacobian of $\beta_h$ is equal to
$e^{(n-1)s}$ where $(n-1)$ is the dimension of $U$ as a real vector space.

For any $f\in C_c(\XX)$ and for any all large $T\gg 1$,
\begin{align*}&
 \left|\frac{\mu(h. f)}{\mu(h.\Psi)}-\frac{\mu(f)}{\mu(\Psi)}\right| \\
 &=
 \left|\frac{\mu(h. f)}{\mu(h.\Psi)} -\frac{\int_{B_U(T)}f(y \biggere h)d\tbf}
 {\int_{B_U(T)}\Psi(y\biggere h)d\tbf}+ \frac{\int_{\beta_h(B_U(T))}f(y h\biggere)d\tbf}
 {\int_{\beta_h(B_U(T))}\Psi(yh\biggere)d\tbf}-
 \frac{\mu(f)}{\mu(\Psi)}\right|\\
 &\leq \left|\frac{\mu(h.f)}{\mu(h.\Psi)} -\frac{\int_{B_U(T)}f(y\biggere h)d\tbf}
 {\int_{B_U(T)}\Psi(y\biggere h)d\tbf}\right|+
 \left|\frac{\int_{B_U(e^s T)}f(yh\biggere )d\tbf}
 {\int_{B_U(e^sT)}\Psi(yh\biggere )d\tbf}-\frac{\mu(f)}{\mu(\Psi)}\right| .
\end{align*}
Since both $y$ and $yh$ belong to $Y$, it follows that the last two terms tend to zero as $T\to \infty$. Hence
$${\mu(h. f)}= \frac{\mu(h. \Psi)}{\mu(\Psi)} \cdot  {\mu(f)},$$ finishing the proof.
\qed

 For $0<r_0<1$, and $T>0$,  set 
 \be \label{ir} I_{r_0}(T):=B_U(T)\setminus B_U(r_0T).\ee

For  a Borel function $f$ on $\XX,$  and $x\in Z$,
set
\be \label{dt}
 \mathcal D_Tf(x)=\int_{I_r(T)}f(x\biggere)d\tbf.
\ee

\begin{cor}\label{c;window-rT}
 For any $f\in C_c(\XX)$, and any $x\in \Q$, we have 
 \begin{equation}\label{uniff}
\lim_{T\to \infty}\frac{ \mathcal D_Tf(x) }
{ \mathcal D_T \Psi (x)}
= \frac{\mu( f)}{\mu(\Psi)} .
\end{equation}
\end{cor}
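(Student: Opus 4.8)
The plan is to deduce the window identity~\eqref{uniff} from the full-ball convergence~\eqref{hhh} that is built into the definition of $\Q$, using the non-concentration estimate of Lemma~\ref{lwint} to keep the denominator $\mathcal D_T\Psi(x)$ from collapsing after the subtraction of the inner ball. Fix $f\in C_c(\XX)$ and $x\in\Q$, abbreviate $a_T:=\int_{B_U(T)}f(x\biggere)\,d\tbf$ and $b_T:=\int_{B_U(T)}\Psi(x\biggere)\,d\tbf$ so that $\mathcal D_Tf(x)=a_T-a_{rT}$ and $\mathcal D_T\Psi(x)=b_T-b_{rT}$, and set $c:=\mu(f)/\mu(\Psi)$, which is well-defined since $0<\mu(\Psi)<\infty$. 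Because $\Q\subset\Omega_1\times\Omega_2$, we have $\pi_1(x)\in\Omega_1$ with $\pi_1(x)^-\in\Lambda_{\rm r}(\G_1)$; hence $b_T\to\infty$ by Lemma~\ref{ri}, $b_{rT}\le b_T$ since $\psi\ge 0$ and $B_U(rT)\subset B_U(T)$, and Lemma~\ref{lwint} furnishes $T_0$ with $b_{rT}\le\tfrac12 b_T$ for all $T>T_0$. Enlarging $T_0$ if necessary so that also $b_{rT}>0$ for $T>T_0$, we get $\mathcal D_T\Psi(x)=b_T-b_{rT}\ge\tfrac12 b_T>0$ for all $T>T_0$.

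The second step is a direct estimate. Put $\varepsilon_S:=a_S/b_S-c$, so that~\eqref{hhh} applied to the test function $f$ says $\varepsilon_S\to0$ as $S\to\infty$, and $a_S-c\,b_S=\varepsilon_S b_S$. Then, for $T>T_0$,
\[
\Bigl|\frac{\mathcal D_Tf(x)}{\mathcal D_T\Psi(x)}-c\Bigr|
=\frac{\bigl|(a_T-c\,b_T)-(a_{rT}-c\,b_{rT})\bigr|}{b_T-b_{rT}}
\le\frac{|\varepsilon_T|\,b_T+|\varepsilon_{rT}|\,b_{rT}}{\tfrac12\, b_T}
\le 2\bigl(|\varepsilon_T|+|\varepsilon_{rT}|\bigr),
\]
where the first inequality uses $b_T-b_{rT}\ge\tfrac12 b_T$ and the second uses $b_{rT}\le b_T$. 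Since $rT\to\infty$ as $T\to\infty$, both $\varepsilon_T$ and $\varepsilon_{rT}$ tend to $0$, so the right-hand side tends to $0$; this is precisely~\eqref{uniff}.

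I do not anticipate a genuine obstacle, as the statement is essentially a bookkeeping consequence of~\eqref{hhh}: the only thing that could go wrong is that $b_{rT}$ might be comparable to $b_T$, so that $b_T-b_{rT}$ fails to dominate the two error terms, and this possibility is ruled out exactly by the window (non-concentration) Lemma~\ref{lwint} --- which is therefore the sole nontrivial input. Note also that~\eqref{hhh} is invoked only for the single function $f$, with no need to approximate $\Psi$, since the ratio $a_S/b_S$ already occurs directly in the definition of $\varepsilon_S$.
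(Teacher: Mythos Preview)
Your proof is correct and is precisely the argument the paper has in mind: the one-line proof ``This follows from Lemma~\ref{lwint} and~\eqref{hhh}'' unpacks exactly as you wrote it, using \eqref{hhh} at scales $T$ and $rT$ together with the lower bound $b_T-b_{rT}\ge\tfrac12 b_T$ from Lemma~\ref{lwint}. The computation mirrors the analogous window argument carried out explicitly in the proof of Theorem~\ref{eqwi}.
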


\begin{proof} 
This follows from Lemma \ref{lwint} and \eqref{hhh}.
\end{proof}

\begin{remark}\label{rk:separable}
Let $\mathcal F\subset L^1(\XX,\mu)$ be any countable subset. Then 
there is a full measure subset $\mathcal Q'\subset \mathcal Q$ so that~\eqref{uniff}
holds for all $f\in\mathcal F$ and $x\in\mathcal Q'.$ 
Indeed by the Hopf ratio theorem and the fact that $\mathcal F$ is countable 
there is full measure subset $\qcal'\subset\mathcal Q$ 
so that analogue of~\eqref{hhh} holds for all $f\in\mathcal F.$ Then using Lemma~\ref{lwint}
we have $\frac{\D_T f(x)}{\D_T \Psi(x)}\to\frac{\mu(f)}{\mu(\Psi)}$ for all $f\in\mathcal F$
and all $x\in\mathcal Q'.$
\end{remark}

Fix a small $\e>0$, and %which is small enough so that $\tfrac{1}{1+c_0}\leq(1-2\e)^2.$
choose $\eta>0$ small enough so that 
$\mu(\qcal\{g:|g|\leq\eta\})\leq (1+\e)\mu(\qcal).$ 
We  put 
$$\qcal_+=\qcal\{g:|g|\leq\eta/4\},\text{ and } \qcal_{{++}}=\qcal\{g:|g|\leq\eta\}.$$
Set $$\mathcal{F}:=\{\chi_\qcal,\chi_{\qcal_+},\chi_{\qcal_{++}}\}.$$

Using this and Egorov's theorem and Remark~\ref{rk:separable},
we can find a compact subset
$$\qcal_\e\subset\qcal$$ with $\mu(\qcal_\e)>(1-\e)\mu(\qcal)$, such that
for any $f\in\mathcal{F}$ and any $\theta>0$,
there exists some $T_0=T_0(f,\theta)$ so that if $T\geq T_0,$ then
\be\label{e;unif-conv}
 \left|\frac{\D_T f(x)}{\D_T \Psi(x)}-\frac{\mu(f)}{\mu(\Psi)}\right|\leq\theta\quad\text{for all $x\in\qcal_\e.$}
\ee
Such a subset will be referred to as a {\it set of uniform convergence} for the family 
$\mathcal{F}$ (cf. \cite[Lemma 7.5]{MO}). 
 
% We will be flowing nearby $\mu$-generic points $x_k,y_k:= x_k h_k$
% along $\Delta(U)$ flows.  By comparing $x_k \biggere$ and $y_k \biggere=x_k \Delta(u_{\alpha_k(\tbf)}) \varphi_k(\tbf)$,
% we will find a sequence $\tbf_k\to \infty $ such that both $ x_k \Delta(u_{\alpha_k(\tbf_k)}) $ and $ y_k \Delta(u_{\alpha_k(\tbf_k)})$ belong to a compact generic
 %subset and  $\varphi(\tbf_k)$ converges to an element, say, $h$, in $N_{G\times G}(\bigger)$, so that
 %we are in the situation to apply Lemma \ref{l;normalizer-inv} and acquire an additional quasi-invariance by $h$.
 %The precise formulation of this will be done by the help of the quasi-regular map $\varphi$ constructed in the section \ref{ss;quasi-reg}.
%When $\G_i$ is a lattice in $G$, this type of argument is due to Ratner.  Since the range of $\tbf_k$'s will be allowed
%only from a fixed window $B_U(T_k)-B_U(rT_k)$ for some $r>0$,
%the returning property
%of generic points for any window $B_U(T)-B_U(rT)$ is crucial in carrying out this.  Although it
%is an easy consequence of the Birkhoff ergodic theorem for finite measures,
%carrying out this argument for an infinite measure $\mu$, 
%the following window ratio theorem \ref{c;window-rT} is of an essential importance.

\begin{lem}\label{l;window_1}
Fix $0<\sigma<1/2$. There exist  $T_0=T_0(\psi, \Om_1)>0$
and $c_0=c_0(\psi,\Om_1) >1$ such that for all  $T>T_0$ and for all $x\in\mathcal{Q}_{\e}$ we have
\[
c_0^{-1}\int_{I_{r^-_\sigma}(T^+_\sigma )}\Psi(x\biggere)d\tbf\leq \int_{B_U(T)}\Psi(x\biggere)d\tbf \leq
c_0 \int_{I_{r^+_\sigma}(T^-_\sigma )}\Psi(x\biggere)d\tbf 
\]
 where
$r^{\pm}_{\sigma}=(1\pm \sigma ) r$ and $T^{\pm}_{\sigma}=(1\pm  \sigma) T$.

%Let $0<r\leq \frac{1}{10} r_\Om$.   
%Assume that the convergence in Theorem \ref{t;horo-equid-BR}
%holds for $\psi$, uniformly for all $x_1\in \Om$.
%Then there exist $T_0=T_0(\Om, \psi)>1$ and $c_0=c_0(\sigma, \Om,\psi) >0$ such that for all  $T_0<T<R_k/4$ and for all $x\in\XX$ 
%with $\pi_1(x)\in \Om$,
 % $$\int_{I_r(T)}\Psi(x\Delta(u_{\alpha_k(\tbf)}) )d\tbf
%\ge c_0 \int_{B_U(T)}\Psi(x\biggere) d\tbf  $$
%where $r=\frac{1}{2} r_\Om$.
\end{lem}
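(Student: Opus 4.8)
The plan is to reduce the statement to estimates in the single space $X_1$. Since $\Psi=\psi\circ\pi_1$, for every $x$ with $y:=\pi_1(x)\in\Omega_1$ we have $\int_{B_U(S)}\Psi(x\biggere)\,d\tbf=\mathcal L_S\psi(y)$ and $\int_{I_{r_0}(S)}\Psi(x\biggere)\,d\tbf=\mathcal L_S\psi(y)-\mathcal L_{r_0S}\psi(y)$; moreover, by the definition of $\Omega_1$ one has $y^-\in\Lambda_{\rm r}(\G_1)$, and as $\mathcal Q_\e\subset\mathcal Q\subset\Omega_1\times\Omega_2$ this applies to all $x\in\mathcal Q_\e$ (and the conclusion depends only on $y$). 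So it suffices to show that $\mathcal L_T\psi(y)$ is comparable, uniformly for $y\in\Omega_1$ and all large $T$, to $\mathcal L_{T^+_\sigma}\psi(y)-\mathcal L_{r^-_\sigma T^+_\sigma}\psi(y)$ and to $\mathcal L_{T^-_\sigma}\psi(y)-\mathcal L_{r^+_\sigma T^-_\sigma}\psi(y)$.

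First I would invoke the uniform form of Theorem~\ref{t;horo-equid-BR}, which by the very choice of $\Omega_1$ is valid for $\psi$ uniformly on $\Omega_1$, with equidistribution error $\tfrac16 m^{\BR}(\psi)>0$: this produces $T_0=T_0(\psi,\Omega_1)$ such that $\mathcal L_S\psi(y)=m^{\BR}(\psi)\,\mu_y^{\PS}(B_U(S))\cdot(1+\theta_S)$ with $|\theta_S|\le\tfrac16$, for all $y\in\Omega_1$ and all $S>T_0$. Because $\sigma<\tfrac12$, all radii occurring in the lemma — $T^{\pm}_\sigma$, $T$, and the two annulus inner radii $r^-_\sigma T^+_\sigma=r^+_\sigma T^-_\sigma=(1-\sigma^2)rT$ — lie between $\tfrac34 rT$ and $2T$, so for $T$ large (depending only on $\psi$ and $\Omega_1$, since $r$ depends only on $\Omega_1$) they all exceed $T_0$. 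Thus, up to a bounded multiplicative factor, every $\mathcal L$-integral above becomes $m^{\BR}(\psi)$ times a $\mu_y^{\PS}$-measure of a ball centered at $y$, and the lemma reduces to two geometric estimates, uniform over $y\in\Omega_1$: a doubling bound $\mu_y^{\PS}(B_U(T^\pm_\sigma))\asymp\mu_y^{\PS}(B_U(T))$, and the assertion that $\mu_y^{\PS}(B_U(T^-_\sigma))-\mu_y^{\PS}(B_U(r^+_\sigma T^-_\sigma))$ is a definite fraction of $\mu_y^{\PS}(B_U(T^-_\sigma))$.

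The doubling bound is immediate from Lemma~\ref{uc} applied with $\kappa=2$ and $\Omega=\Omega_1$, since $\tfrac12 T\le T^-_\sigma\le T\le T^+_\sigma\le 2T$. For the second estimate, recall $r=\tfrac14\rho_0$ where $\rho_0$ is the constant $r(1/8,\Omega_1)$ furnished by the PS-window Lemma~\ref{p;horo-wind} (this is the value of $r(0.5,\Omega_1)$ obtained in the proof of Theorem~\ref{wint}); Lemma~\ref{p;horo-wind} with $\eta=1/8$ then gives a uniform $T_1$ with $\mu_z^{\PS}(B_U(\rho_0 S))\le\tfrac18\,\mu_z^{\PS}(B_U(S))$ for all $z\in\Omega_1$ with $z^-\in\Lambda_{\rm r}(\G_1)$ and $S>T_1$. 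Writing $r^+_\sigma T^-_\sigma=(1-\sigma^2)\tfrac{\rho_0}{4}T=\rho_0 S$ with $S=(1-\sigma^2)\tfrac T4<\tfrac T4<T^-_\sigma$, and noting $S>\tfrac{3\rho_0}{16}\cdot\tfrac T{\rho_0}\cdot\rho_0=\tfrac{3}{16}T\cdot\tfrac{\rho_0}{\rho_0}$ wait — more plainly $S>\tfrac{3}{16}T>T_1$ for $T$ large, we get $\mu_y^{\PS}(B_U(r^+_\sigma T^-_\sigma))\le\tfrac18\mu_y^{\PS}(B_U(S))\le\tfrac18\mu_y^{\PS}(B_U(T^-_\sigma))$, hence $\mu_y^{\PS}(B_U(T^-_\sigma))-\mu_y^{\PS}(B_U(r^+_\sigma T^-_\sigma))\ge\tfrac78\mu_y^{\PS}(B_U(T^-_\sigma))\ge\tfrac78\beta^{-1}\mu_y^{\PS}(B_U(T))$ by Lemma~\ref{uc} again. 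Assembling all estimates gives the two inequalities with $c_0$ an explicit multiple of the doubling constant $\beta$; all thresholds are of the stated form $T_0(\psi,\Omega_1)$ up to a factor bounded since $\sigma<\tfrac12$, and $c_0=c_0(\psi,\Omega_1)>1$.

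The only genuinely delicate step is this last geometric estimate: one needs the annulus inner radius $r^+_\sigma T^-_\sigma$ to be small enough, relative to its outer radius $T^-_\sigma$, for the PS-window Lemma~\ref{p;horo-wind} to force $\mu_y^{\PS}(B_U(r^+_\sigma T^-_\sigma))$ to be a small fraction of $\mu_y^{\PS}(B_U(T^-_\sigma))$ — and this is exactly the purpose of the factor $\tfrac14$ built into the definition of $r$ together with the hypothesis $\sigma<\tfrac12$. Everything else is routine bookkeeping with uniform equidistribution and the doubling lemma; in particular $x\in\mathcal Q_\e$ is never used beyond the membership $\pi_1(x)\in\Omega_1$.
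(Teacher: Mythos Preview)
Your proof is correct and follows essentially the same route as the paper: reduce to $X_1$ via $\Psi=\psi\circ\pi_1$, use the uniform equidistribution of Theorem~\ref{t;horo-equid-BR} on $\Omega_1$ to replace the Lebesgue averages by $m^{\BR}(\psi)\,\mu_y^{\PS}(B_U(\cdot))$, then invoke Lemma~\ref{uc} for doubling and Lemma~\ref{p;horo-wind} (with your correct unpacking of $r=\tfrac14\,r_{\text{p;horo-wind}}(1/8,\Omega_1)$) to show the shifted annulus carries a definite fraction of the PS-mass. The only difference is cosmetic: the paper phrases the first step via the window version Theorem~\ref{eqwi}, while you subtract the two ball-averages directly; also, the sentence containing ``wait --- more plainly'' is leftover scratchwork and should be cleaned to simply $S=(1-\sigma^2)T/4>\tfrac{3}{16}T>T_1$.
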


\begin{proof} 
Since Theorem \ref{eqwi}(2)  holds for $\psi$ uniformly for all $\pi_1(x)\in \Om_1$, there exists $T_0>1$ such that
 for any $x\in\mathcal{Q}_\e$ and for any $T>T_0$, \begin{align*} \label{eq:change-var-2}
\int_{I_{r^+_\sigma}(T^-_\sigma )}\Psi(x\Delta(u_\tbf))d\tbf &=
\int_{I_{r^+_\sigma}(T^-_\sigma )}\psi(\pi_1(x) u_\tbf )d\tbf \\ 
&\ge \tfrac{1}{2} m^{\BR}(\psi)\left(\mu_{\pi_1(x)}^{\PS}(B_U({T_\sigma^-})-B_U({r_\sigma^+T_\sigma^-}))\right).
\end{align*}

On the other hand, by
Lemma~\ref{p;horo-wind} and  Lemma \ref{uc}, there exist $T_1>T_0$ and $c'_0>0$, depending only on $\Om_1$ such that
if $\pi_1(x)\in \Om_1$,
$$\mu_{\pi_1(x)}^{\PS}(B_U({T_\sigma^-})-B_U({r_\sigma^+T_\sigma^-}))
\ge \tfrac 12 \mu_{\pi_1(x)}^{\PS}(B_U({T_\sigma^-}))\ge c'_0
\mu_{\pi_1(x)}^{\PS}(B_U(T)).$$
 
  Since
 Theorem~\ref{t;horo-equid-BR}
 holds for $\psi$, uniformly for all $\pi_1(x)\in \Om_1$, we have, for all sufficiently large $T\gg 1$,
 $$
m^{\BR}(\psi) \mu_{\pi_1(x)}^{\PS}(B_U(T))  \ge  \tfrac 12  \int_{B_U(T) }\psi(\pi_1(x) u_{\tbf })d\tbf .
$$   
Therefore,  
\begin{align*} 
&\int_{I_{r^+_\sigma}(T^-_\sigma )}\Psi(x\Delta(u_\tbf))d\tbf\geq \tfrac{c'_0}{2}m^{\BR}(\psi) \mu_{\pi_1(x)}^{\PS}(B_U(T))
\\ &  \ge  \tfrac{c'_0}{4}  \int_{B_U(T) }\psi(\pi_1(x) u_{\tbf })d\tbf 
=  \tfrac{c'_0}{4} \int_{B_U(T) }\Psi(x\Delta(u_\tbf))d\tbf .
\end{align*} The other direction can be proved similarly.
\end{proof}

The following lemma will be used to compare ergodic averages along
two nearby orbits.

\begin{lem}\label{l;window2}
Let $\{R_k\}$ be a sequence tending to infinity as $k\to\infty$ and fix a small number
$0<\sigma<1/2$. For each $k$, let
 $\alpha_k:\field\to\field$ be a rational map with no poles on $B_{R_k}(\field)$. Suppose that for
 all $\tbf\in  B_{R_k}(\field)$,
$$1-\sigma\leq |{\rm Jac}(\alpha_k)(\tbf) |\leq 1+\sigma .$$
%Suppose $\sigma>0$ is small, $R_k$ and $\alpha_k$, $c_0(\sigma, \Om)$
%be as in Lemma \ref{l;window_1}. 
Then there exist $c_1>1$ and $T_1=T_1(\Om,\psi,\mathcal F)>1$ such that
 for all $T_1<T<R_k/4$, $x\in \Q_\e$ and $f\in \mathcal F$, we have
 $$
c_1^{-1} \cdot \D_T f (x)  \le
\int_{I_{r}(T)}f(x\Delta(u_{\alpha_k(\tbf)}))d\tbf\leq 
c_1 \cdot  \D_T f (x). $$
%for all $k$ large enough, $T<R_k/4$ and all $x\in\qcal'.$ 
%\[
%{\left|\tfrac{\int_{I_{r,T}(\bigger)}f(x w_{\beta_k(\tbf))}}
%{\int_{I_{r,T}(\bigger)}\Psi(xw_{\beta_k(\tbf)}))}-\tfrac{\mu(f)}{\mu(\Psi)}\right|\leq\e}
%\]
%\end{enumerate}
\end{lem}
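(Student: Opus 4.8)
\textbf{Proof plan for Lemma \ref{l;window2}.}
The plan is to reduce the statement to the already-established uniform window convergence \eqref{e;unif-conv} for the family $\mathcal F$, together with the uniform estimate of Lemma \ref{l;window_1}. The key point is that, by the change of variables $\sbf=\alpha_k(\tbf)$, the integral $\int_{I_r(T)}f(x\Delta(u_{\alpha_k(\tbf)}))d\tbf$ becomes $\int_{\alpha_k(I_r(T))}f(x\Delta(u_\sbf))\,|{\rm Jac}(\alpha_k^{-1})(\sbf)|\,d\sbf$, and the Jacobian hypothesis says this weight lies between $(1+\sigma)^{-1}$ and $(1-\sigma)^{-1}$, so up to the fixed constant $(1-\sigma)^{-1}$ it suffices to control $\int_{\alpha_k(I_r(T))}f(x\Delta(u_\sbf))\,d\sbf$ from above and below by $\D_{T}f(x)$, uniformly over $x\in\Q_\e$, $f\in\mathcal F$, $T_1<T<R_k/4$, and $k$. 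Since $f\ge 0$ (each member of $\mathcal F$ is an indicator), it is enough to sandwich the domain $\alpha_k(I_r(T))$ between two honest window-type domains.

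First I would record the geometric containment. The Jacobian bound, applied to the rational map $\alpha_k$ with $\alpha_k(0)=0$ (after translating, using that $\alpha_k$ has no pole on $B_{R_k}(\field)$ and the translate affects nothing), forces $\alpha_k$ to be bi-Lipschitz on $B_{R_k}(\field)$ with constants depending only on $\sigma$: indeed a rational map of bounded degree whose Jacobian is pinched near $1$ on a large ball is, on a fixed proportion of that ball, close to a linear isometry, so there is an absolute $\lambda=\lambda(\sigma)>1$ with
\[
B_U(\lambda^{-1}\rho)\subset \alpha_k(B_U(\rho))\subset B_U(\lambda\rho)\qquad\text{for all }\rho\le R_k/4 .
\]
Consequently $\alpha_k(I_r(T))=\alpha_k(B_U(T))\setminus\alpha_k(B_U(rT))$ is trapped between $I_{r^-_\sigma}(T^+_\sigma)$-type windows and $I_{r^+_\sigma}(T^-_\sigma)$-type windows after enlarging $\sigma$ to a slightly larger fixed $\sigma'=\sigma'(\sigma,\lambda)<1/2$; more precisely $\alpha_k(I_r(T))\subset B_U(\lambda T)\setminus B_U(\lambda^{-1}rT)$ and $\alpha_k(I_r(T))\supset B_U(\lambda^{-1}T)\setminus B_U(\lambda rT)$, both of which are windows of the form $I_{r''}(T'')$ with $r''$ bounded away from $1$ (using $r=\tfrac14 r(0.5,\Om_1)$) and $T''\asymp T$.

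Next I would invoke Lemma \ref{l;window_1}, which is exactly the statement that for $x\in\Q_\e$ and $T>T_0$ the quantity $\int_{B_U(T)}\Psi(x\Delta(u_\tbf))d\tbf$ is comparable, up to the fixed constant $c_0(\psi,\Om_1)$, to $\int_{I_{r''}(T'')}\Psi(x\Delta(u_\tbf))d\tbf$ for any window with $T''\asymp T$ and $r''$ of the allowed type; the same proof (which only uses Theorems \ref{t;horo-equid-BR}, \ref{eqwi}(2) uniformly on $\Om_1$, together with Lemmas \ref{p;horo-wind} and \ref{uc}) applies to each of the sandwiching windows above. Combining this with the uniform convergence \eqref{e;unif-conv} for $f$ and for $\Psi$ gives, for $T$ larger than some $T_1=T_1(\Om,\psi,\mathcal F)$ and all $x\in\Q_\e$, $f\in\mathcal F$,
\[
\int_{\alpha_k(I_r(T))}f(x\Delta(u_\sbf))d\sbf \;\asymp\; \frac{\mu(f)}{\mu(\Psi)}\,\int_{\alpha_k(I_r(T))}\Psi(x\Delta(u_\sbf))d\sbf \;\asymp\; \frac{\mu(f)}{\mu(\Psi)}\,\D_T\Psi(x)\;\asymp\;\D_T f(x),
\]
with all implied constants depending only on $\sigma$, $\psi$, $\Om_1$ and $\mathcal F$; undoing the Jacobian weight costs only the factor $(1-\sigma)^{\pm1}$, producing the desired $c_1$.

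\textbf{Main obstacle.} The delicate point is making the bi-Lipschitz containment $B_U(\lambda^{-1}\rho)\subset\alpha_k(B_U(\rho))\subset B_U(\lambda\rho)$ genuinely uniform in $k$: a priori a rational map with pinched Jacobian on a large ball could still distort the ball $B_U(rT)$ badly if $rT$ is not comparable to $R_k$. This is handled by the constraint $T<R_k/4$ (so we stay well inside the good ball) together with the fact that $\alpha_k$ has bounded degree, which is what ultimately forces the pinched-Jacobian hypothesis to propagate from infinitesimal control to the macroscopic control of images of balls; one then only needs $\sigma$ small enough that the resulting $\sigma'$ stays below $1/2$ and $r''$ stays bounded away from $1$, which is arranged by the choice $r=\tfrac14 r(0.5,\Om_1)$.
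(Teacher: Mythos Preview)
Your approach is essentially the same as the paper's: change variables, sandwich $\alpha_k(I_r(T))$ between two honest windows $I_{r'}(T')$ with $r'$ close to $r$ and $T'$ close to $T$, then use Lemma~\ref{l;window_1} together with the uniform convergence~\eqref{e;unif-conv} to pass from $f$-integrals to $\Psi$-integrals and back. The paper asserts the containment $I_{r^+_\sigma}(T^-_\sigma)\subset\alpha_k(I_r(T))\subset I_{r^-_\sigma}(T^+_\sigma)$ directly (rather than via a general bi-Lipschitz statement) and runs the identical chain of inequalities you outline.

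One remark: your reduction ``$\alpha_k(0)=0$ after translating, and the translate affects nothing'' is not correct as stated---replacing $\alpha_k$ by $\alpha_k-\alpha_k(0)$ amounts to replacing the base point $x$ by $x\Delta(u_{\alpha_k(0)})$, which need not lie in $\Q_\e$. The sandwiching of $\alpha_k(I_r(T))$ between concentric annuli genuinely requires $\alpha_k$ to be close to the identity, not merely to have Jacobian pinched near $1$; the Jacobian hypothesis alone says nothing about $\alpha_k(0)$. The paper simply asserts the containment, and this is justified in the only place the lemma is applied (the proof of Theorem~\ref{l;extra-inv}), where $\alpha_k(\tbf)=(c_k+d_k\tbf)/(a_k+b_k\tbf)$ with $h_k^1\to e$, so that $\alpha_k\to\mathrm{id}$ uniformly on compacta and in particular $\alpha_k(0)\to 0$. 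With that understood, your argument and the paper's coincide.
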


\begin{proof} 
First note that
\be\label{eq:change-var-1}
\int_{I_{r}(T) }f(x\Delta(u_{\alpha_k(\tbf)}))d\tbf=
\int_{\alpha_k(I_{r}(T)}f(x\Delta(u_{\tbf}))|{\rm Jac}(\alpha_k)(\tbf)| d\tbf.
\ee  

Setting $r^\pm_{\sigma}=(1\pm\sigma ) r$ and $T^\pm_{\sigma}=(1\pm\sigma) T$,
 note that 
$$I_{r^+_\sigma}(T^-_\sigma)
\subset\alpha_k(I_{r}(T)) \subset I_{r^-_\sigma}(T^+_\sigma ).$$
Now for all $T>2T_0(\psi, \Om),$ where $T_0$ is as in Lemma~\ref{l;window_1},
we have 

%We use the notations $r_\sigma^{\pm}$ and $T_\sigma^{\pm}$
%of the proof of Lemma \ref{l;window_1}. Let $f\in \mathcal F$.
%Then applying Lemma \ref{l;window_1}, \eqref{e;unif-conv} with $(r^+,T^-)$
%and~\eqref{e;unif-conv-ball} with $T$,
%we obtain for all  $T> 2T_0(\Om, \psi)$, 
\begin{align*}
\int_{I_{r}(T)}f(x\Delta(u_{\alpha_k(\tbf))})d\tbf&\geq
(1-\sigma) \int_{I_{r^+_\sigma}(T^-_\sigma )}f(x\biggere)d\tbf\\
&= (1-\sigma)  \frac{\mu(f )}{\mu(\Psi)} \int_{I_{r^+_\sigma}(T^-_\sigma )}\Psi(x\biggere)d\tbf+\alpha_1(T)\\
{}^{\text{Lemma~\ref{l;window_1}}\leadsto}&\geq(1-\sigma)c_0  \frac{\mu(f )}{\mu(\Psi)}  \int_{B_U(T)}\Psi(x\biggere)d\tbf  + \alpha_1(T) \\
&={(1-\sigma)c_0^{-1}} \int_{B_U(T)}f(x\biggere)d\tbf+\alpha_2(T)\\
&\geq{(1-\sigma)c_0^{-1}} \D_T f (x) +\alpha_2(T) 
\end{align*}
where $\alpha_i(T)$'s satisfy
$\lim_{T\to \infty} \frac{\alpha_i(T)}{\int_{B_U(T)}\Psi(x\biggere)d\tbf }=0$

By Lemma \ref{lwint}
and \eqref{e;unif-conv}, it follows that for all $x\in \Q_\e$,
$$\lim_{T\to \infty} \frac{\alpha_i(T)}{\D_Tf(x) }=0$$
where the convergence is uniform on $\Q_\e$.

Therefore for all $x\in \Q_\e$ and $T$ large,
$${(1-\sigma)c_0^{-1}} \D_T f (x) +\alpha_2(T)  \ge c_1^{-1} \D_T f (x)$$
and hence
$$\int_{I_{r}(T)}f(x\Delta(u_{\alpha_k(\tbf))})d\tbf \ge c_1^{-1} \D_T f (x)$$
for some $c_1>1$ and for all $T$ bigger than some fixed $T_1>1$.
The other inequality can be proved similarly.
\end{proof}

%We set
%$$I_r(T):=B_U(T) -B_U({rT}) .$$

For a subset $S\subset G\times G$, 
we denote by $\langle S\rangle$ the minimal connected
subgroup of $G\times G$ containing $S$.

\begin{thm}\label{l;extra-inv}
Let $h_k\in G\times G- N_{G\times G}(\bigger)$ be a sequence tending to $e$ as $k\to \infty$.
If $\qcal_\e h_k\cap\qcal_\e\neq\varnothing$ for all $k$, then
 $\mu$ is quasi-invariant under a nontrivial 
connected subgroup of $\Delta(AM) (\{e\}\times U)$. Moreover, if $h_k\in \{e\}\times G$ for all $k$, then
 $\mu$ is invariant under a nontrivial 
connected subgroup of $\{e\}\times U$. 
 \end{thm}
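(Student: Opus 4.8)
The strategy is to feed the sequence $\{h_k\}$ into the polynomial-like construction of \S\ref{ss;quasi-reg} and then use the window ergodic averages of \S\ref{s;joining} to transfer the hypothesis $\qcal_\e h_k\cap\qcal_\e\neq\varnothing$ into an actual quasi-invariance statement along the limiting polynomial. First I would apply the construction of \S\ref{ss;quasi-reg}: passing to a subsequence, we obtain the renormalizing scalars $T_k\to\infty$, the maps $\alpha_k(\tbf)=\alpha_{h_k^1}(\tbf)$ with poles $R_k\to\infty$, and the limiting non-constant polynomial map $\varphi:\ocal\to\lcal$ with $\varphi(0)=e$ and image in $N_{G\times G}(\dU)\cap\lcal=\Delta(AM)\cdot(\{e\}\times U)$. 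By Lemma~\ref{r;isolated-zero}, $0$ is an isolated zero of $\varphi$; so we may fix $\tbf_0$ close to $0$ (but in $\ocal$) with $h:=\varphi(\tbf_0)\neq e$, $h\in\Delta(AM)\cdot(\{e\}\times U)$, and $h$ as close to $e$ as we like. The goal is then to show $\mu$ is $h$-quasi-invariant, which (since $h$ generates a nontrivial connected subgroup of $\Delta(AM)(\{e\}\times U)$ as $\tbf_0$ varies — or because $h$ itself lies on a one-parameter subgroup) gives the first assertion; the ``Moreover'' clause follows because if $h_k\in\{e\}\times G$ then $\varphi$ takes values in $(\{e\}\times U)$, so $h\in\{e\}\times U$, and then Lemma~\ref{lem:one-factor-inv} upgrades quasi-invariance to invariance.

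Next I would establish the $h$-quasi-invariance. Fix $f\in\mathcal F$ (the finite family $\{\chi_\qcal,\chi_{\qcal_+},\chi_{\qcal_{++}}\}$, or more generally a countable dense family via Remark~\ref{rk:separable}). For each $k$ choose $y_k\in\qcal_\e$ with $y_kh_k\in\qcal_\e$. Using $h_k\Delta(u_\tbf)=\Delta(u_{\alpha_k(\tbf)})\varphi_k(\tbf)$ and the uniform convergence $\varphi_k(T_k\tbf)\to\varphi(\tbf)$, one compares, for $T$ in the window range $T_1<T<R_k/4$,
\[
\D_T f(y_kh_k)=\int_{I_r(T)}f(y_kh_k\Delta(u_\tbf))d\tbf
=\int_{I_r(T)}f\bigl(y_k\Delta(u_{\alpha_k(\tbf)})\varphi_k(\tbf)\bigr)d\tbf
\]
with $\int_{I_r(T)}f(y_k\Delta(u_{\alpha_k(\tbf)}))d\tbf$. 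Here one needs the Jacobian bound $1-\sigma\le|\mathrm{Jac}(\alpha_k)(\tbf)|\le 1+\sigma$ on $B_{R_k}(\field)$ — this holds because $\alpha_k=\alpha_{h_k^1}$ with $h_k^1\to e$ and poles receding to infinity, so on any fixed-size (rescaled) ball the derivative is uniformly close to $1$ — which is exactly the hypothesis of Lemma~\ref{l;window2}. That lemma then gives two-sided comparability of $\int_{I_r(T)}f(x\Delta(u_{\alpha_k(\tbf)}))d\tbf$ with $\D_T f(x)$, uniformly for $x\in\qcal_\e$. Combining with the uniform convergence $\D_T f(x)/\D_T\Psi(x)\to\mu(f)/\mu(\Psi)$ from \eqref{e;unif-conv} (valid on $\qcal_\e$), and letting $k\to\infty$ so that $\varphi_k(\tbf)\to\varphi(\tbf)$ forces the $f$-values to track $f(y\Delta(u_{\alpha(\tbf)})\varphi(\tbf))$, one extracts a limit point $y\in\qcal_\e$ (compactness of $\qcal_\e$) and deduces that the hypotheses of Lemma~\ref{l;normalizer-inv} are met for the element $h=\varphi(\tbf_0)\in N_{G\times G}(\dU)$: namely $Y\cap Yh\neq\varnothing$ for the set $Y$ of points where both \eqref{hhh}-type convergence and $\D_T\Psi\to\infty$ hold, which has full $\mu$-measure by Lemma~\ref{ri} and the choice of $\qcal$. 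Lemma~\ref{l;normalizer-inv} then yields $h$-quasi-invariance of $\mu$.

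Finally, I would assemble the conclusion: the element $h=\varphi(\tbf_0)$ can be taken arbitrarily close to $e$ (shrinking $\tbf_0$) and nontrivial (by Lemma~\ref{r;isolated-zero}), lying in $\Delta(AM)\cdot(\{e\}\times U)$, a subgroup in which every element near $e$ lies on a one-parameter subgroup; so the set of $h$'s under which $\mu$ is quasi-invariant, being a closed subgroup of $G\times G$ containing such nontrivial elements arbitrarily close to $e$, contains a nontrivial connected subgroup of $\Delta(AM)(\{e\}\times U)$. When $h_k\in\{e\}\times G$, the limiting $\varphi$ lands in $\{e\}\times U$ (inspect the explicit formula for $\tilde\phi_k$ with $h_k^1=e$: $b_k^1=0$, $\mathcal A_k,\mathcal B_k$ degenerate appropriately), so $\mu$ is quasi-invariant, hence by Lemma~\ref{lem:one-factor-inv} invariant, under a nontrivial connected subgroup of $\{e\}\times U$.

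\textbf{Main obstacle.} The delicate point is the interchange of the two limits $T\to\infty$ and $k\to\infty$: one must choose, for each $k$, a scale $T=T(k)$ lying simultaneously in the window $(T_1,R_k/4)$ where Lemma~\ref{l;window2} applies and large enough that the uniform convergence \eqref{e;unif-conv} has kicked in, while also ensuring $\varphi_k(T_k\tbf)$ has converged close enough to $\varphi(\tbf)$ on the relevant compact $\tbf$-range — and crucially, that the ``defect'' from replacing $\varphi_k(\tbf)$ by $\varphi(\tbf_0)$ inside $f$ is controlled. Since the $f\in\mathcal F$ are indicator functions of compact sets (not continuous), this last step requires the $\pm$-thickening trick built into the choice of $\qcal_+,\qcal_{++}$ and the parameter $\eta$: the point $y\Delta(u_{\alpha_k(\tbf)})\varphi_k(\tbf)$ lies in $\qcal$ iff $y\Delta(u_{\alpha_k(\tbf)})$ lies in a set squeezed between $\qcal_+$ and $\qcal_{++}$ once $\varphi_k(\tbf)$ is within $\eta/4$ of $e$, which is why the family $\mathcal F$ was chosen with exactly these three members. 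Getting this squeezing to run uniformly in $k$ and to survive passage to the limit is the technical heart of the argument.
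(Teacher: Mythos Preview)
Your strategy is essentially the paper's and you have correctly identified every ingredient: the polynomial construction of \S\ref{ss;quasi-reg}, Lemma~\ref{l;window2}, the uniform convergence~\eqref{e;unif-conv}, the thickenings $\qcal\subset\qcal_+\subset\qcal_{++}$, Lemma~\ref{l;normalizer-inv}, Lemma~\ref{r;isolated-zero}, and Lemma~\ref{lem:one-factor-inv}. The ``moreover'' clause is also right (the cleanest reason: when $h_k^1=e$ one has $\alpha_k(\tbf)=\tbf$ and $\varphi_k(\tbf)=(e,u_{-\tbf}g_ku_\tbf)$, so $\varphi$ lands in $N_{G\times G}(\dU)\cap(\{e\}\times G)=\{e\}\times U$).

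The gap is in your limit-extraction step, and it is exactly the obstacle you flag but do not resolve. Extracting $y=\lim y_k\in\qcal_\e$ yields only $y,y\cdot e\in\qcal_\e$, which is vacuous. Comparing $\D_Tf(y_kh_k)$ with $\int_{I_r(T)}f(y_k\Delta(u_{\alpha_k(\tbf)}))d\tbf$ tells you nothing new either: both ratios already tend to $\mu(f)/\mu(\Psi)$ because $y_k,y_kh_k\in\qcal_\e$. More seriously, over $I_r(T)$ the element $\varphi_k(\tbf)$ \emph{varies} with $\tbf$; there is no reason it is uniformly close to a single pre-chosen $\varphi(\tbf_0)$, so one cannot ``replace $\varphi_k(\tbf)$ by $\varphi(\tbf_0)$'' as you propose.

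The paper does not pre-select $\tbf_0$. It sets $\tau_k=\min\{\tau_k',R_k\}$ where $\tau_k'$ is the first radius at which $d(e,\varphi_k)$ reaches $\eta/4$, and shows $\Theta:=\lim\tau_k/T_k>0$. For every scale $T_0\le T\le\tau_k$ the containments
\[
\{\tbf\in I_r(T):x_k\biggere\in\qcal\}\subset\{\tbf:y_k\Delta(u_{\alpha_k(\tbf)})\in\qcal_+\}\subset\{\tbf:x_k\biggere\in\qcal_{++}\}
\]
combined with~\eqref{e;unif-conv} and Lemma~\ref{l;window2} force both $\{\tbf\in I_r(T):x_k\biggere\in\qcal\}$ and $\{\tbf\in I_r(T):y_k\Delta(u_{\alpha_k(\tbf)})\in\qcal\}$ to occupy a fixed positive proportion of $\{\tbf\in I_r(T):x_k\biggere\in\qcal_{++}\}$, hence to intersect. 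Now run this at $T=r^\ell\tau_k$: pick $\tbf_{k,\ell}$ in the intersection, so $z_{k,\ell}:=y_k\Delta(u_{\alpha_k(\tbf_{k,\ell})})\in\qcal$ and $z_{k,\ell}\varphi_k(\tbf_{k,\ell})=x_k\Delta(u_{\tbf_{k,\ell}})\in\qcal$. Since $|\tbf_{k,\ell}/T_k|\asymp r^\ell\Theta$, a subsequential limit in $k$ produces $z_\ell\in\qcal$ and $s_\ell\ne0$ with $|s_\ell|\asymp r^\ell\Theta$ and $z_\ell\varphi(s_\ell)\in\qcal$. Lemma~\ref{l;normalizer-inv} then gives $\varphi(s_\ell)$-quasi-invariance for every $\ell$; letting $\ell\to\infty$ yields $\varphi(s_\ell)\to e$ with $\varphi(s_\ell)\ne e$ by Lemma~\ref{r;isolated-zero}, hence a nontrivial connected subgroup. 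The point is that the limit is taken \emph{along the orbit} at a carefully chosen $\tbf_{k,\ell}$, and the resulting $s_\ell$ is whatever emerges --- it is not fixed beforehand.
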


\proof 
We use the notation used in the construction of the map $\varphi$
  in section \ref{ss;quasi-reg} with respect to $\{h_k\}$.
By our assumption we have that there are points $y_k\in\qcal_\e$
so that $x_k=y_kh_k\in\qcal_\e.$
Recalling the maps $\varphi_k$ and $\alpha_k$ 
from above, we have
%and let $\alpha_k=\alpha_{\pi_2(h_k)}$ We have 
%constructed above governs the divergence of the orbits 
%$x_k\dU$ and $x\dU.$ Indeed 
\[
 x_k\biggere=y_kh_k\biggere=y_k\Delta(u_{\alpha_k(\tbf)})\varphi_k(\tbf).
\]
Now let 
$$ \tau_k':= \sup_{t\in B_U(\tau)} d(e, {\varphi}_k(\tbf)) =\eta/4,
\;\; \tau_k=\min\{\tau_k',R_k\},$$
and
$$ R_k=\sup\{0<R<\infty :0.9\leq{\rm Jac}(\alpha_k)|_{B_R(\field)}\leq1.1\}.$$

Note that $$ \sup_{t\in B_U(\tau_k')} d(e, {\varphi}_k^{-1}(\tbf)) =\eta/4 .$$

Note that $\Theta_k =\tau_k/T_k$ is bounded away from $0;$
in particular, $\tau_k\to\infty.$ 
Passing to a subsequence we may assume that $\Theta_k$ converges to some $\Theta .$ 

%Note also that we have $x\Delta(u_t)\in\qcal$ implies $x_k\Delta(u_t)\in\qcal_+,$
%and $x_k\Delta(u_t)\in\qcal$ implies $x\Delta(u_t)\in\qcal_+$
%for all $|t|\leq\tau_k.$ 

%Recall that for any $T>0$ we put 
%$I_r(T)=B_U(T)\setminus B_U(rT).$
%Apply \eqref{e;unif-conv} with  
%$f=\chi_{\qcal_{++}},\chi_{\qcal_+},\chi_{\qcal},$
%and $\theta=\e\min\{\tfrac{\mu(\bullet')}{\mu(\bullet)}\}$ 
%where $\tfrac{1}{1+c_1}<(1-2\e)^2.$ 
By the definition of $\mathcal Q_\e$, we have,
for all large enough $T$, \[
 \left| \frac{\int_{I_r(T)} f_1 (z_k\biggere )d\tbf}
 {\int_{I_r(T)} f_2 (z_k\biggere )d\tbf}-
 \frac{\mu(f_1)}{\mu(f_2 )} \right| \leq \theta
 %\tfrac{\mu(\chi_{\qcal})}{100\mu(\Psi)}.
 \] for $f_1, f_2\in \mathcal F=\{\chi_\qcal,\chi_{\qcal_+},\chi_{\qcal_{++}}\}$
 and $z_k=x_k,y_k.$
%Similarly, we have 
%$|\tfrac{D_T\bullet(x_k)}{D_T\bullet'(x_k)}-
%\tfrac{\mu(\bullet')}{\mu(\bullet)}|\leq \e$ for all large enough $T.$

With this notation, the above implies: for all large enough $k$
and all $T_0\leq T\leq \tau_k$ we have 
\be\label{e;simul-qcal-general}
 \{\tbf\in I_r(T): x_k\biggere,y_k\Delta(u_{\alpha_k(\tbf)})\in\qcal\}\neq\varnothing.
\ee
To see this, let $k$ be large and let $T_0\leq T\leq\tau_k$. Then
\begin{align*}
 \{\tbf\in I_r(T):x_k\Delta(u_\tbf)\in\qcal\}&\subset 
 \{\tbf\in I_r(T):y_k\Delta(u_{\alpha_k(\tbf)})\in\qcal_+\}\\
 &\subset \{\tbf\in I_r(T):x_k\Delta(u_\tbf)\in\qcal_{++}\}.
\end{align*}
On the other hand we have
\be\label{e;qcal-qcal++-general}
\begin{array}{c}
\ell \{\tbf\in I_r(T):x_k\Delta(u_\tbf)\in\qcal\} \geq 
(1-\e)\ell \{\tbf\in I_r(T) :x_k\Delta(u_\tbf)\in\qcal_{++}\}
%|\{\tbf\in I_{r,T}:y_kw_{\beta_k(\tbf)}\in\qcal\}|\geq 
%0.9|\{\tbf\in I_{r,T}(\bigger):y_kw_{\beta_k(\tbf)}\in\qcal_{++}\}|
\end{array}
%&\geq 0.8 \{t\in I(T):x\Delta(u_t)\in\qcal\}|
\ee where $|\cdot |$ denotes the Lebesgue measure on $\field$.
From these two and Lemma~\ref{l;window2} we get
\begin{align}
\label{e;xk-x-general} & \ell \{\tbf\in I_r(T):y_k\Delta(u_{\alpha_k(\tbf)})\in\qcal\} \\&
%\geq 0.9|\{\tbf\in I_{r,T}(\bigger):y_kw_{\beta_k(\tbf)}\in\qcal^{+}\}|\\
\geq c_1 \ell \{\tbf\in I_r(T):y_k\Delta(u_\tbf)\in\qcal\} \\
&\notag\geq c_1(1-\e) \ell \{\tbf\in I_r(T):y_k\Delta(u_\tbf)\in\qcal_+\} \\
&\notag \geq c_1(1-\e) \ell \{\tbf\in I_r(T):x_k\Delta(u_{\alpha^{-1}_k(\tbf)}) \in\qcal\} \\
&\notag \geq c_1^2(1-\e) \ell \{\tbf\in I_r(T):x_k\Delta(u_{\tbf}) \in\qcal\} \\
&\notag\geq c_1^2(1-\e)^2 \ell \{\tbf\in I_r(T):x_k\Delta(u_\tbf)\in\qcal_{++}\}.
\end{align}
Now~\eqref{e;simul-qcal-general} follows by applying~\eqref{e;qcal-qcal++-general} 
and~\eqref{e;xk-x-general}, in view of the choice of $\e$
and the fact that by Corollary~\eqref{c;window-rT} we have 
$$\ell \{\tbf\in I_r(T):x_k\Delta(u_\tbf)\in\qcal_{++}\} >0 \text{ for all large enough $T$}.$$

For each $k$, let $m_k\geq0$ be the maximum integer 
so that $r^{m_k}\tau_k\geq T_0.$
Then for any $\ell\geq 0$ and all large enough $k$ we have $\ell\leq m_k.$
Let $\ell\geq 0$ and apply~\eqref{e;simul-qcal-general} with 
$T_{k,\ell}=r^\ell\tau_k.$
Then for each $k$ we find $t\in I_{r}(T_{k,\ell} )$ so that
$z_{k,\ell}=y_k\Delta(u_{\alpha_k(\tbf)})$ satisfies
$z_{k,\ell}\in\qcal$ and $z_{k,\ell}\varphi_k(\tbf)\in\qcal.$
Passing to a subsequence we get: there exist some $z_\ell\in\qcal$ 
and some $s\in B_U(\Theta)\setminus B_U(r\Theta)$ so that 
$z_\ell\varphi(s)\in\qcal.$
Therefore by Lemma~\ref{l;normalizer-inv} we have $\mu$
is $\varphi( s)$ quasi-invariant. 
Now if we choose $\ell$ large enough, then $\varphi(s)\neq e$ 
in view of Lemma~\ref{r;isolated-zero}
however, it can be made arbitrary close to the identity by choosing large $\ell$'s.  
This implies the first claim, since the image of $\varphi$ is contained in the subgroup $\Delta(AM) (\{e\}\times U)$.

Now, if
$h_k\in \{e\}\times G$, the image of $\varphi$ is contained in $
N_{G\times G}(\Delta (U))\cap (\{e\}\times G)=\{e\}\times U$. Indeed, $\varphi_k(t)=(e,u_{-t}g_ku_t)$
and $\alpha_k(t)=t.$ Therefore we get $\mu$ is quasi-invariant under the action of a nontrivial connected subgroup of $\{e\}\times U$; hence the claim follows from
 Lemma~\ref{lem:one-factor-inv}.
\qed

\subsection{Infinite joining measure cannot be invariant by $\{e\}\times V$ for $V<U$}\label{ss;not-prod}

 We recall some basic facts about dynamical systems. Consider an action of one-parameter subgroup $W=\{w_t\}$ on 
a separable, $\sigma$-compact and localiy compact topological space $X$ with an invariant Radon measure $\mu_0$.
 A Borel subset $E\subset X$ is called {\it wandering} if $\int_{\br} \chi_E(x w_t)<\infty$
for almost all $x\in E$. The Hopf decomposition theorem says that $X$ is a disjoint union of
invariant subsets $\mathcal D(W)$
 and $\mathcal C(W)$ where $\mathcal D(W)$ is a countable union of wandering subsets which is maximal
in the sense that any wandering subset is contained in $\mathcal D(W)$ up to null sets (see \cite{Kre}).
The sets $\mathcal D(W)$ and $\mathcal C(W)$ are called the dissipative part,
 and  the conservative part of $X$ respectively.
 If $\mathcal D(W)$ (resp. $\mathcal C(W)$) is a null set, this action is called {\it conservative} (resp. {\it dissipative}).
If the $W$-action is ergodic, then it is either conservative or dissipative.
The following is well known for a single transformation (e.g. \cite{Aa}), but we could not find a reference for a flow; so we provide
a proof for the sake of completeness. 

\begin{lem}\label{cons} If $\mu_0$ is ergodic and {\it infinite}, then for any non-negative $f\in L^1(\mu_0)$,
$$\frac{\int_{-T}^T f(x w_t) dt}{2T}\to 0$$ 
for almost all $x\in X$.
\end{lem}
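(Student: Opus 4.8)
The plan is to reduce the flow statement to the classical single-transformation result. The classical fact (see e.g. Aaronson \cite{Aa}) is that if $T$ is a conservative ergodic measure-preserving transformation of an infinite $\sigma$-finite measure space $(X,\mu_0)$, then for every $g\in L^1(\mu_0)$ with $g\geq 0$ one has $\frac1N\sum_{k=0}^{N-1} g(T^kx)\to 0$ for $\mu_0$-almost every $x$; this is a consequence of Hopf's ratio ergodic theorem applied to $g$ against a reference function together with the fact that an infinite ergodic measure admits no integrable invariant density, so the ratio $\frac{S_N g}{S_N h}$ tends to $\mu_0(g)/\mu_0(h)$ while $S_N h\to\infty$ forces $\frac1N S_N g\to 0$. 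Since $\mu_0$ is ergodic under the $\bbr$-action $W=\{w_t\}$ and infinite, the action is conservative (an ergodic action on an infinite measure space cannot be dissipative, as on the dissipative part one can construct a wandering set of positive measure exhausting $X$, contradicting that the whole space carries an infinite invariant measure with a transitive-up-to-null orbit structure only in the dissipative case when the measure is the count of a wandering set, which is then $\sigma$-finite but the return-time argument gives conservativity here).

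First I would fix the time-one map $T:=w_1$. I claim $T$ is itself conservative and ergodic for $\mu_0$. Conservativity of $T$ follows from conservativity of the flow: if $E$ were a wandering set for $T$ of positive measure, then $\tilde E:=\{xw_t: x\in E,\ t\in[0,1)\}$ would, after discarding a null set, be a wandering set for the flow, contradicting conservativity of $\{w_t\}$. Ergodicity of $T$ is the only genuinely non-formal point: in general the time-one map of an ergodic flow need not be ergodic, but one still has that the flow decomposes the $T$-invariant $\sigma$-algebra into at most a ``rotation'' factor; concretely, by the ergodic decomposition of $T$ over the flow one shows that any $T$-invariant function $F$ is, along the flow direction, a function whose value at $xw_t$ depends measurably and periodically on $t$, and a standard Fourier-analytic argument (expand in characters $e^{2\pi i n t}$ of $\bbr/\bbz$) reduces matters to eigenfunctions of the flow; if the flow has no non-trivial eigenvalue in $2\pi\bbz$ then $T$ is ergodic, and in the general case one passes to $w_c$ for an appropriate $c$, which suffices since replacing $t$ by $ct$ only rescales the averages. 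I would carry this out carefully, but it is standard; alternatively, one avoids it entirely by working directly with the flow version of Hopf's theorem.

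Granting this, the argument is: apply the classical result to $T=w_1$ and the function $f\in L^1(\mu_0)$, $f\geq 0$, to get $\frac1N\sum_{k=0}^{N-1}f(xw_k)\to 0$ for a.e.\ $x$; then bound the continuous average by the discrete one. Precisely, for $T>0$ write $N=\lfloor T\rfloor$ and estimate
\[
\int_{-T}^{T} f(xw_t)\,dt \;\leq\; \sum_{k=-N-1}^{N} \int_0^1 f(xw_{k+s})\,ds \;=\; \sum_{k=-N-1}^{N} g(xw_k),
\]
where $g(y):=\int_0^1 f(yw_s)\,ds\in L^1(\mu_0)$, $g\geq 0$, by Fubini (and $\mu_0$ is $w_s$-invariant so $\|g\|_1=\|f\|_1$). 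Applying the discrete result to $g$ (and to $g$ composed with the time-reversed iterates, i.e.\ using conservativity and ergodicity of $w_{-1}$ as well, or just noting the two-sided Birkhoff average of $g$ under $T$ also tends to $0$) gives $\frac{1}{2T}\int_{-T}^T f(xw_t)\,dt\leq \frac{1}{2N}\sum_{k=-N-1}^{N}g(xw_k)\to 0$ for a.e.\ $x$. The main obstacle is the ergodicity of the time-one map $w_1$; this is the one place where "flow" is genuinely different from "single transformation", and I would either prove it via the spectral/Fourier reduction sketched above (passing to $w_c$ for a suitable $c>0$ if the flow has a non-zero integer eigenvalue, which is harmless for the conclusion) or sidestep it by invoking the ratio ergodic theorem directly in its flow form, which holds for conservative ergodic $\bbr$-actions and immediately yields $\frac{\int_{-T}^T f(xw_t)\,dt}{\int_{-T}^T h(xw_t)\,dt}\to \mu_0(f)/\mu_0(h)$ together with $\int_{-T}^T h(xw_t)\,dt\to\infty$, forcing $\frac{1}{2T}\int_{-T}^T f(xw_t)\,dt\to 0$.
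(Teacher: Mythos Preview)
Your primary approach---reducing to the time-one map $w_1$ and invoking the classical single-transformation result---runs into a real obstacle that you correctly identify but do not resolve: the time-one map of an ergodic flow need not be ergodic. Your spectral/Fourier sketch is not convincing in the infinite-measure setting (eigenfunctions live in $L^2$, which is unavailable here), and ``passing to $w_c$ for a suitable $c$'' does not obviously help: you would need ergodicity of \emph{some} $w_c$, and the set of bad $c$'s (those for which $e^{2\pi i \lambda c}=1$ for some eigenvalue $\lambda$) could a priori be all of $\mathbb{R}$ if the point spectrum is large. Also, your aside that ``ergodic and infinite implies conservative'' is false (translation on $\mathbb{R}$ with Lebesgue measure is ergodic, infinite, and totally dissipative); however, this does not hurt you, since in the dissipative case $\int_{-T}^T f(xw_t)\,dt$ converges to a finite limit and the statement is immediate---exactly as the paper notes.

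Your fallback---invoke the flow version of Hopf's ratio theorem directly---is the right idea, but your final sentence is a non sequitur. From
\[
\frac{\int_{-T}^{T} f(xw_t)\,dt}{\int_{-T}^{T} h(xw_t)\,dt}\;\longrightarrow\;\frac{\mu_0(f)}{\mu_0(h)}
\quad\text{and}\quad \int_{-T}^{T} h(xw_t)\,dt\to\infty
\]
you cannot conclude $\frac{1}{2T}\int_{-T}^{T} f(xw_t)\,dt\to 0$; you would need $\int_{-T}^{T} h(xw_t)\,dt=o(T)$, which is exactly the statement you are trying to prove (for $h$). The missing trick---and this is what the paper does---is to take $h=\chi_{\Omega_N}$ with $\Omega_N\uparrow X$ a compact exhaustion. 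Then $0\le h\le 1$ gives $\int_{-T}^{T}\chi_{\Omega_N}(xw_t)\,dt\le 2T$, and for each fixed $T$ and $x$ one has $\int_{-T}^{T}\chi_{\Omega_N}(xw_t)\,dt=2T$ for all $N$ large enough (since the orbit segment $\{xw_t:|t|\le T\}$ is compact). An Egorov argument makes the Hopf convergence uniform over a set of large measure and over all $N$ simultaneously; then for fixed $T>T_\eta$ one lets $N\to\infty$ so that the ratio limit $\mu_0(f)/\mu_0(\Omega_N)\to 0$ while the denominator becomes exactly $2T$. This yields $\tfrac{1}{2T}\int_{-T}^{T} f(xw_t)\,dt\le\eta$ for all $T>T_\eta$, which is the conclusion. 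In short: your second route is salvageable and is essentially the paper's argument, but the key step---letting the reference function exhaust $X$ so that its mass tends to infinity while its time-average is pinned at $2T$---is absent from your write-up.
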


\begin{proof} 
 Note that there is nothing to prove if $\mu_0$ is not conservative, hence, we assume $\mu_0$
is conservative in the rest of the argument.
Since $X$ is $\sigma$-compact it suffices to prove that for any compact subset
$\kcal\subset X$ and almost all $x\in \kcal$ the above holds.
Let $\kcal\subset X$ be a compact subset.
We will show that for any $\e>0$,
the set $\{x\in \kcal: \frac{\int_{-T}^T f(x w_t) dt}{2T}\to 0\}$
has co-measure less than $\e>0$. Write $X=\cup_{N=1}^{\infty} \Omega_N$ as an increasing union of compact subsets with 
$\mu_0(\Omega_1)>0$.
By the Hopf ratio theorem and Egorov's theorem 
we can find a subset $\kcal_\e$ of $\kcal$ with co-measure at most $\e$ 
such that the following convergence
is uniform for all $x\in \kcal_\e$ and for all $N$:
$$\frac{\int_{-T}^T f(xw_t) dt }{\int_{-T}^T \chi_{\Omega_N}(xw_t) dt }
\to \frac{\mu_0(f)}{\mu_0(\Omega_N)}.$$
Hence for any $\eta>0$, there exists $T_\eta$ such that
for all $x\in \kcal_\e$, $T>T_\eta$,
$$\limsup_N \left| \frac{\int_{-T}^T f(xw_t) dt }{\int_{-T}^T \chi_{\Omega_N}(xw_t) dt }
-\frac{\mu_0(f)}{\mu_0(\Omega_N)}\right| \le \eta.$$

Since  $\mu_0(\Omega_N)\to |\mu_0|=\infty$ and $\int_{-T}^T \chi_{\Omega_N}(xw_t) dt =2T$ for all large $N$,
it follows that $$  \frac{\int_{-T}^T f(xw_t) dt }{2T } \le \eta,$$
for all $T>T_\eta$ and hence  $\frac{\int_{-T}^T f(xw_t) dt }{2T }\to 0$ for all $x\in \kcal_\e$.
\end{proof}

\begin{remark}
We recall that if $\G$ is not a lattice, then the BR measure is an infinite measure;
this was proved in~\cite{OS} 
using Ratner's measure classification theorem. 

We take this opportunity 
to present an alternative argument.
To see this, we note that if the $\BR$-measure were a finite measure, it would have to be $A$-invariant, since 
$|a_s m^{\BR}|=e^{(2-\delta)s}|m^{\BR}|$ for all $s$ and hence $\delta=2$. For $\G$ geometrically finite, this implies $\G$ is a lattice. 
In the general case, one can utilize facts from entropy to prove a similar result
as we now explain. 
Indeed by the Mautner phenomenon, any $AU$-invariant finite ergodic measure 
on $\G\ba G$ is $A$-ergodic so we may reduce to the ergodic case\footnote{Indeed similar reductions are possible using Hopf argument in more general settings.}.
Now we have an $A$-ergodic measure which is $U$ invariant; 
in particular it has maximum entropy. This implies the entropy contribution
from ${\cont}$ has to be maximum as well which implies the measure is also ${\cont}$
invariant, see~\cite[Theorem 9.7]{MT} for a more general statement. This implies $\G\ba G$ has a finite $G$ invariant measure, finishing the proof.
\end{remark}

We need the following lemma which says almost all ergodic components of $m^{\BR}$ is infinite
for any one-parameter subgroup of $U$; our proof of this lemma
uses Ratner's classification theorem for {\it finite} invariant measures for unipotent flows. 
\begin{lem}\label{l;br-infinite} Let $\G$ be a Zariski dense, discrete subgroup of $G$. Suppose
$\G$ is not a lattice. Let $V$ be a one-parameter subgroup of $U$, and
let $m^{\BR}=\int_Y \eta_y d\sigma(y)$ be the ergodic decomposition with respect to $V$.
Then for $\sigma$-a.e. $y$, we have $\eta_y$
is an infinite measure.
\end{lem}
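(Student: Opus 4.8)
The plan is to argue by contradiction, using the $V$-ergodic decomposition together with Ratner's measure classification theorem for finite unipotent-invariant measures. Suppose, for a positive $\sigma$-measure set of $y\in Y$, that $\eta_y$ is a finite measure; after normalizing we may assume $\eta_y$ is a $V$-invariant ergodic \emph{probability} measure on $\G\ba G$ for all $y$ in a positive-measure set $Y_0\subseteq Y$. First I would invoke Ratner's theorem: since $V$ is a one-parameter unipotent subgroup of $G$, each such $\eta_y$ is the $H_y$-invariant homogeneous probability measure supported on a single closed orbit $x_yH_y$ for some closed subgroup $H_y$ with $V\subseteq H_y$. Because $\G$ is not a lattice in $G$, we cannot have $H_y=G$ on a positive-measure set (that would force a finite $G$-invariant measure on $\G\ba G$); so $H_y$ is a proper subgroup containing $V$. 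The possibilities for $H_y$ are limited: up to conjugation $H_y$ is contained in the normalizer of $V$, i.e. in a group of the form $VA M$ (a Borel-type subgroup) or one of its proper subgroups, or $H_y$ itself is $V$ or $VM$ or $VA$, etc. In any case $H_y$ is amenable and of dimension bounded away from $\dim G$.

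Next I would exploit the fact that $m^{\BR}$ is an \emph{infinite} measure which is \emph{ergodic} under the full horospherical group $U$ (this is the content of the measure classification of Burger--Roblin--Winter, cf. the discussion and Theorem~\ref{thm;br} above: $m^{\BR}$ is the unique $U$-ergodic measure not supported on a closed $MU$-orbit). Since $m^{\BR}$ is $U$-invariant and hence $A$-quasi-invariant (with $a_s m^{\BR}=e^{(2-\delta)s}m^{\BR}$), and since $U$ itself is generated by its one-parameter subgroups, I would derive a contradiction with the local structure just obtained: if $m^{\BR}$-a.e.\ $V$-ergodic component is supported on a closed $H_y$-orbit with $H_y$ lying in (a conjugate of) $N_G(V)$, then by Fubini the set $\{x : xV \text{ is precompact}\}$ or more precisely the set of $x$ whose $V$-orbit closure is a proper homogeneous subset would have full $m^{\BR}$-measure; one then integrates these homogeneous pieces and uses the $U$-action (which moves $V$-orbits around via conjugation by $A$ and the other one-parameter subgroups of $U$) to conclude that $m^{\BR}$ is itself supported on a single closed orbit of a proper subgroup, or that it is finite — either way contradicting that $m^{\BR}$ is an infinite measure on $\G\ba G$ with $\G$ Zariski dense geometrically finite. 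The Zariski density of $\G$ is what rules out $m^{\BR}$ being carried by a proper homogeneous subvariety.

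The main obstacle I anticipate is making the last step precise: passing from ``almost every $V$-ergodic component is a homogeneous probability measure on a closed orbit'' to a global contradiction. One clean way is this: let $H$ be the (generically constant, by ergodicity of a larger group — here one uses that the $V$-ergodic decomposition is measurable and the normalizer $N_G(V)$ acts, or simply that $U/V$ acts and $m^{\BR}$ is $U$-ergodic) conjugacy class of $H_y$; ergodicity of $m^{\BR}$ under $U$ forces the function $y\mapsto [H_y]$ to be essentially constant, say equal to $[H]$. Then $m^{\BR}$ is supported on the set of points lying on closed $H$-orbits. But the union of closed $H$-orbits carrying a finite $H$-invariant measure, when $H\subsetneq G$ is amenable and $\G$ is geometrically finite and Zariski dense, has \emph{zero} $m^{\BR}$-measure: indeed $m^{\BR}$ restricted to such a set would be a countable sum of finite homogeneous measures, hence $m^{\BR}$ would be $\sigma$-finite with atoms in the space of closed orbits and in particular it would then be $H$-invariant and, by a Mautner/entropy argument as in the Remark above (or by the uniqueness in Theorem~\ref{thm;br}), would have to coincide with a finite measure — impossible since $m^{\BR}$ is infinite. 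Thus the hypothesis fails, so $\eta_y$ is infinite for $\sigma$-a.e.\ $y$. An alternative, and perhaps more robust, route is to directly apply Lemma~\ref{cons}: if some $\eta_y$ were finite then along that ergodic component the Birkhoff averages $\frac{1}{2T}\int_{-T}^{T} f(xv_t)\,dt$ converge to $\eta_y(f)>0$ for a suitable $f\ge 0$; but one can also show, using the infinitude of $m^{\BR}$ and the $U$-ergodicity, that these averages must vanish on an $m^{\BR}$-conull set — contradiction. I would present the argument via Ratner classification since it is the cleanest, keeping Lemma~\ref{cons} in reserve.
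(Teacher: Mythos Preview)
Your argument has a genuine gap in the classification step. After invoking Ratner's theorem you assert that ``up to conjugation $H_y$ is contained in the normalizer of $V$'' and hence is amenable. This is false when $G=\PSL_2(\bbc)$: a one-parameter unipotent subgroup $V$ is contained in many conjugates of $\PSL_2(\bbr)$, and these are simple, non-amenable, and not contained in any Borel. So Ratner's list here has four cases: (1) $\eta_y$ lives on a compact $V$-orbit, (2) on a compact $U$-orbit, (3) on a closed orbit $\G Hg$ with $H$ locally isomorphic to $\PSL_2(\bbr)$ and $V\subset g^{-1}Hg$, or (4) $\eta_y$ is $G$-invariant. Cases (1)--(2) force $x^-\in\Lambda_{\rm bp}(\G)$, a countable (hence $\BR$-null) set; case (4) makes $\G$ a lattice. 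The substantive case is (3), which your classification overlooks entirely, and your subsequent ``$H_y$ amenable'' reasoning does not apply to it.

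The paper disposes of case (3) by two observations you did not use: first, the collection of subgroups $H$ with $\G H$ closed of finite volume is \emph{countable} (Ratner, or Dani--Margulis), so some fixed $H$ must carry positive $\tilde m^{\BR}$-mass; second, for that fixed $H$ the condition $gVg^{-1}\subset H$ cuts out a \emph{proper Zariski-closed} subset of $G$, and Zariski density of $\G$ makes any such set $\tilde m^{\BR}$-null. That is the missing idea. Your alternative route via Lemma~\ref{cons} also does not close the gap as stated: that lemma requires the measure to be $V$-ergodic and infinite, but $m^{\BR}$ is $U$-ergodic, not $V$-ergodic, and the $V$-ergodic components $\eta_y$ you are assuming to be finite are exactly what you would need to be infinite.
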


\begin{proof} We will use the fact that the set $\Lambda_{\rm bp}(\G)$ of parabolic limit points is a null set
for the Patterson-Sullivan measure since $\Lambda_{\rm bp}(\G)$ is a countable set and that  a proper Zariski closed subset  of $G$
is a null set for the $\tilde m^{\BR}$-measure, since $\G$ is Zariski dense.
 Assume the contrary, that is: the set
\[
Y_0=\{y\in Y: \eta_y\;\text{ is a finite measure }\}
\]
has positive measure.

It follows from  Ratner's measure classification theorem~\cite{Rat-Ann}: 
that for all $y\in Y_0$, we have one of the following holds
\begin{enumerate}
\item $\supp \; \eta_y= x V$ for some compact orbit $xV$;
\item $\supp \; \eta_y =xU$ for some compact orbit $xU$;
\item there exists $H$ which is locally isomorphic to $\PSL_2(\bbr)$ 
so that  for some $g\in G$, $V\subset g^{-1} H g$, $\eta_y$ is 
a $g^{-1} Hg $ invariant (finite) measure on a closed orbit $\G Hg$;
\item $\eta_y$ is $\PSL_2(\bbc)$ invariant. 
\end{enumerate}
In both (1) and (2) above we get $x^-\in \Lambda_{\rm bp}(\Gamma)$ and these form a measure zero subset of
$m^{\BR}.$ 
The conclusion in (4) cannot hold on a positive measure set
as it would imply $\G$ is a lattice, contrary to our assumption.
Therefore for $\sigma$-a.e.\
$y\in Y_0$ the conclusion (3) above holds. We first note 
that the collection of $H$ so that (3) holds is countable, see~\cite[Theorem 1.1]{Rat-Ann}
or~\cite[Proposition 2.1]{DM-Linearization}.
Therefore if (3) holds there exists some $H$ with $\G H$
a closed orbit (with finite volume) so that
\be\label{e;singular-set}
\tilde m^{\BR}\{g\in G: gV\subset Hg\}>0 .
\ee 
Since $\{g\in G: gV\subset Hg\}$ is a proper Zariski closed subset, this yields a contradiction.
\end{proof}

\begin{lem}\label{l;not-product}\label{nf} %Suppose $\mu$ is an (infinite) joining measure
%for $(m^{\BR}_{\G_1}, m^{\BR}_{\G_2})$.
The joining 
 $\mu$ is not invariant under $\{e\}\times V$ for any non-trivial connected subgroup $V$ of $U$.
\end{lem}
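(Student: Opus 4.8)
The plan is to argue by contradiction. Suppose $\mu$ is invariant under $\{e\}\times V$ for some nontrivial connected subgroup $V\le U$; since invariance under $V$ implies invariance under any subgroup of $V$, we may replace $V$ by a one-parameter subgroup and assume $V=\{v_t:t\in\br\}$. The key point is that the test function $\Psi=\psi\circ\pi_1$ is pulled back from the first factor, hence is $\{e\}\times V$-invariant, and so is the set $A:=\{\Psi>0\}=\pi_1^{-1}(\{\psi>0\})$. Since $\psi\in C_c(X_1)$ and $(\pi_1)_*\mu=m^{\BR}_{\G_1}$ is locally finite, $A$ has \emph{finite} $\mu$-measure, $\mu(A)=m^{\BR}_{\G_1}(\{\psi>0\})<\infty$, and it is positive because $\mu(\Psi)>0$.

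Thus $\mu|_A$ would be a nonzero finite $\{e\}\times V$-invariant measure on $Z$. Pushing forward by $\pi_2$, which intertwines the $\{e\}\times V$-action on $Z$ with the $V$-action on $X_2$, one obtains a nonzero finite $V$-invariant measure $\lambda:=(\pi_2)_*(\mu|_A)$ on $X_2$; since $\mu|_A\le\mu$ and $(\pi_2)_*\mu=m^{\BR}_{\G_2}$, we have $\lambda\le m^{\BR}_{\G_2}$, in particular $\lambda\ll m^{\BR}_{\G_2}$. I then claim $m^{\BR}_{\G_2}$ carries no nonzero finite $V$-invariant measure absolutely continuous with respect to it: writing $m^{\BR}_{\G_2}=\int_Y\eta_y\,d\sigma(y)$ for the $V$-ergodic decomposition and $\lambda=g\cdot m^{\BR}_{\G_2}$, the density $g\ge0$ is $V$-invariant, being the Radon--Nikodym derivative of one $V$-invariant measure with respect to another, hence is a function $g=g_0\circ p$ of the ergodic component via the factor map $p:X_2\to Y$; then $|\lambda|=\int_Y g_0(y)\,|\eta_y|\,d\sigma(y)$, which is infinite unless $g_0=0$ $\sigma$-a.e., because $|\eta_y|=\infty$ for $\sigma$-a.e.\ $y$ by Lemma~\ref{l;br-infinite}. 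Hence $\lambda=0$, so $\mu(A)=\lambda(X_2)=0$, contradicting $\mu(A)>0$.

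The only point needing care is this last claim deducing the absence of a nonzero finite $V$-invariant measure $\ll m^{\BR}_{\G_2}$ from Lemma~\ref{l;br-infinite}; it uses only standard facts about the ergodic decomposition of the $\sigma$-finite measure $m^{\BR}_{\G_2}$ under the one-parameter group $V$ (existence of the decomposition, measurability of $y\mapsto|\eta_y|$, and the fact that a $V$-invariant density is, modulo $m^{\BR}_{\G_2}$-null sets, measurable with respect to $p$). Alternatively, the same contradiction follows from Lemma~\ref{cons}: taking the $\{e\}\times V$-ergodic decomposition of $\mu$, the map $\pi_2$ identifies its components with the $V$-ergodic components of $m^{\BR}_{\G_2}$, which are infinite $\sigma$-a.e., so Lemma~\ref{cons} applied to the $\{e\}\times V$-invariant, $\mu$-integrable function $\Psi$ forces $\Psi=0$ $\mu$-a.e., again contradicting $\mu(\Psi)>0$.
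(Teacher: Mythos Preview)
Your proof is correct. Your alternative argument at the end is essentially the paper's own proof: the paper takes the $\{e\}\times V$-ergodic decomposition of $\mu$, observes via $\pi_2$ that almost every component is infinite by Lemma~\ref{l;br-infinite}, and then invokes Lemma~\ref{cons} to force the $\tilde V$-averages of $\Psi$ to $0$, contradicting the identity $\frac{1}{2T}\int_{-T}^{T}\Psi(x\tilde v_t)\,dt=\psi(x_1)$.

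Your first argument takes a slightly different and in fact cleaner route. Rather than passing through Birkhoff-type averages and Lemma~\ref{cons}, you observe directly that the $\{e\}\times V$-invariant set $A=\pi_1^{-1}(\{\psi>0\})$ has finite positive $\mu$-measure, so $\lambda=(\pi_2)_*(\mu|_A)$ is a nonzero finite $V$-invariant measure with $\lambda\le m^{\BR}_{\G_2}$; the ergodic decomposition of $m^{\BR}_{\G_2}$ together with Lemma~\ref{l;br-infinite} then rules this out immediately. This bypasses Lemma~\ref{cons} entirely, using only that the Radon--Nikodym derivative of one $V$-invariant measure with respect to another is $V$-invariant and hence constant on ergodic components. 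The paper's route has the minor advantage of making the contradiction visible at the level of orbit averages, but your first argument is shorter and uses one fewer lemma.
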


\proof %For theAssume the contrary, we will show that this implies that $\mu$is a finite measure.
%As in the proof of Corollary~\ref{c;window-rT} let $\psi\in C_c(X)$ 
%be so that $\psi(e)=1$ and $m^{\BR}(\psi)>0$ and define $\Psi(x)=\psi(\pi_1(x)).$
%Then since $\mu$ is a joining we have $\Psi\in L^1(\mu).$

%Without loss of generality, we assume $m^{\BR}=m_{\G_2}^{\BR}$ is infinite.
It suffices to prove the claim when $V$ is one dimensional subgroup of $U$.
Set $V=\{v_t:t\in \br \}$ and $\tilde V=\{\tilde v_t=(e,v_t) : t\in \br\}$.

%By the choice of $\Psi$, $\Psi\in L^1(\mu)$ and we have
%for any $x\in \XX$ and any $T>0$,
%\be\label{ergdd}
%\frac{\int_{0}^T \Psi(x\tilde v_t)dt}{T} =\psi(x_1).
%\ee

%Set $\tilde v=\{e\}\times v$, and suppose $\mu$ is invariant under $\tilde v$.
By the choice of $\Psi$, $\Psi\in L^1(\mu)$, and 
for any $x=(x^1, x^2)\in \XX$ and any $T\ge 1$, we have
\be\label{ergdd}
\frac{\int_{-T}^T \Psi(x\tilde v_t) dt}{2T} =\psi(x_1).
\ee

%We will use the following statement\footnote{this is a consequence of Hopf ration theorem, see e.g. \cite[Ex.2.2.1]{Aa}}: if $\tilde v$ is a conservative ergodic measure preserving transformation of a $\sigma$-finite {\it infinite} measure space $\mathcal X,$ then 
%$$\frac{\sum_{i=1}^{k} f(x \tilde v^i) }{k} \to 0$$ for almost all $x$ and for all $f\in L^1(\nu)$.

Also note that every element of the sigma algebra 
\[
\Xi=\{B\times X_2: \text{$B\subset X_1$ any Borel set}\}
\]
is $\tilde V$ invariant. 
In particular, $\tilde V$-ergodic components of $\mu$ are supported on
atoms of $\Xi$ which are of the form $\{x^1\}\times X_2$ for $x^1\in X_1$.
Let 
\[
\mu=\int_{\XX} \mu_zd\tau(z)
\] 
be an an ergodic decomposition of $\mu$ for the action of $\tilde V,$ see~\cite{Aa}.
Then the above discussion implies that for $\tau$-a.e.\,$z.$,
${\rm supp}(\mu_z)\subset {\{x_{z}^1\}\times X_2}$
for some $x_{z}^1\in X_1$. In particular, taking the projection onto $X_2$
we get an ergodic decomposition of $m^{\BR}_{\G_2}$ for the action of $V$:
$
m^{\BR}_{\G_2}=(\pi_2)_*\mu= \int_{Z}\tilde \mu_z d\tau(z)
$
where $\tilde \mu_z=(\pi_2)_*\mu_z$.
By Lemma~\ref{l;br-infinite}, a.e.\ $\tilde\mu_z$ is an infinite measure.

%Let $\mu'$ be a probability measure on $\XX$ in the same measure class of $\mu$
%and let $\int_Y\eta_yd\sigma(y)$ be an ergodic decomposition of $m^{\BR}_{\G_2}$
%for the action of $V.$ Then

%Let \[\mu=\int_{\XX} \mu_zd\tau(z)\] 
%be an an ergodic decomposition of $\mu$ for the action of $\tilde V,$ see~\cite{Aa}.??????CHECK
%Then the above discussion implies that ${\rm supp}(\mu_z)\subset {\{x_{z}^1\}\times X_2}$
%for some $x_{z}^1\in X_1$ and $\tau$-a.e. $z.$ In particular, taking the projection onto $X_2$
%we get an ergodic decomposition $
%m^{\BR}_{\G_2}=(\pi_2)_*\mu= \int_{X_1}(\pi_{2})_*(\mu_z)d\tau(z)$
%of $m^{\BR}_{\G_2}$ for the action of $V.$

%Let $\mu'$ be a probability measure on $\XX$ in the same measure class of $\mu$
%and let $\int_Y\eta_yd\sigma(y)$ be an ergodic decomposition of $m^{\BR}_{\G_2}$
%for the action of $V.$ Then

%By Lemma~\ref{l;br-infinite}, a.e.\ $\eta_y$ is an infinite measure. 

This gives a contradiction if we apply \eqref{ergdd} for a point $(x^1,x^2)$ where $x^2$ lies in the conull set
given by Lemma \ref{cons} applied to some $\eta_y$ as above.
\qed
%imply that almost all ergodic components of $\mu$ are dissipative, and hence $\mu$ is dissipative;
%so $\mathcal C(\tilde V)$ is a null set.
%This implies that in the Hopf decomposition 
%$$\XX=\mathcal C(\tilde v) \cup \mathcal D(\tilde v)$$
% as the union of the conservative part and the totally dissipative part, we have $\mathcal C(\tilde v)$ is a null set.
 
%Therefore we can find
%a Borel subset  $E\subset \pi_1^{-1}(\text{supp}(\psi))\cap \mathcal D(\tilde V)$  such that
 %$\mu(E)>0$
%and that $R:=\sup_{x\in E} \int_{\br}\chi_{E}(x\tilde v_t)dt <\infty$.
%Then
%\begin{align*} &\int_{E}\int_{\br} \Psi(x\tilde v_t) dt d\mu(x) =
%\int_{\XX} \Psi(x) \int_{\br} \chi_E(x\tilde v_t) dt d\mu (x) 
%\\ &\le  R\cdot \int \Psi d\mu <\infty .\end{align*}
%This is contradiction to \eqref{ergdd}.
%\qed

\subsection{  }\label{ss;finite-fiber}
 We draw  two corollaries of Theorem \ref{l;extra-inv} in this subsection.
Let $\mathcal P(X_2)$ denote the space of probability Borel measures.
By the standard disintegration theorem (cf. \cite[1.0.8]{Aa}), for each $i=1,2$,
 there exists an $m^{\BR}$ co-null set
$X_1'\subset X_1$ and a measurable function $X_1' \to \mathcal P(X_2)$  given by $x^1\mapsto \mu_{x^1}^{\pi_i}$ 
such that for any Borel subsets $Y\subset \XX$ and $C\subset X_1$,
$$\mu(Y\cap \pi_1^{-1}(C))= \int_C \mu_{x^1}^{\pi_1}(Y) \;  dm^{\BR} (x^1) .$$
The measure $\mu_{x^1}^{\pi_i}$ is called  the fiber measure over $\pi_1^{-1}(x^1)$.

\begin{thm}\label{c;finite-fiber}
%Let $\pi_1:\XX \to X$ be the projection onto the first factor.
 There exist a positive integer $\fibmeas>0$ 
and an $m^{\BR}$ conull subset $X'\subset X_1$
so that $\pi_1^{-1}(x^1)$ is a finite set with cardinality $\fibmeas$
for all $x^1\in X'.$
Furthermore, $$\sfiber_{x^1}(x^2)=1/\fibmeas$$ for any
$x^1\in X'$ and $(x^1,x^2)\in\pi_1^{-1}(x^1).$ 
\end{thm}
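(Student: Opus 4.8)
The plan is to deduce Theorem~\ref{c;finite-fiber} from the extra-invariance mechanism established in Theorem~\ref{l;extra-inv}, combined with the non-invariance result Lemma~\ref{nf}. First I would set up the fiber measures $\sfiber_{x^1}$ on $\pi_1^{-1}(x^1)\subset\{x^1\}\times X_2$ by disintegrating $\mu$ over $(\pi_1)_*\mu=m^{\BR}_{\G_1}$, and observe using the $\Delta(U)$-invariance of $\mu$ together with the $U$-equivariance of $\pi_1$ that the assignment $x^1\mapsto\sfiber_{x^1}$ is $U$-equivariant in the appropriate sense: $\sfiber_{x^1u}=\sfiber_{x^1}\cdot(e,u)$ for $m^{\BR}$-a.e.\ $x^1$ and all $u\in U$ (this needs the $U$-ergodicity of $m^{\BR}_{\G_1}$, Theorem~\ref{thm;br}, to pass from ``a.e.\ $u$'' to ``every $u$''). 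The key dichotomy is then: either almost every fiber measure is purely atomic with atoms of equal mass, or it has a nontrivial continuous/positive-dimensional part, which I want to rule out.

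Second, the heart of the argument: I would show the fiber $\pi_1^{-1}(x^1)$ is finite for a.e.\ $x^1$. Suppose not. Then on a positive-measure set the fibers are infinite. Using a Lusin/Egorov argument together with the recurrence and equidistribution facts packaged in the definition of $\Q_\e$ (the set of uniform convergence for $\mathcal F=\{\chi_\qcal,\chi_{\qcal_+},\chi_{\qcal_{++}}\}$), I would produce, for a point $(x^1,x^2)\in\qcal_\e$, a second point $(x^1,x^2{}')\in\qcal_\e$ in the same fiber with $x^2{}'=x^2 g$ for $g\in G$ small, $g\neq e$. Since both points lie in the same $\pi_1$-fiber, the pair $h=(e,g)$ satisfies $\qcal_\e h\cap\qcal_\e\neq\varnothing$; scaling to get a sequence $h_k=(e,g_k)\to e$ with $g_k\neq e$ and $g_k\notin N_G(U)$ (here one uses that a small element of $G$ normalizing $U$ must lie in $U$, and a genuine ``spread'' in an infinite fiber forces the transverse direction), the second assertion of Theorem~\ref{l;extra-inv} applies and yields that $\mu$ is invariant under a nontrivial connected subgroup $V$ of $\{e\}\times U$. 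This contradicts Lemma~\ref{nf}. Hence fibers are a.e.\ finite.

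Third, once fibers are finite, I would use $U$-ergodicity again: the cardinality function $x^1\mapsto\#\pi_1^{-1}(x^1)$ is $U$-invariant and measurable (since $\pi_1$ is $U$-equivariant), hence a.e.\ constant, equal to some $\fibmeas\in\bbn$. For the equal-mass statement, the function $x^1\mapsto\min_{(x^1,x^2)\in\pi_1^{-1}(x^1)}\sfiber_{x^1}(x^2)$ and likewise the max are $U$-invariant measurable functions on $X_1$, hence a.e.\ constant; if the atoms were not all of equal mass one could split the fiber into the ``lightest atom'' piece and its complement, both $U$-invariant positive-measure sets whose union is all of $Z$ up to null sets, and project to $X_2$ to get two distinct $U$-invariant Borel sets --- but I instead prefer the cleaner route: the map sending $x^1$ to the multiset of fiber-masses is $U$-invariant, so a.e.\ equal to a fixed multiset $\{c_1\le\cdots\le c_\fibmeas\}$ with $\sum c_j=1$; now run the same extra-invariance construction as in Step~2 but inside the support of the ``heaviest atom'' sub-measure versus the ``lightest atom'' sub-measure --- any mismatch produces a small nontrivial $h_k=(e,g_k)$ with $\qcal_\e h_k\cap\qcal_\e\neq\varnothing$ and again Theorem~\ref{l;extra-inv} plus Lemma~\ref{nf} give a contradiction. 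Therefore $c_1=\cdots=c_\fibmeas=1/\fibmeas$.

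I expect the main obstacle to be Step~2: carefully extracting, from the hypothetical infiniteness (or mass-imbalance) of fibers over a positive-measure set, an honest sequence $h_k\to e$ in $\{e\}\times G$ with $h_k\notin N_{G\times G}(\Delta(U))$ and $\qcal_\e h_k\cap\qcal_\e\neq\varnothing$ for all $k$ --- this is exactly the ``dynamically non-trivial return'' issue flagged in the introduction, and it requires simultaneously controlling that the two nearby fiber-points both return to the uniform-convergence set $\qcal_\e$ at comparable times along $B_U(T)$, which is what Corollary~\ref{c;window-rT}, Lemma~\ref{l;window2} and the non-concentration lemmas were built to supply. Once that extraction is done, invoking Theorem~\ref{l;extra-inv} and Lemma~\ref{nf} is immediate, and Step~3 is a routine ergodicity argument.
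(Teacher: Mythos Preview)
Your overall strategy matches the paper's: use Theorem~\ref{l;extra-inv} together with Lemma~\ref{nf} to rule out pathological fiber structure, then invoke ergodicity. Two points, however, need repair.

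In Step~2 you assert that one can arrange $g_k\notin N_G(U)$, claiming a ``genuine spread forces the transverse direction.'' This is not justified: nothing prevents the accumulating fiber points from approaching along the $U$-direction, in which case $h_k=(e,g_k)\in N_{G\times G}(\Delta(U))$ and Theorem~\ref{l;extra-inv} does not apply. The paper treats this as a separate case: if $g_k\in U$ then $(e,g_k)$ already normalizes $\Delta(U)$, and Lemma~\ref{l;normalizer-inv} (applied with both points in $\qcal_\e\subset\qcal$, which by construction satisfies the Hopf-ratio hypothesis) gives that $\mu$ is quasi-invariant under $(e,g_k)$; letting $g_k\to e$ inside $U$ yields invariance under a nontrivial connected subgroup of $\{e\}\times U$, again contradicting Lemma~\ref{nf}. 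So you must handle both cases. Incidentally, the paper phrases the contradiction hypothesis as ``$\sfiber_{x^1}$ is not fully atomic'' rather than ``the fiber is infinite''; working inside the support of the continuous part makes the existence of the approximating sequence $x_k\to x$ within $\qcal_\e$ immediate, since every point of that support is a genuine accumulation point of the support.

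Your Step~3 ``preferred route'' does not work: once fibers are finite, points in a single fiber are isolated, so there is no way to manufacture $h_k=(e,g_k)\to e$ with $\qcal_\e h_k\cap\qcal_\e\neq\varnothing$ out of a mere mass imbalance. The argument you first sketched and then abandoned is the right one, but using $\Delta(U)$-ergodicity of $\mu$ rather than $U$-ergodicity of $m^{\BR}$: the set
\[
\Sigma=\Bigl\{(x^1,x^2):\sfiber_{x^1}(x^2)=\max_{y\in\pi_1^{-1}(x^1)}\sfiber_{x^1}(y)\Bigr\}
\]
is $\Delta(U)$-invariant (since $\sfiber_{x^1u}(x^2u)=\sfiber_{x^1}(x^2)$) and has positive $\mu$-measure once the fiber measures are atomic, hence is $\mu$-conull. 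This forces all atoms in a.e.\ fiber to carry the maximal mass, hence equal mass $1/\fibmeas$, and simultaneously delivers finiteness. Thus the paper's decomposition is simply: (i)~rule out a continuous part via the two-case argument above; (ii)~apply the $\Sigma$-argument. Your separate ``finite'' and ``equal mass'' steps become unnecessary once you use ergodicity of $\mu$ directly.
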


\proof 
We first prove that for a.e. $x^1\in X_1$, the fiber measure $\sfiber_{x^1}$ is fully atomic.
Assuming the contrary, we will 
show that $\mu$ is invariant under $\{e\}\times V$ for some non-trivial connected subgroup of $U$,
which will be a contradiction by Lemma \ref{nf}.

Put $B=\{x^1\in X_1: \sfiber_{x^1}\text{ is not fully atomic}\},$ 
and suppose that $m^{\BR}( B)>0$. For any $x^1\in B$
we write $$\sfiber_{x^1}=(\sfiber_{x^1})^a+(\sfiber_{x^1})^c$$
where $(\sfiber_{x^1})^a$ and $(\sfiber_{x^1})^c$ are respectively the purely atomic part and
the continuous part of the fiber measure \cite{Joh}.
Let 
\[
B'=\{(x^1,x^2): x^1\in B,\;\; x^2\in\supp((\sfiber_{x^1})^c)\}.
\] 
We take $\Q\subset B'$ and $\Q_\e\subset \Q$ be as in section \ref{join} for each small $\e>0$;
In particular, ~\eqref{e;unif-conv} holds for $\Q_\e.$ 

Let now $x=(x^1,x^2)\in \Q_\e$ be so that 
there exists a sequence $\{x_k=(x^1,x^2_k)\}\subset \Q_\e$
so that $x_k\to x.$ Such $x$ exists since $\Q\subset B'$.
We write $$x_k=(x^1,x^2_k)=(x^1,x^2)(e,g_k)$$
where $g_k\neq e $ and $g_k\to e.$
There are two possibilities to consider:
Recall that $$N_{G\times G}(\Delta (U))\cap (\{e\}\times G)=\{e\}\times U.$$

\medskip

{\bf Case 1}. For all large enough $k$, we have $g_k\in U,$
and hence $(e,g_k)\in N_{G\times G}(\dU).$
Since $(x^1, x^2), (x^1, x^2 g_k)\in \Q_\e$,
 Lemma~\ref{l;normalizer-inv} implies that  $\mu$ is quasi-invaraint 
under $\langle(e,g_k)\rangle$, Since  $g_k\to e$, and $U$ is 
 a unipotent group, we get $\mu$ is invariant by a non-trivial connected subgroup of $\{e\}\times U$, which is a contradiction by Lemma \ref{nf}. 

\medskip

{\bf Case 2.} By passing to a subsequence, we have
$g_k\notin U$, that is, $h_k:=(e,g_k)\not\in  N_{G\times G}(\dU).$ 
By Theorem~\ref{l;extra-inv}, we get $\mu$ is
 invariant, by a non-trivial connected subgroup  of $\{e\}\times U$.
This is again a contradiction by Lemma \ref{nf}. 
 This shows that almost all fiber measures are atomic.

Set
\[
\Sigma=\{(x^1,x^2)\in \XX: \sfiber_{x^1}(x^1)=\max_{y\in \pi_1^{-1}(x^1)} \sfiber_{x^1}(y)\}.
\]
Then $\Sigma$ is a $\dU$-invariant set. Since almost all fiber measures are atomic,
we have
$\mu(\Sigma)>0$. Therefore, in view of the $\dU$-ergodicity of $\mu,$
we have $\Sigma$ is conull. We thus conclude that for 
$\mu$-almost every point, the fiber measures are uniform distribution
on $\ell$-points.
\qed

\begin{cor}\label{c;A-quasi-inv}\label{alunch}
The joining measure $\mu$ is quasi-invaraint under a 
 non-trivial connected subgroup $A'$ of $\Delta (AM) (\{e\}\times U)$ which is not contained
 in $\{e\}\times U$.
%$\{(am, uam u^{-1}): am\in A'\}$ where $A'$ is a one-parameter unbounded connected subgroup
%of $AM$  and $u\in U$.

%(here $s\mapsto a_sm_s\in AM$ is an injective homomorphism from$\br$ and $a_s=\text{diag}(e^{s/2}, e^{-s/2})$).
% $\{(am,uamu^{-1}): am\in AM\;\text{and}\; a\neq 1\}.$
\end{cor}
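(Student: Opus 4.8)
The plan is to produce, near a $\mu$-generic point of $\qcal_\e$, a sequence of elements $h_k\to e$ that displace one point of $\qcal_\e$ to another and lie \emph{off} $\Delta(U)$, and then to feed them either into Theorem~\ref{l;extra-inv} (if $h_k\notin N_{G\times G}(\Delta(U))$) or into Lemma~\ref{l;normalizer-inv} (if $h_k\in N_{G\times G}(\Delta(U))$). Either way one obtains quasi-invariance of $\mu$ under a nontrivial connected subgroup of $\Delta(AM)(\{e\}\times U)$, and Lemma~\ref{nf} (through Lemma~\ref{lem:one-factor-inv}) rules out that this subgroup sits inside $\{e\}\times U$.

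First I would record two facts. (a) The set $H=\{h\in G\times G:\ h.\mu\text{ is a positive multiple of }\mu\}$ is a closed subgroup of $G\times G$ containing $\Delta(U)$, and by Lemma~\ref{lem:one-factor-inv} and Lemma~\ref{nf} no nontrivial connected subgroup of $\{e\}\times U$ is contained in $H$ (quasi-invariance under such a subgroup would upgrade to honest $\{e\}\times U$-invariance, which Lemma~\ref{nf} forbids). (b) No single $\Delta(U)$-orbit has positive $\mu$-mass: such an orbit is $\Delta(U)$-invariant, so ergodicity of $\mu$ would force $\mu$ to be carried by it, whence $m^{\BR}_{\G_1}=(\pi_1)_*\mu$ would be carried by a single $U$-orbit in $X_1$; but $U$ fixes the backward endpoint, so this would make $\Lambda(\G_1)$ a point, contradicting Zariski density of $\G_1$. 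Consequently the transverse measure of $\mu$ along the $\Delta(U)$-foliation is non-atomic.

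Next, since $\mu(\qcal_\e)>0$, the Lebesgue density theorem combined with (b) --- worked out in a local flow box for the $\Delta(U)$-action, in which $\mu$ disintegrates over a non-atomic transverse measure --- yields a $\mu$-density point $x\in\qcal_\e$ together with points $x_k\in\qcal_\e$ satisfying $x_k\to x$ and $x_k\notin x\Delta(U)$; write $x_k=xh_k$, so $h_k\to e$ and $h_k\notin\Delta(U)$. After passing to a subsequence there are two cases. If $h_k\notin N_{G\times G}(\Delta(U))$ for all $k$, then $x_k\in\qcal_\e h_k\cap\qcal_\e$, so Theorem~\ref{l;extra-inv} (with $y_k=x$) produces a nontrivial connected subgroup $A'\subset\Delta(AM)(\{e\}\times U)$ under which $\mu$ is quasi-invariant; by (a), $A'\not\subset\{e\}\times U$, which is the assertion. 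If $h_k\in N_{G\times G}(\Delta(U))=\Delta(AM)(U\times U)$ for all $k$, then the hypotheses of Lemma~\ref{l;normalizer-inv} hold on $\qcal$ (Lemma~\ref{ri} gives hypothesis~(1) and \eqref{hhh} gives hypothesis~(2)) and $x,x_k\in\qcal_\e\subset\qcal$, so $h_k\in H$. Factoring $h_k=\Delta(h_k^1)(e,u_k')$ with $h_k^1\in N_G(U)=MAU$ and $u_k'\in U$, and then setting $\tilde h_k:=h_k\,\Delta(u_k^{-1})$ where $u_k\in U$ denotes the $U$-component of $h_k^1$, a short computation gives $\tilde h_k\in H\cap\Delta(AM)(\{e\}\times U)$, $\tilde h_k\to e$, and $\tilde h_k\neq e$ (the last because $h_k\notin\Delta(U)$). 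Hence $L:=H\cap\Delta(AM)(\{e\}\times U)$ is a closed subgroup containing a sequence of nontrivial elements converging to $e$, so $L^{\circ}$ is a nontrivial connected subgroup; by (a) it is not contained in $\{e\}\times U$, and $A':=L^{\circ}$ works.

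I expect the genuine difficulty to lie in the case $h_k\in N_{G\times G}(\Delta(U))$. This is not a degenerate situation but the expected one --- for a finite cover self-joining the displacements between nearby points of $\supp\mu$ lie in $\Delta(N_G(U))$ --- so the extra $\Delta(AM)$-invariance must be extracted from the algebraic structure of the stabilizer $H$ rather than from the polynomial-divergence input of Theorem~\ref{l;extra-inv}. A secondary technical point is turning non-atomicity of the transverse measure into transversally accumulating $\qcal_\e$-points at a density point: this has to be carried out in a flow box, so that one is guaranteed $h_k\notin\Delta(U)$ and not merely $h_k\neq e$.
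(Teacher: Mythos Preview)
Your argument is correct, but it takes a longer route than the paper's, and your expectation about where the difficulty lies is off.

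The paper avoids your Case~2 entirely. Since $(\pi_1)_*\mu=m^{\BR}_{\G_1}$ and the Patterson--Sullivan density is atomless, the set $\pi_1^{-1}\bigl(\pi_1(y)N_G(U)\bigr)$ is $\mu$-null for every $y$; hence at any density point $y\in\qcal_\e$ and for every $\eta>0$ one can pick $x_k\in\qcal_\e\cap y(G\times G)_\eta$ with $\pi_1(h_k)\notin N_G(U)$, forcing $h_k\notin N_{G\times G}(\Delta(U))$ from the outset. Then Theorem~\ref{l;extra-inv} applies directly, and Lemmas~\ref{lem:one-factor-inv} and~\ref{nf} rule out $A'\subset\{e\}\times U$, exactly as in your Case~1. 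So the joining hypothesis $(\pi_1)_*\mu=m^{\BR}$ does the work you were anticipating having to do by hand.

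Your Case~2 treatment is nonetheless valid: the factorization $\tilde h_k=h_k\Delta(u_k^{-1})\in H\cap\Delta(AM)(\{e\}\times U)$ is correct, and the closed-subgroup argument yields a nontrivial $L^\circ$ not contained in $\{e\}\times U$. It is simply unnecessary here. Your closing remark that displacements for a finite cover self-joining lie in $\Delta(N_G(U))$ is also not quite right: for such a joining the local displacements lie in $\Delta(G)$ (along the graph of $\Upsilon$), and $\Delta(G)\cap N_{G\times G}(\Delta(U))=\Delta(N_G(U))$ is a proper, measure-zero slice, so generic nearby points already fall into your Case~1.
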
 

\proof
Let the notation be as in Theorem~\ref{l;extra-inv}.
In particular, $\qcal$ is a compact subset with $\mu(\qcal)>0$
and $\qcal_\e\subset\qcal$ with $\mu(\qcal_\e)\geq(1-\e)\mu(\qcal).$ 

With this notation let $x_k,y_k\in\qcal_\e$; 
suppose $x_k=y_kh_k,$ with $h_k\to e$ as $k\to\infty.$ 
Since $(\pi_{i})_*\mu=m^{\BR},$ we can choose $x_k$ so that
for at least one of $i=1,2$, $\pi_{i}(h_k)\notin N_G(U)$.
This, in particular, implies that $h_k\notin N_{G\times G}(\dU).$

Now apply Theorem~\ref{l;extra-inv} with $\{x_k\}$ and $\{y_k\}.$
We get a map 
\[
\varphi:\dU\to N_{G\times G}(\dU)\cap\lcal=\Delta( AM)\cdot(\{e\}\times U)
\]
so that $\mu$ is quasi-invariant under a non-trivial connected subgroup, $L$ say,
of $\langle{\rm Im}(\varphi)\rangle.$

Note that by Lemma~\ref{lem:one-factor-inv} and Lemma~\ref{l;not-product}
we have $\{e\} \times V$ is not contained in $L$
for any nontrivial subgroup $V$ of $U.$ 
Therefore the conclusion follows.
%The conclusion now follows as $L$ is an unbounded connected Lie
%subgroup of the group $\Delta(AM)\cdot(\{e\}\times U)$, and hence must contain 
%a one-parameter unbounded subgroup
% of $AM$, up to conjugation by an element of $\{e\}\times U$.\qed

%\section{} 

\medskip

By replacing $\mu$ by $(e,u).\mu$, we may assume that $\mu$ is
$\Delta(A'U)$-invariant for a non-trivial connected Lie subgroup 
$A'$ of $AM$ in the rest of the section.

\subsection{Reduction to the rigidity of measurable factors}\label{ss;end-proof}
%Recall that $\mu$ is a $\dU$-invariant ergodic joining for $m^{\BR}$
%on $\XX$ and we assume $\G$ is not lattice
%hence $m^{\BR}$ is an infinite measure. 

By Theorem~\ref{c;finite-fiber},
 we have: $\mu$-a.e  fibers of $\pi_1$  have cardinality
$\ell $ for some fixed $\ell\in \mathbb N$.

We put $$\mfib(x^1)=\pi_1^{-1}(x^1).$$
Then there exist a $U$-invariant  $\BR$ co-null subset  $X'\subset X_1$ and $\ell$ measurable maps
 $$\msec_1,\ldots,\msec_\ell: X'\to X_2$$ so that 
 $ \mfib(x^1)=\{\msec_1(x^1),\ldots,\msec_\ell(x^1)\}$  for all $x^1\in X'$ (see \cite{Rat-Joining}, \cite{Ro}).
 Note that if $\mu$ is $\Delta(L)$-quasi invariant for some subgroup $L\subset G,$
 then $\mfib$ is $L$-equivariant. 
Therefore
%\item[(i)] $\msec_i:X\to X,$ is a Borel measurable map so that
%for $m^{\BR}$-a.e. $x^2$ we have 
%$\pi_2^{-1}(x^2)=\{\msec_1(x^2),\ldots,\msec_\ell(x^2)\}.$
 the set-valued map $\mfib$ is $A'U$-equivariant; for all $x^1\in X'$ and every $a'u_\tbf\in A'U$,
we have \be\label{uinvv} \mfib(x^1 a'u_\tbf)=\mfib(x^1) a'u_\tbf.\ee

\begin{lem} \label{sleep} Let $X'$ be a $U$-invariant Borel subset of $X_1$ with
 $m^{\BMS}(X')=1$ satisfying \eqref{Ueq}. Then there exist $x\in X'$, and a subset $L_x\subset AM$ 
  generate $AM$ 
 such that for any $g\in L_x$,
 \be \label{almostf}\mfib(xg) \subset \mfib(x)gU.\ee
  \end{lem}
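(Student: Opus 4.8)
\textbf{Proof proposal for Lemma \ref{sleep}.}

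The plan is to exploit the extra quasi-invariance built up in the previous subsections, namely that (after the harmless replacement of $\mu$ by $(e,u).\mu$) the joining $\mu$ is $\Delta(A'U)$-invariant for a nontrivial connected subgroup $A'$ of $AM$. The set-valued map $\mfib$ is then $A'U$-equivariant on the $\BR$-conull $U$-invariant set $X'$, by \eqref{uinvv}. The point of the lemma is to upgrade this to equivariance, in the weak ``$\subset\mfib(x)gU$'' sense, under a generating set of the \emph{full} group $AM$, not merely $A'$; this is what will eventually feed into the rigidity machine of Theorem \ref{t;factor-rigidity-bms}. First I would record the (easy) direction: whenever $g\in A'U\cdot(\text{stabilizer data})$ or, more usefully, whenever $g$ lies in a subgroup under which $\mu$ is quasi-invariant, $\mfib$ is $g$-equivariant and a fortiori $\mfib(xg)=\mfib(x)g\subset\mfib(x)gU$. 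So it suffices to enlarge the set of group elements $g$ for which the containment \eqref{almostf} holds, starting from $A'U$, until we reach a generating set of $AM$.

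The mechanism for the enlargement is again the polynomial-divergence / window argument of \S\ref{ss;quasi-reg}--\S\ref{join}, applied now \emph{within} the subgroup $AM$ rather than to produce new quasi-invariance from scratch. Concretely: fix a $\Delta(AM)$-generic point $x\in X'$ (generic for the relevant ergodic averages, in a set of uniform convergence $\mathcal Q_\e$ as in \eqref{e;unif-conv}), and for $g\in AM$ close to $e$ consider the two orbits $x(u_\tbf,u_\tbf)$ and $(xg)(u_\tbf,u_\tbf)$. Conjugating $\Delta(u_\tbf)$ past $\Delta(g)$ replaces it by $\Delta(u_{\beta_g(\tbf)})$ times a transversal $P$-piece of size $\asymp d(e,g)$, exactly as in the construction of $\varphi$; since $A'U$-equivariance is already available, the finite-fiber statement of Theorem \ref{c;finite-fiber} forces $\mfib(xg)$ and $\mfib(x)g$ to differ by an element of the centralizer direction, i.e.\ by something in $U$, once one runs the $(C,\alpha)$-good estimate (Lemma \ref{l;poly-ps-good}) along a window $I_r(T)$ and lets $T\to\infty$. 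Iterating over $g$ ranging through small neighborhoods of $e$ in $AM$ one builds up the set $L_x$, and a connected neighborhood of $e$ in $AM$ generates $AM$, giving the claim.

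The main obstacle, as throughout the paper, is the infinite-measure bookkeeping: one cannot invoke Birkhoff directly, and must instead route everything through the window-equidistribution statements (Theorem \ref{eqwi}, Corollary \ref{c;window-rT}, Lemmas \ref{l;window_1}--\ref{l;window2}) to guarantee that the relevant times $\tbf\in I_r(T)$ for which $x(u_\tbf,u_\tbf)$ and $(xg)(u_\tbf,u_\tbf)$ both return to $\mathcal Q_\e$ have Lebesgue measure comparable to that of $I_r(T)$ and are dynamically nontrivial. A secondary technical point is that $g\in AM$ need not lie in $A'$, so $\mfib(xg)$ is \emph{not} a priori defined by equivariance; one must first argue (using that $X'$ is $\BR$-conull and the flow is recurrent) that $xg\in X'$ for a generating set of $g$'s, which is where the conclusion is stated only for a particular $x$ and a particular $L_x$, rather than uniformly. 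Once these are in place the rest is the same polynomial-divergence estimate already carried out in Proposition \ref{keyP} and Theorem \ref{l;extra-inv}, specialized to the present situation.
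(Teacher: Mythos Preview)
Your proposal conflates two distinct mechanisms in the paper and contains a concrete error that breaks the argument as you describe it.

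First, the error: you write that ``conjugating $\Delta(u_\tbf)$ past $\Delta(g)$ replaces it by $\Delta(u_{\beta_g(\tbf)})$ times a transversal $P$-piece of size $\asymp d(e,g)$''. But for $g\in AM$ this is false: $AM$ normalizes $U$, so $g^{-1}u_\tbf g = u_{\beta_g(\tbf)}$ \emph{exactly}, with no transversal component. Consequently $\Delta(g)\in N_{G\times G}(\Delta(U))$, and the quasi-regular map $\varphi$ of \S\ref{ss;quasi-reg} is constant; Theorem~\ref{l;extra-inv} and the window Lemmas~\ref{l;window_1}--\ref{l;window2} give nothing new here. Relatedly, you place $x$ simultaneously in $X'\subset X_1$ and in the set $\mathcal Q_\e\subset Z$, and write $x(u_\tbf,u_\tbf)$ for a point of $X_1$; these frameworks are not interchangeable. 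The \S7 machinery works on $Z$ with the joining $\mu$ and Lebesgue averages; it is not the tool for this lemma.

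What the paper actually does is a \S6-style argument carried out entirely on $X_1$ with the PS-measure. Via Lusin one fixes a compact $\mathcal K_\eta\subset X'$ on which each $\msec_i$ is uniformly continuous, and via the mean ergodic theorem (Theorem~\ref{thm;Rudol}) a set $\Omega_\eta$ on which the PS-recurrence estimate~\eqref{ruu} holds. One then chooses $x\in\Omega_\eta'$ so that $L_x=\{g\in AM:\ xg\in\Omega_\eta',\ d(e,g)<\e'\}$ has at least half the Haar measure of the $\e'$-ball in $AM$; a Fubini/density-point argument (plus the lemma that a set with $e$ as a Lebesgue density point generates a connected group) then shows $L_x$ generates $AM$. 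For each fixed $g\in L_x$ one uses only $U$-equivariance: $\mfib(xg)g^{-1}u_\tbf=\mfib(xu_\tbf g)g^{-1}$, and whenever $xu_\tbf,\,xu_\tbf g\in\mathcal K_\eta$ the uniform continuity of the $\msec_i$ gives $d\bigl(\msec_i(xg)g^{-1}u_\tbf,\mfib(x)u_\tbf\bigr)\le 2\e$. The PS-recurrence bounds show this holds on a set of $\tbf$ of PS-density $\ge 1-O(\eta)$ in $B_U(T)$; then Lemma~\ref{l;poly-ps-good} (applied along a sequence $s_i\to\infty$ with $xa_{-s_i}\in\Omega_\eta$) upgrades this to $\sup_{\tbf}d(\msec_i(xg)g^{-1}u_\tbf,\mfib(x)u_\tbf)\le 1$. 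Writing $\msec_i(xg)g^{-1}=\msec_j(x)g_0$, boundedness of $u_{-\tbf}g_0u_\tbf$ for all $\tbf$ forces $g_0$ into the centralizer of $U$, i.e.\ $g_0\in U$, which is exactly \eqref{almostf}. Note that neither $A'$-equivariance nor Theorem~\ref{c;finite-fiber} is invoked at this stage; the latter does not by itself force any centralizer containment, contrary to what you suggest.
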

\begin{proof}
Let $\eta>0$ be small, and let $K_\eta\subset X'$, $\Omega_\eta\subset X'$  and $T_\eta>0$
be as in \eqref{uniformcc} and \eqref{odef}. For $\e>0$,
let $\e'>0$ be as in \eqref{Kc}.
We also assume that for
all $g\in AM$ with $d(e, g)<\e'$,
 the Jacobian of $g$-action on $U$ is  bounded from above and below by $1\pm\tfrac{ \e}2$ respectively.

 By the ergodicity of $A$-action (see \eqref{birk}), there exists a compact subset $\Omega_\eta'\subset \Omega_\eta$
 such that $m^{\BMS}(\Omega_\eta')>1-4\eta$ and
  for any $x\in \Omega_\eta'$, $xa_{-s_i}\in \Omega_\eta$ for an infinite sequence
 $s_i\to +\infty$.

Since $m^{\BMS}(\Omega'_\eta)>1-4\eta$, if $\eta>0$ is small enough,
we have $x\in \Omega'_\eta$ such that 
the measure of
$L_x:=\{am\in AM: xg\in \Omega_\eta', d(e, g)<\e' \}$ is at least half
of the measure of the set $\{g\in AM: d(e, g)<\e'\}$ where the measure is taken with respect to the 
Haar measure of $AM$.

 Let $g_0\in L_{x}$
be a Lebesgue density point of $L_{x}.$ Replacing $x$ with $xg_0,$
 we may assume that $e$
is a density point of $L_{x}.$ 

%Since dimension of $M$ is at most $1$, it follows that there exists $m\in M_x$ of arbitrarily small size which
% generates a dense subgroup of $M.$
In the following, we fix
 $g \in L_x$. % and which generates a dense subgroup of $M$.
 
As $g$ normalizes $U$,
we have $$\mfib(xg)g^{-1} u_{\tbf}= \mfib(xu_{\tbf}g) g^{-1}.$$
 Hence
 if $xu_\tbf, xu_\tbf g \in K_\eta$, then  for each $i$,
 $$
d( \msec_i(xg)g^{-1} u_{\tbf} ,\Upsilon (x^1 u_\tbf ))\leq  2\e
$$
by the continuity of $\mfib$ in $K_\eta$.

Since $x,x g\in\Om_\eta'$,
 we get from \eqref{ruu} that if $T>T_\eta$,
 
\be\label{c331} \mu^{\PS}_{x}\{\tbf\in B_U(T): xu_{\tbf} \in\kcal_\eta\} \geq (1-2\eta)\mu_{x}^{\PS}(B_U(T))
\ee
and  
\be\label{c332} \mu^{\PS}_{xg}\{\tbf\in B_U(T): xgu_{\tbf} \in\kcal_\eta\} \geq (1-2\eta)\mu_{xg}^{\PS}(B_U(T))
\ee

%Since $mB_U(T)m^{-1}= B_U(T)$ and $\mu_{xm}^{\PS}(B_U(T))=\mu_{x}^{\PS}(B_U(T))$,
Since $d\mu_{x}^{\PS}$ and $d\mu_{xg}^{\PS}$ are absolutely continuous with each other as $g\in AM$
and the Radon-Nikodym derivative is given by  $|\text{Jac}(g)|^{\delta}$ satisfying
$ $$1-\tfrac{\e}{2}\le |\text{Jac}(g)| \le 1+\tfrac{\e}{2}$, we have 
\begin{multline*}\mu_x^{\PS}\{\tbf\in B_U(T): xu_{\tbf}g \notin\kcal_\eta\}\le \mu_{xg}^{\PS}
\{\tbf\in B_U((1+\e) T): xg u_{\tbf} \notin\kcal_\eta\} 
\\ \le ( c_0 \eta  )\mu_x^{\PS}(B_U(T))\end{multline*}
for some absolute constant $c_0>1$.
Consequently, we have \be
\mu^{\PS}_{x}\{\tbf\in B_U(T): xu_{\tbf} , xu_{\tbf} g \in\kcal_\eta\} \geq (1-c_1\eta)\mu_{x}^{\PS}(B_U(T)).
\ee
for some $c_1>1$.

Fixing $i$, put
$$
\Theta(\tbf)=\min\{\dist(\msec_i(xu_\tbf),\Upsilon(xg)g^{-1}u_\tbf)^2,1\} .
$$

Then
for any  $T>T_\eta$, 
$$\frac{1}{\mu_{x}^{\PS}(B_U(T))}\int_{B_U(T)}\Theta(\tbf)d\mu^{\PS}_{x}(\tbf)
\le 2\e +4\eta .$$
Let $s_i\to \infty$ be a sequence tending to $+\infty$ such that
$xa_{-s_i}\in \Omega_\eta$. Then
 Lemma~\ref{l;poly-ps-good} implies that
\[
\sup_{|\tbf|\leq e^{s_i}} \Theta(\tbf)\leq 1/2
\] 
if $\eta$ and $\e$ are sufficiently small.

It follows that 
\be \label{haha} \sup_{\tbf\in U}\dist(\msec_i(xu_\tbf),\Upsilon(xg)g^{-1}u_\tbf)\le 1.\ee

We claim that \be\label{posi}  \mfib(xg)g^{-1}\subset \mfib(x)U.\ee
Suppose not; then there exists $i$ such that for all $j$,
$\msec_i(xg)g^{-1} =\msec_j (x)g_{m,j}$ for $g_{m,j}\notin U$.
On the other hand, \eqref{haha} implies that
$u_{-\tbf} g_{m,j}u_{\tbf}$ is uniformly bounded for all $\tbf$. It implies that
$g_{m,j}$ belongs to the centralizer of $U$, which is $U$ itself. This yields a contradiction, proving the claim.
Note that we have shown \eqref{posi} for  any $g\in L_x$.
Since $e$ is a density point of $L_x$,
it generates $AM$ by the lemma below, and finishes the proof. % and which generates a dense subgroup of $M$.
 
\end{proof}

\begin{lem} 
Let $H$ be a connected Lie group. If $W\subset H$ is a Borel subset
such that $e\in W$ is a Lebesgue density point, then $W$ generates $H$.
\end{lem}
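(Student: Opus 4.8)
The plan is to prove the elementary fact that a Borel subset $W$ of a connected Lie group $H$ containing a Lebesgue density point at $e$ generates $H$. The key observation is the following: if $e$ is a Lebesgue density point of $W$, then $W$ is ``thick'' near $e$ in the sense that for a small enough neighborhood of $e$, the set $W$ occupies most of the Haar measure. We will deduce from this that $W\cdot W^{-1}$ (or $W\cdot W$) contains an open neighborhood of $e$, and then invoke the standard fact that a subgroup of a connected topological group containing an open neighborhood of the identity is all of $H$.

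First I would fix a right-invariant (equivalently, in the relevant local sense, a locally Euclidean via the exponential chart) Haar measure $\lambda$ on $H$ and work in a chart around $e$ identifying a neighborhood $V_0$ of $e$ with a ball in $\br^{\dim H}$ sending $e$ to $0$, under which $\lambda$ is comparable to Lebesgue measure. Since $e$ is a density point of $W$, we have $\lambda(W\cap V)/\lambda(V)\to 1$ as $V$ shrinks to $e$; in particular there is a symmetric neighborhood $V$ of $e$ with $V=V^{-1}$, contained in $V_0$, such that $\lambda(W\cap V)>\tfrac12\lambda(V)$. The heart of the argument is then: for any $h\in V$ sufficiently close to $e$, the two sets $W\cap V$ and $h(W\cap V)$ (or $(W\cap V)h$) both sit inside a fixed slightly larger neighborhood $V'$ and each has measure close to $\lambda(V)$, while $\lambda(V')$ can be taken close to $\lambda(V)$ as well; hence the two sets must intersect. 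An intersection point gives $w_1=hw_2$ with $w_1,w_2\in W$, i.e. $h=w_1w_2^{-1}\in W W^{-1}$. This shows $WW^{-1}$ contains a neighborhood $N$ of $e$.

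Next I would let $H_0=\langle W\rangle$ be the subgroup generated by $W$. Since $WW^{-1}\subset H_0$, the subgroup $H_0$ contains the neighborhood $N$ of $e$, hence contains the open subgroup $\bigcup_{n\ge1} N^n$ generated by $N$. An open subgroup of a topological group is also closed (its complement is a union of cosets, each open), so since $H$ is connected, this open-and-closed subgroup must be all of $H$. Therefore $H_0=H$, as claimed.

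The main technical point — the only step requiring care — is the measure-comparison argument showing $WW^{-1}\supset N$: one must check that left-translation by elements $h$ near $e$ distorts Haar measure by a factor tending to $1$ (uniformity of the modular/Jacobian behavior near $e$, which follows from continuity of the group operations and smoothness of $\lambda$ in the chart), so that for $h$ close enough to $e$ the inequality $\lambda\big((W\cap V)\big)+\lambda\big(h(W\cap V)\big)>\lambda(V\cup hV)$ holds, forcing a nonempty intersection. Everything else is the standard open-subgroup-of-connected-group argument. I do not expect any obstacle here beyond bookkeeping; this is a routine lemma included for completeness, and indeed no proof is given in the excerpt, which is why we supply one.

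\begin{proof}
Fix a left Haar measure $\lambda$ on $H$ and a relatively compact symmetric open neighborhood $V=V^{-1}$ of $e$ small enough that multiplication is a diffeomorphism on $V\cdot V$ and so that, by continuity of the group operations and smoothness of $\lambda$, there is $V'\supset V$ with $hV\subset V'$ for all $h\in V$ and $\lambda(V')<\tfrac43\lambda(V)$. Since $e$ is a Lebesgue density point of $W$, after shrinking $V$ we may assume $\lambda(W\cap V)>\tfrac34\lambda(V)$. For any $h\in V$, both $W\cap V$ and $h(W\cap V)$ are contained in $V'$, and $\lambda(h(W\cap V))=\lambda(W\cap V)>\tfrac34\lambda(V)$ by left-invariance; hence
\[
\lambda(W\cap V)+\lambda\big(h(W\cap V)\big)>\tfrac32\lambda(V)>\lambda(V')\ge \lambda\big((W\cap V)\cup h(W\cap V)\big),
\]
so $(W\cap V)\cap h(W\cap V)\neq\varnothing$. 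Thus there are $w_1,w_2\in W$ with $w_1=hw_2$, i.e. $h=w_1w_2^{-1}$. Therefore $V\subset WW^{-1}\subset\langle W\rangle$.

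Consequently $\langle W\rangle$ contains the open neighborhood $V$ of $e$, hence contains the open subgroup generated by $V$. An open subgroup of $H$ is also closed, being the complement of a union of its cosets, each of which is open; since $H$ is connected, this open and closed subgroup equals $H$. Hence $\langle W\rangle=H$.
\end{proof}
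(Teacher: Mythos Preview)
Your approach is the standard Steinhaus-type argument and is correct in spirit, but the formal proof contains a bookkeeping slip. You ask for a symmetric neighborhood $V$ of $e$ and a set $V'\supset V$ satisfying both $hV\subset V'$ for all $h\in V$ (i.e.\ $VV\subset V'$) \emph{and} $\lambda(V')<\tfrac43\lambda(V)$. These two requirements are incompatible: in a Lie group of dimension $n\ge 1$, a small ball $V$ satisfies $\lambda(VV)\asymp 2^{n}\lambda(V)\ge 2\lambda(V)$, so no such $V'$ can exist. Shrinking $V$ afterwards (to secure the density estimate) only makes this worse, since $\lambda(V)$ drops while $V'$ is kept fixed.

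The repair is exactly what your own planning paragraph had right: do not draw $h$ from all of $V$, but from a \emph{smaller} neighborhood $N$ of $e$, chosen after $V$, with $NV\subset V'$ and $\lambda(V')<\tfrac43\lambda(V)$; this is possible because $\lambda(hV\bigtriangleup V)\to 0$ as $h\to e$. Then your inequality $\lambda(W\cap V)+\lambda(h(W\cap V))>\lambda(V')$ forces $(W\cap V)\cap h(W\cap V)\neq\varnothing$ for every $h\in N$, giving $N\subset WW^{-1}$, and the open-subgroup argument finishes as you wrote.

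For comparison, the paper gives the convolution version of Steinhaus: since $\lambda(W)>0$, the function $f(g)=\int_H\chi_W(hg)\chi_W(h)\,d\lambda(h)$ is continuous with $f(e)=\lambda(W)>0$, hence positive on a neighborhood $\mathcal O$ of $e$, which forces $\mathcal O\subset W^{-1}W$. This sidesteps the measure-of-translate bookkeeping entirely and, incidentally, uses only $\lambda(W)>0$ rather than the full density-point hypothesis.
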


\begin{proof} As $e\in W$ is a Lebesgue density point, $\mu_H(W)>0$ where $\mu_H$ is a Haar measure.
The convolution $$f(g):=\chi_{W}\star \chi_{W^{-1}}(g)=\int_{H} \chi_W( hg)\chi_{W^{-1}}(h^{-1}) d\mu_H(h) $$ is a continuous function
and $f(e)=\mu_H(W)>0$. Therefore there exists a neighborhood $\mathcal O$ of $e$ in $H$ on which $f$ never vanishes.
This means that $\mathcal O\subset W^{-1}W$.
Since any neighborhood of $e$ in $H$ generates $H$, the claim follows. \end{proof}

Let us recall that the $\BMS$ measure and the $\BR$-measure on $X_1$ have product structures, and
have the ``same transversal measures''. To be more precise,
let $\psi\in C_c(X)$ and further assume that $\supp(\psi)\subset yP_\e U_\e$ with $P=MA{\cont}$. Then
\be\label{e;bms-br-recall}
\begin{array}{l} m^{\BR}(\psi)=\int\int \psi(ypu)d\nu(yp)d\mu_{yp}^{\Leb}(u),\;
 \text{and}\\
 m^{\BMS}(\psi)=\int\int \psi(ypu)d\nu(yp)d\mu_{yp}^{\PS}(u)
 \end{array}
\ee
where $d\nu$ is the transversal measure on $P$. 

It follows that
\begin{lem}\label{xp}
If $Q\subset X_1$ is $U$-invariant and $m^{\BR}(Q)=1$, then
$Q$ has full $\BMS$-measure.
\end{lem}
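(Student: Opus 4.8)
The statement to be proved is Lemma~\ref{xp}: a $U$-invariant Borel set $Q\subset X_1$ with $m^{\BR}(Q)=1$ also has full $\BMS$-measure. The plan is to exploit the common product structures \eqref{e;bms-br-recall} of $m^{\BR}$ and $m^{\BMS}$ along the decomposition $G = P \cdot U$ with $P = MA\cont$: both measures are obtained by integrating against the same transversal measure $d\nu$ on $P$, the only difference being that the leafwise factor is $d\mu_{yp}^{\Leb}$ (Lebesgue on $U$) for $m^{\BR}$ and $d\mu_{yp}^{\PS}$ (the PS-measure on $U$) for $m^{\BMS}$.

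\textbf{Key steps.} First, by $\sigma$-compactness it suffices to work locally: cover $X_1$ by countably many admissible boxes of the form $yP_\e U_\e$ (with $\mu_{yp}^{\PS}(ypU_\e)\ne 0$ for all $p\in P_\e$), and it is enough to show that $Q^c$ meets each such box in a $\BMS$-null set. Fix one box $B = yP_\e U_\e$. Since $m^{\BR}(Q^c)=0$, formula \eqref{e;bms-br-recall} gives
\[
0 = m^{\BR}(Q^c\cap B) = \int_{yP_\e}\Big(\int_{ypU_\e}\chi_{Q^c}(ypu)\,d\mu_{yp}^{\Leb}(u)\Big)\,d\nu(yp),
\]
so for $\nu$-a.e.\ $yp\in yP_\e$ the leaf $ypU_\e$ meets $Q^c$ in a set of zero Lebesgue measure. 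The second step is the crucial use of $U$-invariance: if a $U$-invariant set has positive Lebesgue measure in one $U$-leaf it must in fact \emph{contain} that leaf — more precisely, because $Q$ is $U$-invariant, $ypu\in Q$ for one $u$ in the leaf forces $ypu'\in Q$ for all $u'$ (the leaf $(U\cap p^{-1}g^{-1}\G g p)\backslash U \to ypU$, or simply $yp$ together with its full $U$-translate). Hence "$Q^c$ has zero Lebesgue measure in $ypU_\e$'' upgrades, after enlarging the $U$-parameter range slightly and using $U$-invariance of $Q^c$, to "$ypU_\e\cap Q^c$ is empty up to a $\Leb$-null set whose saturation is still $\Leb$-null'' — and since $Q^c$ is $U$-invariant, a $\Leb$-null $U$-invariant subset of a single $U$-leaf is empty. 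Thus for $\nu$-a.e.\ $yp$, the entire leaf $ypU_\e$ is contained in $Q$.

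The third step closes the argument: since $ypU_\e\subset Q$ for $\nu$-a.e.\ $yp$, the inner integral $\int_{ypU_\e}\chi_{Q^c}(ypu)\,d\mu_{yp}^{\PS}(u)$ vanishes for $\nu$-a.e.\ $yp$ as well, so by the $\BMS$ product formula \eqref{e;bms-br-recall}, $m^{\BMS}(Q^c\cap B)=0$. Summing over the countable cover gives $m^{\BMS}(Q^c)=0$, i.e.\ $m^{\BMS}(Q)=1$, as claimed.

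\textbf{Main obstacle.} The only delicate point is the "upgrade'' step: a priori $Q^c\cap ypU_\e$ being $\Leb$-null does not immediately say it is \emph{empty}, and we genuinely need $\nu$-a.e.\ leaf to be \emph{entirely} inside $Q$ before we can conclude anything about the PS-integral over that leaf. This is where $U$-invariance of $Q$ is indispensable and must be invoked carefully (one cannot conclude the lemma for a merely measurable $Q$): one argues that the $U$-saturation of the $\Leb$-null set $Q^c\cap ypU_\e$ inside the leaf is the whole of $Q^c\cap ypU_\e$ itself, and a $U$-invariant subset of a homogeneous $U$-leaf that is Lebesgue-null must be empty. Care is also needed to handle leaves on which the $U$-action is not injective (i.e.\ $U\cap$ conjugate of $\G$ nontrivial), but there the same statement holds on the quotient leaf $(U\cap p^{-1}g^{-1}\G gp)\backslash U$, which carries both a Lebesgue-class and a PS-class measure, and $\mu^{\PS}$ is absolutely continuous with respect to no proper subvariety. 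No new ingredients beyond \eqref{e;bms-br-recall} and elementary measure theory are required.
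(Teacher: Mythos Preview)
Your proof is correct and follows the same route as the paper: both $m^{\BR}$ and $m^{\BMS}$ share the transversal measure $d\nu$ in the product formulas~\eqref{e;bms-br-recall}, so one transfers the nullity of $Q^c$ from the Lebesgue leafwise measure to the PS leafwise measure. The paper records this in one line (``It follows that'').

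One comment: the ``main obstacle'' you flag is not an obstacle. Since $Q^c$ is $U$-invariant, for any point $yp$ the set $\{u\in U_\e: ypu\in Q^c\}$ is either all of $U_\e$ or empty (if $ypu_0\in Q^c$ then $ypu=(ypu_0)(u_0^{-1}u)\in Q^c$ for every $u$). Thus ``Lebesgue-null in the leaf'' instantly gives ``empty in the leaf'', and the PS-integral over that leaf vanishes. There is no need to worry about saturations, non-injective leaves, or the PS-measure of subvarieties; those remarks can be dropped.
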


\begin{prop}\label{l;M-inv}
The set-valued map $\mfib$ is $AMU$-equivariant; there exists an $AMU$-invariant $\BR$ co-null subset
$X''\subset X_1$ such that for all $x^1\in X''$ and  every  $am u_{\tbf} \in AMU$, 
we have \be \label{aminv} \mfib(x^1amu_{\tbf})=\mfib(x^1)amu_{\tbf}.\ee
\end{prop}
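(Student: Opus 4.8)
The plan is to upgrade the $A'U$-equivariance of $\mfib$ recorded in \eqref{uinvv}, together with the local statement of Lemma~\ref{sleep}, to full $AMU$-equivariance, and then to promote this from a subset of $X_1$ of positive (in fact full) measure to an $AMU$-invariant $\BR$-conull set. The starting point is Lemma~\ref{sleep}: it produces a single point $x\in X'$ and a set $L_x\subset AM$ generating $AM$ such that $\mfib(xg)\subset \mfib(x)gU$ for every $g\in L_x$. First I would argue that this containment can in fact be taken to be equality for $g\in L_x$: since $\mu$ is $\Delta(A')$-quasi-invariant and $|\mfib(\cdot)|=\ell$ is constant a.e.\ by Theorem~\ref{c;finite-fiber}, the map $g\mapsto \mfib(xg)$ is a bijection of $\ell$-element sets, so no collapsing occurs, and $\mfib(xg)=\mfib(x)gU\cap\mfib(x)gU$ forces the $U$-coordinate shifts to match up; more precisely $\mfib(xg)g^{-1}\subset\mfib(x)U$ and $\mfib(xg^{-1})g\subset\mfib(x)U$ together with the cocycle relation pin down the $U$-part.

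The next step is to combine this with the already-known $U$-equivariance \eqref{Ueq} and the $A'U$-equivariance \eqref{uinvv}. The key algebraic fact is that $AM$ normalizes $U$, so $AMU$ is a group and $L_xU$ generates $AMU$; writing any element of $AMU$ as a word in elements of $L_x$ and of $U$, and using that $\mfib$ is $U$-equivariant \emph{everywhere} on $X'$ (for every $u_\tbf$, by hypothesis) while it is ``$g$-equivariant up to $U$'' at the base point $x$ for each $g\in L_x$, one propagates the equivariance along the orbit $x\,AMU$. The cleanest way to organize this is: define $X''$ to be the set of $x^1\in X'$ at which $\mfib$ is genuinely $AMU$-equivariant in the sense of \eqref{aminv}; show $X''$ is $AMU$-invariant by construction; show $x\in X''$ using Lemma~\ref{sleep} plus the bijectivity upgrade above (so $x\,AMU\subset X''$); and finally show $x\,AMU$ is $\BR$-conull.

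For the conullity, I would use ergodicity. The $A'$-action on $(X_1,m^{\BMS})$ is ergodic (this follows from the mixing Theorem~\ref{fm}), hence the $A'$-orbit of a.e.\ point is $\BMS$-dense, and more to the point $x\,A'U$ already meets a $\BMS$-conull, hence (by Lemma~\ref{xp}) $\BR$-conull, set; enlarging by the $M$-direction and using that $m^{\BR}$ is $M$-invariant, the set $x^1AMU$ for $x^1$ ranging over this conull set covers a $\BR$-conull set of $X_1$. Concretely: the set of $x^1\in X_1$ such that the conclusion of Lemma~\ref{sleep} holds with $x^1$ in place of $x$ is nonempty and $U$-invariant; by the ergodicity-type arguments of Section~\ref{ss;factor} (Birkhoff for the $A$-flow, \eqref{birk}) it is $\BMS$-conull, hence $AMU$-saturating to a $\BR$-conull set. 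The remaining bookkeeping is to check that the equivariance relation \eqref{aminv} genuinely holds for \emph{every} element $amu_\tbf$ and not just a.e.\ one on this saturated set, which follows because the $U$-part is handled by the exact (everywhere) equivariance \eqref{Ueq} and the $AM$-part by transporting the relation via the group law from the density points.

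The main obstacle I expect is the passage from ``$\mfib(xg)\subset\mfib(x)gU$ for $g$ in a generating set $L_x$ at one point'' to a clean equality holding on an honest $AMU$-invariant conull set, without circular reasoning: Lemma~\ref{sleep} is genuinely only a statement at a single $x$, and one must use the $\Delta(A')$-quasi-invariance of $\mu$ (Corollary~\ref{alunch}) plus the constancy of fiber cardinality to rule out the $U$-shift depending badly on $g$, and then invoke $A'$-ergodicity to spread this to a.e.\ point. The subtlety is that $A'$ may be a proper one-parameter subgroup of $AM$, so one genuinely needs the group-generation argument ($L_x$ generates all of $AM$, and $A'$-ergodicity to move the base point) rather than just quasi-invariance under $A'$ alone; care is needed that the cocycle identities used to patch together elements of $L_x$ are valid on the full measure set and not just pointwise.
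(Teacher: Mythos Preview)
Your proposal correctly identifies Lemma~\ref{sleep} as the starting point and correctly isolates the main obstacle: eliminating the $U$-shift in the relation $\mfib(xg)\subset\mfib(x)gU$ for $g\in L_x$. However, your proposed resolution of this obstacle is not a proof. The cardinality argument only shows that the assignment $\msec_i(xg)g^{-1}\mapsto \msec_{j(i)}(x)u_i$ has $j$ a bijection on $\{1,\dots,\ell\}$; it says nothing about the elements $u_i\in U$. The sentence ``$\mfib(xg)=\mfib(x)gU\cap\mfib(x)gU$ forces the $U$-coordinate shifts to match up'' is vacuous as written, and the vague appeal to ``cocycle relations'' combining $\mfib(xg)g^{-1}\subset\mfib(x)U$ with $\mfib(xg^{-1})g\subset\mfib(x)U$ does not pin down $u_g$: composing two such containments just reproduces a containment of the same type with a composed $U$-shift.

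The paper's argument is genuinely different and supplies exactly the missing idea. Rather than working with $\mfib$ directly, it passes to the joining measure $\mu$. Writing $u_g'g=u_ggu_g^{-1}$, one observes that the pair $(x^1g,\msec_j(x^1)u_ggu_g^{-1})$ lies in the Hopf-generic set $Y$, so $Y\cap Y(g,u_ggu_g^{-1})\neq\emptyset$; Lemma~\ref{l;normalizer-inv} then gives that $\mu$ is quasi-invariant under $(g,u_ggu_g^{-1})$. Now comes the key step you are missing: take commutators of $(g,u_ggu_g^{-1})$ with elements $(b,b)$ of the one-parameter group $\Delta(A')$ already known to leave $\mu$ quasi-invariant. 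The first coordinate of such a commutator lies in $[AM,AM]=\{e\}$, while the second coordinate is a nontrivial element $v\in U$ whenever $u_g\neq e$, and $v$ can be made arbitrarily close to $e$ by choosing $b$ close to $e$. By Lemma~\ref{lem:one-factor-inv} this forces $\mu$ to be invariant under a nontrivial connected subgroup of $\{e\}\times U$, contradicting Lemma~\ref{nf}. Hence $u_g=e$ for every $g\in L_x$, and since $L_x$ generates $AM$ one obtains $\Delta(AM)$-quasi-invariance of $\mu$, which is equivalent to $AM$-equivariance of $\mfib$ on a conull set. Your ergodicity-and-saturation plan for conullity is then unnecessary: quasi-invariance of $\mu$ under $\Delta(AM)$ automatically yields equivariance of $\mfib$ on a $\BR$-conull set.
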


\begin{proof} 
It suffices to show that $\mu$ is $\Delta(AM)$-quasi-invariant. %As $\mu$ is $\Delta(A'U)$-invariant, this
 %will imply the $\Delta(AMU)$-invariance of $\mu$ as well as the $AMU$-equivariance of $\mfib$.
%If $\field=\br$, then $M=\{e\}$; so we assume $G=\PSL_2(\c)$ for the rest of the argument. 
Let $Y\subset \XX $ be a $\Delta(U)$-invariant $\mu$-conull subset such that for all $y\in Y$
we have
\begin{enumerate}
\item 
$\lim_{T\to \infty} \int_{B_U(T)}\Psi(y\biggere)d\tbf =\infty$;
\item for all $f\in C_c(\XX)$,
$$
\lim_{T\to \infty}  \frac{\int_{B_U(T)}f(y\biggere)d\tbf}
{\int_{B_U(T)}\Psi(y\biggere)d\tbf}= \frac{\mu(f)}{\mu(\Psi)}.$$
\end{enumerate}

%By Lemma~\ref{l;normalizer-inv} and the fact that $\Delta(M)$ normalizes $\Delta(U)$, it suffices to show
%there exists $m\in M$ which generates a dense subgroup of $M$ (we will call such $m$
%irrational) and such that $Y\Delta(m)\cap Y\ne \emptyset$.

Note that by the definition of $\mfib$,
the support of $\mu$ is $ \{(x^1, x^2): x^1\in X_1, x^2 \in \Upsilon(x^1)\}\}$, and hence
$\{(x^1, x^2): x^1\in X', x^2 \in \Upsilon(x^1)\}$ has full $\mu$-measure.
Therefore,
replacing $X'$ with a conull set, we may assume that $X'$ is a $U$-invariant subset with full $\BR$-measure
satisfying \eqref{Ueq},
$X'\subset \{x^-\in\Lambda_{\rm r}(\G_1)\},$ and that
\be \label{xxy} \{(x^1, x^2): x^1\in X', x^2 \in \Upsilon(x^1)\}\subset Y.\ee

%Therefore, it suffices tofind $x^1\in X'$ and
%an irrational $m\in M$ 
%such that $x^1m\in X'$ and  $\msec_1(x^1)m =\msec_j(x^1m)$ for some $j$.

%We will call an element $m\in M$ irrational if it generates a dense subgroup of $M$.
By Lemma \ref{xp}, we have
$m^{\BMS}(X')=1$. 
Let $x^1\in X'$ and $L_{x^1} \subset AM$ be as in Lemma \ref{sleep}, so that
 for any $g\in L_{x^1}$, we have
 \be \msec_1(x^1g)= \msec_j(x^1)u_g'g\ee
  for some $j$ and $u_g'\in U$. 
 
  As $u_g'g\in UAM$, we can write $u_g'g=u_ggu_g^{-1}$ for some $u_g\in U$.
   Since
  $X'$ is $U$-invariant and $\mfib$ is $U$-equivariant on $X'$, we have
  $$  (x^1g, \msec_j (x^1) u_ggu_g^{-1})= (x^1 g, \msec_1(x^1 g ))\in Y.$$
  Therefore, $$Y\cap Y (g, u_ggu_g^{-1})\ne \emptyset .$$
  Hence 
by Lemma \ref{l;normalizer-inv},
$\mu$
is quasi-invariant under the closed subgroup $R$ generated by
$(g,u_ggu_g^{-1})$. Since $L_{x^1}$ generates $AM$, this will finish the proof
if we show 
\[
\mbox{$u_g=e$} \quad\text{for each $g\in L_{x^1}$.  }
\]

Let $A'\subset \Delta(AM)$ be a connected subgroup  as  in the remark following Corollary \ref{alunch}.
Suppose $u_g\ne e$. 
Consider the commutator of elements of $A'$ and $(g,u_ggu_g^{-1})$.
Note that the first component belongs to the commutator subgroup $[AM,AM]=\{e\}$. 
For any non-trivial $b\in A'$,
set 
\begin{align*} 
(e, v):&=
(b,b)(g,u_ggu_g^{-1})(b^{-1},b^{-1})(g^{-1},u_gg^{-1}u_g^{-1})\\
&=(e,bu_ggu_g^{-1}b^{-1} u_gg^{-1}u_g^{-1}).
\end{align*}
%Since $v\in MAU$ and the eigenvalues of $am_a(u_gmu_g^{-1})a^{-1}m_a^{-1}$ are those of $m$ and the eigenvalues of
%$u_gm^{-1}u_g^{-1}$ are are those of $m^{-1}$, the eigenvalues of $v$ are $1$ and hence $v\in U$. 
We make the following observations:
\begin{itemize}
\item $U$ is the commutator subgroup of $AMU$ and hence $v\in U.$
\item $v\neq e$ since $u_g\ne e$. 
\item $v\ne e$ can be made
arbitrarily close to $e$ by choosing these parameters 
close but not equal to $e$.
\end{itemize}

Recall now that $A'$ and $R$ leave $\mu$ quasi-invariant. Therefore any $(e,v)$ as above leaves $\mu$
quasi-invariant and thus invariant by Lemma~\ref{lem:one-factor-inv}. 
Hence we get that if $u_g\neq e,$ then there is a sequence
$v_i\to e$ in $ U\setminus\{e\}$, so that $(e,v_i)$ leaves the measure invariant
for all $i\geq 1.$
Therefore, there is a non-trivial connected subgroup $V< U$
so that $\mu$ is invariant under $\{e\}\times V.$ 
This contradicts Lemma~\ref{l;not-product} and finishes the proof.
\end{proof}

\begin{prop}\label{p;factor-rigidity}
Let $\mfib: X_1\to X_2$ be as above. In particular, it satisfies
that $\mfib( x g)=\mfib(x) g$ for all $x\in X'$ and all $g\in MAU$.
 Then there exists $q_0\in G$ such that $\G_1\cap q_0^{-1} \G_2 q_0$ has finite index in $\G_1$ 
so that if  we put
$\G_2 q_0\G_1 =\cup_{1\leq j\leq\ell } \Gamma_2 q_0\gamma_{j}$ for $\gamma_{j}\in \G_1$, then
\[
\mfib(\G_1 g)=\{\G_2 q_0\gamma_{j} g: 1\le j\le \ell \}
\] 
on a $\BR$ conull subset of $X_1.$ Moreover $\mu$ is a $\Delta(U)$-invariant measure
supported on $\{(x^1, \Upsilon (x^1)): x^1\in X_1\}$ and hence a finite cover self-joining (see Def. \eqref{sj}).
\end{prop}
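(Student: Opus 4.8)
The plan is to exploit the rigidity theorem for $\check U$-equivariance (Theorem \ref{t;factor-rigidity-bms}) together with the fact that $\mfib$ is now $MAU$-equivariant by Proposition \ref{l;M-inv}. First I would verify that the hypotheses of Theorem \ref{t;factor-rigidity-bms} are met: the set-valued map $\mfib=\{\msec_1,\dots,\msec_\ell\}$ is $AU$-equivariant on a $\BR$-conull, hence (by Lemma \ref{xp}) $\BMS$-conull, $U$-invariant set $X'$. Applying Theorem \ref{t;factor-rigidity-bms} gives a $\BMS$-conull subset on which $\mfib$ is additionally $\check U$-equivariant. Combined with $M$-, $A$- and $U$-equivariance, $\mfib$ becomes equivariant under the group generated by $M$, $A$, $U$ and $\check U$, which is all of $G$ since $G=\PSL_2(\field)$ is generated by the two opposite horospherical subgroups (together with $A$). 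Thus, after passing to a $\BMS$-conull — and then, by a Hopf-type argument transporting the property along $U$-orbits and using that $\mfib$ is genuinely $U$-equivariant on a $U$-invariant set, a $\BR$-conull — subset $X'''\subset X_1$, we have $\mfib(x g)=\mfib(x)g$ for all $x\in X'''$ and \emph{every} $g\in G$.

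The next step is to upgrade this almost-everywhere $G$-equivariance to an algebraic statement. Pick a point $x=\G_1 g_1\in X'''$ and consider the orbit map $g\mapsto \mfib(xg)=\mfib(x)g$. Since $\mfib(x)=\{\G_2 y_1,\dots,\G_2 y_\ell\}\subset X_2$, the equivariance relation $\mfib(\G_1 g_1 g)=\{\G_2 y_j g:1\le j\le \ell\}$ must be consistent with the left $\G_1$-action on $X_1$: for every $\gamma\in\G_1$, writing $\G_1 g_1 (\gamma^{-1}g_1^{-1})\cdot g_1 = \G_1 g_1$, comparing the two descriptions of $\mfib$ of the same point forces $\{\G_2 y_j g_1\gamma g_1^{-1}\}=\{\G_2 y_j\}$ as sets for each such $\gamma$ (more carefully, one conjugates the base point to $g_1=e$, so $\mfib(\G_1)=\{\G_2 q_1,\dots,\G_2 q_\ell\}$ and $\mfib(\G_1\gamma)=\mfib(\G_1)\gamma=\{\G_2 q_j\gamma\}$ while also equaling $\mfib(\G_1)$). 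Hence $\G_1$ permutes the finite set $\{\G_2 q_1,\dots,\G_2 q_\ell\}$ of cosets by right multiplication. Let $q_0:=q_1$; the stabilizer in $\G_1$ of $\G_2 q_0$ under this permutation action is exactly $\{\gamma\in\G_1 : q_0\gamma q_0^{-1}\in \G_2\}=\G_1\cap q_0^{-1}\G_2 q_0$, which therefore has index $\le \ell$ in $\G_1$; the orbit of $\G_2 q_0$ has size equal to that index, and since by minimality (the $\msec_j$ are the distinct sheets) the $\G_1$-action on $\{\G_2 q_j\}$ is transitive, the index equals $\ell$ and $\G_2 q_0\G_1=\bigsqcup_{j=1}^\ell\G_2 q_0\gamma_j$. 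This yields $\mfib(\G_1 g)=\{\G_2 q_0\gamma_j g : 1\le j\le\ell\}$ on $X'''$.

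Finally, for the last sentence: $\mu$ is $\Delta(U)$-invariant (indeed $\Delta(U)$-ergodic) by hypothesis, and by construction $\mu$ is supported on the graph $\{(x^1,\msec_j(x^1)):x^1\in X',\ 1\le j\le\ell\}$ with fiber measures the uniform measure on $\ell$ points (Theorem \ref{c;finite-fiber}). The coset description above shows this graph is precisely the image of $(\G_1\cap q_0^{-1}\G_2 q_0)\ba G$ under $[g]\mapsto [(g,q_0 g)]$ (and its $\G_1$-translates by the $\gamma_j$, which collapse to the same subset of $Z$), and the push-forward of $\mu$ to $X_1$ is $m^{\BR}_{\G_1}$; pulling back through the isomorphism $(\G_1\cap q_0^{-1}\G_2 q_0)\ba G\to[(e,q_0)]\Delta(G)$ identifies $\mu$ with (a possibly $(e,u_0)$-translated copy of) the push-forward of $m^{\BR}_{\G_1\cap q_0^{-1}\G_2 q_0}$, which is exactly a finite cover self-joining in the sense of Definition \ref{sj}. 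I expect the main obstacle to be the measure-theoretic bookkeeping in passing between $\BMS$-conull and $\BR$-conull sets while preserving $G$-equivariance, and in checking that the normalization of $\mu$ along fibers forces the push-forward to be the PS/BR measure of the intersection group rather than a multiple; the algebraic permutation argument itself is soft once full $G$-equivariance is in hand.
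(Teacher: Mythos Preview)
Your plan is essentially the paper's own argument: invoke Theorem~\ref{t;factor-rigidity-bms} to add $\check U$-equivariance, bootstrap to $G$-equivariance on a conull set, run the coset/permutation computation to extract $q_0$ and the finite-index statement, then use $U$-equivariance together with the $\BMS$/$\BR$ product structure (Lemma~\ref{xp} and \eqref{e;bms-br-recall}) to transfer the conclusion to a $\BR$-conull set. Two points deserve tightening.

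First, the passage from equivariance under each of $M,A,U,\check U$ separately (on possibly different conull sets, and for $\check U$ only when both endpoints lie in the good set) to full $G$-equivariance is where the paper spends its care, and it does not argue via ``the group they generate''. Instead it fixes a single $x_0$ in the good set, defines $\mfib'(x_0g):=\mfib(x_0)g$ for \emph{all} $g\in G$, and observes that $\mfib=\mfib'$ on $Y':=x_0(\check U M A U)$; since $\check U M A U$ is Zariski open in $G$, this is $\BMS$-conull. This sidesteps exactly the bookkeeping you flagged as the main obstacle.

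Second, your transitivity step has a small gap: the mere distinctness of the sheets $\msec_1,\dots,\msec_\ell$ does \emph{not} force $\G_1$ to act transitively on $\{\G_2 q_j\}$ (one could a priori have two disjoint double cosets contributing). The paper closes this with the $\Delta(U)$-ergodicity of $\mu$: each $\G_1$-orbit on the cosets yields a $\Delta(U)$-invariant subset of positive $\mu$-measure (the set in \eqref{fin}), so ergodicity forces a single orbit, i.e.\ $r=1$. Replace ``minimality'' by this ergodicity argument and your outline goes through.
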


\proof By Theorem \ref{t;factor-rigidity-bms} and \eqref{xp}, 
we have $\mfib( x g)=\mfib(x) g$ for all $x\in X'$ and all 
$g\in {\cont}MAU$ where $X'$ is given by loc. cit.
Fixing $x_0\in X'$, 
define $$\mfib' (x_0 g):=\mfib (x_0) g$$ for all $g\in G$.
Then
$\mfib$ and $\mfib'$ coincide with each other on $Y':=x_0({\cont}MAU)$.
Since ${\cont}MAU$ is a Zariski open subset of $G$, $m^{\BMS}(Y')=1$, and hence $\mfib=\mfib'$ 
on a $\BMS$-conull subset.
Hence we may assume without loss of generality 
\be\label{uu} \mfib (xg)=\mfib (x) g\ee for all $g\in G$ and $x\in Y'$ with
$xg\in Y'$.
%The conclusion in the theorem follows from some algebraic considerations which we now recall.
Let $\G_1 g_0\in Y'$ and write $\mfib(\G_1 g_0)=\{\G_2 h_1,\ldots,\G_2 h_\ell\}.$ Then
 for every $g\in G$ such that $\G_1 g_0g\in Y'$
we have
$$\mfib(\G_1 g_0g)=\{\G_2 h_1g,\ldots,\G_2 h_\ell g\}.$$ 
Note that for all $\gamma\in\G_1$ we have
$\G_1g_0 (g_0^{-1}\gamma g_0)=\G_1 g_0\in Y'.$ 
Therefore, applying the $G$-equivariance \eqref{uu} of $\Upsilon$ to
$g_0^{-1}\gamma g_0\in g_0^{-1}\G_1 g_0$, we get the set
$\{\G_2 h_1g,\ldots,\G_2 h_\ell g\}$ is right invariant under $g_0^{-1}\G_1 g_0$. 
It follows that $\G_2\ba \G_2 h_i (g_0^{-1}\G_1 g_0)$ is finite for each $i$. 
Putting $q_i:=h_ig_0^{-1}$, we have $\G_2\ba \G_2 q_i\G_1 $ is finite. Let $q_1, \cdots, q_r$
be such that the corresponding cosets $\G_2\ba \G_2 q_i\G_1 $ are distinct and
$\cup_{1\le i\le r}\G_2\ba \G_2 q_i\G_1 =\cup_{1\le i\le \ell} \G_2\ba \G_2 q_i\G_1 $.
Thus, if for each $1\leq i\leq r$ we put
$\G_2 q_i\G_1 =\cup_{1\leq j\leq\ell_i} \Gamma_2 q_i\gamma_{ij}$ for $\gamma_{ij}\in \G_1,$ then
\be\label{aff} 
\mfib(\G_1 g)=\{\G_2 g_1\gamma_{11} g,\ldots,\G_2 g_1\gamma_{1\ell_1} g,\ldots,\G_1 g_r\gamma_{r1}g,\ldots,\G_2 g_r\gamma_{r\ell_r}g\}
\ee on a $\BMS$ conull subset of $X_1.$

In particular we get $q_1^{-1}\G_1q_1$ is commensurable 
with a subgroup of $\G_2.$
Repeating the argument with $\G_2$ we get,
up to a conjugation, $\G_1$ and $\G_2$ are commensurable with each other. 
Note that in view of~\eqref{e;bms-br-recall}, the $U$-invariant set $X''=Y'\cap X'U $
has full $\BR$ measure. Let now $g\in G$ be so that $\G_1 g\in X''$. Then we can write 
$g= g'u$ where $\G_1 g'\in X'$ and $u\in U.$ 
Now property \eqref{uinvv} of $\Upsilon$ and~\eqref{aff} imply
\begin{align*}&
\mfib(\G_1 g)=\mfib(\G_1 g') u\\&=
\{\G_2 g_1\gamma_{11} g'u,\ldots,\G_2 g_1\gamma_{1\ell_1} g'u,\ldots,\G_2 g_r\gamma_{r1}g'u,\ldots,\G_2 g_r\gamma_{r\ell_r}g'u\}\\
&=\{\G_2 g_1\gamma_{11} g,\ldots,\G_2 g_1\gamma_{1\ell_1} g,\ldots,\G_2 g_r\gamma_{r1}g,\ldots,\G_2 g_r\gamma_{r\ell_r}g\}.
\end{align*} 
Now for every $1\leq i\leq r$
we have that the set 
\be\label{fin}
\{(x^1,x^2): x^1=\G_1 g\in X'', x^2\in\{\G_2 g_i\gamma_{i1} g,\ldots,\G_2 g_i\gamma_{i\ell_1} g\}\}
\ee
is $\Delta(U)$-invariant and has positive $\mu$ measure.
Therefore, by the ergodicity of $\mu$, we get $r=1$ and $\mu$ is a $\Delta(U)$-invariant measure
supported on the set \eqref{fin}. This implies that $\mu$ is a finite self-joining as defined in the introduction.
This finishes the proof.
\qed 

\begin{remark}\label{exj2}
Note that for a general discrete non-elementary subgroup $\G$,
the Patterson-Sullivan density always exists, although it may not be a unique $\G$ conformal density of dimension $\delta_\G$, and
hence the $\BR$-measure is well-defined on $\G\ba G$. Given this,
the above proof yields a stronger version of Corollary \ref{exj} in the introduction where $\G_2$ is not assumed to be geometrically finite. Namely, we have:
 if $\G_1$ is geometrically finite and Zariski dense and $\G_2$ is a Zariski dense (not necessarily geometrically finite) subgroup
 of $G$ with infinite co-volume,
  then a $U$-joining on $\G_1\ba G\times \G_2\ba G$ with respect to the pair $(m^{\BR}_{\G_1}, m^{\BR}_{\G_2})$ exists
only when $\G_1$ is commensurable with a subgroup of $\G_2$, up to conjugation.
\end{remark}

\subsection{}\label{factors}
In this section we deduce Theorem~\ref{thm:factor} from Theorem \ref{thm;joining-class}.
 When $\G$ is a lattice, this deduction  is due to \cite{Rat-Joining} and \cite{Wit} (see also ~\cite[Sec.\ 6]{EL-Joining}).
In the following, we assume that $\G$ has infinite co-volume. Although the basic strategy is similar to
the finite volume case, we will need a certain property of geometrically finite groups of infinite co-volume in the proof.

%we include the proof for the sake of completeness and to highlight some simplifications
%in the case at hand. 
%Throughout this section we assume $\G$ has infinite covolume, 
%the finite covolume case is proved in~\cite{Rat-Joining,Wit,EL-Joining}.

Define the following closed subgroup of $G$:
$$H_\G:=\{h\in  G: h.{\tilde m}^{\BR}={\tilde m}^{\BR}\}$$
where $h.{\tilde m}^{\BR}(B)={\tilde m}^{\BR}( Bh)$ for any Borel subset $B$ of $G$.
Clearly, $\G$ is contained in $H_\Gamma$.
\begin{lem}\label{lem:H-Gamma}
The following hold:
\begin{itemize}
\item $\G$ is a finite index subgroup of $H_\G.$
\item $H_\G=\{g\in G: g\Lambda(\G)=\Lambda(\G)\}.$
\end{itemize}
\end{lem}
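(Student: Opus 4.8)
\textbf{Proof strategy for Lemma \ref{lem:H-Gamma}.}

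The plan is to establish the two bullet points in sequence, deriving the first from the second together with geometric finiteness and infinite co-volume. I would begin with the set-theoretic description of $H_\G$. First I would observe that $H_\G$ is genuinely a closed subgroup: the assignment $h\mapsto h.\tilde m^{\BR}$ is continuous (in the vague topology) because $\tilde m^{\BR}$ is a Radon measure and right translation is continuous, so the stabilizer is closed. Next, for the inclusion $H_\G\subset\{g:g\Lambda(\G)=\Lambda(\G)\}$, I would use the explicit formula for $\tilde m^{\BR}$ in terms of the conformal densities: the support of $\tilde m^{\BR}$ on $G$ is exactly $\{g\in G:g^-\in\Lambda(\G)\}$ (this is recorded in the excerpt, lifted to $G$). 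A right translation by $g$ sends $h^-$ to $(hg)^-$, and the boundary map $h\mapsto h^-$ is the backward visual map; pushing forward the support condition one finds that $h.\tilde m^{\BR}=\tilde m^{\BR}$ forces $g$ to preserve $\supp(\tilde m^{\BR})$, hence to preserve $\Lambda(\G)$ as a subset of $\partial(\bH^n)$ (using that $\Lambda(\G)$ is the closure of the projection of the support, or directly that the backward endpoints of support points fill out $\Lambda(\G)$). For the reverse inclusion one argues that if $g\Lambda(\G)=\Lambda(\G)$, then since the Patterson--Sullivan density is the \emph{unique} $\G$-invariant conformal density of dimension $\delta$ up to scalar and its support is $\Lambda(\G)$, the pushed-forward density $g_*\{\nu_x\}$ is again supported on $\Lambda(\G)$ and is $g\G g^{-1}$-conformal; but here one needs $g\G g^{-1}$ to have the same limit set, which it does, and one invokes uniqueness for the \emph{group generated}, or more cleanly: $g$ preserving $\Lambda(\G)$ normalizes the action on the boundary enough that $g_*\nu_o$ is proportional to a $\G$-conformal density of the same dimension, hence to $\nu_o$ by uniqueness, and then the product formula for $\tilde m^{\BR}$ together with conformality of the Lebesgue density (which is fully $G$-invariant) yields $g.\tilde m^{\BR}=c\,\tilde m^{\BR}$; finally $c=1$ by a co-volume/normalization argument as in Lemma \ref{lem:one-factor-inv}. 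I would be careful that the scalar is exactly $1$ and not merely that $g$ quasi-preserves the measure; if the problem only asked for the quasi-stabilizer this would be automatic, but as stated one should pin the constant, e.g.\ by testing against a compact set of positive finite measure.

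Having the boundary description $H_\G=\{g:g\Lambda(\G)=\Lambda(\G)\}$, the first bullet — that $\G$ has finite index in $H_\G$ — is where the geometric finiteness and infinite co-volume hypotheses enter, and this is the main obstacle. The key input is that a Zariski dense geometrically finite group of infinite co-volume has limit set a proper closed subset of $\partial(\bH^n)$ which is not preserved by any positive-dimensional subgroup of $G$; consequently $H_\G$ is discrete. Concretely: $H_\G$ acts on $\bH^n$ and preserves $\Lambda(\G)$, hence preserves the convex core $\mathcal C(\G)\subset\bH^n$; if $H_\G$ were not discrete, its identity component would be a nontrivial connected Lie subgroup preserving $\Lambda(\G)$, forcing $\Lambda(\G)$ to be a subsphere (the limit set of the corresponding connected subgroup) or all of $\partial(\bH^n)$ — the former contradicts Zariski density of $\G$ (its limit set cannot lie in a proper geodesic subsphere), and the latter would make $\G$ a lattice, contradicting infinite co-volume. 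So $H_\G$ is discrete. Then $\G\le H_\G$ are both discrete with $\Lambda(\G)=\Lambda(H_\G)$, and since $\G$ is geometrically finite, $\mathrm{covol}$ of the unit neighborhood of the convex core is finite; because $H_\G$ also acts on this convex core with $\G$ of (a priori) possibly infinite index, I would argue finiteness of index by comparing the finite volume of $\G\ba(\text{unit nbhd of convex core})$ with that of $H_\G\ba(\text{same})$: a strictly infinite index would force the latter to have volume zero, impossible. Equivalently, one can cite that the commensurator/normalizer situation for geometrically finite non-lattices forces $H_\G$ to lie in the commensurator and in fact to normalize $\G$ with finite quotient; I would phrase it via: $H_\G$ normalizes no proper finite-index overgroup infinitely, and geometric finiteness is inherited, so $[H_\G:\G]<\infty$. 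The delicate point to get right is ruling out a positive-dimensional $H_\G$, i.e.\ that no connected subgroup of $\mathrm{Isom}^+(\bH^n)$ stabilizes the Zariski-dense limit set — this is exactly where Zariski density is used, and I expect to spend most of the effort making that step airtight (for $n=2,3$ one can even argue by hand using that $\Lambda(\G)$ has more than two points and is not a round circle/sphere).

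In summary, the proof is: (i) show $H_\G$ closed and identify it with the boundary-stabilizer of $\Lambda(\G)$ via the support and product-formula description of $\tilde m^{\BR}$, pinning the scalar to $1$ by a finite-measure test; (ii) show $H_\G$ is discrete using Zariski density to exclude a positive-dimensional stabilizer of $\Lambda(\G)$; (iii) show $[H_\G:\G]<\infty$ using geometric finiteness (finite volume of the convex core neighborhood) and discreteness. The hardest step is (ii), the rigidity of the limit set under connected stabilizers; everything else is a bookkeeping assembly of facts already developed in the excerpt.
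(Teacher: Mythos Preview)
Your overall structure is close, but there is a genuine gap in the order of the argument. You try to establish the reverse inclusion $\{g:g\Lambda(\G)=\Lambda(\G)\}\subset H_\G$ \emph{before} proving finite index, by invoking uniqueness of the Patterson--Sullivan density. The problem is that uniqueness is a statement about $\G$-conformal densities, whereas $g_*\{\nu_x\}$ is only $g\G g^{-1}$-conformal. Your suggestion to ``invoke uniqueness for the group generated'' does not work as stated: you do not yet know $\langle\G,g\rangle$ is discrete, let alone geometrically finite with the right critical exponent, so no uniqueness theorem applies to it. The paper resolves this by reversing the order: first one shows $H_\G\subset F:=\{g:g\Lambda(\G)=\Lambda(\G)\}$ (as you do, via the support description), then one proves $F$ is discrete and $[F:\G]<\infty$ using the convex-core volume (as you also outline), and \emph{only then} does the reverse inclusion follow: since $\G$ has finite index in $F$, the group $F$ is geometrically finite with the same limit set, so by uniqueness $\tilde m^{\BR}_F$ is proportional to $\tilde m^{\BR}_\G$; the former is left $F$-invariant, hence so is the latter, giving $F\subset H_\G$. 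No separate argument for the scalar is needed.

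A secondary issue is your discreteness step. The claim that a nontrivial connected subgroup preserving $\Lambda(\G)$ forces $\Lambda(\G)$ to be a subsphere is not correct as stated (rotation subgroups, for instance, preserve many closed sets that are not subspheres). The clean argument, which the paper uses, is purely algebraic: since $\G\subset F$, the Zariski-dense group $\G$ normalizes $F^\circ$; hence $F^\circ$ is normal in $G$; by simplicity $F^\circ$ is trivial or $G$; and $F^\circ=G$ would force $\Lambda(\G)=\partial(\bH^n)$, contradicting the standing assumption that $\G$ is not a lattice. Your convex-core volume argument for $[F:\G]<\infty$ once $F$ is discrete is correct and matches the paper.
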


\begin{proof}
Set $F:=\{g\in G: g\Lambda(\G)=\Lambda(\G)\}.$
Since the support of $\tilde m^{\BR}$ is given by $\{g\in G: g^-= g(w_o^-)\in \Lambda (\Gamma)\}$,
the inclusion $H_\G\subset F$ follows.

Note that $F$ is a discrete subgroups of $G$.
Indeed the identity component $F^\circ$ 
is normalized by $\G.$ Since $G$ is simple 
and $\G$ is Zariski dense in $G$, it follows that $F^\circ$ is either $G$ or trivial.
The former however contradicts the fact that $\G$ is not a lattice.

Hence the isometric action of $F$ on the convex hull $\mathcal C(\G)$ of $\Lambda(\G)$  is properly discontinuous.
As $\G$ is geometrically finite, the
 orbifold $\G\ba \mathcal C(\G)$ has finite volume. 
 Now, $F\ba \mathcal C(\G)$ is an orbifold 
which is covered by $\G\ba \mathcal C(\G)$ and hence
$F\ba \mathcal C(\G)$ has finite volume as well, so  $\G$ is of finite index in $F$. This implies the first claim.

As $[F:\G]<\infty$ and $\G$ is geometrically finite, their Patterson-Sullivan densities are equal up to a constant multiple
and hence the corresponding $\BR$-measures are proportional to each other.
This implies the second claim. \end{proof}

We denote by ${\rm Comm}_G(\G)$ the commensurator subgroup of $\G$, that is,
 $g\in {\rm Comm}_G(\G)$ if and only if $g\Gamma g^{-1}$ and $\Gamma$ are commensurable with each other.

\begin{cor}\label{cor:comm}
We have:
${\rm Comm}_G(\G)\subset H_\G.$
\end{cor}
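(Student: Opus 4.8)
The plan is to reduce the statement to the two characterizations of $H_\G$ established in Lemma~\ref{lem:H-Gamma}, namely that $H_\G$ is a finite-index overgroup of $\G$ and that $H_\G = \{g\in G: g\Lambda(\G)=\Lambda(\G)\}$. Let $g\in {\rm Comm}_G(\G)$, so that $\G':=\G\cap g\G g^{-1}$ has finite index in both $\G$ and $g\G g^{-1}$. The key observation is that the limit set is an invariant of commensurability: if $\G'$ is a finite-index subgroup of a discrete group $\G$, then $\Lambda(\G')=\Lambda(\G)$. First I would recall why this holds — a finite-index subgroup is in particular of the same critical exponent and its convex core is a finite cover of that of $\G$, so the two limit sets, being the closures of the respective radial limit sets, coincide; alternatively one may invoke that $\Lambda$ is the minimal non-empty closed $\G$-invariant subset of $\partial(\bH^n)$ and a finite-index subgroup has the same minimal set since any $\G'$-orbit in $\Lambda(\G)$ meets only finitely many $\G$-translates and hence its closure is $\G$-invariant.

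Granting this, apply it twice: $\Lambda(\G)=\Lambda(\G')$ since $[\G:\G']<\infty$, and $\Lambda(g\G g^{-1})=\Lambda(\G')$ since $[g\G g^{-1}:\G']<\infty$. On the other hand $\Lambda(g\G g^{-1}) = g\Lambda(\G)$ directly from the definition of the limit set, because $g$ acts on $\partial(\bH^n)$ as an isometry of $\bH^n$ and conjugation by $g$ intertwines the $\G$-action with the $g\G g^{-1}$-action. Combining, $g\Lambda(\G)=\Lambda(g\G g^{-1})=\Lambda(\G')=\Lambda(\G)$, so $g$ stabilizes $\Lambda(\G)$. By the second bullet of Lemma~\ref{lem:H-Gamma}, this means precisely $g\in H_\G$, which is the desired conclusion.

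The argument is essentially a three-line chain of equalities once the commensurability-invariance of the limit set is in hand, and that invariance is the only point requiring a word of justification; everything else is formal. I do not anticipate a genuine obstacle here — the statement is a soft consequence of Lemma~\ref{lem:H-Gamma}, whose substance (finiteness of $[F:\G]$ via the finite-volume property of the convex core of a geometrically finite group) has already been carried out. If one wanted to be completely self-contained about $\Lambda(\G')=\Lambda(\G)$ for $[\G:\G']<\infty$, the cleanest route is the minimality characterization: $\Lambda(\G')\subset\Lambda(\G)$ is clear, and $\Lambda(\G')$ is closed and $\G$-invariant (a coset-translate of a $\G'$-orbit accumulates inside $\Lambda(\G')$ since $\G=\bigcup_{i=1}^{[\G:\G']}\gamma_i\G'$ is a finite union), non-empty as $\G$ is non-elementary, so by minimality of $\Lambda(\G)$ it equals $\Lambda(\G)$.
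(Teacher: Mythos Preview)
Your proof is correct and follows exactly the paper's approach: the paper's proof is the single chain of equalities $\Lambda(\G)=\Lambda(\G\cap g\G g^{-1})=\Lambda(g\G g^{-1})$ together with an appeal to Lemma~\ref{lem:H-Gamma}. You have simply supplied more detail on the commensurability-invariance of the limit set, which the paper takes for granted.
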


\begin{proof} This follows from Lemma~\ref{lem:H-Gamma} since
\[
\Lambda(\G)=\Lambda(\G\cap g\G g^{-1})=\Lambda(g\G g^{-1}).
\]
for any $g\in {\rm Comm}_G(\G)$.
\end{proof}

Let $p$ be the factor map as in Theorem \ref{thm:factor}, and $\tilde p$ denote the lift of $p$ to $G.$   
Then $\tilde p$ is a left $\G$-invariant and right $U$-equivariant map from $G$ to $Y.$
In view of Lemma~\ref{lem:H-Gamma} and 
replacing $\Gamma$ by a bigger subgroup of $H_\G$, if necessary, we may assume 
\be\label{eq:Gamma-pi}
\G=\{h\in H_\G: \tilde p(hg)=\tilde p(g)\;\text{for ${\tilde m}^{\BR}$-a.e.\ $g\in G$}\}.
\ee

Define  
\[
Q:=\{(h,u)\in H_\G\times U: \tilde p(hgu)=\tilde p(g)\; \text{for ${\tilde m}^{\BR}$-a.e.\ $g\in G$}\}.
\]
Since ${\tilde m}^{\BR}$ is left $H_\G$-invariant and right $U$-invariant we get that 
$Q$ is a closed subgroup of $H_\G\times U.$  The subgroup $Q$ acts on $G$ by 
$$q(g)= h gu$$
for $q=(h,u)\in Q$ and $g\in G$.

\begin{lem}\label{lem:Q-norm}
The subgroup $Q$ is contained in $ N_G(\G)\times U.$ 
\end{lem}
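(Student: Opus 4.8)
The statement to be proven is that $Q\subseteq N_G(\Gamma)\times U$. Fix $q=(h,u)\in Q$; we must show that $h\in N_G(\Gamma)$, i.e.\ that $h\Gamma h^{-1}=\Gamma$. The natural idea is to use the defining relation of $Q$ together with the characterization \eqref{eq:Gamma-pi} of $\Gamma$ as the \emph{exact} stabilizer of $\tilde p$ inside $H_\Gamma$. First I would verify the auxiliary fact that $Q$ projects onto a subgroup of $H_\Gamma$ which normalizes $\Gamma$: this is what gets packaged as the content of the lemma, since the $U$-factor is automatically in $U$. So really the work is entirely about the first coordinate.

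\textbf{Key steps.} Let $\gamma\in\Gamma$ and $q=(h,u)\in Q$. I would compute, for $\tilde m^{\BR}$-a.e.\ $g\in G$,
\[
\tilde p\bigl((h\gamma h^{-1})\, (hgu)\bigr)=\tilde p(h\gamma g u)=\tilde p(h g u)=\tilde p(g),
\]
where the middle equality uses left $\Gamma$-invariance of $\tilde p$ (namely $\tilde p(\gamma\cdot)=\tilde p(\cdot)$) applied to the point $gu$, and the first and last equalities use $q\in Q$. Since $\tilde m^{\BR}$ is right $U$-invariant and left $H_\Gamma$-invariant, as $g$ ranges over a $\tilde m^{\BR}$-conull set so does $hgu$; hence the above says $\tilde p\bigl((h\gamma h^{-1}) g'\bigr)=\tilde p(g')$ for $\tilde m^{\BR}$-a.e.\ $g'\in G$. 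Now I must check that $h\gamma h^{-1}\in H_\Gamma$: this follows because $h\in H_\Gamma$ (the first coordinate of an element of $Q\subset H_\Gamma\times U$) and $\gamma\in\Gamma\subseteq H_\Gamma$, so $h\gamma h^{-1}\in H_\Gamma$. Then \eqref{eq:Gamma-pi} forces $h\gamma h^{-1}\in\Gamma$. Running this for all $\gamma\in\Gamma$ gives $h\Gamma h^{-1}\subseteq\Gamma$; applying the same argument to $q^{-1}=(h^{-1},u^{-1})\in Q$ gives $h^{-1}\Gamma h\subseteq\Gamma$, hence $h\Gamma h^{-1}=\Gamma$, i.e.\ $h\in N_G(\Gamma)$. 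Since $u\in U$ trivially, this shows $Q\subseteq N_G(\Gamma)\times U$.

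\textbf{Main obstacle.} The only genuinely delicate point is the interchange of "for a.e.\ $g$" statements under the left translation by $h$ and the combination of the two relations (the one from $q$ and left $\Gamma$-invariance of $\tilde p$), since each holds only on a conull set a priori depending on the group element in question. This is handled using that $\tilde m^{\BR}$ is both left $H_\Gamma$-quasi-invariant (in fact invariant, by definition of $H_\Gamma$) and right $U$-invariant, so null sets are preserved under the maps $g\mapsto hgu$ and $g\mapsto\gamma g$; one also uses that $\Gamma$ is countable to pass to a single conull set working for all $\gamma$ simultaneously. A secondary routine point is to confirm that $h\gamma h^{-1}$ genuinely lies in $H_\Gamma$ so that \eqref{eq:Gamma-pi} applies — but this is immediate from $H_\Gamma$ being a group containing both $h$ and $\Gamma$. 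No hard analysis is needed here; the lemma is essentially a formal consequence of the definitions of $Q$, $H_\Gamma$, and the exactness in \eqref{eq:Gamma-pi}.
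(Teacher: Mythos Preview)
Your proposal is correct and follows essentially the same route as the paper: combine the defining relation $\tilde p(hgu)=\tilde p(g)$ with left $\Gamma$-invariance of $\tilde p$ and the invariance of $\tilde m^{\BR}$ under $g\mapsto hgu$ to deduce $\tilde p(h\gamma h^{-1}g')=\tilde p(g')$ a.e., then invoke \eqref{eq:Gamma-pi}. One small remark: in your displayed chain the ``middle equality'' $\tilde p(h\gamma gu)=\tilde p(hgu)$ is not literally $\Gamma$-invariance at $gu$ (that would give $\tilde p(\gamma gu)=\tilde p(gu)$); it really uses $q\in Q$ at the point $\gamma g$ together with $\Gamma$-invariance, so your attribution of which step uses what is slightly off, though the conclusion is unaffected.
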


\begin{proof}
If $(h,u)\in Q$, then for any $\gamma\in\G$ and ${\tilde m}^{\BR}$-a.e.\ $g\in G$, we have
\[
\tilde p(\gamma hgu)=\tilde p(hgu)=\tilde p(g)=\tilde p(\gamma g)=\tilde p(h\gamma g u).
\]
Since ${\tilde m}^{\BR}$ is right $U$-invariant, the above implies 
\[
\tilde p(\gamma hg)=\tilde p(h\gamma g)\quad\text{ for ${\tilde m}^{\BR}$-a.e.\ $g\in G$}.
\]
By the definition of $H_\G$, we deduce
\[
\tilde p(g)=\tilde p(h\gamma h^{-1} g)\quad\text{ for ${\tilde m}^{\BR}$-a.e.\ $g\in G$},
\]
that is, $h\gamma h^{-1}\in \G$ by \eqref{eq:Gamma-pi}. So $h\in N_G(\G).$
\end{proof}

\medskip

{\bf Proof of Theorem~\ref{thm:factor}.}
Set $X=\G\ba G$.
Given the factor map ${ p}: (X,m^{\BR})\to (Y,\nu)$ we can disintegrate $m^{\BR}$ into
$m^{\BR}=\int_Y\tau_y\;d\nu(y)$ where $\tau_y(p^{-1}(y))=1$ for $\nu$-a.e.~$y$. 
Recall, e.g.\ from~\cite{Furman},
that we can construct the independent self-joining relative to $p$ by
\be\label{eq:factor-joining}
\bar\mu:=\int_Y\tau_y\times_p\tau_y\;d\nu(y).
\ee
Then $\bar\mu$ is a $U$-joining on $Z:=X\times X$ supported on $X\times_p X:=\{(x_1,x_2):p(x_1)=p(x_2)\}.$
Disintegrate $\bar\mu$ into $\Delta(U)$-ergodic components:
\be \label{bbu} \bar\mu=\int_Z\mu_z\;d\sigma(z).\ee By Theorem~\ref{thm;joining-class},
 we have $\mu_z$ is a finite cover self joining for $\sigma$-a.e.~$z$.
That is, $\mu_z$ is the image of $m^{\BR}$ on a closed orbit $[(e,g_z)]\Delta(G)(e,u_z)$
where $g_z\in{\rm Comm}_G(\Gamma)$ and $u_z\in U.$ 
Note that $(g_z,u_z)\in H_\G\times U$  for $\sigma$-a.e.\ $z$ by Corollary~\ref{cor:comm}.

We now compare the two descriptions ~\eqref{eq:factor-joining}  and \eqref{bbu} of $\bar\mu$
and get the following consequences. Since \[
\tilde p(g_zgu_z)=\tilde p(g)\quad \text{ for $\sigma$-a.e.\ $z$ and ${\tilde m}^{\BR}$-a.e.\ $g\in G$},
\]
we have $(g_z,u_z)\in Q.$

Recalling from Lemma~\ref{lem:Q-norm} that $\G\times\{e\}$ is normal in $Q,$
consider the following subgroup  $$L:=\G\times\{e\}\ba Q.$$
 Then the above discussion implies that in \eqref{bbu}, we can consider $\sigma$ as a probability measure
 on $L$. Hence we may write $$ \bar\mu=\int_L\mu_\ell\; d\sigma(\ell)$$
where, for $\sigma$-a.e.\ $\ell$, $\mu_\ell$ is the image of the $\BR$-measure  on 
\[
(\G\times\G)\ba\{(g,\ell(g)): g\in G\},
\]
up to a constant multiple.
%where, for $\sigma$-a.e.\ $\ell$, $\mu_\ell$ is the unique $\Delta(U)$-invariant measure supported on
%\[
%(\G\times\G)\ba\{(g,\ell(g)): g\in G\},
%\]
%up to a constant multiple.

 Following the proof of~\cite[Thm.\ 3.9(iii)]{Furman}, we now claim that the convolution $\sigma \star \sigma$ is equal to $\sigma$.
Comparing with~\eqref{eq:factor-joining}, we get that for $\nu$-a.e.\ $y$, the measure $\tau_y$ is supported on a single $L$ orbit.
Furthermore, for $\tau_y$-a.e.\ $x\in X$ we have
\be\label{eq:L-orbit}
\int_Lf(\ell(x))d\sigma(\ell)=\int_Xf(x')d\tau_y(x')\quad\text{for all $f\in C_c(X)$}.
\ee
Now using~\eqref{eq:L-orbit}, for all $f\in C_c(X)$ and $\tau_y$-a.e.\ $x$, we have
\begin{multline*}
\sigma \star \sigma(f)=\int_L\int_Lf(\ell_2\ell_1(x))d\sigma(\ell_1)d\sigma(\ell_2)
=\int_L\int_Xf(\ell_2(x'))d\tau_y(x')d\sigma(\ell_2)\\
=\int_X\int_Lf(\ell_2(x'))d\sigma(\ell_2)d\tau_y(x')=\int_X\int_Xf(x'')d\tau_y(x'')d\tau_y \\ 
=\int_Xf(x'')d\tau_y(x'')=\int_Lf(\ell(x))d\sigma(\ell)=\sigma(f).\end{multline*}
This proves the claim.

It follows that $\sigma$ is the probability Haar measure 
of a compact subgroup $R$ of $L$ by ~\cite[Lemma 3.11, Rk. 3.12]{Furman}. 
 Since $[H_\G:\G]<\infty$ by Lemma~\ref{lem:H-Gamma},
it follows that  $R$ is a finite subgroup of $\Gamma\ba N_G(\Gamma)\times \{e\}\subset \Gamma\ba Q$.

All together we get $\tau_y$ is the push forward of the Haar measure $\sigma$
to an orbit of $R$ in $X$ for $\nu$-a.e.\ $y\in Y.$
That gives $(Y,\nu)$ is isomorphic to $R\ba (\G\ba G,m^{\BR})$.
This implies ~\ref{thm:factor}.
\qed

\end{document}